\newtheorem{cor}{Corollary}[section]
\newtheorem{theorem}[cor]{Theorem}
\newtheorem{prop}[cor]{Proposition}
\newtheorem{lemma}[cor]{Lemma}
\theoremstyle{definition}
\newtheorem{defi}[cor]{Definition}
\theoremstyle{remark}
\newtheorem{remark}[cor]{Remark}
\newtheorem{example}[cor]{Example}
\DeclareMathOperator\arctanh{arctanh}
\newcommand{\C}{{\mathbb C}}
\newcommand{\R}{{\mathbb R}}
\newcommand{\dev}{\mbox{dev}}
\newcommand{\hess}{\mbox{Hess}}
\newcommand{\Hyp}{\mathbb{H}}
\newcommand{\dS}{\mathrm{d}\mathbb{S}}
\newcommand{\co}{\mathrm{co}\,}
\newcommand{\grad}{\operatorname{grad}}%{\mbox{grad}}
\newcommand{\isom}{\mathrm{Isom}}
\newcommand{\Ip}{\mathrm{I}^+}
\newcommand{\Ipm}{\mathrm{I}^-}
\newcommand{\rar}{\rightarrow}
\newcommand{\war}{\rightharpoonup}
\newcommand{\id}{\mathrm{I}}
\newcommand{\SO}{\mathrm{SO}}
\newcommand{\so}{\mathfrak{so}}
\newcommand{\ddt}{\left.\frac{d}{dt}\right|_{t=0}}
\newcommand{\D}{\mathbb{D}}
\def\Hess{\mathrm{Hess}}
\begin{document}

\setcounter{secnumdepth}{3}
\setcounter{tocdepth}{2}

\title{Spacelike convex surfaces with prescribed curvature in (2+1)-Minkowski space}

%On the Minkowski problem in $(2+1)$-dimensional Minkowski space]{On the Minkowski problem in $(2+1)$-dimensional Minkowski space and boundedness of the principal curvatures}

\author[Francesco Bonsante]{Francesco Bonsante}
\address{Francesco Bonsante: Dipartimento di Matematica ``Felice Casorati", Universit\`{a} degli Studi di Pavia, Via Ferrata 1, 27100, Pavia, Italy.} \email{bonfra07@unipv.it} 
\author[Andrea Seppi]{Andrea Seppi}
\address{Andrea Seppi: Dipartimento di Matematica ``Felice Casorati", Universit\`{a} degli Studi di Pavia, Via Ferrata 1, 27100, Pavia, Italy.} \email{andrea.seppi01@ateneopv.it}
%\date{\today}

\thanks{The authors were partially supported by FIRB 2010 project ``Low dimensional geometry and topology'' (RBFR10GHHH003). The first author was partially supported by
PRIN 2012 project ``Moduli strutture algebriche e loro applicazioni''.
The authors are members of the national research group GNSAGA}

\begin{abstract}
We prove existence and uniqueness of solutions to the Minkowski problem in any domain of dependence $D$ in $(2+1)$-dimensional Minkowski space, provided $D$ is contained in the future cone over a point. Namely, it is possible to find a smooth convex Cauchy surface with prescribed curvature function on the image of the Gauss map. This is related to solutions of the Monge-Amp\`ere equation $\det D^2 u(z)=(1/\psi(z))(1-|z|^2)^{-2}$ on the unit disc, with the boundary condition $u|_{\partial\D}=\varphi$, for $\psi$ a smooth positive function and $\varphi$ a bounded lower semicontinuous function.

We then prove that a domain of dependence $D$ contains a convex Cauchy surface with principal curvatures bounded from below by a positive constant  if and only if
the corresponding function $\varphi$ is in the  Zygmund class. Moreover in this case the surface of constant curvature $K$ contained in $D$ has bounded
principal curvatures, for every $K<0$. In this way we get a full classification of isometric immersions of the hyperbolic plane in Minkowski space with bounded shape operator in terms of
Zygmund functions of $\partial \D$.

%We then characterize constant curvature convex surfaces in Minkowski space which are spacelike entire graphs and have principal curvatures bounded from below by a positive constant: this condition is equivalent to the fact that $u|_{\partial\D}$ has the Zygmund regularity, and is also equivalent to the finiteness of the Thurston norm of the measured geodesic lamination dual to the domain of dependence of the surface, using the technology introduced by Geoffrey Mess.
Finally, we prove that every domain of dependence as in the hypothesis of the Minkowski problem is foliated by the surfaces of constant curvature $K$, as $K$ varies in $(-\infty,0)$.
\end{abstract}

\maketitle

%\tableofcontents

%\bibstyle{plain}

%\input convex-sec1.tex

\section{Introduction}

A classical theorem of Riemannian geometry states that if $\sigma: \mathbb{S}^2\to\R^3$ is an isometric immersion of the round sphere into Euclidean space,
then it is the standard inclusion up to an isometry of $\R^3$. On the other hand, Hano and Nomizu (\cite{hanonomizu}) first proved that the analogous statement does not hold in 
Minkowski space $\R^{2,1}$. Namely there are isometric embeddings of the hyperbolic plane in $\R^{2,1}$ which are not equivalent to the standard inclusion
of the hyperboloid model of $\Hyp^2$ into $\R^{2,1}$.
 
 A possible way to understand the rigidity in the Euclidean case makes use of the so-called support function: basically if $\sigma:\mathbb{S}^2\to\R^3$ is an isometric immersion, 
 the image of $\sigma$ must be a locally convex surface by the Gauss equation. In particular it turns out that the Gauss map $G:\mathbb{S}^2\to \mathbb{S}^2$ is bijective, and the support function is defined
 as $\bar u: \mathbb{S}^2\to \R$, $\bar u(x)=\langle x, G^{-1}(x)\rangle$. A simple computation shows that $\det(\Hess^{\mathbb{S}^2}\bar u+\bar u\id)=1$, where $\hess^{\mathbb{S}^2}$ is the covariant Hessian on
 the sphere.   By a wise use of the comparison principle it turns out that the difference of any two solutions must be the restriction on $\mathbb{S}^2$ of a linear form on $\R^3$.
  This allows to conclude that every solution is of the form $\bar u(x)=1+\langle x, \xi\rangle$ for some $\xi\in\R^3$. Therefore the surface $\sigma(\mathbb{S}^2)$ is the round sphere of radius $1$ centered at $\xi$.
 
 The \emph{support function} $\bar u$ can be analogously defined for a spacelike convex immersion $\sigma:\Hyp^2\to\R^{2,1}$. If $\sigma$ is an isometric immersion, then $\bar u$ satisfies
 the equation $\det(\hess^{\Hyp^2}\bar u-\bar u\id)=1$. However the maximum principle cannot be directly used in this context by the non-compactness of $\Hyp^2$. This is a general
 indication that some boundary condition must be taken into account to determine the solution and  the immersion $\sigma$.
 
 The classical Minkowski problem in Euclidean space  can be also formulated for Minkowski space. Given a  smooth spacelike strictly convex surface $S$ in $\R^{2,1}$, the 
 curvature function is defined as $\psi:G(S)\to \R$, $\psi(x)=-K_S(G^{-1}(x))$, where $G:S\to\Hyp^2$ is the Gauss map and $K_S$ is the scalar intrinsic curvature on $S$.
 Minkowski problem consists in finding a convex surface in Minkowski space whose curvature function is a prescribed positive function $\psi$.
Using the support function technology, the problem  turns out to  be equivalent to solving the equation 
\begin{equation}\label{eq:monhyp}
\det(\hess^{\Hyp^2}\bar u-\bar u\id)=\frac{1}{\psi}\,.
\end{equation}
Also in this case for the well-posedness of the problem some boundary conditions must be imposed.

Using the Klein model of $\Hyp^2$,
Equation \eqref{eq:monhyp} can be reduced to a standard Monge-Amp\`ere equation over the unit disc $\D$.
In particular solutions of \eqref{eq:monhyp} explicitly correspond to solutions $u:\D\to\R$ of the equation 
\begin{equation}\label{eq:moneucl}
\det D^2u(z)=\frac{1}{\psi(z)}(1-|z|^2)^{-2}\,.
\end{equation}

It should be remarked that the correspondence can degenerate in some sense. Indeed given a convex surface $S$, the support function $u$
is  defined only on a convex subset of $\D$. On the other hand, any convex function over $\D$ corresponds to some convex surface $S$ in Minkowski
space, but in general $S$ might contain lightlike rays. 
We say that a convex surface is a spacelike entire graph if $S=\{(p, f(p))\,|\,p\in\R^2\}$, where $f:\R^2\to\R$ is a $C^1$ function on the horizontal plane 
such that $||D f(p)||<1$ for all $p\in\R^2$. 

In \cite{Li}, Li studied the Minkowski problem in Minkowski space in any dimension showing the existence and uniqueness
of the solution of \eqref{eq:moneucl} imposing $u|_{\partial\D}=\varphi$, for a given smooth $\varphi$. 
The result was improved in dimension $2+1$ by Guan, Jian and Schoen in \cite{schoenetal}, where the existence of the solution is proved assuming that
the boundary data is only Lipschitz.
The solutions obtained in both cases correspond to spacelike entire graphs.

A remarkable result in \cite{Li} is that under the assumption that the boundary data is smooth, the corresponding convex surface $S$ has principal curvatures
bounded from below by a positive constant. As a partial converse statement, if $S$ has principal curvatures bounded from below by a positive constant, then the corresponding function
$u:\D\to\R$ extends to a continuous function of the boundary of $\D$.

In a different direction Barbot, B\'eguin and Zeghib (\cite{barbotzeghib}) solved the Minkowski problem for surfaces invariant by an affine deformation of a cocompact Fuchsian group.
Let $G$ be a cocompact Fuchsian group, and $\Gamma$ an affine deformation of $G$ ($\Gamma$ is a group of affine transformations whose elements  are obtained
by adding a translation part to elements of $G$). If $S$ is a $\Gamma$-invariant surface, its curvature function $\psi$ is $G$-invariant.
Barbot, B\'eguin and Zeghib proved that, given a positive $G$-invariant function $\psi$, there exists a unique solution of Minkowski problem which is $\Gamma$-invariant.

If $u:\D\to\R$ is the support function corresponding to some $\Gamma$-invariant surface $S$, combining the result by Li and the cocompactness of $\Gamma$, it turns out that
$u$ extends to the boundary of $\D$. It is not difficult to see that the extension on the boundary only depends on $\Gamma$ and in particular it is independent of the curvature function.
However the result in \cite{barbotzeghib} is not a consequence of the results in \cite{Li, schoenetal}, as it is not likely that $u|_{\partial\D}$ is Lipschitz continuous. This gives an indication that in dimension $2+1$ results
in \cite{Li, schoenetal} are not sharp.

One of the goals of the paper is to determine the exact regularity class of the extension on $\partial\D$ of functions $u:\D\to\R$ corresponding to
surfaces with principal curvatures bounded from below.

\begin{theorem}\label{thm:main1}
Let  $\varphi:\partial\D\to\R$ be a continuous function. Then there exists a spacelike entire graph in $\R^{2,1}$ whose principal curvatures are bounded from below by a positive constant
and whose support function $u$ extends $\varphi$ if and only if $\varphi$ is in the Zygmund class.
\end{theorem}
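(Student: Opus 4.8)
The plan is to analyze the problem through the correspondence between spacelike convex surfaces $S$ in $\R^{2,1}$ and their support functions on the unit disc $\D$. Recall that an isometric immersion $\sigma:\Hyp^2\to\R^{2,1}$ whose image has principal curvatures bounded below by a positive constant corresponds (via the Klein model) to a convex function $u:\D\to\R$ solving a Monge-Ampère equation of the form \eqref{eq:moneucl}; the geometric condition on principal curvatures translates into a uniform two-sided bound on the Hessian of $u$ in an appropriate (hyperbolically normalized) sense. The Zygmund class enters because a bounded function $\varphi:\partial\D\to\R$ is Zygmund precisely when its Poisson-type or harmonic extension — or more relevantly, the extension by a bounded convex-type function — has controlled second-order behavior near the boundary; indeed the Zygmund seminorm $\sup|\varphi(e^{i(\theta+t)})+\varphi(e^{i(\theta-t)})-2\varphi(e^{i\theta})|/|t|$ is exactly the obstruction to such control.

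First I would establish the \emph{necessity} direction. Suppose $S$ is a spacelike entire graph with principal curvatures $\geq c>0$, with support function $u$ extending $\varphi$. The bound on principal curvatures gives, after passing to the Klein model, a bound of the form $\Hess^{\Hyp^2}\bar u - \bar u\,\id \leq C(\Hess^{\Hyp^2}\bar u - \bar u\,\id)^{-1}$ or more directly a uniform upper bound on the hyperbolic Hessian of $\bar u$; this is where the interplay between the shape operator of $S$ and second derivatives of the support function is used. Such a uniform bound, written in the Euclidean disc coordinates and evaluated along geodesics reaching the boundary, forces the second difference quotients of the boundary trace $\varphi$ to be bounded, i.e., $\varphi\in$ Zygmund. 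Concretely: comparing $u$ along a chord of $\D$ with the affine function interpolating its boundary values, the convexity plus the Hessian upper bound yields $|u(z)-\ell(z)|\lesssim (1-|z|)\,|\log(1-|z|)|$ near $\partial\D$ type estimates, which upon taking boundary limits along two chords symmetric about a boundary point gives the Zygmund estimate for $\varphi$.

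Next, for \emph{sufficiency}, given $\varphi$ in the Zygmund class I would construct the surface. Using barriers built from the Zygmund condition — the standard fact that a Zygmund function admits an extension $\tilde\varphi$ to $\ol\D$ with $|\Hess\tilde\varphi|\lesssim 1/(1-|z|)$ — one produces sub- and super-solutions for the Monge-Ampère equation \eqref{eq:moneucl} with the prescribed curvature function $\psi$ (say $\psi\equiv$ const, which suffices since we only need existence of \emph{one} such surface) and boundary data $\varphi$. Applying the existence theory (the results of Li and of Guan--Jian--Schoen quoted above handle Lipschitz data; here one needs a limiting/approximation argument since $\varphi$ need only be Zygmund, approximating $\varphi$ from below by smooth $\varphi_n$ with uniformly bounded Zygmund norm and passing to the limit using the barriers to get uniform $C^{1,1}$-type interior estimates scaled by $1/(1-|z|)$), one obtains a convex solution $u$. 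The uniform Hessian bounds then translate back to a uniform lower bound on the principal curvatures of the associated surface $S$, and the barrier control guarantees $u$ extends continuously to $\varphi$ on $\partial\D$ and that $S$ is a spacelike entire graph.

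The main obstacle I expect is the sufficiency direction, specifically obtaining the a priori estimate that the Hessian of the solution $u$ grows no faster than $1/(1-|z|)$ near $\partial\D$ solely from the Zygmund bound on $\varphi$ — this is the quantitative heart of the matter and is strictly finer than what the Lipschitz-data theory provides. The key technical point is to build good barriers out of the Zygmund condition and to run a careful approximation scheme that does not lose the uniform second-order control in the limit; controlling the surface's behavior at infinity (ensuring it stays an entire graph, not developing lightlike rays) is the accompanying delicate issue, handled by the same barrier estimates near $\partial\D$.
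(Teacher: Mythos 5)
Your proposal takes a genuinely different route from the paper: you work purely with barriers and Hessian estimates for the Monge--Amp\`ere equation, whereas the paper proves Theorem \ref{thm:main1} as part of Theorem \ref{big theorem zygmund finite dual lamination bounded curvatures}, by passing through the measured geodesic lamination dual to the domain of dependence. There, the Zygmund condition on $\varphi$ is shown to be equivalent to finiteness of the Thurston norm of the dual lamination (via the identification of $\co(\varphi)|_{\partial\D}$ with an infinitesimal earthquake and the Gardiner--Hu--Lakic theorem, Proposition \ref{zygmund e lamination}), and the curvature bounds are then tied to the Thurston norm by direct geometric arguments in $\R^{2,1}$. As written, your route has two genuine gaps. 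In the sufficiency direction, the passage from the $C^0$ pinching between barriers to a uniform lower bound on the principal curvatures is missing: pinching $u$ between two functions does not control $D^2u$, and Pogorelov-type interior estimates (Theorem \ref{monge ampere boundedness second derivative}) degenerate as one approaches $\partial\D$, which is exactly where the bound is needed. The paper's substitute is a renormalization argument (Proposition \ref{boundedness curvature boundedness lamination}): if the principal curvatures degenerated at $x_n\in S$, one applies isometries $A_n\in\isom(\R^{2,1})$ sending $G(x_n)$ to a fixed point of $\Hyp^2$; the renormalized support functions stay uniformly bounded because the Thurston norm is invariant under $A_n$ and controls the support function uniformly (Proposition \ref{uniform boundedness support function bounded lamination}), and interior estimates then yield a contradiction. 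The Zygmund seminorm of $\varphi$ is not manifestly preserved under these renormalizations, so your scheme has no replacement for this step.

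In the necessity direction your chord argument is also incomplete. The intermediate estimate $|u-\ell|\lesssim (1-|z|)\,|\log(1-|z|)|$ carries a logarithmic loss that would not yield the bound $C|h|$ on second differences; moreover the Zygmund condition is two-sided, and only one inequality (an upper bound for $\varphi(e^{i(\theta+h)})+\varphi(e^{i(\theta-h)})-2\varphi(e^{i\theta})$) can come from convexity of $u$ along the chord, while the reverse inequality requires a lower bound on $u$ at the chord's midpoint relative to $\varphi(e^{i\theta})$ that your sketch does not supply. One must also track the anisotropy in the dictionary between $\Hess\,\bar u-\bar u\,\id$ and $D^2u$, since the radial and angular directions scale by different powers of $1-|z|^2$. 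The paper bypasses all of this with a short geometric estimate (Proposition \ref{prop thurston norm and principal curvatures}) bounding the mass of the dual lamination on unit-length transversals by $2\sqrt{2(1+\cosh(1))}\,\|B^{-1}\|_{op}$, obtained by comparing support planes of $D(S)$ at pairs of points of $\Hyp^2$ at distance at most $1$. Your analytic approach may be salvageable, but both halves require substantial additional argument before they constitute a proof.
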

 
 Recall that a function $\varphi:S^1\to\R$ is in the \emph{Zygmund class}  if there is a constant $C$ such that, for every $\theta, h\in\R$,
 \[
 |\varphi(e^{i(\theta+h)})+\varphi(e^{i(\theta-h)})-2\varphi(e^{i\theta})|<C|h|\,.
 \]
 Functions in the Zygmund class are $\alpha$-H\"older for every $\alpha\in(0,1)$, but in general they are not Lipschitz.
 
 Theorem \ref{thm:main1} implies  that spacelike entire graphs of constant curvature $-1$ and with a uniform bound on the principal curvatures
 correspond to functions $u$ whose extension to $\partial\D$ is Zygmund. We prove that also the converse holds. This gives  a complete classification
 of such surfaces in terms of Zygmund functions.
 
 \begin{theorem}\label{thm:main2}
 Let $\varphi:\partial\D\to\R$ be a function in the Zygmund class.
 For every $K<0$ there is a unique spacelike entire graph $S$ in $\R^{2,1}$ of constant curvature $K$ and with bounded principal curvatures  whose corresponding  function $u$
 extends $\varphi$.
 \end{theorem}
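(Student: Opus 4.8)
The plan is to deduce Theorem \ref{thm:main2} from three facts established earlier in the paper: the existence and uniqueness of the solution of the Minkowski problem in an arbitrary domain of dependence $D$ contained in the future cone over a point; the sharpened form of Theorem \ref{thm:main1}, according to which, when the associated boundary function is Zygmund, the surface of constant curvature $K$ inside $D$ has \emph{bounded} (not merely bounded from below) principal curvatures, for every $K<0$; and the foliation of $D$ by its surfaces of constant curvature $K$, $K\in(-\infty,0)$. We will use repeatedly that a Zygmund function $\varphi:\partial\D\to\R$ is continuous, hence bounded and lower semicontinuous, so that it determines a domain of dependence $D_\varphi$ lying in a future cone over a point and the Minkowski problem is solvable there; and that a spacelike entire graph $S\subset\R^{2,1}$ whose support function $u:\D\to\R$ has boundary trace $u|_{\partial\D}=\varphi$ is a convex Cauchy surface of $D_\varphi$, while conversely the constant curvature leaves of $D_\varphi$ are spacelike entire graphs whose support functions are defined on all of $\D$ and restrict to $\varphi$ on $\partial\D$.

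\emph{Existence.} Fix $K<0$ and take the smooth positive curvature function $\psi\equiv -K$. Solving the Minkowski problem in $D_\varphi$ with curvature function $\psi$ produces a spacelike entire graph $S_K\subset D_\varphi$ of constant curvature $K$ whose support function $u_K$ solves \eqref{eq:moneucl} with $\psi\equiv -K$ and satisfies $u_K|_{\partial\D}=\varphi$. Since $\varphi$ is Zygmund, the sharpened form of Theorem \ref{thm:main1} applies to $D_\varphi$ and shows that its constant curvature $K$ leaf, which coincides with $S_K$, has principal curvatures bounded from above; because the Gauss equation forces $\kappa_1\kappa_2=-K>0$ along $S_K$, an upper bound on the $\kappa_i$ automatically yields a positive lower bound. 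Hence $S_K$ has bounded principal curvatures, which gives existence.

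\emph{Uniqueness, and the main obstacle.} Let $S$ be any spacelike entire graph of constant curvature $K$ with bounded principal curvatures whose support function extends $\varphi$. By the remarks above, $S$ is a convex Cauchy surface of $D_\varphi$ with curvature function $\psi\equiv -K$, so the uniqueness part of the Minkowski problem in $D_\varphi$, equivalently the uniqueness of the leaf of prescribed curvature in the constant curvature foliation of $D_\varphi$, forces $S=S_K$, completing the proof. The argument thus rests entirely on the cited ingredients, and the genuinely hard one is the two-sided curvature estimate: upgrading the mere existence inside $D_\varphi$ of some convex Cauchy surface with a positive lower bound on the principal curvatures to two-sided bounds for every constant curvature leaf of $D_\varphi$. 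Granting that, the only point requiring care in the present deduction is the bookkeeping that identifies, within the single domain of dependence $D_\varphi$, the Minkowski-problem solution with curvature function $-K$, the constant curvature $K$ leaf of the foliation, and an arbitrary constant curvature $K$ entire graph with boundary trace $\varphi$, so that the uniqueness statements may be applied.
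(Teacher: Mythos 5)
Your reduction of the existence and uniqueness statements to the Minkowski problem (Theorem \ref{thm:main3} / Theorem \ref{theorem minkowski problem lsc}) is correct as far as it goes: with $\psi\equiv|K|$ one gets a unique spacelike entire graph $S_K$ of constant curvature $K$ in $D_\varphi$ with $u_K|_{\partial\D}=\varphi$, and any competitor $S$ as in the statement has a support function solving the same Monge--Amp\`ere problem, so uniqueness follows from Proposition \ref{theorem uniqueness lsc}. The observation that $\kappa_1\kappa_2=|K|$ turns an upper bound on the principal curvatures into a two-sided bound is also fine.

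The problem is that the entire analytic content of the theorem is hidden in what you call ``the sharpened form of Theorem \ref{thm:main1}'' and then granted. Theorem \ref{thm:main1} as stated only asserts that for Zygmund $\varphi$ there exists \emph{some} entire graph with principal curvatures bounded from below; the claim that the constant curvature leaf $S_K$ has \emph{bounded} principal curvatures is not a consequence of that statement but is precisely the assertion of Theorem \ref{thm:main2} you are asked to prove (in the paper the two theorems are proved simultaneously as the equivalences $i)$--$iv)$ of Theorem \ref{big theorem zygmund finite dual lamination bounded curvatures}). What is missing, concretely, is: (a) the identification of the Zygmund condition on $\varphi$ with finiteness of the Thurston norm of the dual lamination $\mu$ of $D_\varphi$, which the paper obtains by showing that the convex envelope of $\varphi$ is the support function produced by the infinitesimal earthquake along $\mu$ and invoking Gardiner--Hu--Lakic (Proposition \ref{zygmund e lamination}, Corollary \ref{cor support function extends equivalence 12}); and (b) the curvature estimate of Proposition \ref{boundedness curvature boundedness lamination}: if the principal curvatures of $S_K$ degenerated along a sequence $x_n$, one normalizes by isometries $A_n$ sending $G(x_n)$ to $(0,0,1)$, uses the uniform bound on support functions in terms of $\|\mu\|_{Th}$ (Proposition \ref{uniform boundedness support function bounded lamination}) together with the interior $C^2$ estimate for the Monge--Amp\`ere equation (Lemma \ref{lemma boundedness second derivatives}) to bound $\Hess(u_n)=B_n^{-1}$ at the origin, contradicting the degeneration. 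You explicitly acknowledge that this two-sided estimate is ``the genuinely hard one'' and assume it; since it is not derivable from the literal statement of Theorem \ref{thm:main1}, the argument as written is circular and the proof has a genuine gap at its core.
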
 
 
 The proof of Theorem \ref{thm:main2} relies on a general statement we prove about solvability of Minkowski problem.
 We precisely prove the following Theorem.
 
 \begin{theorem}\label{thm:main3}
 Let $\varphi:\partial \D\to\R$ be a lower semicontinuous and bounded function and $\psi: \D\to [a,b]$ for some $0<a<b<+\infty$.
 Then there exists a unique spacelike graph $S$ in $\R^{2,1}$ whose support function $u$ extends $\varphi$ and whose
 curvature function is $\psi$. 
 \end{theorem}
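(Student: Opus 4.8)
The plan is to reformulate the statement as the Monge--Amp\`ere equation \eqref{eq:moneucl}: via the support function and the Klein model, finding $S$ amounts to finding a convex $u:\D\to\R$ with $\det D^2 u(z)=\psi(z)^{-1}(1-|z|^2)^{-2}$ whose ``boundary value'' on $\partial\D$ is $\varphi$, in the sense appropriate to a lower semicontinuous datum. Throughout one keeps the geometric dictionary in mind: $u$ is the support function of a spacelike surface without lightlike rays precisely when $u$ is \emph{strictly} convex, prescribing the curvature function $\psi$ is exactly the equation above, and the domain of dependence of the surface is encoded by the boundary behaviour of $u$ on $\partial\D$. The argument then has three parts: interior existence and regularity of $u$; identification of the boundary datum; and uniqueness.

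For existence I would proceed by monotone approximation from below. Since $\varphi$ is bounded and lower semicontinuous, the inf-convolutions $\varphi_n(\xi)=\inf_{\xi'\in\partial\D}\bigl(\varphi(\xi')+n\,d_{\partial\D}(\xi,\xi')\bigr)$ form an increasing sequence of Lipschitz functions with $\sup_n\varphi_n=\varphi$, and after replacing $\varphi_n$ by $\max(\varphi_n,\inf\varphi)$ we may assume $\inf\varphi\le\varphi_n\le\sup\varphi$. For each $n$, \cite{schoenetal} (or \cite{Li}, after a further smoothing) provides a convex solution $u_n:\D\to\R$ of $\det D^2 u_n=\psi^{-1}(1-|z|^2)^{-2}$, continuous up to $\partial\D$ with $u_n|_{\partial\D}=\varphi_n$, corresponding to a spacelike entire graph. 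By the comparison principle for the Monge--Amp\`ere operator the sequence is increasing, $u_n\le u_{n+1}$; comparing $u_n$ with the explicit barrier $v(z)=-c\sqrt{1-|z|^2}+\inf\varphi$ with $c^2=\sup_\D\psi^{-1}$ — which satisfies $\det D^2 v\ge\det D^2 u_n$ in $\D$ and $v\le\varphi_n=u_n$ on $\partial\D$, hence $v\le u_n$ — gives the uniform lower bound $u_n\ge -c+\inf\varphi$, while $u_n\le\sup\varphi$ is automatic from convexity. Thus $u:=\sup_n u_n$ is a finite convex function on $\D$; monotone convergence of finite convex functions is locally uniform, so $\det D^2 u_n\to\det D^2 u$ weakly as measures and $u$ solves $\det D^2 u=\psi^{-1}(1-|z|^2)^{-2}$ in the Aleksandrov sense. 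Since the right-hand side is smooth and strictly positive on $\D$, interior regularity for the Monge--Amp\`ere equation (strict convexity and $C^{1,\alpha}$ estimates of Caffarelli followed by Schauder, or Pogorelov's interior estimate) makes $u$ smooth and strictly convex, hence the support function of a smooth strictly convex spacelike surface $S$ with curvature function $\psi$.

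It remains to check that $S$ lies in the domain of dependence prescribed by $\varphi$, i.e. that $u$ extends $\varphi$, and this is the step I expect to be the main obstacle. One inequality is soft: from $u\ge u_n$ and continuity of $u_n$ up to $\partial\D$ with boundary value $\varphi_n$ we get $\liminf_{z\to\xi}u(z)\ge\sup_n\varphi_n(\xi)=\varphi(\xi)$ for every $\xi\in\partial\D$, so the extension of $u$ by $\varphi$ is lower semicontinuous on $\overline\D$. The reverse — that $u$ does not overshoot $\varphi$ near $\partial\D$, so that $S$ is asymptotic to exactly the prescribed domain of dependence and not to a larger one — is where the blow-up rate $(1-|z|^2)^{-2}$ of the right-hand side must be used: it is precisely the rate that makes the total mass $\int_\D\det D^2 u$ infinite and, via an Aleksandrov maximum-principle estimate on thin crescents $\{\,r<|z|<1\,\}$ abutting an arc of $\partial\D$, forces $u$ to track its prescribed boundary values. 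Coupling this with upper barriers adapted to the lower semicontinuous datum — suprema of functions of the form $\langle a,\cdot\rangle+c'-c''\sqrt{1-|\cdot|^2}$ lying above $\varphi$ on $\partial\D$ — one pins the boundary behaviour of $u$ down to $\varphi$; when $\varphi$ is continuous this produces a genuine continuous extension, and in general the weaker notion appropriate to the lower semicontinuous case.

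Uniqueness follows from the comparison principle together with the boundary identification. If $u_1,u_2$ both solve the equation and both extend $\varphi$, then on $\D_r=\{|z|<r\}$ they have equal Monge--Amp\`ere densities while $\sup_{\partial\D_r}|u_1-u_2|\to 0$ as $r\to 1$ by the established boundary behaviour; the comparison estimate $\sup_{\D_r}|u_1-u_2|\le\sup_{\partial\D_r}|u_1-u_2|$ then forces $u_1=u_2$. Geometrically this is the statement that two spacelike surfaces with the same curvature function inside the same domain of dependence coincide, which can alternatively be obtained by applying the maximum principle to the difference of their time functions at a first contact point, where equality of curvature functions forces equality of the second fundamental forms. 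This yields the unique spacelike graph $S$ as claimed, completing the proof.
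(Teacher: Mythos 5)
Your existence scheme (monotone approximation of $\varphi$ from below by Lipschitz functions, solving each approximate problem via \cite{schoenetal}, passing to the limit with the comparison principle and the hyperboloid barrier, then Aleksandrov--Heinz and interior regularity) is a legitimate alternative to the paper's route, which instead approximates the \emph{dual lamination} by cocompact-Fuchsian-invariant laminations and invokes Barbot--B\'eguin--Zeghib. The soft direction of the boundary identification is also fine, and in fact easier than you fear: from $u_n\leq h_n=\co(\varphi_n)\leq h=\co(\varphi)$ you get $u\leq h$ for free, and since the lower semicontinuous extension of $h$ to $\partial\D$ is $\varphi$, no Aleksandrov estimate on crescents is needed. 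However, there is a genuine and serious gap at the very end of your existence argument: the assertion that ``$u$ is the support function of a spacelike surface without lightlike rays precisely when $u$ is strictly convex'' is false. Strict convexity and smoothness of $u$ on $\D$ only control the \emph{spacelike} part $\partial_sD$ of the dual boundary; they do not exclude that $\partial D\setminus\partial_sD$ contains lightlike rays, i.e.\ that the surface fails to be an entire graph. Remark \ref{remark counterexample constant curvature not entire} exhibits exactly this: $u(z)=|z|^2/2$ is smooth and strictly convex on $\D$, yet its dual surface is tangent to a lightcone and is not an entire graph (there $\psi$ is unbounded, which is why boundedness of $\psi$ must enter the proof). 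Establishing that the solution is an entire graph when $\psi\leq b<\infty$ is a substantive step of the theorem; the paper does it in Subsection \ref{no light rays} by comparison (Lemma \ref{confronto sci}) with the parabolic-invariant constant-curvature entire graphs of Proposition \ref{surface parabolic graph} and Appendix \ref{appendix parabolic}, which share a lightlike support plane with a putative lightlike ray of $S$ and thereby rule it out. Your proposal omits this entirely.

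There is a second, smaller gap in uniqueness. You write that $\sup_{\partial\D_r}|u_1-u_2|\to0$ as $r\to1$ ``by the established boundary behaviour,'' but for an arbitrary competitor $u_2$ the hypothesis only gives $\liminf_{z\to\xi}u_2(z)=\varphi(\xi)$ pointwise; for a merely lower semicontinuous $\varphi$ this does not yield any uniform decay of $u_1-u_2$ near $\partial\D$, and without it the comparison on $\D_r$ does not close. What is needed is the a priori two-sided estimate $h(z)-C\sqrt{1-|z|^2}\leq u(z)\leq h(z)$ for \emph{every} solution extending $\varphi$, which gives $|u_1-u_2|\leq C\sqrt{1-|z|^2}$ and hence continuous vanishing of the difference at $\partial\D$. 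The paper proves this in Proposition \ref{estimate cosmological any cc surface} by a dilation trick: one sets $u_r(z)=u(rz)$, observes that $u_r$ solves a Monge--Amp\`ere equation with curvature function still bounded below by $\inf\psi$ and with \emph{continuous} boundary data, applies the estimate in the continuous case, and lets $r\to1$. You should incorporate this (or an equivalent a priori estimate) before running the comparison argument; the rest of your uniqueness reasoning is then correct.
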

 
In general we say that a convex function $u$ extends $\varphi$ if $\varphi(z_0)=\liminf_{z\to z_0} u(z)$ for every $z_0\in\partial\D$.
By convexity, if $\varphi$ is continuous this condition is  equivalent to requiring that $u$ is continuous up to the boundary and its boundary value
coincides with $\varphi$.

Let us explain the geometric meaning of the boundary value of the support function of $S$.
%If $S$ is the graph of a function $f$, then 
As $\partial\D$, regarded as the set of lightlike directions, parameterizes lightlike linear planes, 
the restriction of the support function on $\partial\D$ gives the height function of 
lightlike support planes of $S$, where $u(\eta)=+\infty$ means that there is no lightlike support plane orthogonal to $\eta$.
It can be checked that, when $S$ is the graph of a convex function $f:\R^2\to\R$, the condition $u|_{\partial\D}=\varphi$ is also equivalent to requiring that
$$\lim_{r\to+\infty}\left(r-f(rz)\right)=\varphi(z)$$
for every $z\in\partial\D$. The asymptotic condition is stated in the latter fashion for instance in \cite{treibergs} and \cite{choitreibergs}, where the existence problem for constant mean curvature surfaces is treated.

In this paper we will consider future convex surfaces with bounded support function on $\D$. Geometrically this means that $S$ is contained in the future cone
of some point $p\in\R^{2,1}$.

It is also useful to consider convex objects more general than spacelike surfaces. A \emph{future-convex domain} is defined as an open domain in $\R^{2,1}$ which is
the intersection of a family of future half-spaces with spacelike boundary planes. Given a future convex set, $D_0$ we consider a bigger domain $D$ obtained as the intersection of the future of the  lightlike support planes of $D_0$.
From the Lorentzian point of view $D$ is the Cauchy development of the boundary of $D_0$. A domain $D$ obtained in this way is called a \emph{domain of dependence}.
An immediate consequence of Theorem \ref{thm:main3} is that for any domain of dependence and any $K<0$ there exists a unique convex surface $S$
in $\R^{2,1}$ of constant curvature $K$ whose Cauchy development coincides with $D$.
More precisely we prove the following Theorem.

\begin{theorem}\label{thm:main4}
If $D$ is a domain of dependence contained in the future cone of a point, then $D$ is foliated by surfaces of constant curvature $K\in(-\infty,0)$.
\end{theorem}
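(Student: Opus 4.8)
The plan is to realize the leaves of the foliation as the constant--curvature surfaces produced by Theorem~\ref{thm:main3}, and then to show that, as $K$ ranges over $(-\infty,0)$, these surfaces are pairwise disjoint and sweep out all of $D$.

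\emph{Construction of the leaves.} A domain of dependence $D$ contained in the future cone of a point is, as recalled above, the Cauchy development of its lower boundary, and is encoded by a bounded lower semicontinuous function $\varphi\colon\partial\D\to\R$, namely the heights of its lightlike support planes. For each $K<0$, apply Theorem~\ref{thm:main3} with the constant curvature function $\psi\equiv -K$: this gives a unique spacelike graph $S_K=\{(p,f_K(p)):p\in\R^2\}$ of constant curvature $K$, whose support function $u_K\colon\D\to\R$ extends $\varphi$ and solves $\det D^2u_K=(1/(-K))(1-|z|^2)^{-2}$, and which is related to the graph by the order-reversing Legendre duality $f_K=u_K^{*}$ underlying equation~\eqref{eq:moneucl}. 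Since $u_K$ and $D$ carry the same boundary datum $\varphi$, the surface $S_K$ has exactly the lightlike support planes of $D$; being future convex, $S_K$ lies in the intersection of their futures, so $S_K\subset D$ and $S_K$ is a Cauchy surface of $D$.

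\emph{Disjointness and nesting.} If $K_1<K_2<0$ then $1/(-K_1)<1/(-K_2)$, so $\det D^2u_{K_1}<\det D^2u_{K_2}$ on $\D$ while $u_{K_1}$ and $u_{K_2}$ have the same boundary trace $\varphi$; the comparison principle for the Monge--Amp\`ere operator gives $u_{K_1}\ge u_{K_2}$ on $\D$, and strictly so in the interior (equality at an interior point would force the two right-hand sides to agree there). Since the Legendre transform reverses order and the suprema defining $f_{K_i}=u_{K_i}^{*}$ are attained at interior points (the Gauss maps are diffeomorphisms onto $\D$), this yields $f_{K_1}<f_{K_2}$ everywhere. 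Hence the $S_K$ are pairwise disjoint and nested, with $S_K$ lying in the past of $S_{K'}$ whenever $K<K'$; that the nesting has this orientation is checked on the rotationally symmetric hyperboloids $\{\langle x,x\rangle=-r^{2}\}$, which have curvature $-1/r^{2}$ and move to the future as $r\to\infty$.

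\emph{Exhaustion.} Being an intersection of futures of lightlike planes, $D$ is convex and stable under future time-translations, so its vertical slice over each $p\in\R^2$ is an interval $(\ell(p),+\infty)$ with $\ell(p)\in\R$ (finite by the cone hypothesis). Fix $q_0=(p,h)\in D$, so $h>\ell(p)$. Because the $f_K$ are pairwise ordered, $\{K:f_K(p)<h\}$ is a lower half-line, $\{K:f_K(p)>h\}$ an upper half-line, and $f_K(p)=h$ for at most one $K$; hence it suffices to exclude ``$f_K(p)<h$ for all $K$'' and ``$f_K(p)>h$ for all $K$''. For the first: as $K\uparrow 0$, $u_K$ decreases, and if its limit $u_0=\inf_K u_K$ were finite at some interior point it would be a finite convex function near that point, so $u_K$ would converge to it locally uniformly and its Monge--Amp\`ere measure would be the limit of the measures $(1/(-K))(1-|z|^2)^{-2}\,dz$; but these have densities blowing up uniformly on compacta of $\D$, so no finite Radon measure can be their limit --- a contradiction. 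Thus $u_0\equiv-\infty$ on $\D$, whence $f_K=u_K^{*}\to u_0^{*}\equiv+\infty$ pointwise and ``$f_K(p)<h$ for all $K$'' fails. For the second: as $K\downarrow-\infty$ the density $(1/(-K))(1-|z|^2)^{-2}$ tends to $0$ locally uniformly and $u_K$ increases to the support function of the lower boundary of $D$; dually $f_K$ decreases to the height $\ell$ of that boundary, so $f_K(p)<h$ for $K$ sufficiently negative. Therefore every point of $D$ lies on exactly one $S_K$, and $\{S_K\}_{K\in(-\infty,0)}$ is a foliation of $D$.

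\emph{Main obstacle.} The delicate point is the degenerate regime $K\to-\infty$: one must prove that the increasing limit of the $u_K$ is \emph{exactly} the support function of the initial singularity of $D$, so that the leaves descend all the way down to $\partial D$ with no gap left, and that $K\mapsto S_K$ is continuous (convenient, although the argument above only uses monotonicity). Both follow from the uniqueness part of Theorem~\ref{thm:main3} together with interior a priori estimates for solutions of~\eqref{eq:moneucl}: these make the family $\{u_K\}$ precompact, uniqueness identifies each subsequential limit, and the lower boundary of $D$ is characterized as the graph of the maximal convex function with the prescribed asymptotics $\varphi$. The remaining ingredients --- the precise form of the comparison principle and the Legendre duality $f=u^{*}$ between a spacelike graph and its support function --- are standard and are pinned down on the explicit hyperboloid family.
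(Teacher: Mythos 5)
Your construction of the leaves and the two endpoint limits are sound, and in places your route is genuinely different from the paper's: you order the surfaces by applying the comparison principle directly to $u_{K_1},u_{K_2}$ (the paper instead inherits the ordering from the Barbot--B\'eguin--Zeghib foliation of the equivariant approximants and passes to the limit), and you rule out a point lying above every leaf by showing that the Monge--Amp\`ere measures $\frac{1}{|K|}(1-|z|^2)^{-2}\,d\mathcal{L}$ blow up as $K\uparrow 0$, forcing $u_K\downarrow-\infty$ (the paper instead takes $K\downarrow$... rather $K\to 0$ on the graph side, obtains a limit with $\det D^2 f_\infty=0$, and invokes Lemma \ref{lemmaaffine} to produce an entire affine line inside $D$). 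Both of your substitutes work; the measure blow-up argument in particular is arguably cleaner than Lemma \ref{lemmaaffine}. One small caveat: to apply Theorem \ref{comparison principle} you need boundary control, and since $\varphi$ is only lower semicontinuous you must first observe (as in the proof of Proposition \ref{theorem uniqueness lsc}) that \eqref{inequality support functions constant curvature surface} forces $u_{K_1}-u_{K_2}$ to extend continuously by $0$ to $\partial\D$; you skip this, but the tool is at hand.

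The genuine gap is in the exhaustion step. You write that, by monotonicity, ``it suffices to exclude `$f_K(p)<h$ for all $K$' and `$f_K(p)>h$ for all $K$'.'' It does not: a strictly increasing family $K\mapsto f_K(p)$ taking values both below and above $h$ can still jump over $h$, leaving $(p,h)$ on no leaf. An intermediate-value argument in $K$ is indispensable, not ``convenient.'' The paper supplies it as Step~2 of the proof of Theorem \ref{theorem foliation bounded}: for $x$ between $S(K_1)$ and $S(K_2)$ it finds $K_n\in(K_1,K_2)$ with $x\in S_n(K_n)$ (using that the equivariant approximants genuinely foliate their domains), extracts $K_n\to K_\infty$, and uses convergence of support functions plus Lemma \ref{convergence support functions implies convergence graphs} to conclude $x\in S(K_\infty)$. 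Your ``main obstacle'' paragraph does sketch the correct alternative --- precompactness of $\{u_K\}$ via \eqref{inequality support functions constant curvature surface} and convexity, identification of every subsequential limit by the uniqueness in Theorem \ref{thm:main3}, hence local uniform continuity of $K\mapsto u_K$ and, by your Legendre duality (or Lemma \ref{convergence support functions implies convergence graphs}), continuity of $K\mapsto f_K(p)$ --- but this must be promoted from an aside to an actual step of the proof, and the sentence claiming monotonicity alone suffices must be deleted.
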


A problem that remains open is to characterize spacelike entire graphs  with bounded curvature which are complete for the induced metric.
If a surface has bounded principal curvatures, the Gauss map turns to be bi-Lipschitz, hence the surface is automatically complete.
On the opposite side we construct an example of non complete entire graphs which constant curvature; in our example the boundary extension
$\varphi$ of the support function is bounded but not continuous. This result underlines a remarkable difference with respect to CMC surfaces.
Indeed it was proved in \cite{chengyaumax, treibergs} that an entire CMC spacelike graph is automatically complete.

The surfaces we construct are invariant under a one-parameter parabolic group of isometries of $\R^{2,1}$ fixing the origin, and are isometric to a half-plane in $\Hyp^2$. This strategy goes back to Hano and Nomizu, who first exhibited non-standard immersions of the hyperbolic plane in $\R^{2,1}$ as \emph{surfaces of revolutions}, namely surfaces invariant under a one-parameter hyperbolic group fixing the origin.

\subsection*{Ingredients in the proofs}
We will use a description of domains of dependence due to Mess.
The key fact is that it is possible to associate to a domain of dependence $D$ a \emph{dual} measured geodesic lamination of $\Hyp^2$.
It turns out that if $D$ is a domain of dependence, its support function $u$ is
 the convex envelope of its boundary value on $\partial\D$.
 Heuristically, the graph of $u$ is  a piece-wise linear convex pleated surface. The bending lines provide a geodesic lamination over
$\Hyp^2$, whereas  a transverse measure encodes the amount of  bending.

This correspondence will be crucial in the present work.
Solving the Minkowski problem with a given boundary value $\varphi$ is equivalent to finding a convex entire graph $S$ in $\R^{2,1}$ whose
curvature function is prescribed and whose Cauchy development is the domain of dependence $D$ determined by $\varphi$.
If $\mu$ is the dual measured geodesic lamination of $D$, we 
construct a sequence of measured geodesic laminations $\mu_n$ and
Fuchsian groups $G_n$ such that $\mu_n$ is $G_n$-invariant and $\mu_n$ converges to $\mu$ on compact subsets of $\Hyp^2$ in an appropriate sense.
By a result of \cite{barbotzeghib} for every $n$ it is possible to solve the Minkowski problem. The proof of Theorem \ref{thm:main3} is obtained by taking
$\Gamma_n$-invariant curvature functions $\psi_n$ converging to the prescribed curvature function, and 
showing that solutions $S_n$ of Minkowski problem  for the domain $D_n$ dual to $\mu_n$ with curvature function $\psi_n$ converge to a solution of the original Minkowski problem for $D$.

The convergence of solutions is obtained first by showing the convergence of the support functions and then by proving that the surface dual to the limit support function is an entire graph.

For the first step a simple application of the maximum principle implies some a priori bounds of the support functions of $S_n$ in terms of the support function of $D_n$.
This allows to conclude that the support functions of $S_n$ converge to a convex function $u:\D\to\R$ extending $\varphi$. Applying standard theory of Monge-Amp\`ere equation we have
that $u$ is a generalized solution of our problem. On the other hand, in dimension $2$,  Alexander-Heinz theorem implies that $u$ is strictly convex and by standard regularity theorem
we conclude that $u$ is a classical solution.

The second step is more geometric. The key idea is to use - as \emph{barriers} - the already mentioned constant curvature surfaces which are invariant under a $1$-parameter parabolic group
fixing a point $\eta_0\in\partial\D$.
A similar approach (also in higher dimension) using surfaces invariant for a hyperbolic group is taken in \cite{schoenetal}. Since the parabolic-invariant surfaces are entire graphs and 
their support function is constant on  $\partial \D\setminus\{\eta_0\}$, they are very appropriate to
 show (by applying the comparison principe) that the boundary of the domain dual to a solution of \eqref{eq:moneucl}
cannot contain lightlike rays. The argument works well  under the hypothesis that the boundary value of the support function is bounded, leading to the proof
of Theorem \ref{thm:main3}.

The proof of Theorem \ref{thm:main1} is based on a relation we point out between convex geometry in Minkowski space and the theory of infinitesimal earthquakes
introduced and studied in \cite{gardbending, garhulakic, saricweak, saric_earthquakes}.
In particular given $\varphi:\partial \D\rar\R$ we prove that the convex envelope of $\varphi$ is explicitly related to the infinitesimal earthquake extending the field $\varphi\frac{\partial\,}{\partial\theta}$.
From this correspondence we see that the dual lamination associated by Mess to the domain defined by $\varphi$ is equal to the earthquake lamination.
Using a result of Gardiner, Hu and Lakic (\cite{garhulakic}) we deduce that $\varphi$ is Zygmund if and only if the Thurston norm of the dual lamination is finite.
Given a convex entire graph with principal curvatures bounded from below by a positive constant, we point out by a direct geometric construction in Minkowski space an
explicit estimate on the Thurston norm of the dual lamination. By the above correspondence, this proves one direction of the statement of Theorem \ref{thm:main1}.

Conversely, we show that a spacelike entire graph of constant curvature with bounded dual lamination has bounded principal curvatures.
This proves Theorem \ref{thm:main2} and shows the other implication of Theorem \ref{thm:main1}.
The proof is obtained by contradiction. The key fact is the following: if $D$ is a domain of dependence with bounded
dual lamination and $\alpha_n$ is any sequence of isometries of Minkowski space such that $\alpha_n(D)$ contains a fixed point $p$ with horizontal
support plane at $p$, then $\alpha_n(D)$ converges to a domain of dependence with bounded dual lamination.
Now let $S$ be an entire graph of constant curvature whose domain of dependence has bounded dual lamination.
If the principal curvatures of $S$ were not bounded, we could construct a sequence of isometries $\alpha_n$ bringing back to a fixed point
a sequence $p_n\in S$ where the principal curvature are degenerating. The statement above implies that the domain of dependence of $\alpha_n(S)$
is converging to a domain with bounded dual lamination. 
Applying standard regularity theory of Monge-Amp\`ere equations to the support function of $\alpha_n(S)$ one obtains an a priori bound
on the second derivatives of the support function of $\alpha_n(S)$, thus leading to a contradiction.

\subsection*{Discussions and possible developments}
Recently several connections between  Teichm\"uller theory and the geometry of spacelike surfaces in the Anti-de Sitter space
have been exploited \cite{Mess, notes, bon_schl}. The key idea is that graphs of orientation-preserving homeomorphisms of $\partial\D$ are naturally curves in the asymptotic
 boundary of the Anti-de Sitter space spanned by spacelike surfaces. A natural construction allows to associate, to any convex surface $\Sigma$ spanning the
 graph of a homeomorphism $f:\partial \D\rar\partial\D$, a homeomorphism $F:\D\to\D$ extending $f$.
 It turns out that $F$ is quasi-conformal if and only if the principal curvatures of $\Sigma$ are bounded, showing that in this case $f$ is quasi-symmetric.
 
 Theorem \ref{thm:main1} can be regarded as an infinitesimal version of this property. 
 More generally we believe that the relation between the theory of convex surfaces in Minkowski space and the infinitesimal Teichm\"uller theory
 is an infinitesimal evidence of the above connection for Anti-de Sitter space.
 The correct geometric setting to understand this idea goes through the geometric transition introduced recently in \cite{danciger, dancigerideal, dancigertransition}.
In fact Danciger introduced the half-pipe geometry, which is a projective blow-up of a a spacelike geodesic plane in Anti-de Sitter space.
This model turns out to be a natural parameter space for spacelike affine planes in Minkowski space.
Regarding the Minkowski space as the blow-up of a point in Anti-de Sitter space, the correspondence between spacelike affine planes
in Minkowski space and points in half-pipe geometry is the infinitesimal version of the projective duality between points and spacelike planes in the Anti-de Sitter space.
Using this connection the graph of support functions $u:\D\to\R$ are naturally convex surfaces in half-pipe geometry.

 Let  $\Sigma_t$ be a smooth family of convex surfaces, for $t\in[0,\epsilon)$, such that $\Sigma_0$ is a totally geodesic plane and the principal curvatures
 of $\Sigma_t$ are $O(t)$. Using the correspondence above one can prove that the boundary of $\Sigma_t$ is the graph
 of a  differentiable family   $f_t$  of quasi-symmetric homeomorphisms of $\partial \D$ such that $f_0=\textrm{id}$. 
 
 The rescaled limit of $\Sigma_t$ in the half-pipe model is the graph of the support function of a convex surface $S$ in Minkowski space.
 It turns out that the principal curvatures of $S$ are the inverse of the derivatives of the principal curvatures of $\Sigma_t$.
 In particular they are bounded from below by a positive constant is the principal curvatures of $\Sigma_t$ are $O(t)$.
 Moreover the support function at infinity of $S$ corresponds to the vector field $\dot f$ on $\partial\D$. 
 
This gives an evidence of the fact that Theorem \ref{thm:main1} is a rescaled version of the correspondence between quasi-symmetric homeomorphisms
and convex surfaces in Anti de Sitter space with bounded principal curvatures.

We believe that even Theorem \ref{thm:main2} is a rescaled version in this sense of the existence of a  $K$-surface (actually, a foliation by $K$-surfaces, compare Theorem \ref{thm:main4}) with bounded principal curvature 
spanning a prescribed quasi-symmetric homeomorphism (for $K\in(-\infty,-1)$). 
However we leave this problem for a further investigation.
 
Finally we mention some questions still open in the Minkowski case, which are left for future developments.
\begin{itemize}
\item As already remarked, an interesting problem is to characterize complete spacelike entire graphs in Minkowski space in terms of the regularity of the boundary value
of the support function. From Theorem \ref{thm:main2} we know that the surface is complete if this regularity is Zygmund.
On the other hand it is not difficult  to construct an example of isometric embedding of $\Hyp^2$ in Minkowski space with unbounded principal curvatures,
for which the regularity of the support function will necessarily be weaker than Zygmund.
\item Another interesting question is to solve Minkowski problem for domains of dependence which are not contained in the future of a point.
This would include the case of domains of dependence whose support function is finite only on some subset  of $\overline{\D}$ which is obtained as the convex hull
of a subset $E$ of $\partial\D$.
It is simple to check that there is no solution for the constant curvature problem when $E$ contains $0$, $1$ or $2$ points (the corresponding domains are the whole space,
the future of a lightlike plane, or the future of a spacelike line).
An existence result in case where $E$ is an interval is given in \cite{schoenetal} with some  assumption on the smoothness of the support function on $E$.
In our opinion the construction of the support function in this setting is not difficult to generalize, but the barriers we use to prove that the corresponding surfaces
are entire graph seem to be ineffective.
\end{itemize}
 
 \subsection*{Organization of the paper}
In Section \ref{preliminaries} we give a short review of the different techniques used in the paper. We first recall the main definitions of the theory of future convex
sets in Minkowski space, introducing the support function and the generalized Gauss map. We relate those notions to Mess' description of domains of dependence
and dual laminations. Minkowski problem is then formulated in terms of Monge-Amp\`ere equations and we collect the main result used in the paper.
Finally we discuss the notion of infinitesimal earthquakes and how it is related to the infinitesimal theory of Teichm\"uller spaces.

In Section \ref{sec:convex existence} we solve Minkowski problem for domains of dependence contained in the future cone of a point, proving Theorem \ref{thm:main3}.
 In the following section we prove the existence of the foliation by constant curvature surfaces of the same domains, as in Theorem \ref{thm:main4}.
In Section \ref{sec:convex zygmund} we finally prove Theorems \ref{thm:main1} and \ref{thm:main2}.
Those theorems are collected in a unique statement, see Theorem \ref{big theorem zygmund finite dual lamination bounded curvatures}.

Finally in Appendix \ref{appendix parabolic} we construct constant curvature surfaces invariant by a parabolic group. The discussion leads to consider two classes of surfaces: the first class
(used in the proof of Theorem \ref{thm:main3}) includes surfaces which are entire graphs, whereas the second class gives surfaces which develop a lightlike ray.
Although we do not need surfaces of the latter type in this paper, we include a short description for completeness.
   
\subsection*{Acknowledgements}
The authors would like to thank Thierry Barbot, Fran{\c{c}}ois Fillastre and Jean-Marc Schlenker for their interest and encouragement during several fruitful discussions.

\clearpage

\section{Preliminaries and motivation} \label{preliminaries}

\subsection{Convex surfaces in Minkowski space} \label{preliminaries subsec convex surfaces}

We denote by $\R^{2,1}$ the $(2+1)$-dimensional Minkowski space, namely $\R^3$ endowed with the bilinear quadratic form
$$\langle x,y\rangle =x^1 y^1+x^2 y^2-x^3 y^3\,.$$
We denote by $\isom(\R^{2,1})$ the group of orientation preserving and time preserving isometries of $\R^{2,1}$. It is know that there is an isomorphism
$$\isom(\R^{2,1})\cong \SO_0(2,1)\rtimes \R^{2,1}$$
where $\SO(2,1)$ is the group of linear isometries of Minkowski product, $\SO_0(2,1)$ is the connected component of
the identity, and $\R^{2,1}$ acts on itself by translations.

A vector $v\in T_x \R^{2,1}\cong \R^{2,1}$ is called \emph{timelike} (resp. \emph{lightlike}, \emph{spacelike}) if $\langle v,v\rangle<0$ (resp. $\langle v,v\rangle=0$, $\langle v,v\rangle>0$). A plane $P$ is \emph{spacelike} (resp. \emph{lightlike}, \emph{timelike}) if its orthogonal vectors are timelike (resp. lightlike, spacelike).
An immersed differentiable surface $S$ in $\R^{2,1}$ is \emph{spacelike} if its tangent plane $T_x S$ is spacelike for every point $x\in S$. In this case, the symmetric 2-tensor induced on $S$ by the Minkowski product is a Riemannian metric. For instance, the hyperboloid 
$$\Hyp^2=\{x\in \R^{2,1}:\langle x,x\rangle = -1,x^3>0\}$$
is a spacelike embedded surface, and is indeed an isometric copy of hyperbolic space embedded in $\R^{2,1}$. 
An example of an embedded surface which is not spacelike - but that will still be important in the following - is the de Sitter space
$$\dS^2=\{x\in \R^{2,1}:\langle x,x\rangle = +1\}\,.$$
It is easy to see that $\dS^2$ parametrizes oriented geodesics of $\Hyp^2$. Indeed, for every point $x$ in $\dS^2$, the orthogonal complement $x^\perp$ is a timelike plane in $\R^{2,1}$, which intersects $
\Hyp^2$ in a complete geodesic, and coincides as a set with $(-x)^\perp$.

Given a point $x_0\in\R^{2,1}$, $\Ip(x_0)$ denotes the \emph{future cone} over $x_0$, namely the set of points $x$ of $\R^{2,1}$ which are connected to $x_0$ by a timelike differentiable path (namely, a path with timelike tangent vector at every point) along which the $x^3$ coordinate is increasing. 
It is easy to see that $\Ip(x_0)$ is a translate of
$$\Ip(0)=\{x\in\R^{2,1}:(x^1)^2+(x^2)^2<(x^3)^2,x^3>0\}\,.$$

It is clear from the definition that $\Hyp^2$ parametrizes spacelike linear planes in $\R^{2,1}$. %for which the induced metric is Riemannian. 
Hence for every spacelike surface $S$, there is a well-defined \emph{Gauss map} $$G:S\rar\Hyp^2$$ which maps $x\in S$ to the normal of $S$ at $x$, i.e. the future unit timelike vector orthogonal to $T_x S$.

As we stated in the introduction, one of
 the aims of this paper is to classify isometric (or more generally homothetic) immersions of the hyperbolic plane into $\R^{2,1}$ which are contained in the future cone over some point.
Mess proved \cite{Mess} that if the first fundamental form of a spacelike immersion is complete, then the image of the immersion is a spacelike entire graph.
So we will deal with convex surfaces in $\R^{2,1}$ which are of the form $\{(x^1, x^2, x^3)\,|\,x^3=f(x^1,x^2)\}$, where $f:\R^2\rar\R$ is a convex function satisfying the spacelike condition
$||Df||^2<1$, where $Df$ is the Euclidean gradient of $f$.
Notice however that if $S$ is a spacelike entire graph in general it might not be complete.

It is convenient to extend the theory to the case of convex entire graphs which are not smooth and possibly contain lightlike rays.
Those correspond to convex functions $f:\R^2\rar\R$ such that $||Df||\leq 1$ almost everywhere. 
 We will extend the notion of Gauss map to this more general class.
%The Gauss map can be defined also for more general sets, which are not smooth surfaces. 

A \emph{future-convex domain} in $\R^{2,1}$ is a closed convex set which is obtained as the intersection of future half-spaces bounded by spacelike planes. 
If $f:\R^2\rar\R$ is a convex function satisfying the condition $||Df||\leq 1$, then the epigraph of $f$ is a future-convex domain, and conversely the boundary of
any future-convex domain is the graph of a convex function as above.

A \emph{support plane} for a future-convex domain $D$ is a plane $P=y+x^\perp$ such that $P\cap \mathrm{int}(D)=\emptyset$ and every translate $P'=P+v$, for $v$ in the future of $x^\perp$, intersects $\mathrm{int}(D)$. A future-convex domain can admit spacelike and lightlike support planes. 
%A future vector $x$ orthogonal to a support plane $P$ is called a \emph{support vector} for $D$. 
We define the \emph{spacelike boundary} of $D$ as the subset $$\partial_s D=\{p\in \partial D:p\text{ belongs to a spacelike support plane of }D\}\,.$$
It can be easily seen that $\partial D\setminus \partial_s D$ is a union of lightlike geodesic rays.
So $\partial D$ is a spacelike entire graph if and only if it does not contain lightlike rays.
 
We can now define the Gauss map for the spacelike boundary of a future-convex set. 
We allow the Gauss map to be set-valued, namely $$G(p)=\{x\in\Hyp^2:p+x^\perp\text{ is a support plane of }D \}\,.$$
By an abuse of notation, we will treat the Gauss map as a usual map with values in $\Hyp^2$. 
The following proposition (see \cite{minty1,minty2}) has to be interpreted in this sense.
\begin{prop}
Given a future-convex domain $D$ in $\R^{2,1}$, the Gauss map of $\partial_s D$ has image a convex subset of $\Hyp^2$. 
If $S$ is a strictly convex embedded spacelike surface, then its Gauss map is a homeomorphism onto its image.
\end{prop}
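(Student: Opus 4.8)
The plan is to establish the two assertions of the Proposition separately, both resting on the duality between convex subsets of $\Hyp^2$ and support planes. For the first assertion, I would argue as follows. Recall that points of $\Hyp^2$ parametrize spacelike linear planes, equivalently future-pointing unit timelike vectors. Given the future-convex domain $D$, let $\Omega = \bigcup_{p\in\partial_s D} G(p) \subseteq \Hyp^2$. To see $\Omega$ is convex, I would take two points $x_0, x_1 \in \Omega$ and a geodesic $\gamma(t)$ of $\Hyp^2$ joining them; write $x_t = \gamma(t)$. For each $x_i$ there is a spacelike support plane $P_i = p_i + x_i^\perp$ of $D$. The key observation is that the intersection $P_0 \cap P_1$ is a spacelike line $\ell$ (this uses that two distinct spacelike planes in $\R^{2,1}$ whose normals span a timelike plane — which is exactly the plane $x_0^\perp \cap x_1^\perp{}^\perp$ — meet in a spacelike line), and that the family of planes through $\ell$ with normal $x_t$, $t\in[0,1]$, are precisely the planes "between" $P_0$ and $P_1$. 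Since $D$ lies in the future half-space of both $P_0$ and $P_1$, and the wedge region bounded by $P_0$ and $P_1$ on the future side is the intersection of these two half-spaces, every plane $P_t = \ell + x_t^\perp$ through $\ell$ with intermediate normal direction is disjoint from $\mathrm{int}(D)$; translating toward the future one eventually meets $D$ since $D$ is nonempty and already touches $P_0$. Hence $P_t$ is a support plane of $D$ and $x_t \in \Omega$, proving convexity. One must be a little careful with the degenerate case $x_0 = -x_1$, which cannot occur since then the two support planes would have $D$ on opposite sides, contradicting $D$ being future-convex and nonempty; and with the case where $P_0 \parallel P_1$, in which case $D$ lies in a slab and a similar but easier argument applies.

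For the second assertion, let $S$ be a strictly convex embedded spacelike surface, which we may take to be $\partial_s D = \partial D$ for the future-convex domain $D$ equal to the epigraph; strict convexity means no support plane of $D$ touches $S$ in more than one point. First, the Gauss map $G$ is single-valued: if $p + x^\perp$ and $p + x'^\perp$ were both support planes at $p$ with $x \neq x'$, the surface being $C^1$ (or using the spacelike condition) forces $x = x'$; more robustly, if $G(p)$ contained two points it would contain the geodesic between them by the first part applied locally, but then every plane in that family supports $D$ at $p$, and pushing one such plane forward would still leave a point of $S$ — contradicting strict convexity via a standard support-plane argument. Continuity of $G$ follows from the closedness of the graph of the (set-valued) Gauss map together with single-valuedness: if $p_n \to p$ and $G(p_n) \to x$, then $p + x^\perp$ is a support plane at $p$, so $x = G(p)$. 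Injectivity of $G$ is exactly strict convexity: if $G(p) = G(q) = x$ with $p \neq q$, then $p + x^\perp$ and $q + x^\perp$ are parallel support planes both of which must coincide (both pass through points of $S$ on the same side), so they are the same plane touching $S$ in two points, contradicting strict convexity. Finally, $G$ is an open map onto its image $\Omega$: this is the only real analytic input, and I would get it from invariance of domain once I know $G$ is a continuous injection from the surface (a $2$-manifold) to $\Hyp^2$ (a $2$-manifold), provided $\Omega$ is open, or more generally by noting that a continuous injective map between manifolds of the same dimension is open onto its image by invariance of domain. Combining, $G: S \to \Omega$ is a continuous open bijection, hence a homeomorphism.

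The main obstacle, I expect, is the first assertion — proving the image is convex — and specifically the geometric lemma that for spacelike support planes $P_0, P_1$ of $D$ with normals $x_0, x_1$, all the "intermediate" planes $\ell + x_t^\perp$ through $\ell = P_0 \cap P_1$ with $x_t$ on the geodesic from $x_0$ to $x_1$ are again support planes. This requires correctly identifying which affine planes pass through a fixed spacelike line $\ell$ and have a prescribed normal direction, and checking that the future wedge $\{P_0 \text{ future-side}\} \cap \{P_1 \text{ future-side}\}$ is swept out exactly by the future sides of these intermediate planes — a computation most transparent in coordinates adapted to $\ell$ (say $\ell$ the $x^1$-axis, so that $x_0, x_1$ and all relevant normals lie in the $(x^2, x^3)$-plane, reducing to a $(1{+}1)$-dimensional picture). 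The rest is soft point-set topology plus invariance of domain. Since the Proposition is cited to \cite{minty1,minty2} and used only as a black box in what follows, I would keep the exposition brief, emphasizing the wedge argument and citing the monotone-operator literature for the general statement.
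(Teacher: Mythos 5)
The paper offers no proof of this Proposition: it is quoted from \cite{minty1,minty2} and used as a black box, so your argument can only be judged against the statement itself. Your treatment of the second assertion is fine (single-valuedness, injectivity from strict convexity, continuity, and openness via invariance of domain), and your wedge computation for the first assertion is also correct as far as it goes: for every $x_t$ on the geodesic arc from $x_0$ to $x_1$, writing $x_t$ as a positive combination of $x_0$ and $x_1$ shows at once that $\langle q-\ell_0,x_t\rangle\leq 0$ for all $q\in D$ and $\ell_0\in\ell$, so $P_t=\ell+x_t^\perp$ misses $\mathrm{int}(D)$ and $U(x_t)=\sup_{p\in D}\langle p,x_t\rangle$ is finite. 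The genuine gap is the sentence ``translating toward the future one eventually meets $D$''. What you need for $x_t$ to lie in the image of the Gauss map of $\partial_s D$ is that the critical plane $\{\langle\cdot,x_t\rangle=U(x_t)\}$ actually contains a point of $\partial D$, i.e.\ that the supremum is \emph{attained}. Your argument does not address this, and it is not automatic: the region $\overline{\Ip(P_0)}\cap\overline{\Ip(P_1)}\cap\{\langle\cdot,x_t\rangle\geq c\}$ is a triangle times the line $\ell$, hence non-compact, and the supremum can be approached only along a sequence escaping to infinity in the direction of $\ell$.

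This is not a removable technicality; it is the actual content of the cited references. In terms of the support function $u$ restricted to $\D$, the image of the Gauss map is $\{z\in\D:\partial u(z)\neq\emptyset\}$, and Minty's theorem only guarantees that the range of a subdifferential is \emph{almost} convex: it contains the relative interior of its convex hull but may omit a non-convexly distributed part of the relative boundary. A model for the failure of attainment is $u(z_1,z_2)=\max\{1-\sqrt{z_1},\,10|z_2|\}$ on $\{z_1\geq 0\}\cap\overline{\D}$ (and $+\infty$ elsewhere): one checks $\partial u(0,z_2)=\emptyset$ for $|z_2|<1/10$ while $\partial u(0,\pm 1/10)\neq\emptyset$, so the set where the subdifferential is nonempty is not convex even though the suprema at the two endpoint directions are attained. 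A complete proof must therefore either carry out Minty's attainment analysis --- which is why the paper simply cites \cite{minty1,minty2} --- or restrict to the domains actually used in the sequel: if $D$ is contained in the future cone of a point, then $u$ is finite on all of $\overline{\D}$, every $z\in\D$ lies in the relative interior of $\mathrm{dom}(u)$, the subdifferential is automatically nonempty there, and the Gauss image is all of $\D$. I would recommend you either add that hypothesis or replace the last step of your wedge argument with an explicit attainment proof; the rest of your write-up stands.
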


\subsection{The support function} \label{preliminaries subsec support}
We now introduce another important tool for this work, which is the Lorentzian analogue of the support function of Euclidean convex bodies. Roughly speaking, the support function encodes the information about the support planes of a future-convex domain. This is essentially done by associating to a unit future timelike (or lightlike) vector $x\in \Hyp^2$ the height of the support plane with normal unit vector $x$.

\begin{defi}
Given a future-convex domain $D$ in $\R^{2,1}$, the \emph{support function} of $D$ is the function $U:\overline{\Ip(0)}\rar \R\cup\{\infty\}$ defined by
$$U(x)=\sup_{p\in D}\langle p,x\rangle\,. $$
\end{defi}

The following is an immediate property of support functions.

\begin{lemma} \label{lemma disuguaglianza funzioni supporto}
Let $D_1$ and $D_2$ be future-convex domains with support functions $U_1$ and $U_2$. Then $U_1\geq U_2$ if and only if $D_1\subseteq D_2$.
\end{lemma}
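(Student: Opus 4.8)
The plan is to reduce the lemma to a single structural fact about future-convex domains: such a domain is recovered from its support function as the intersection of the half-spaces the latter prescribes, namely
\[
D=\bigcap_{x\in\overline{\Ip(0)}}\{p\in\R^{2,1}:\langle p,x\rangle\le U(x)\},
\]
with the convention that the $x$-th half-space is all of $\R^{2,1}$ when $U(x)=+\infty$. Once this identity is available, the lemma is purely formal. One implication is immediate from the definition of $U_i$ as a supremum over $D_i$: the supremum is monotone with respect to inclusion of the domains, so $U_1$ and $U_2$ are comparable in the way the statement asserts, with no convexity needed. For the other implication I would pick $p\in D_1$, use $\langle p,x\rangle\le U_1(x)$ for every $x$ (again by definition of $U_1$), transport this through the comparison of $U_1$ and $U_2$ given in the statement to obtain $\langle p,x\rangle\le U_2(x)$ for every $x$, and conclude $p\in D_2$ from the reconstruction identity applied to $D_2$. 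This yields $D_1\subseteq D_2$.

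To prove the reconstruction identity I would unwind the definition of \emph{future-convex domain}. By hypothesis $D$ is an intersection of future half-spaces bounded by spacelike planes, hence it equals the intersection of \emph{all} spacelike future half-spaces that contain it. Each spacelike future half-space is of the form $\{p:\langle p,x\rangle\le c\}$ for a unique future timelike unit normal $x\in\Hyp^2$ and some $c\in\R$, and it contains $D$ precisely when $c\ge\sup_{q\in D}\langle q,x\rangle=U(x)$; so for fixed $x$ the smallest such half-space is the one with $c=U(x)$. Intersecting over $x\in\Hyp^2$ therefore already cuts out $D$, and enlarging the index set to all of $\overline{\Ip(0)}$ changes nothing, both because $U$ is positively $1$-homogeneous and because the constraint attached to a lightlike direction $x\in\partial\Ip(0)$ reads $\langle p,x\rangle\le U(x)=\sup_{q\in D}\langle q,x\rangle$, which is automatically satisfied by every point of $D$ and hence imposes nothing new.

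The step I expect to be the main obstacle — though it is a matter of careful bookkeeping rather than of depth — is precisely this reconstruction. One must use the sharp form of the hypothesis, namely that $D$ is an intersection of \emph{spacelike} future half-spaces and not merely a closed convex set, in order to know that the spacelike half-spaces alone suffice to recover $D$; one must keep track of closed versus open half-spaces (all the half-spaces above are closed, matching the fact that $D$ is closed); and one must account for lightlike support planes, which arise only as limits of spacelike ones and therefore contribute no constraint beyond those already recorded by the timelike directions. Everything else in the argument is formal and follows the two bullet points of the first paragraph.
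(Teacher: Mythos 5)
Your plan — monotonicity of the supremum for one implication, and the reconstruction identity $D=\bigcap_{x}\{p:\langle p,x\rangle\le U(x)\}$ for the other — is the natural argument, and your handling of the reconstruction step (spacelike versus lightlike directions, $1$-homogeneity) is careful; the paper gives no proof at all, calling the lemma an immediate property. But there is a sign you did not resolve, and it is load-bearing. With $U(x)=\sup_{p\in D}\langle p,x\rangle$ and $x$ future-directed, enlarging $D$ enlarges the supremum, so monotonicity gives $D_1\subseteq D_2\Rightarrow U_1\le U_2$; the lemma as printed asserts $U_1\ge U_2$. Your sentence ``the supremum is monotone \ldots\ so $U_1$ and $U_2$ are comparable in the way the statement asserts'' is therefore internally inconsistent: monotonicity gives $\le$, the printed statement says $\ge$. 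A concrete check: the future of $\Hyp^2$ sits inside $\overline{\Ip(0)}$, and their support functions restricted to $\Hyp^2$ are $\bar u\equiv -1$ and $\bar u\equiv 0$ respectively, so the smaller domain has the \emph{smaller} support function. The paper's only use of this lemma, at the end of the proof of Lemma~\ref{surface bounded curvature hyperboloid}, where $v\ge u$ is invoked to place $S$ in the future of a hyperboloid, is likewise consistent only with $U_1\le U_2\Leftrightarrow D_1\subseteq D_2$.

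This is not merely cosmetic, because your second implication depends on the sign. You take $p\in D_1$, obtain $\langle p,x\rangle\le U_1(x)$, and then ``transport this through the comparison of $U_1$ and $U_2$ given in the statement'' to get $\langle p,x\rangle\le U_2(x)$. The comparison given in the statement is $U_1\ge U_2$, and together with $\langle p,x\rangle\le U_1(x)$ it yields nothing of the form $\langle p,x\rangle\le U_2(x)$; that step requires $U_1\le U_2$. So as written, one direction is announced with the wrong sign and the other is deduced from it. The repair is small: note explicitly that the inequality in the lemma should read $U_1\le U_2$, and then your argument — monotonicity in one direction, the reconstruction identity plus $U_1\le U_2$ in the other — is a complete and correct proof.
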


It is clear from the definition that $U$ is 1-homogeneous, namely $U(\lambda x)=\lambda U(x)$ for every $\lambda>0$. Moreover, $U$ is lower semicontinuous, since it is defined as the supremum of continuous functions. 
It is straightforward to check that, given an isometry in $\isom(\R^{2,1})$ with linear part $A$ and translation part $t$,
 the support function of $D'=AD+t$ is
 \begin{equation} \label{transf rule support function}
 U'(x)=U(A^{-1}x)+\langle x,t\rangle
 \end{equation}

We will mostly consider the restriction of $U$ to the Klein projective model of hyperbolic space, which is the disc
$$\D=\{(z,1)\in\R^{2,1}:|z|<1\}\,.$$
This restriction will be denoted by lower case letters,
$u=U|_{\D}$, and uniquely determines the 1-homogeneous extension $U$. We will generally write $u(z)$ instead of $u(z,1)$. 
Analogously, also the restriction of $U$ to the hyperboloid, denoted $\bar u=U|_{\Hyp^2}$, can be uniquely extended to a 1-homogeneous function, and will be often used in the following. 

\begin{remark}
It is easy to relate the restrictions $u$ and $\bar u$ of the support function to $\D$ and $\Hyp^2$ respectively. Let us consider the radial projection $\pi:\Hyp^2\rar\D$ defined by 
$$\pi(x^1,x^2,x^3)=(x^1/x^3,x^2/x^3,1)\,.$$
Its inverse is given by
$$\pi^{-1}(z,1)=\left(\frac{z}{\sqrt{1-|z|^2}},\frac{1}{\sqrt{1-|z|^2}}\right)\,.$$
Since $U$ is 1-homogeneous, we obtain
$$u(z)=\sqrt{1-|z|^2}\,\bar u(\pi^{-1}(z))\,.$$
\end{remark}

%The following Lemma states another property of the 1-homogeneous support function, namely that $U$ is \emph{subadditive}: for every $x,y\in\Ip(0)$,
%$$U(x+y)\leq U(x)+U(y)\,.$$
A 1-homogeneous convex function is called \emph{sublinear}.

\begin{lemma}[{\cite[Lemma 2.21]{Fillastre}}]
Given a future-convex domain $D$ in $\R^{2,1}$, the support function $U:\overline{\Ip(0)}\rar\R$ is sublinear and lower semicontinuous. 
%Its restriction $u$ to $\D$ is a convex function. 
Conversely, given a sublinear function $\hat U$ on $\Ip(0)$ (or equivalently every convex function $u$ on $\D$), consider the lower semicontinuous extension $U:\overline{\Ip(0)}\rar\R\cup\{+\infty\}$, which is defined on $\partial\Ip(0)$ as $$U(x)=\liminf_{y\to x}\hat U(y)\,.$$
Then $U$ is the support function of a future-convex domain, defined by
$$D=\{p\in \R^{2,1}:\langle p,x\rangle\leq U(x)\text{ for every }x\in\Ip(0)\}\,.$$
\end{lemma}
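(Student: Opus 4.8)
The statement to prove is the characterization of support functions of future-convex domains as precisely the sublinear lower semicontinuous functions on $\overline{\Ip(0)}$ (equivalently, convex functions on $\D$), together with the explicit reconstruction formula for $D$. The plan is to prove the two directions separately, relying on the definition $U(x)=\sup_{p\in D}\langle p,x\rangle$ and on basic convex analysis.

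For the forward direction, I would start from a future-convex domain $D$, which by definition is the intersection of a family of future half-spaces $\{p:\langle p,x_i\rangle\le c_i\}$ with $x_i\in\Hyp^2$. Sublinearity of $U$ is immediate: $1$-homogeneity was already noted (it follows directly from the definition), and convexity follows because for $x,y\in\Ip(0)$ and $t\in[0,1]$ one has $\langle p, tx+(1-t)y\rangle = t\langle p,x\rangle+(1-t)\langle p,y\rangle\le tU(x)+(1-t)U(y)$ for every $p\in D$, hence the supremum over $p$ satisfies the same bound. Lower semicontinuity was also already observed, being a supremum of the continuous linear functionals $p\mapsto\langle p,x\rangle$; the prescribed value on $\partial\Ip(0)$ as $\liminf_{y\to x}\hat U(y)$ is then the standard lower semicontinuous envelope, and it agrees with the sup-definition because a sublinear lsc function on a cone is determined by its restriction to any cross-section together with this $\liminf$ extension. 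The equivalence with convex functions on $\D$ is just the passage between a $1$-homogeneous function on the cone and its value on the slice $\{x^3=1\}$, already used when introducing $u=U|_\D$.

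For the converse, I would take a sublinear lsc $U$ on $\overline{\Ip(0)}$ (extended by $+\infty$ where needed), define $D=\{p:\langle p,x\rangle\le U(x)\ \forall x\in\Ip(0)\}$, and show two things: that $D$ is a future-convex domain, and that its support function is exactly $U$. That $D$ is closed and convex is clear, being an intersection of half-spaces; that these half-spaces are future half-spaces with spacelike boundary planes follows because each defining inequality $\langle p,x\rangle\le U(x)$ with $x\in\Ip(0)$ (and $U(x)<\infty$) has normal $x$ timelike future-pointing, so the plane $\{\langle p,x\rangle=U(x)\}$ is spacelike; inequalities with $U(x)=+\infty$ are vacuous. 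One must also check $D$ is nonempty, which uses that $U$, being sublinear, lies above some linear function $p_0\mapsto\langle p_0,\cdot\rangle$ (supporting hyperplane to the convex function at an interior point of its domain), so $p_0\in D$. Finally, to see that the support function of this $D$ is $U$: the inequality $U_D\le U$ is immediate from the definition of $D$, and the reverse inequality $U_D(x)\ge U(x)$ for each fixed $x\in\Ip(0)$ follows from a Hahn–Banach / supporting-hyperplane argument — one produces $p\in D$ with $\langle p,x\rangle = U(x)$ by taking the graph of the convex function associated to $U$ and choosing a support plane with the prescribed timelike normal direction $x$; sublinearity guarantees such a support plane exists and that the corresponding affine functional lies below $U$ everywhere, i.e. the point $p$ lies in $D$. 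For boundary directions $x\in\partial\Ip(0)$ one then recovers $U(x)=\liminf_{y\to x}U_D(y)$ by lower semicontinuity and the matching of the two functions on the interior.

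The main obstacle I expect is the reverse inequality $U_D\ge U$, i.e. showing the constructed $D$ is "large enough" that its sup-definition recovers all of $U$; this is where one genuinely needs the convex-duality content (a separation/supporting-hyperplane theorem), as opposed to the formal manipulations that handle everything else. The subtlety is handling directions $x$ where $U(x)=+\infty$ versus finite, and making sure the supporting affine functional to the convex graph of $u$ at a boundary point in the appropriate direction actually exists and has the right timelike normal — degenerate (lightlike) support planes need to be argued away using that $x$ is in the \emph{open} cone $\Ip(0)$. Since all of this is classical convex analysis adapted to the Lorentzian cone, and since the cited reference \cite[Lemma 2.21]{Fillastre} supplies it, I would keep the write-up brief and point to that source for the separation step.
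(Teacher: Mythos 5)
The paper gives no proof of this lemma---it is quoted verbatim from \cite[Lemma 2.21]{Fillastre}---so there is no in-paper argument to compare against; your proposal is the standard convex-duality proof and is correct. The step you flag as delicate (the reverse inequality $U_D\ge U$) is handled cleanly by taking a subgradient $p$ of the convex function $U$ at the interior point $x\in\Ip(0)$ and using $1$-homogeneity to upgrade the supporting affine function to a linear one $\langle p,\cdot\rangle\le U$ with equality at $x$, which shows $p\in D$ and $\langle p,x\rangle=U(x)$; no lightlike degeneracy arises there because $x$ lies in the open cone where $U$ is finite, and the boundary values then match by the Rockafellar-type radial-limit fact the paper records as Lemma \ref{lemma limit radial geodesics}.
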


%Hereafter, when we talk about the support function, we will mean the 1-homogeneous extension $H$ to $\Ip(0)$.

The support function of a future-convex domain $D$ is finite on the image of the Gauss map of $\partial_s D$, since for every point $x$ in $G(\partial_s D)$ there exists a support plane with normal vector $x$. Observe that $\bar u(x)=+\infty$ if $x\in \Hyp^2\setminus \overline{G(\partial_s D)}$. We will call \emph{support function at infinity} the restriction of $U$ to $\partial\D=\{(z,1):|z|=1\}$. Given $z\in\partial\D$, $u(z)<+\infty$ if and only if there exists a lightlike support plane $P$ orthogonal to the lightlike vector $(z,1)$. In this case $-u(z)$ is the intercept of $P$ on the $x^3$-axis. 

\begin{example}
The support function of the hyperboloid $\Hyp^2$ is $U(x)=-\sqrt{|\langle x,x\rangle|}$. Hence its restriction $\bar u:\Hyp^2\to\R$ is constant, $\bar u\equiv -1$. The support function at infinity is finite, $u|_{\partial\D}\equiv 0$, by an easy computation. For some less elementary examples, see Remark \ref{remark counterexample constant curvature not entire} and Appendix \ref{appendix parabolic}.

In this paper, we are mostly concerned with domains of dependence for which the support function at infinity is finite, and is actually bounded. Geometrically, this means that the domain is contained in the future cone over some point. 
\end{example}

The following lemma will be useful to compute the value of support functions at infinity.

\begin{lemma}[{\cite[Theorem 7.4,7.5]{rockafellar}}] \label{lemma limit radial geodesics}
Let $U:\overline{\Ip(0)}\rar\R$ be a sublinear and lower semincontinuous function. Let $c:[0,1]\to \overline{\Ip(0)}$ be a spacelike line such that $x=c(1)\in\partial\Ip(0)$. Then $U(x)=\lim_{t\to1}U(c(t))$.
\end{lemma}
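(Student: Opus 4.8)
The plan is to reduce the statement to an elementary one-dimensional fact about convex functions. Since $U$ is sublinear it is in particular convex on the convex set $\overline{\Ip(0)}$, and by hypothesis it takes only finite real values there. I would parametrize the spacelike line affinely, writing $c(t)=(1-t)c(0)+t\,c(1)$, and set $g(t)=U(c(t))$ for $t\in[0,1]$. Then $g$ is a finite-valued convex function on $[0,1]$: for $t_0,t_1\in[0,1]$ and $\lambda\in[0,1]$ one has $c\big((1-\lambda)t_0+\lambda t_1\big)=(1-\lambda)c(t_0)+\lambda c(t_1)$, so convexity of $U$ gives $g\big((1-\lambda)t_0+\lambda t_1\big)\le(1-\lambda)g(t_0)+\lambda g(t_1)$.

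I would then prove the two inequalities separately. For the upper bound, fix any $t_0\in(0,1)$; for $t\in(t_0,1)$ write $t=(1-\lambda)t_0+\lambda\cdot 1$ with $\lambda=(t-t_0)/(1-t_0)$, noting $\lambda\to 1$ as $t\to 1$. Convexity of $g$ gives $g(t)\le(1-\lambda)g(t_0)+\lambda g(1)$, and letting $t\to 1$ yields $\limsup_{t\to 1}g(t)\le g(1)=U(x)$. For the lower bound, since $c$ is continuous with $c(t)\to x$ as $t\to 1$, lower semicontinuity of $U$ gives $\liminf_{t\to 1}g(t)=\liminf_{t\to 1}U(c(t))\ge U(x)$. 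Combining the two, $\lim_{t\to 1}U(c(t))=U(x)$.

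There is essentially no serious obstacle here: the lemma is the specialization to a line segment ending on the boundary of the domain of the cited general results of Rockafellar. The only places where the hypotheses are genuinely used are (i) that $U$ is finite on all of $\overline{\Ip(0)}$, so that $g(1)$ is a genuine real number and the endpoint convexity estimate is meaningful, and (ii) that $U$ is lower semicontinuous, which supplies the reverse inequality that can fail for a merely convex function at a boundary point of its domain. The "spacelike" nature of the line $c$ plays no role in the argument — it merely describes the geometric situation in which the lemma will later be applied.
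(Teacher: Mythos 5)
Your proof is correct; the paper gives no argument of its own for this lemma, simply citing Rockafellar's Theorems 7.4--7.5, and your two-sided estimate (convexity of $g(t)=U(c(t))$ for $\limsup_{t\to1}g(t)\le g(1)$, lower semicontinuity of $U$ for $\liminf_{t\to1}g(t)\ge U(x)$) is exactly the standard argument behind those cited results, specialized to a segment. Your closing observations --- that finiteness of $U$ at the endpoint and lower semicontinuity are the only hypotheses doing real work, and that the spacelike condition on $c$ is irrelevant to the argument --- are also accurate.
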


Some of the geometric invariants of $\partial_s D$ can be directly recovered from the support function. This is the content of next lemma. Recall that, given a $C^2$ embedded spacelike surface $S$ in $\R^{2,1}$, its \emph{shape operator} is a $(1,1)$-tensor which can be defined as 
$$B(v)=\nabla_v N\,,$$
where $\nabla$ is the Levi-Civita connection of the first fundamental form of $\R^{2,1}$, $N$ is the future unit normal vector field, and $v$ is any tangent vector in $T_p S$. The Gauss Theorem in this Lorentzian setting gives the following relation between the intrinsic curvature $K$ of the first fundamental form and its shape operator, which holds for every point $p\in S$:
$$K=-\det B\,.$$
Finally, we define the \emph{hyperbolic Hessian} of a function $\bar u:\Hyp^2\rar \R$ as the $(1,1)$-tensor
$$\Hess \bar u(v)=\nabla^{\Hyp^2}_v\grad\bar u\,,$$
where $\nabla^{\Hyp^2}$ is the Levi-Civita connection of $\Hyp^2$. We denote by $D^2 u$ the Euclidean Hessian of a function $u$ defined on an open subset of $\R^2$. In the following, the identity operator is denoted by $\id$.

\begin{lemma}[{\cite[§2.10, 2.13]{Fillastre}}] \label{lemma formulae minkowski}
Let $D$ be a future-convex domain in $\R^{2,1}$ and let $G:\partial_s D\rar\Hyp^2$ be its Gauss map. 
\begin{itemize}

\item If the support function is $C^1$, then the intersection of $\partial_s D$ with any spacelike support plane consists of exactly one point.
The inverse of the Gauss map $G$ is well defined and is related to the support function of $D$ by the formula
\begin{equation}
G^{-1}(x)=\grad \bar u(x)-\bar u(x)x\,, \label{formula inverse gauss map}
\end{equation}
where $\bar u:\Hyp^2\rar \R$ is the support function restricted to $\Hyp^2$. 

\item If the support function is $C^2$ and the operator $\Hess \bar u-\bar u\id$ is positive definite, then $\partial_s D$ is a convex $C^2$-surface. The inverse of its shape operator and its curvature are
\begin{gather}
B^{-1}=\Hess \bar u-\bar u\id\,, \label{formula shape operator euclidean hessian}
\\
 -\frac{1}{K(G^{-1}(x))}=\det (\Hess \bar u-\bar u\id)(x)=(1-|z|^2)^2\det D^2u (z)\,,
\end{gather}
where $z=\pi(x)$ is the point of $\D$ obtained from $x$ by radial projection.
\end{itemize}
%The same formulae in terms of the support function $u:\D\rar \R$ on the Klein model are:
%\begin{gather}
%B^{-1}(z)=(\sqrt{1-|z|^2})D^2 u(z)\,,  \\
%-\frac{1}{K(G^{-1}(z))}=\,.
%\end{gather}
\end{lemma}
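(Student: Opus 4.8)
The strategy is to transfer the statement to the $1$-homogeneous extension $U:\overline{\Ip(0)}\to\R\cup\{+\infty\}$ of the support function, where everything follows from elementary convex duality together with Euler's relation for homogeneous functions; since the Minkowski gradient $\nabla^{\R^{2,1}}U$ is $0$-homogeneous it suffices to work at points $x\in\Hyp^2$. The key observation is that if $U$ is differentiable at such an $x$ and $p^\ast\in\partial_s D$ lies on the support plane with normal $x$ (so $\langle p^\ast,x\rangle=U(x)$), then $y\mapsto U(y)-\langle p^\ast,y\rangle$ is nonnegative, vanishes at $x$, and is differentiable there, hence its differential vanishes at $x$; this forces $p^\ast=\nabla^{\R^{2,1}}U(x)$. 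Since the right-hand side is independent of $p^\ast$, the exposed face is a single point, so any spacelike support plane meets $\partial_s D$ in exactly one point and $G^{-1}(x)=\nabla^{\R^{2,1}}U(x)$.

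Formula \eqref{formula inverse gauss map} then follows by decomposing $T_x\R^{2,1}=T_x\Hyp^2\oplus\R x$ orthogonally for the Minkowski product (recall $x$ is the unit normal of $\Hyp^2$ at $x$, with $\langle x,x\rangle=-1$). Writing $\nabla^{\R^{2,1}}U(x)=w+\lambda x$ with $w\in T_x\Hyp^2$: for $v\in T_x\Hyp^2$ one has $\langle w,v\rangle=dU_x(v)=d\bar u_x(v)$, so $w=\grad\bar u(x)$ because the metric of $\Hyp^2$ is the restriction of $\langle\,\cdot\,,\cdot\,\rangle$; and pairing with $x$ and using Euler's identity $dU_x(x)=U(x)=\bar u(x)$ gives $\lambda=-\bar u(x)$. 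Hence $G^{-1}(x)=\grad\bar u(x)-\bar u(x)\,x$.

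For the second item, suppose $U\in C^2$ with $\Hess\bar u-\bar u\,\id>0$, and differentiate $x\mapsto G^{-1}(x)=\grad\bar u(x)-\bar u(x)\,x$ along $\Hyp^2$. For $v\in T_x\Hyp^2$ the ambient derivative of $\grad\bar u$ splits as $\Hess\bar u(v)\in T_x\Hyp^2$ (by definition of the hyperbolic Hessian) plus a normal term, which equals $\langle\grad\bar u(x),v\rangle\,x$ upon differentiating $\langle\grad\bar u,x\rangle\equiv 0$; the derivative of $-\bar u(x)\,x$ is $-\langle\grad\bar u(x),v\rangle\,x-\bar u(x)\,v$. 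The normal contributions cancel, so $d(G^{-1})_x(v)=(\Hess\bar u-\bar u\,\id)(v)$, an isomorphism onto $T_{G^{-1}(x)}\partial_s D$ by the positivity hypothesis. Thus $G^{-1}$ is a $C^1$ immersion parametrizing $\partial_s D$, so its inverse $G$ — the normal map of $\partial_s D$ — is $C^1$, whence $\partial_s D$ is a $C^2$ surface. Since the shape operator is $B=\nabla_\cdot N=dG$, we get $B^{-1}=d(G^{-1})=\Hess\bar u-\bar u\,\id$, and the Gauss equation $K=-\det B$ stated above yields $-1/K(G^{-1}(x))=\det B^{-1}=\det(\Hess\bar u-\bar u\,\id)(x)$.

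It remains to identify this with $(1-|z|^2)^2\det D^2u(z)$ for $z=\pi(x)$, and the plan is to make explicit the Legendre-type duality between $u$ and the graph function of $\partial_s D$. Writing $\partial_s D$ locally as a graph $x^3=f(x^1,x^2)$, its future unit normal is $N=(1-|\nabla f|^2)^{-1/2}(\nabla f,1)$, so $\pi\circ G=\nabla f$ as maps into $\D$; on the other hand, restricting the $0$-homogeneous field $\nabla^{\R^{2,1}}U$ to the slice $\{x^3=1\}$ and applying Euler's relation to the last component shows that $z\mapsto G^{-1}(\pi^{-1}(z))$ equals $z\mapsto(\nabla u(z),\,\langle z,\nabla u(z)\rangle-u(z))$. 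Comparing the two, $\nabla f$ and $\nabla u$ are mutually inverse and $f=u^\ast$, hence $D^2f(x)=(D^2u(z))^{-1}$ when $x=\nabla u(z)$, and $|\nabla f(x)|^2=|z|^2$. Substituting into the standard expression for the intrinsic curvature of a spacelike graph, $K=-\det D^2f/(1-|\nabla f|^2)^2$ (obtained from a direct computation of the first fundamental form, and checked against the hyperboloid), gives $-1/K(G^{-1}(\pi^{-1}(z)))=(1-|z|^2)^2\det D^2u(z)$, which together with the previous paragraph finishes the proof. The only genuinely computational points are this change of variables and the spacelike-graph curvature formula; I expect the bookkeeping of Jacobians and signs there to be the main (if routine) obstacle.
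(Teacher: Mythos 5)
Your proof is correct. Note that the paper itself gives no argument for this lemma --- it is quoted from Fillastre [\cite{Fillastre}, \S 2.10, 2.13] as background --- so there is nothing internal to compare against; your route (Euler's relation for the $1$-homogeneous extension $U$, the first-order condition at the touching point to get $G^{-1}=\nabla^{\R^{2,1}}U$, differentiation along $\Hyp^2$ for $B^{-1}=\Hess\bar u-\bar u\,\id$, and Legendre duality between $u$ and the graph function $f$ for the factor $(1-|z|^2)^2$) is the standard one and all the computations check out. The only point you gloss over is the \emph{existence} half of ``exactly one point'': your argument shows any $p^\ast\in D$ on the support plane must equal $\nabla^{\R^{2,1}}U(x)$, and one should add the standard convex-analysis fact that when the support function is differentiable at $x$ its gradient does lie in $\overline D$ and attains the supremum, so the support plane indeed touches $\partial_s D$.
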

We will often abuse notation and write $G^{-1}(z)$ in place of $G^{-1}(\pi^{-1}(z))$ for $z\in\D$.

\subsection{The boundary value of the support function of an entire graph}
Let $S$ be the boundary of a future-convex domain in $\R^{2,1}$. Denote by $f:\R^2\to\R$ the function defining $S$ as a graph and $u:\D\to\R$ the support function.
We want to show that 
\[
    \lim_{r\to\infty} (f(rz)-r)=-u(z)    
\]
for every unitary vector $z\in\partial\D$. This clarifies that the asymptotic conditions defined for instance in \cite{treibergs, choitreibergs} coincide with those considered here and in \cite{schoenetal, Li}.

First consider the case where $D=\Ip(p)=graph(f_p)$ where $p=(w_0, a_0)$ and 
$f_p(z)=|z-w_0|+a_0$. In this case the support function is $u_p(z)=\langle z, w_0\rangle-a_0$. A simple computation shows
\begin{align*}
   f_p(rz)-r=&\sqrt{|w_0|^2-2r\langle z, w_0\rangle+r^2 }-r+a_0\\ =&
   \frac{-2r\langle z, w_0\rangle+|w_0|^2}{\sqrt{|w_0|^2-2r\langle z, w_0\rangle+r^2 }+r} +a_0
   \longrightarrow -\langle z, w_0\rangle+a_0=-u_p(z)\,.
 \end{align*}
 
Now consider the general case.
Imposing that the point $(rz,f(rz))$ lies in the future of the support plane
$\{q\in\R^{2,1}\,|\,\langle q, (z,1)\rangle=u(z)\}$ we get
 $f(rz)-r\geq -u(z)$. So it is sufficient to prove that $\limsup (f(rz)-r)\leq -u(z)$.
 
Fix $\epsilon>0$ and consider the lightlike plane $P=\{q\in\R^{2,1}\,|\,\langle q, (z,1)\rangle =u(z)-\epsilon\}$.
This plane must intersect the future of $S$. Let $p=(w_0, a_0)$ be a point in this intersection.
The cone $\Ip(p)$ is contained in the future of $S$, hence $f_p\geq f$, where $f_p$ is the graph function
for $\Ip(p)$ as above.

In particular, using the computation above for $\Ip(p)$,
$$\limsup_{r\to+\infty} (f(rz)-r)\leq \lim_{r\to+\infty} (f_p(rz)-r)=-\langle z, w_0\rangle+a_0\,.$$
Imposing that $p$ lies on the plane $P$, $$-\langle z, w_0\rangle+a_0=-\langle (z,1), p\rangle=-u(z)+\epsilon\,.$$
Therefore, for any $\epsilon>0$, $$\limsup_{r\to+\infty} (f(rz)-r)\leq-u(z)+\epsilon$$
and this concludes our claim, since $\epsilon$ is arbitrary.

\subsection{Cauchy surfaces and domains of dependence}
Given a future-convex domain $D$ in $\R^{2,1}$, a \emph{Cauchy surface} for $D$ is a spacelike embedded surface $S\subseteq D$ such that every differentiable inextensible causal path in $D$ (namely, such that its tangent vector is either timelike or lightlike at every point) intersects $S$ in exactly one point. Given an embedded surface $S$ in $\R^{2,1}$, the maximal future-convex domain $D(S)$ such that $S$ is a Cauchy surface for $D(S)$ is the \emph{domain of dependence} of $S$. It turns out that $D(S)$ is obtained as intersection of future half-spaces bounded by lightlike planes which do not disconnect $S$.

It is easy to prove the following lemma.
\begin{lemma} \label{remark support functions and Cauchy}
Let $h:\overline\D\to\R$ be the support function of a future-convex domain $D$, with $h|_{\partial\D}<\infty$. Let $S\subseteq D$ be a convex embedded surface and let $u:\overline\D\to\R$ be the support function of $S$. Then $S$ is a Cauchy surface for $D$ if and only if $h|_{\partial \D}=u|_{\partial \D}$.
\end{lemma}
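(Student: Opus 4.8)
The plan is to prove the two implications separately, using throughout the description of $D(S)$ recalled just before the statement: the domain of dependence of $S$ is the intersection of all future half-spaces bounded by lightlike planes that do not disconnect $S$. First I would reformulate this in terms of support functions at infinity. A lightlike plane $P$ orthogonal to the lightlike direction $(z,1)$, $z\in\partial\D$, with $x^3$-intercept $-c$, does not disconnect $S$ exactly when $S$ lies entirely in the closed future half-space bounded by $P$, i.e. when $\langle p,(z,1)\rangle\le c$ for all $p\in S$; by definition of the support function this says $c\ge u(z)$. Hence $D(S)=\{q:\langle q,(z,1)\rangle\le u(z)\text{ for all }z\in\partial\D\}$, and the last lemma of the subsection on support functions identifies this intersection as the future-convex domain whose support function at infinity is precisely $u|_{\partial\D}$ (and whose support function on $\D$ is the convex envelope of $u|_{\partial\D}$). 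So $D(S)$ is characterized by: its support function at infinity equals $u|_{\partial\D}$.

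For the implication ``$S$ Cauchy for $D$ $\Rightarrow$ $h|_{\partial\D}=u|_{\partial\D}$'': if $S$ is a Cauchy surface for $D$, then by maximality of $D(S)$ we have $D\subseteq D(S)$, so by Lemma \ref{lemma disuguaglianza funzioni supporto} the support functions satisfy $u_{D(S)}\le h$ on $\overline\D$ (note $S\subseteq D$ also gives $u\le h$, consistent since $u_{D(S)}=$ convex envelope of $u|_{\partial\D}\le u$). In particular at infinity $u|_{\partial\D}=u_{D(S)}|_{\partial\D}\le h|_{\partial\D}$. For the reverse inequality, any lightlike support plane of $D$ is in particular a lightlike plane not disconnecting $S$ (since $S\subseteq D$ stays on one side of it), so it enters the intersection defining $D(S)$; this forces $h|_{\partial\D}\ge u|_{\partial\D}$. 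Combining gives $h|_{\partial\D}=u|_{\partial\D}$. Conversely, assume $h|_{\partial\D}=u|_{\partial\D}$. Then $D$ has the same support function at infinity as $D(S)$, hence the same family of non-disconnecting lightlike planes, hence $D=D(S)$ by the characterization above — so $S$ is a Cauchy surface for $D$. Here I should be a little careful: $D$ and $D(S)$ need not have the same support function on all of $\overline\D$ (they agree on $\partial\D$ but $D$ could be ``thinner''), yet a domain of dependence is by construction determined by its lightlike support planes alone, i.e. by the boundary value; so the equality $D=D(S)$ as domains of dependence follows, and in fact one only needs $D\subseteq D(S)$ together with $S\subseteq D$ to conclude that every inextensible causal path in $D$ meets $S$ exactly once.

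**Main obstacle.** The routine part is the dictionary between lightlike support planes and the boundary value of the support function; the one genuinely delicate point is showing that $h|_{\partial\D}=u|_{\partial\D}$ is enough to guarantee that $S$ is a Cauchy surface for $D$ and not merely for $D(S)$. This needs the fact, recalled in the text, that a domain of dependence is reconstructed from (and only from) its family of non-disconnecting lightlike planes; once one knows $D$ itself is a domain of dependence (which is part of the hypothesis implicit in ``$h|_{\partial\D}<\infty$'' and the setup), the equality of boundary values of support functions forces $D=D(S)$, and the Cauchy property is then immediate. I would state this reconstruction fact explicitly and cite the description of $D(S)$ given above as the justification.
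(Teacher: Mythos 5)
The paper offers no proof of this lemma (it is stated as ``easy''), so I can only judge your argument on its own terms. The overall strategy --- identify $D(S)$ as $\{q:\langle q,(z,1)\rangle\le u(z)\ \text{for all}\ z\in\partial\D\}$ and reduce everything to boundary values of support functions --- is the right one, and your forward direction is essentially correct: from $S\subseteq D\subseteq D(S)$ one gets $u|_{\partial\D}\le h|_{\partial\D}\le u_{D(S)}|_{\partial\D}=u|_{\partial\D}$, hence equality. (A small caution: with the definition $U(x)=\sup_{p\in D}\langle p,x\rangle$, a smaller domain has a \emph{smaller} support function, so Lemma \ref{lemma disuguaglianza funzioni supporto} as printed has the inclusion reversed; you follow the printed version, which swaps which of your two sub-arguments yields which inequality, but since you obtain both inequalities the conclusion stands.)

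The genuine gap is in the converse. Your main line is ``$h|_{\partial\D}=u|_{\partial\D}$ implies $D=D(S)$'', justified by asserting that $D$ being a domain of dependence is ``part of the hypothesis implicit in $h|_{\partial\D}<\infty$''. It is not: the lemma only assumes $D$ is a future-convex domain, and finiteness of $h$ on $\partial\D$ does not make $D$ a domain of dependence. Concretely, take $D$ the future of the hyperboloid $\Hyp^2$ and $S=\Hyp^2$: then $h|_{\partial\D}=u|_{\partial\D}\equiv 0$ and $S$ is Cauchy for $D$, yet $D(S)=\Ip(0)\supsetneq D$, so $D\ne D(S)$. Thus the intermediate claim you rely on is false in general, even though the lemma itself is true. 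The correct argument is exactly the one you relegate to a final clause --- $S\subseteq D\subseteq D(S)$ already suffices --- and it should be promoted to the main proof, with its one nontrivial step made explicit: an inextensible causal curve $\gamma$ in $D$ extends to an inextensible causal curve $\tilde\gamma$ in $D(S)$, which meets $S$ in exactly one point $q$; since a future-directed causal curve, once it enters a future-convex set, remains in it, $\tilde\gamma\cap D$ is a connected terminal segment of $\tilde\gamma$ and hence coincides with $\gamma$; as $q\in S\subseteq D$, the point $q$ lies on $\gamma$, and $\gamma\cap S\subseteq\tilde\gamma\cap S=\{q\}$ gives uniqueness. The inclusion $D\subseteq D(S)$ itself follows directly from $D\subseteq\bigcap_{z\in\partial\D}\{q:\langle q,(z,1)\rangle\le h(z)\}$ together with $h|_{\partial\D}=u|_{\partial\D}$, with no appeal to $D$ being a domain of dependence.
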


%\begin{remark} 
%Given a domain of dependence $D$, whose initial singularity $\partial_sD$ has support function at infinity $\bar h:\partial\D\rar\R$, a convex surface $S$ is a Cauchy surface if and only if the support function $\bar u$ at infinity of $S$ coincides with $\bar h$. Indeed, it is clear that if $S$ is contained in $D$, then $\bar u\leq \bar h$; if $\bar u$ were strictly smaller at some point at infinity, then $S$ would have a support plane which is parallel and disjoint to a support plane of $\partial_s D$; hence some points in $D$ have causal paths which do not intersect $S$, so $S$ would be not a Cauchy surface. Vice versa, if $\bar u=\bar h$, then $S$ and $\partial_s D$ have the same support planes at infinity, so every causal path from  point in $D$ and in the past of $S$ must intersect $S$.
%\end{remark}

Domains of dependence can be characterized in terms of the support function, see \cite[Proposition 2.21]{bonfill}.

\begin{lemma}
Let $D$ be a domain of dependence in $\R^{2,1}$, whose lightlike support planes are determined by the function $\varphi:\partial\D\to\R\cup\{\infty\}$. Then the support function $h:\overline\D\to\R$ of $D$ is the convex envelope $h=\co(\varphi)$, namely:
$$h(z)=\sup\{f(z):f\text{ is an affine function on }\D,f|_{\partial\D}\leq\varphi\}\,.$$
\end{lemma}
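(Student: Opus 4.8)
The plan is to prove that the support function $h$ of a domain of dependence $D$ equals the convex envelope $\co(\varphi)$ of the function $\varphi$ describing the lightlike support planes of $D$. First I would unwind the definitions: by construction, $D$ is the intersection of the future half-spaces bounded by its lightlike support planes, i.e. $D = \{p : \langle p, (z,1)\rangle \leq \varphi(z) \text{ for all } z \in \partial\D \text{ with } \varphi(z)<\infty\}$. The support function is $h(x) = \sup_{p\in D}\langle p, x\rangle$, and restricting to $\D$ (writing $x=(z,1)$) we need to compare $h(z)$ with $\co(\varphi)(z)$.

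For one inequality, $h \leq \co(\varphi)$, I would argue as follows. Each lightlike support plane of $D$ with $\varphi(\eta)<\infty$, $\eta\in\partial\D$, is $\{q : \langle q,(\eta,1)\rangle = \varphi(\eta)\}$; its restriction to the horizontal graph picture gives an affine function $f_\eta$ on $\D$ with $f_\eta(z) = \langle z,\eta\rangle - \varphi(\eta)$ on $\partial\D$... more to the point, since $D$ lies in the future of each such plane, the support function of each such half-space dominates $h$ by Lemma \ref{lemma disuguaglianza funzioni supporto}, and each half-space has a support function that, restricted to $\D$, is affine. Taking the infimum over all lightlike support planes, and using that $h$ is the support function of the intersection, one gets that $h$ is bounded above by the infimum of these affine functions. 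The cleanest route: any affine function $f$ on $\D$ with $f|_{\partial\D}\leq\varphi$ corresponds (via homogeneous extension) to a point $p_f\in\R^{2,1}$, and the condition $f|_{\partial\D}\leq\varphi$ says exactly $\langle p_f,(z,1)\rangle \leq \varphi(z)$ on $\partial\D$, hence by Lemma \ref{lemma limit radial geodesics} (taking limits along spacelike lines to the boundary) $\langle p_f, x\rangle \leq h(x)$... wait, that is the wrong direction. Let me instead note $p_f \in D$ directly: the defining inequalities of $D$ involve all $x\in\Ip(0)$, but by sublinearity and lower semicontinuity of $h$ together with Lemma \ref{lemma limit radial geodesics}, $p_f\in D$ iff $\langle p_f, (z,1)\rangle\leq h(z)$ for all $z\in\D$, and since the lightlike planes determine $D$ one reduces to the boundary condition. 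Then $f(z) = \langle p_f,(z,1)\rangle \leq h(z)$ for $p_f\in D$, and taking sup over such $f$ gives $\co(\varphi) \leq h$. This is in fact the easy inequality.

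For the reverse $h \leq \co(\varphi)$, I would use that $D$ is a domain of dependence: it is the intersection of \emph{future half-spaces bounded by lightlike planes}, equivalently $D = \bigcap_{\eta}\{q : \langle q,(\eta,1)\rangle \leq \varphi(\eta)\}$ over the $\eta$ with $\varphi(\eta)<\infty$. Hence for every $p\in D$ and every such $\eta$, $\langle p,(\eta,1)\rangle \leq \varphi(\eta)$. Now the function $z\mapsto \langle p,(z,1)\rangle$ is affine on $\D$ and its boundary values are $\leq \varphi$, so it is one of the competitors in the sup defining $\co(\varphi)$; therefore $\langle p,(z,1)\rangle \leq \co(\varphi)(z)$ for all $z\in\D$. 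Taking the sup over $p\in D$ yields $h(z) \leq \co(\varphi)(z)$, completing the proof. The main obstacle — and the point requiring care rather than cleverness — is the interplay between the $1$-homogeneous function $U$ on $\overline{\Ip(0)}$ and its restriction $h$ to $\D$: one must make sure that the defining inequalities of $D$ over all of $\Ip(0)$ are equivalent, for a domain of dependence, to the lightlike-plane inequalities indexed by $\partial\D$, and that passing between affine functions on $\D$ and points $p\in\R^{2,1}$ is a genuine bijection respecting the boundary condition. This is exactly where one invokes the characterization of domains of dependence (as intersections of lightlike future half-spaces not disconnecting a Cauchy surface) together with Lemma \ref{lemma limit radial geodesics} to control boundary values along spacelike lines; once that bookkeeping is set up, both inequalities are immediate.
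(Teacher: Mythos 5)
Your proof is correct, and since the paper does not prove this lemma itself (it simply cites \cite[Proposition 2.21]{bonfill}), the comparison is with the standard argument rather than with an in-text proof. The substance of your argument is exactly the right one: a domain of dependence is, by definition, the intersection of the closed futures of its lightlike support planes, i.e.\ $D=\bigcap_\eta\{q:\langle q,(\eta,1)\rangle\leq\varphi(\eta)\}$; and the map $p\mapsto f_p$, $f_p(z)=\langle p,(z,1)\rangle$, is a bijection between points of $\R^{2,1}$ and affine functions on $\D$ under which the condition $f_p|_{\partial\D}\leq\varphi$ is precisely $p\in D$. Then $h=\sup_{p\in D}f_p$ and $\co(\varphi)=\sup\{f_p:p\in D\}$ coincide, which is both of your inequalities at once. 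Two presentational blemishes are worth fixing. First, the opening of your first argument announces $h\leq\co(\varphi)$ but actually proves $\co(\varphi)\leq h$ (as you note at its end); relabel it. Second, the abandoned detour contains a false assertion: the support function of the closed future of a single lightlike plane is \emph{not} affine on $\D$ --- it is finite only on the ray spanned by the lightlike normal, hence identically $+\infty$ on $\D$ --- so the claim that ``each half-space has a support function that, restricted to $\D$, is affine'' should be deleted rather than merely sidestepped. Neither issue affects the validity of the final argument, and the ``bookkeeping'' you flag at the end (equivalence of the full family of defining inequalities over $\Ip(0)$ with the lightlike ones indexed by $\partial\D$) is not actually needed once you use the defining description of a domain of dependence as an intersection of lightlike future half-spaces.
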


A useful example of support functions of Cauchy surfaces can be obtained by looking at the leaves of the cosmological time of the domain of dependence. Observe that a \emph{timelike distance} can be defined for two points $x_1$ and $x_2\in \Ip(x)$ in $\R^{2,1}$, by means of the definition
$$d(x_1,x_2)=\sup_{\gamma} \int_{\gamma}\sqrt{|\langle\gamma'(t),\gamma'(t)\rangle|}dt\,,$$ 
where the supremum is taken over all causal paths $\gamma$ from $x_1$ to $x_2$. This is not a distance though, because it satisfies a reverse triangle inequality; however, $d(x_1,x_2)$ is achieved along the geodesic from $x_1$ to $x_2$. Given an embedded spacelike surface $S$, consider the equidistant surface $$S_d=\{x\in\R^{2,1}:x\in \Ip(S), d(x,S)=d\}\,,$$
where of course $d(x,S)=\sup_{x'\in S}d(x,x')$. If the support function of $S$ restricted to $\Hyp^2$ is $\bar u$, then $S_d$ has support function (see for instance \cite{Fillastre})
$$\bar u_d(x)=\bar u(x)-d\,.$$
This can be applied also for $\partial_s D$, instead of an embedded surface. In this way, we obtain the level sets of the \emph{cosmological time}, namely the function $T:D\to\R$ defined by
$$T(x)=\sup_{\gamma}\int_{\gamma}\sqrt{|\langle\gamma'(t),\gamma'(t)\rangle|}dt\,,$$
where the supremum is taken over all causal paths $\gamma$ in $D$ with future endpoint $x$. If $\bar h:\Hyp^2\to\R$ is the support function of $D$, the level sets $L_d=\{T=d\}$ of the cosmological time have support function on the disc $\bar h_d(x)=\bar h(x)-d$. It can be easily seen that all leaves of the cosmological time of $D$ are Cauchy surfaces for $D$ (although only $C^{1,1}$). Indeed, the support functions $h_d:\D\to\R$ can be computed:
$$h_d(z)=h(z)-d\sqrt{1-|z|^2}\,.$$
Therefore they all agree with $h$ on $\partial\D$. 

\subsection{Dual lamination}

In this paper, we will adopt the following definition of measured geodesic lamination. The equivalence with the most common definition is discussed for instance in \cite{saricweak}. Let $\mathcal{G}$ be the set of (unoriented) geodesics of $\Hyp^2$. The space $\mathcal{G}$ is identified to $((S^1\times S^1)\setminus diag)/\sim$ where the equivalence relation is defined by $(a,b)\sim(b,a)$. Note that $\mathcal{G}$ has the topology of an open M\"{o}bius strip. Given a subset $B\subset \Hyp^2$, we denote by $\mathcal{G}_B$ the set of geodesics of $\Hyp^2$ which intersect $B$.

\begin{defi} \label{defi mgl}
A geodesic lamination on $\Hyp^2$ is a closed subset of $\mathcal{G}$ such that its elements are pairwise disjoint geodesics of $\Hyp^2$. A measured geodesic lamination is a locally finite Borel measure on $\mathcal{G}$ such that its support is a geodesic lamination.
\end{defi}

A measured geodesic lamination is called \emph{discrete} if its support is a discrete set of geodesics. A measured geodesic lamination $\mu$ is \emph{bounded} if
$$\sup_I \mu(\mathcal{G}_I)<+\infty\,,$$
where the supremum is taken over all geodesic segments $I$ of lenght at most 1 transverse to the support of the lamination.
The \emph{Thurston norm} of a bounded measured geodesic lamination is 
$$||\mu||_{Th}=\sup_I \mu(\mathcal{G}_I)\,.$$

Elements of a geodesic lamination are called \emph{leaves}. \emph{Strata} of the geodesic lamination are either leaves or connected components of the complement of the geodesic lamination in $\Hyp^2$.

%\begin{defi}
%Dual lamination = bending lamination of convex envelope
%\end{defi}

%In general, if the Gauss map of $\partial_s D$ is not surjective, the dual lamination is a measured geodesic lamination on the image of the Gauss map. As we have said, in this article we will mostly deal with the case of surjective Gauss map.

%We now see that the regular part of the boundary of a domain of dependence $D$ is determined, up to a translation, by the dual lamination. In the following, given the oriented geodesic interval $[x_0,x]$ in $\Hyp^2$, $\boldsymbol{\sigma}:\mathcal{G}[x_0,x]\rar\R^{2,1}$ is the function which assigns to a geodesic $l$ (intersecting $[x_0,x]$) the corresponding point in $\dS^2$, namely, the spacelike unit vector in $\R^{2,1}$ orthogonal to $l$ for the Minkowski product, pointing outward with respect to the direction from $x_0$ to $x$.

In his groundbreaking work, Mess associated a domain of dependence $D$ to every measured geodesic lamination $\mu$, in such a way that the support function $h:\D\to\R$ of $D$ is linear on every stratum of $\mu$. Although we do not enter into details here, the measure of $\mu$ determines the bending of $h$. (Recall $h$ is the convex envelope of some lower semicontinuous function $\varphi:\partial\D\to\R$.) The domain $D$ is determined up to translation in $\R^{2,1}$, and $\mu$ is called \emph{dual lamination} of $D$.

Given $y_0\in \R^{2,1}$ and $x_0\in\Hyp^2$ %which does not belong to any weighted leaf of $\mu$
, we will denote by $D(\mu,x_0,y_0)$ the domain of dependence having $\mu$ as dual laminations and $P=y_0+x_0^\perp$ as a support plane tangent to the boundary at $y_0$.

We sketch here the explicit construction of $D(\mu,x_0,y_0)$. In the following, given the oriented geodesic interval $[x_0,x]$ in $\Hyp^2$, $\boldsymbol{\sigma}:\mathcal{G}[x_0,x]\rar\R^{2,1}$ is the function which assigns to a geodesic $l$ (intersecting $[x_0,x]$) the corresponding point in $\dS^2$, namely, the spacelike unit vector in $\R^{2,1}$ orthogonal to $l$ for the Minkowski product, pointing outward with respect to the direction from $x_0$ to $x$.
Then 
\begin{equation}
y(x)=y_0+\int_{\mathcal{G}[x_0,x]} \boldsymbol{\sigma} d\mu \label{eq: boundary domain mess}
\end{equation}
is a point of the regular boundary of $D(\mu,x_0,y_0)$ such that $y(x)+x^\perp$ is a support plane for  $D(\mu,x_0,y_0)$, for every $x\in\Hyp^2$ such that the expression in Equation \eqref{eq: boundary domain mess} is integrable. The image of the Gauss map of the regular boundary of  $D(\mu,x_0,y_0)$ is composed precisely of those $x\in\Hyp^2$ which satisfy this integrability condition.

%\begin{lemma}[\cite{mess}] \label{lemma boundary domain mess}
%Suppose $D$ is a domain of dependence in $\R^{2,1}$ with dual lamination $\mu$ and such that the plane $P=y_0+(x_0)^{\perp}$ is a support plane for $D$, for $y_0\in\partial_s D$ and $x_0\in \Hyp^2$. Then, for any $x\in G(\partial_s D)$, 
%$$y=y_0+\int_{\mathcal{G}[x_0,x]} \boldsymbol{\sigma} d\mu$$
%is a point of $\partial_s D$ such that $y+x^\perp$ is a support plane for  $D$.
%\end{lemma}

In the following proposition, we give an explicit expression for the support function of the domain of dependence $D(\mu,x_0,y_0)$ we constructed.
By an abuse of notation, given two points $x_0,x\in\Ip(0)$, we will denote by $[x_0,x]$ the geodesic interval of $\Hyp^2$ obtained by projecting to the hyperboloid $\Hyp^2\subset\R^{2,1}$ the line segment from $x_0$ to $x$. 
%A direct consequence of the previous lemma is the following:

\begin{prop} \label{support function from lamination}
Suppose $D$ is a domain of dependence in $\R^{2,1}$ with dual lamination $\mu$ and such that the plane $P=y_0+(x_0)^{\perp}$ is a support plane for $D$, for $y_0\in\partial_s D$ and $x_0\in \Hyp^2$. Then the support function $H:\overline{\Ip(0)}\rar\R$ of $D$ is:
\begin{equation} \label{support function formula}
H(x)=\langle x,y_0\rangle+\int_{\mathcal{G}[x_0,x]} \langle x,\boldsymbol{\sigma}\rangle d\mu\,.
\end{equation}
%\noindent where 
%$\mathcal{G}[x_0,x]$ is the set of geodesics which intersect the closed geodesic segment $[x_0,x]$ and  $\sigma:\mathcal{G}[x_0,x]\rar\R^{2,1}$ is the function which assigns to a geodesic $l$ the unit normal vector in $\R^{2,1}$, outward with respect to the direction from $x_0$ to $x$.
%$\gamma:I\rar\Hyp^2$ is any path from $x_0$ to $x$ transverse to $\mu$, $d\mu$ is the measure induced on $|\gamma|$ by $\mu$ and $\sigma_\gamma$ is defined as the unit vector orthogonal to $l$ and pointing outward from $x_0$, when $\gamma(s)$ is on a leaf $l$ of $\mu$, whereas $\sigma_\gamma(s)=0$ otherwise.
\end{prop}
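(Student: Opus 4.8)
The plan is to reduce the formula to the explicit description of the regular boundary of $D = D(\mu,x_0,y_0)$ given in Equation \eqref{eq: boundary domain mess}, and then to use the 1-homogeneity of the support function together with Lemma \ref{lemma limit radial geodesics} to pass from the regular boundary to the whole of $\overline{\Ip(0)}$. First I would fix $x\in\Hyp^2$ lying in the image of the Gauss map of $\partial_s D$, i.e.\ such that the integral in \eqref{eq: boundary domain mess} is defined; for such $x$, the point $y(x) = y_0 + \int_{\mathcal{G}[x_0,x]}\boldsymbol{\sigma}\,d\mu$ lies on $\partial_s D$ and $y(x)+x^\perp$ is the support plane with normal $x$. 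By definition of the support function this means $H(x) = \langle x, y(x)\rangle$, and substituting the expression for $y(x)$ and using linearity of $\langle x,\cdot\rangle$ under the integral sign gives exactly Equation \eqref{support function formula}. The only point to check here is that $\langle x,y(x)\rangle$ really equals $\sup_{p\in D}\langle p,x\rangle$: this is precisely the statement that $y(x)+x^\perp$ is a support plane tangent to $\partial_s D$ at $y(x)$, which is how $y(x)$ was constructed, so the identity $\langle x, y(x)\rangle = H(x)$ holds, and moving $\langle x,\cdot\rangle$ inside the Bochner-type integral is justified by $\boldsymbol{\sigma}$ being $\mu$-integrable on $\mathcal{G}[x_0,x]$ and $\langle x,\cdot\rangle$ being a bounded linear functional on the relevant compact set of values of $\boldsymbol{\sigma}$.

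Next I would extend the identity from $\Hyp^2$ to all of $\overline{\Ip(0)}$. Since both sides of \eqref{support function formula} are positively 1-homogeneous in $x$ — the left side because $H$ is a support function, the right side because $\langle \lambda x, y_0\rangle = \lambda\langle x,y_0\rangle$, $\langle\lambda x,\boldsymbol{\sigma}\rangle = \lambda\langle x,\boldsymbol{\sigma}\rangle$, and the domain of integration $\mathcal{G}[x_0,\lambda x] = \mathcal{G}[x_0,x]$ depends only on the ray through $x$ — the formula on $\Hyp^2$ propagates to the open cone $\Ip(0)$. For $x\in\partial\Ip(0)$, I would take a spacelike line segment $c:[0,1]\to\overline{\Ip(0)}$ with $c(1) = x$ and $c(t)\in\Ip(0)$ for $t<1$; Lemma \ref{lemma limit radial geodesics} gives $H(x) = \lim_{t\to 1}H(c(t))$, while on the right-hand side one checks that $\int_{\mathcal{G}[x_0,c(t)]}\langle c(t),\boldsymbol{\sigma}\rangle\,d\mu$ converges to $\int_{\mathcal{G}[x_0,x]}\langle x,\boldsymbol{\sigma}\rangle\,d\mu$: the integrands converge pointwise and are dominated once one observes that along the segment $[x_0,x]$ in $\Hyp^2$ the values $\langle c(t),\boldsymbol{\sigma}(l)\rangle$ stay uniformly controlled, so dominated convergence applies and both sides agree at $x$ as well.

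The main obstacle is the interchange of limit and integral in the last step, i.e.\ the continuity of $x\mapsto \int_{\mathcal{G}[x_0,x]}\langle x,\boldsymbol{\sigma}\rangle\,d\mu$ as $x$ approaches the boundary of $\Ip(0)$ along a geodesic. One must be careful that the measure $\mu$ may be infinite near the endpoint at infinity of the geodesic ray $[x_0,x]$ — indeed $\bar u(x) = +\infty$ precisely when the integral diverges — so the argument has to be organized so that one only claims the formula (with both sides possibly $+\infty$) and checks that the divergence of the left side corresponds exactly to the divergence of the right side. For $x$ in the image of the Gauss map this is automatic from \eqref{eq: boundary domain mess}; for boundary points $x\in\partial\D$ where $u(x)<\infty$ the key is that the geodesic $[x_0,x]$ then meets the support of $\mu$ in a set of finite total mass against $\langle x,\boldsymbol{\sigma}\rangle$, and Lemma \ref{lemma limit radial geodesics} handles the passage to the limit. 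I would also note that uniqueness of $D$ up to translation means the choice of $y_0$ only shifts $H$ by the linear term $\langle x,y_0\rangle$, consistent with the transformation rule \eqref{transf rule support function}, which provides a useful sanity check on the formula.
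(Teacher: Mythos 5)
Your proposal is correct and follows essentially the same route as the paper: the identity for $x\in\Hyp^2$ comes directly from Equation \eqref{eq: boundary domain mess}, it propagates to $\Ip(0)$ by 1-homogeneity, and the extension to $\partial\Ip(0)$ is handled by Lemma \ref{lemma limit radial geodesics}. The paper's justification is in fact terser than yours — it simply observes that both sides are sublinear and lower semicontinuous, so agreement on $\Ip(0)$ plus the radial-limit lemma gives agreement on the boundary — which sidesteps the dominated-convergence discussion you carry out explicitly.
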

%%Assume first $x\in\Hyp^2$. The point $y$ is the intersection of $\partial_s D$ with all support planes orthogonal to the points of $\Hyp^2$ lying in the same stratum of $\mu$ as $x_0$. In general (see \cite{Mess}), the support plane of $\partial_s D$ with normal vector $x$ is a translate of the support plane to the cone $\Ip(y_0)$, by the quantity $\int_{\mathcal{G}[x_0,x]} \sigma d\mu$. The formula for $H(x)$ follows. Since the expression for $H(x)$ is 1-homogeneous, this holds for any future timelike vector $x\in \Ip(0)$. 
%\end{proof}
Indeed, the expression in Equation \eqref{support function formula} holds for $x\in\Hyp^2$ by Equation \eqref{eq: boundary domain mess}. Since the expression is 1-homogeneous, it is clear that it holds for every $x\in\Ip(0)$. Using Lemma \ref{lemma limit radial geodesics}, the formula holds also for the lower semicontinuous extension to $\partial \Ip(0)$.

It is easily seen from Equation \eqref{support function formula} that the support function $h:\D\to\R$ (which is the restriction of $H$ to $\D$) is affine on each stratum of $\mu$. In \cite{Mess, bebo} it was proved that every domain of dependence can be obtained by the above construction. Hence a dual lamination is uniquely associated to every domain of dependence.

%An important property in this paper for domains of dependence is the extendability of the support function to the boundary of $\Ip(0)$. We give a characterization of this property here.

%\begin{theorem}
%Let $D$ be a domain of dependence and let $H:\Ip(0)\rar\R$ be the support function of $\partial_s D$. Then the following are equivalent:
%\begin{itemize}
%\item[i)] There exists $p\in\R^{2,1}$ such that $D\subset\Ip(p)$.
%\item[ii)] There exists a constant $C$ such that $H(x)\leq C$ for every $x\in\D=\Ip(0)\cap\{x_3=1\}$.
%\item[iii)] $H$ is defined on $\overline{\Ip(0)}$.
%\end{itemize}
%\end{theorem}
%\begin{proof}
%Let us show $ii)\Rightarrow i)$. Put $p=(0,0,-C)$ and observe that $H(x)<C$ for $x\in\{x_3=1\}$ implies that $D$ has a support plane in the future of $P_x=p+x^\perp$ and parallel to $P_x$, since $\langle p,x\rangle=C$. Therefore $$D\subset\bigcap_{x\in\D}\Ip(P_x)=\Ip(p).$$

%To prove that $i)\Rightarrow ii)$, observe that $\Ip(p)$ has support planes orthogonal to every vector $x\in\Ip(0)$ and its support function is $H_p(x)=\langle x,p\rangle$. Since $D\subset\Ip(p)$ by hypothesis, $D$ also admits support planes orthogonal to every $x$ and $H\leq H_0$. In particular, for $x\in \D$, $H(x)\leq C$ since $H_0$ is bounded on $\D$.

%It is clear that $iii)\Rightarrow ii)$. The implication $ii)\Rightarrow iii)$ is a consequence of the following Lemma.
%\end{proof}

The work of Mess (\cite{Mess}) mostly dealt with domains of dependence which are invariant under a discrete group of isometries $\Gamma<\isom(\R^{2,1})$, whose linear part is a cocompact Fuchsian group. We resume here some results.

%\ref{transf rule support function}

\begin{prop} \label{proposition translation part mess}
Let $D$ be a domain of dependence in $\R^{2,1}$ with dual lamination $\mu$. The measured geodesic lamination $\mu$ is invariant under a cocompact Fuchsian group $G$ if and only if $D$ is invariant under a discrete group $\Gamma<\isom(\R^{2,1})$ such that the projection of $\Gamma$ to $\SO(2,1)$ is an isomorphism onto $G$. In this case, assuming $P=y_0+(x_0)^{\perp}$ is a support plane for $D$, for $y_0\in\partial_s D$ and $x_0\in \Hyp^2$, the translation part of an element $g\in G$ is:
$$t_g=\int_{\mathcal{G}[x_0,g(x_0)]}  \boldsymbol{\sigma} d\mu\,.$$
\end{prop}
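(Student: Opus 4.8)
The statement to prove (Proposition \ref{proposition translation part mess}) relates the symmetry group of a domain of dependence to that of its dual lamination, and gives an explicit formula for the translation part of a group element.

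\medskip

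The plan is to prove both implications by exploiting the explicit construction of the domain $D(\mu,x_0,y_0)$ together with the equivariance of the cocycle appearing in Equation \eqref{eq: boundary domain mess}. First I would address the direction where $\mu$ is $G$-invariant for a cocompact Fuchsian group $G$. Fix $g\in G$. The domain $D(g_*\mu, g(x_0), y_0) = D(\mu, g(x_0), y_0)$ has the same dual lamination as $D = D(\mu, x_0, y_0)$ and differs from it only by a choice of base point on the boundary; since the dual lamination determines the domain up to translation in $\R^{2,1}$, there is a unique translation vector $t_g$ with $D + t_g$ having $P' = y_0 + g(x_0)^\perp$ as support plane at $y_0$. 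The key computation is to identify $t_g$: applying the linear isometry $g$ to the defining integral \eqref{eq: boundary domain mess}, one gets
\[
g \cdot y(x) = g\cdot y_0 + \int_{\mathcal{G}[x_0,x]} g\cdot \boldsymbol\sigma \, d\mu = g\cdot y_0 + \int_{\mathcal{G}[g(x_0), g(x)]} \boldsymbol\sigma \, d(g_*\mu)\,,
\]
using that $g$ maps the spacelike unit normal of a geodesic $l$ to that of $g(l)$ with matching coorientation, and that $g$ maps the interval $[x_0,x]$ to $[g(x_0), g(x)]$. By $G$-invariance of $\mu$ this equals $g\cdot y_0 + \int_{\mathcal{G}[g(x_0), g(x)]} \boldsymbol\sigma\, d\mu = g\cdot y_0 + (y(g(x)) - y_0) - \int_{\mathcal{G}[x_0, g(x_0)]}\boldsymbol\sigma\, d\mu$, using the additivity of the cocycle $x \mapsto \int_{\mathcal{G}[x_0,x]} \boldsymbol\sigma\, d\mu$ along concatenated intervals (which holds since $\mathcal{G}[x_0,g(x)]$ decomposes as $\mathcal{G}[x_0,g(x_0)] \cup \mathcal{G}[g(x_0),g(x)]$ up to a $\mu$-null set of geodesics through $g(x_0)$). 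Rearranging, $y(g(x)) = g\cdot y(x) + t_g$ with $t_g = y_0 - g\cdot y_0 + \int_{\mathcal{G}[x_0, g(x_0)]}\boldsymbol\sigma\, d\mu$; composing with the linear part $g$ this shows the affine map $x \mapsto g\cdot x + t_g$ preserves the regular boundary of $D$, hence preserves $D$ itself, and its translation part is as claimed. One then checks $g \mapsto (g, t_g)$ is a homomorphism (a cocycle identity for $t_{gh}$, again from additivity of the integral cocycle), and that the resulting group $\Gamma$ is discrete because its linear projection $G$ is discrete and the translation parts are determined by $G$ and $\mu$.

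\medskip

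For the converse, suppose $D$ is invariant under a discrete $\Gamma < \isom(\R^{2,1})$ projecting isomorphically onto a cocompact Fuchsian $G$. Any isometry $\gamma = (g, t_g) \in \Gamma$ sends support planes of $D$ to support planes of $D$, and sends the stratification of $\partial_s D$ by strata of the dual lamination to itself (since these are intrinsic to $D$, being detected e.g. via the non-differentiability locus of the support function $h$, which transforms according to Equation \eqref{transf rule support function}). From the characterization that $\mu$ is the unique measured geodesic lamination producing $D$ via Proposition \ref{support function from lamination}, and the transformation rule $h'(x) = h(g^{-1}x) + \langle x, t_g\rangle$ for the support function of $\gamma(D) = D$, one deduces that $g_*\mu = \mu$; the extra affine term $\langle x, t_g\rangle$ does not affect the bending data encoded by $\mu$. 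Thus $\mu$ is $G$-invariant, and the formula for $t_g$ follows from the first part applied to $D$.

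\medskip

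The main obstacle I anticipate is making rigorous the measure-theoretic bookkeeping for the cocycle: one must verify that the function $x \mapsto \int_{\mathcal{G}[x_0,x]}\boldsymbol\sigma\, d\mu$ is a genuine additive cocycle on geodesic intervals — in particular handling the boundary geodesics of $\mathcal{G}[x_0,x]$, the possibility that the integral diverges for some $x$ (outside the image of the Gauss map), and the orientation conventions for $\boldsymbol\sigma$ under the concatenation $[x_0, g(x_0)] * [g(x_0), g(x)]$ versus $[x_0, g(x)]$, where $x_0$, $g(x_0)$, $g(x)$ need not be collinear in $\Hyp^2$. Restricting attention to the open dense set of $x$ for which all relevant integrals converge, and using that $D$ is the closure of the image of $y(\cdot)$ together with its lightlike part, should suffice; the translation-invariance of the whole construction then propagates to all of $D$ by continuity and convexity. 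The second issue is the uniqueness of the dual lamination (needed for the converse), but this is explicitly quoted above from \cite{Mess, bebo}, so I would simply invoke it.
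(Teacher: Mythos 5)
The paper does not actually prove this proposition: it is stated as a summary of results from Mess, so there is no in-paper argument to compare against. Your proof is the standard one and is essentially correct. The forward direction is the equivariance of the Mess cocycle $x\mapsto \int_{\mathcal{G}[x_0,x]}\boldsymbol{\sigma}\,d\mu$ under the $G$-action together with its additivity along concatenated transverse paths; the converse follows from the uniqueness of the dual lamination and the transformation rule \eqref{transf rule support function}. The measure-theoretic caveats you flag (atoms through $g(x_0)$, orientation of $\boldsymbol{\sigma}$ under concatenation) are genuine but standard and are resolved exactly as you indicate; note also that convergence of the integrals is automatic here, since $\mathcal{G}_K$ is compact in $\mathcal{G}$ for $K\subset\Hyp^2$ compact and $\mu$ is locally finite, and that if $x_0$ avoids the weighted leaves of $\mu$ then so does $g(x_0)$, by $G$-invariance.

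One point you should reconcile rather than pass over: your computation yields $t_g=y_0-g\cdot y_0+\int_{\mathcal{G}[x_0,g(x_0)]}\boldsymbol{\sigma}\,d\mu$, which is \emph{not} literally the formula in the statement unless $y_0=0$. This is not an error in your derivation --- the translation part of an affine isometry is only defined relative to a choice of origin, and the two expressions differ by the coboundary $g\mapsto y_0-g\cdot y_0$, so they determine the same class in $H^1(G,\R^{2,1})$ and coincide when the origin is placed at $y_0$ (the normalization implicit in the statement, and the only thing that matters in the paper's use of the proposition, e.g.\ in Theorem \ref{theorem bbz}). But writing ``its translation part is as claimed'' immediately after deriving a formula with the extra term is a mismatch you need to address explicitly, either by normalizing $y_0=0$ at the outset or by remarking that the formula is to be read modulo this coboundary.
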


\subsection{Monge-Amp\`ere equations}
Given a smooth strictly convex spacelike surface $S$ in $\R^{2,1}$, let $U:\Ip(0)\to\R$ be the support function of $S$ and let $u$ be its restriction to $\D=\Ip(0)\cap\left\{x^3=1\right\}$. Given a point $z\in D$, let $x=\pi^{-1}(z)\in\Hyp^2$. The curvature of $S$ is given by (see Lemma \ref{lemma formulae minkowski})
$$-\frac{1}{K(G^{-1}(x))}=(1-|z|^2)^2\det D^2u(z)\,,$$
where $G:S\rar\Hyp^2$ is the Gauss map, which is a diffeomorphism. For $K$-surfaces, namely surfaces with constant curvature equal to $K\in(-\infty,0)$, the support function satisfies the Monge-Amp\`ere equation
\begin{equation} \label{monge ampere constan curvature}
\det D^2u(z)=\frac{1}{|K|}(1-|z|^2)^{-2}\,.
\end{equation}

More generally, the Minkowski problem consists of finding a convex surface with prescribed curvature function on the image of the Gauss map. Given a smooth function $\psi:\D\to\R$, the support function of a surface with curvature $K(G^{-1}(z))=-\psi(z)$ solves the Monge-Amp\`ere equation
\begin{equation} 
\det D^2u(z)=\frac{1}{\psi(z)}(1-|z|^2)^{-2} \tag{\ref{monge ampere constan curvature}}
\end{equation}

We review here some key facts of Monge-Amp\`ere theory. Given a convex function $u:\Omega\rar\R$ for $\Omega$ a convex domain in $\R^2$, we define the normal mapping of $u$ as the set-valued function $N_u$ whose value at a point $\bar w\in\Omega$ is:
$$N_u(\bar w)=\left\{Df:f\text{ affine; }graph(f)\text{ is a support plane for }graph(u),(\bar w,u(\bar w))\in graph(f)\right\}\,.$$

%The Gauss map $G:S=graph(u)\rar\Hyp^2$ (which is also set-valued in general) is easily recovered from the normal mapping, by (setting $w=(\bar w,u(\bar w))$):
%$$z\in N_u(\bar w)\Leftrightarrow \frac{(z,1)}{\sqrt{1-|z|^2}}\in G(w).$$
In general $N_u(\bar w)$ is a convex set; if $u$ is differentiable at $\bar w$, then $N_u(\bar w)=\left\{Du(\bar w)\right\}$. We define the Monge-Amp\`ere measure on the collection of Borel subsets $\omega$ of $\R^2$:
$$M\!A_u(\omega)=\mathcal{L}(N_u(\omega))$$
where $\mathcal{L}$ denote the Lebesgue measure on $\R^2$.

\begin{lemma}[{\cite[Lemma 2.3]{trudwang}}]
If $u$ is a $C^2$ function, then $$M\!A_u(\omega)=\mathcal{L}(Du(\omega))=\int_\omega (\det D^2 u)d\mathcal{L}\,.$$
In general, $\int_\omega (\partial^2 u) d\mathcal{L}$ is the regular part of the Lebesgue decomposition of $M\!A_u(\omega)$, where we set
$$\partial^2 u(\bar w)=\begin{cases} \det D^2_{\bar w} u & \text{if }u\text{ is twice-differentiable at }\bar w \\ 0 & \text{otherwise} \end{cases}\,.$$
\end{lemma}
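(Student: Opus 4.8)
The plan is to prove the two assertions separately, deducing the general statement from the smooth one by a differentiation argument. \textbf{The $C^2$ case.} If $u\in C^2(\Omega)$ it is differentiable everywhere, so $N_u(\bar w)=\{Du(\bar w)\}$ for every $\bar w$ and hence $N_u(\omega)=Du(\omega)$; this already gives $M\!A_u(\omega)=\mathcal{L}(Du(\omega))$. For the remaining equality I would use the elementary inequality $\mathcal{L}(F(E))\le\int_E|\det DF|\,d\mathcal{L}$, valid for Lipschitz maps $F$ and measurable sets $E$, applied to $F=Du$, whose differential $D^2u$ is everywhere $\ge 0$ by convexity. Taking $E=\omega\setminus A$ with $A:=\{\det D^2u>0\}$ gives $\mathcal{L}(Du(\omega\setminus A))=0$. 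On $A$ the gradient $Du$ is injective: if $Du(p)=Du(q)$ with $p\ne q$, the affine function tangent to the graph at $p$ has the same slope as the tangent plane at $q$, so the two planes coincide and $u$ agrees with this affine function on the segment $[p,q]$; then $\det D^2u$ vanishes along $[p,q]$, contradicting $p\in A$. Hence the above inequality is an equality on $\omega\cap A$, and $\mathcal{L}(Du(\omega))=\mathcal{L}(Du(\omega\cap A))=\int_{\omega\cap A}\det D^2u\,d\mathcal{L}=\int_\omega\det D^2u\,d\mathcal{L}$.

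\textbf{Reduction of the general case.} I would first recall that $\omega\mapsto M\!A_u(\omega)$ is a Radon measure (Aleksandrov), the only delicate point being countable additivity, which rests on the classical fact that the set of slopes occurring as subgradients at more than one point of $\Omega$ is Lebesgue-null. Write the Lebesgue decomposition $M\!A_u=g\,\mathcal{L}+(M\!A_u)_s$ with $(M\!A_u)_s\perp\mathcal{L}$; the goal becomes $g=\partial^2u$ almost everywhere. By Aleksandrov's theorem $u$ is twice differentiable $\mathcal{L}$-a.e., and $\partial^2u=0$ off this full-measure set, so it suffices to show $g(\bar w)=\det D^2u(\bar w)$ at $\mathcal{L}$-a.e.\ twice-differentiability point $\bar w$. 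By the Lebesgue--Besicovitch differentiation theorem one has $g(\bar w)=\lim_{r\to 0}M\!A_u(B_r(\bar w))/\mathcal{L}(B_r(\bar w))$ for $\mathcal{L}$-a.e.\ $\bar w$, so everything reduces to computing this density at twice-differentiability points.

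\textbf{The local density estimate, and the main obstacle.} This last step is the technical heart. At a twice-differentiability point, after subtracting an affine function (which merely translates $N_u$ and leaves $M\!A_u$ unchanged) and translating, assume $\bar w=0$, $u(0)=0$, $Du(0)=0$, so that $u(w)=\tfrac12\langle Qw,w\rangle+o(|w|^2)$ with $Q:=D^2u(0)\ge 0$. For each $\epsilon>0$ and all sufficiently small $r$ one has $\tfrac12\langle(Q-\epsilon I)w,w\rangle\le u(w)\le\tfrac12\langle(Q+\epsilon I)w,w\rangle$ on $\overline{B_r}$, the left inequality being used only when $Q$ is nondegenerate. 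Comparing $M\!A_u$ on $B_r$ with the Monge--Amp\`ere measures of these quadratic barriers --- via Aleksandrov's comparison principle, which says that if two convex functions agree on the boundary of a convex domain and one lies above the other, then the normal image of the larger is contained in that of the smaller --- one should obtain $\det(Q-\epsilon I)\le\liminf_{r\to0}M\!A_u(B_r)/\mathcal{L}(B_r)\le\limsup_{r\to0}M\!A_u(B_r)/\mathcal{L}(B_r)\le\det(Q+\epsilon I)$ up to errors vanishing as $\epsilon\to0$, whence the density equals $\det Q$. The main obstacle is precisely that the quadratic barriers do not match $u$ exactly on $\partial B_r$, so the comparison principle cannot be applied verbatim: one must either pass to sublevel sets of the barriers or absorb the boundary mismatch into an $O(\epsilon)$ dilation of the comparison balls, tracking that the resulting error is $o(1)$ as $\epsilon\to0$. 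When $\det Q=0$ only the upper bound is needed and it follows in the same way, since $\det(Q+\epsilon I)\to0$. Granting the local estimate, the differentiation theorem yields $g=\det D^2u=\partial^2u$ almost everywhere, i.e.\ $(M\!A_u)_{ac}(\omega)=\int_\omega\partial^2u\,d\mathcal{L}$, which is the assertion.
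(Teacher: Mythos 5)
The paper does not actually prove this lemma: it is quoted from Trudinger--Wang (their Lemma 2.3) in a list of Monge--Amp\`ere facts that the authors explicitly collect ``without proofs'', so there is no in-paper argument to compare against; I can only assess your reconstruction on its own terms. It is the standard textbook proof and is essentially correct. The $C^2$ case is complete: differentiability gives $N_u(\omega)=Du(\omega)$, the area inequality kills $\mathcal{L}(Du(\omega\setminus A))$ for $A=\{\det D^2u>0\}$, and your injectivity argument on $A$ (equal gradients force the two tangent planes to coincide and $u$ to be affine on $[p,q]\subset\Omega$, degenerating $D^2u$ there) is exactly right, with the area formula then applying with equality on the injectivity set. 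For the general statement, the reduction via countable additivity of $M\!A_u$, the Lebesgue decomposition, Aleksandrov's theorem and Besicovitch differentiation is correct, and you have correctly isolated the one genuinely technical step: identifying the density of $M\!A_u$ at a point of twice differentiability with $\det D^2u$ there. You also correctly diagnose why the naive comparison fails (the quadratic barriers do not match $u$ on $\partial B_r$) and name the standard repair --- apply Aleksandrov's comparison principle on the convex sublevel sets of $u-P_\pm$, which are squeezed between balls of radii $r(1\pm C\sqrt{\epsilon})$, so the mismatch contributes only a factor $(1\pm C\sqrt{\epsilon})^n\to1$. That step is left as an honest, clearly flagged sketch rather than fully executed, but the idea is the right one and the argument goes through; nothing in the outline is wrong.
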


\begin{defi}
Given a nonnegative measure $\nu$ on $\Omega$, we say a convex function $u:\Omega\rar\R$ is a generalized solution to the Monge-Amp\`ere equation
\begin{equation} \label{general monge ampere}
\det D^2u=\nu
\end{equation}
if $M\!A_u(\omega)=\nu(\omega)$ for all Borel subsets $\omega$. In particular, given an integrable function $f:\Omega\rar\R$, $u$ is a generalized solution to the equation $\det D^2u=f$ if and only if, for all $\omega$, $$M\!A_u(\omega)=\int_\omega fd\mathcal{L}\,.$$
\end{defi}

We collect here, without proofs, some facts which will be used in the following. Unless explicitly stated, the results hold in $\R^n$, although we are only interested in $n=2$. Recall that, by Aleksandrov Theorem, a convex function $u$ on $\Omega$ is twice-differentiable almost everywhere.

\begin{lemma}[{\cite[Lemma 2.2]{trudwang}}] \label{convergence of solutions}
Given a sequence of convex functions $u_n$ which converges uniformly on compact sets to $u$, the Monge-Amp\`ere measure $M\!A_{u_n}$ converges weakly to $M\!A_{u}$. 
%In particular, if $u_n$ are solutions to a Monge-Amp\`ere equation (\ref{general monge ampere}), then the limit $u$ is also solution to the same equation.
\end{lemma}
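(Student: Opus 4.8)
The plan is to reduce the statement---which here means $\int_\Omega\phi\,dM\!A_{u_n}\to\int_\Omega\phi\,dM\!A_u$ for every $\phi\in C_c(\Omega)$---to two one-sided estimates: \emph{(i)} $\limsup_n M\!A_{u_n}(K)\le M\!A_u(K)$ for every compact $K\subset\Omega$; and \emph{(ii)} $\liminf_n M\!A_{u_n}(V)\ge M\!A_u(V)$ for every open $V$ with $\overline V\subset\Omega$ compact. Granting these, the passage to weak convergence is routine: for $\phi$ with $0\le\phi\le1$ one writes $\int\phi\,dM\!A_{u_n}=\int_0^1 M\!A_{u_n}(\{\phi>t\})\,dt$ and combines Fatou's lemma with its reverse form (legitimate because, as noted below, the masses $M\!A_{u_n}(\mathrm{supp}\,\phi)$ are uniformly bounded), applying \emph{(i)} to the compact sets $\{\phi\ge t\}$ and \emph{(ii)} to the open sets $\{\phi>t\}$, which agree up to $M\!A_u$-measure for a.e. $t$; the general $\phi$ follows by splitting into positive and negative parts.

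Two preliminary observations do most of the work. First, since $u_n\to u$ uniformly on compacta, $\{u_n\}$ is locally uniformly bounded, hence---being convex---locally uniformly Lipschitz; therefore for each compact $L\subset\Omega$ the normal images $N_{u_n}(L)$ all lie in one fixed ball, and in particular the $M\!A_{u_n}$ have uniformly bounded mass on compacta. Second, normal maps are upper semicontinuous along the sequence: if $x_n\to x\in\Omega$, $p_n\in N_{u_n}(x_n)$ and $p_n\to p$, then letting $n\to\infty$ in $u_n(y)\ge u_n(x_n)+\langle p_n,y-x_n\rangle$ gives $p\in N_u(x)$. Using these, I would argue \emph{(i)} as follows: $N_u(K)$ is compact (apply the Lipschitz bound and the limiting argument to $u$ itself); given $\varepsilon>0$, outer regularity of Lebesgue measure provides an open $G\supseteq N_u(K)$ with $\mathcal L(G)\le\mathcal L(N_u(K))+\varepsilon$; if $N_{u_n}(K)\not\subseteq G$ for infinitely many $n$, pick $x_n\in K$, $p_n\in N_{u_n}(x_n)\setminus G$ along such a subsequence, extract so that $x_n\to x\in K$ and $p_n\to p$ (the $p_n$ are bounded), and conclude $p\in N_u(x)\subseteq G$ from upper semicontinuity, a contradiction; hence $N_{u_n}(K)\subseteq G$ eventually and $\limsup_n M\!A_{u_n}(K)\le\mathcal L(G)\le M\!A_u(K)+\varepsilon$.

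Estimate \emph{(ii)} is the delicate one. Fix a compact $K\subset V$; it suffices to bound $\liminf_n M\!A_{u_n}(V)$ below by $M\!A_u(K)$ and then let $K\uparrow V$. Choose $\rho>0$ with $K_\rho:=\{d(\cdot,K)\le\rho\}\subset V$ (a compact set) and put $A:=K_\rho\setminus\mathrm{int}(K_{\rho/2})$, a compact set disjoint from $K$, with $K_\rho\setminus A=\mathrm{int}(K_{\rho/2})\subseteq V$. For a slope $p$ let $\ell_p$ be the largest affine function of slope $p$ lying below $u$ and $C_p=\{u=\ell_p\}$ its convex contact set; set $N_u^*(K)=\{p:\emptyset\neq C_p\subseteq K\}$ and, for $\delta>0$, $E_\delta=\{p\in N_u^*(K):u-\ell_p\ge\delta\text{ on }A\}$. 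Since $C_p\cap A=\emptyset$ when $p\in N_u^*(K)$, compactness of $A$ gives $E_\delta\uparrow N_u^*(K)$ as $\delta\downarrow0$. The key point: whenever $\|u_n-u\|_{L^\infty(K_\rho)}<\delta/2$---a condition uniform in $p\in E_\delta$---the convex function $u_n-\ell_p$ is $<\delta/2$ somewhere on $K$ (comparing at a point of $C_p$) but $\ge\delta/2$ on $A$, so its minimum over $K_\rho$ is attained at a point $x_n\in K_\rho\setminus A=\mathrm{int}(K_{\rho/2})\subseteq V$; being an interior minimizer, $p\in N_{u_n}(x_n)\subseteq N_{u_n}(V)$. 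Hence $M\!A_{u_n}(V)\ge\mathcal L(E_\delta)$ for $n$ large, so $\liminf_n M\!A_{u_n}(V)\ge\mathcal L(E_\delta)\to\mathcal L(N_u^*(K))$. Finally, $N_u(K)\setminus N_u^*(K)$ consists of slopes whose contact set meets both $K$ and its complement, hence has at least two points; the set of all slopes with a non-degenerate contact set is Lebesgue-null (classically, the singular slopes of a convex function form a null set---equivalently the Legendre transform $u^*$ is differentiable a.e.), so $\mathcal L(N_u^*(K))=\mathcal L(N_u(K))=M\!A_u(K)$, proving \emph{(ii)}.

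The main obstacle is \emph{(ii)}: one must force the contact points $x_n$ of $\ell_p$ with $u_n$ to stay inside $V$ \emph{uniformly over a whole positive-measure family of slopes} $p$---which is exactly what the stratification of $N_u^*(K)$ by the gap parameter $\delta$ achieves---and one must know that passing from $N_u(K)$ to $N_u^*(K)$ costs no Lebesgue measure, i.e.\ the a.e.\ differentiability of a convex conjugate. Everything else---the precompactness, estimate \emph{(i)}, and the derivation of weak convergence from \emph{(i)} and \emph{(ii)}---is standard soft analysis, and the whole argument is insensitive to the dimension.
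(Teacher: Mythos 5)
The paper does not prove this lemma; it is quoted without proof from Trudinger--Wang (the standard reference proof is also in Guti\'errez, \emph{The Monge--Amp\`ere Equation}, Lemma 1.2.3). Your argument is correct and is essentially that standard proof: reduce weak convergence to the two one-sided set estimates, get the upper bound on compacts from upper semicontinuity of the normal map along the sequence, and get the lower bound on open sets by forcing the contact point of each supporting affine function into $V$, discarding the Lebesgue-null set of slopes whose contact set is not a singleton. The only cosmetic difference from the reference is that you stratify $N_u^*(K)$ by the gap parameter $\delta$ to make the contact argument uniform in $p$, where the reference argues for a.e.\ fixed $p$ and then applies Fatou to the sets $N_{u_n}(V)$; both devices accomplish the same thing.
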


\begin{theorem}[Comparison principle, \cite{trudwang,gutierrez}] \label{comparison principle}
Given a bounded convex domain $\Omega$ and two convex functions $u,v$ defined on $\overline\Omega$, if $M\!A_u(\omega)\leq M\!A_v(\omega)$ for every Borel subset $\omega$, then
$$\min_{\overline\Omega}(u-v)=\min_{\partial\Omega}(u-v)\,.$$  
\end{theorem}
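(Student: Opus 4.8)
The statement is the classical comparison principle for generalized (Aleksandrov) solutions, and the plan is to follow the standard argument via the normal mapping $N_u$ and the Monge-Amp\`ere measure $M\!A_u$, as in \cite{trudwang,gutierrez}. The inequality $\min_{\overline\Omega}(u-v)\le\min_{\partial\Omega}(u-v)$ is immediate, so the whole point is the reverse. I would argue by contradiction: assume $m:=\min_{\overline\Omega}(u-v)<a:=\min_{\partial\Omega}(u-v)$, fix an interior point $x_0\in\Omega$ with $u(x_0)-v(x_0)=m$, and replace $v$ by $v+a$ (which leaves $M\!A_v$ unchanged, since adding an affine function does not affect the normal mapping). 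After this normalization one has $u\ge v$ on $\partial\Omega$ while $u(x_0)<v(x_0)$, so the open set $E:=\{u<v\}$ is nonempty and, because $u\ge v$ on $\partial\Omega$, one checks $\overline E\Subset\Omega$.

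The core is a sliding-plane sublemma: \emph{if $u,v$ are convex on $\overline\Omega$ with $u\ge v$ on $\partial\Omega$ and $E=\{u<v\}\neq\emptyset$, then $N_v(E)\subseteq N_u(E)$, hence $M\!A_v(E)\le M\!A_u(E)$.} To prove it I would fix $x_1\in E$ and $p\in N_v(x_1)$, so that $\ell_c(x):=c+\langle p,x-x_1\rangle$ with $c=v(x_1)$ is a supporting affine function for $v$; then I set $c^\ast=\sup\{c:\ \ell_c\le u\text{ on }\overline\Omega\}$, which is finite, and by compactness the plane $\ell_{c^\ast}$ touches $\mathrm{graph}(u)$ at some $x_2$, giving $p\in N_u(x_2)$. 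Since $\ell_{v(x_1)}(x_1)=v(x_1)>u(x_1)$ one has $c^\ast<v(x_1)$, whence $u(x_2)=\ell_{c^\ast}(x_2)<\ell_{v(x_1)}(x_2)\le v(x_2)$, i.e. $x_2\in E$. Passing to Lebesgue measure of the inclusion $N_v(E)\subseteq N_u(E)$ (legitimate because $N_w$ carries compact subsets of $\Omega$ to compact sets for convex $w$) yields the asserted inequality of Monge-Amp\`ere measures.

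Applied to the normalized $v$, the sublemma gives only $M\!A_v(E)\le M\!A_u(E)$, which together with the hypothesis $M\!A_u\le M\!A_v$ forces equality on $E$ rather than a contradiction. To create a strict gap I would perturb: pick $R$ with $\overline\Omega\subset\{|x-x_0|<R\}$ and set $v_\epsilon(x)=v(x)+\epsilon(|x-x_0|^2-R^2)$, so that $v_\epsilon<v$ on $\overline\Omega$ (hence $u\ge v_\epsilon$ on $\partial\Omega$), $v_\epsilon(x_0)>u(x_0)$ for $\epsilon$ small, and $M\!A_{v_\epsilon}(\omega)\ge M\!A_v(\omega)+(2\epsilon)^2\mathcal{L}(\omega)$ for every Borel $\omega$, the last being the standard estimate coming from $N_{v_\epsilon}(\cdot)\supseteq N_v(\cdot)+2\epsilon(\cdot-x_0)$ together with the monotonicity of Lebesgue measure under Minkowski sums. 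Writing $E_\epsilon=\{u<v_\epsilon\}$, a nonempty open subset of $\Omega$, the sublemma gives $M\!A_{v_\epsilon}(E_\epsilon)\le M\!A_u(E_\epsilon)$, while
\[
M\!A_{v_\epsilon}(E_\epsilon)\ \ge\ M\!A_v(E_\epsilon)+(2\epsilon)^2\mathcal{L}(E_\epsilon)\ \ge\ M\!A_u(E_\epsilon)+(2\epsilon)^2\mathcal{L}(E_\epsilon),
\]
using $M\!A_u\le M\!A_v$. Since $\mathcal{L}(E_\epsilon)>0$ and all quantities are finite (as $\overline{E_\epsilon}\Subset\Omega$), this is a contradiction, so $m=a$, proving the theorem.

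The step I expect to demand the most care is the sliding-plane argument in the sublemma when phrased for generalized solutions: one must make sure the touching point $x_2$ genuinely lands in $E\subseteq\Omega$ (so that boundary values never enter), that the supremum defining $c^\ast$ is attained with a contact point, and that the set-valued inclusion of normal images is measurable enough to integrate; all of these are handled by the routine fact that for a convex function finite on $\overline\Omega$ the normal mapping sends compact sets to compact sets. The perturbation inequality $M\!A_{v+\epsilon|x-x_0|^2}\ge M\!A_v+(2\epsilon)^n\mathcal{L}$ is likewise elementary from the definition of $N$, but worth stating explicitly rather than invoking $\det(D^2v+2\epsilon\,\mathrm{I})\ge\det D^2v+(2\epsilon)^n$, which only makes sense in the smooth case.
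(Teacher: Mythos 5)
The paper never proves this theorem: it appears in the list of facts ``collected without proofs'' and is imported from \cite{trudwang,gutierrez}. So the only meaningful comparison is with the standard argument in those references, and your proposal does follow that route: normalization, the sliding-plane (contact-set) lemma, and a strictly convex quadratic perturbation to turn the equality of measures into a strict inequality. Your sublemma is stated and proved correctly, including the two points that genuinely need checking (that $c^\ast$ is attained with a contact point, and that the contact point $x_2$ lies in $E\subseteq\Omega$ because $u(x_2)=\ell_{c^\ast}(x_2)<\ell_{v(x_1)}(x_2)\le v(x_2)$). The final bookkeeping with $E_\epsilon$ is also fine, granted continuity of $u,v$ up to $\partial\Omega$, which is how the references state the result and how the paper uses it. (Minor imprecision: with only $u\ge v$ on $\partial\Omega$ the closure of $E=\{u<v\}$ may touch the boundary where $u=v$; this is harmless because you only ever need $\overline{E_\epsilon}\Subset\Omega$, which does follow from the strict inequality $u>v_\epsilon$ on $\partial\Omega$.)

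The one genuine gap is the justification of $M\!A_{v_\epsilon}(\omega)\ge M\!A_v(\omega)+(2\epsilon)^2\mathcal{L}(\omega)$. The fibrewise inclusion $N_{v_\epsilon}(x)\supseteq N_v(x)+\{2\epsilon(x-x_0)\}$ is correct, but over a Borel set $\omega$ it only yields the fibrewise union $\bigcup_{x\in\omega}\bigl(N_v(x)+\{2\epsilon(x-x_0)\}\bigr)$, which is merely a \emph{subset} of the Minkowski sum $N_v(\omega)+2\epsilon(\omega-x_0)$. Brunn--Minkowski bounds the measure of the larger Minkowski sum from below, which tells you nothing about the fibrewise union; what monotonicity of the shifted subdifferential does give elementarily is only $\mathcal{L}(N_{v_\epsilon}(\omega))\ge\max\{\mathcal{L}(N_v(\omega)),\,(2\epsilon)^2\mathcal{L}(\omega)\}$, not the sum. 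What you actually need is the superadditivity of the Monge--Amp\`ere measure, $M\!A_{v+q}\ge M\!A_v+M\!A_q$ for convex $q$; this is true and standard, but it requires a real argument: either via mixed Monge--Amp\`ere measures (Rauch--Taylor), or by approximating $v$ by smooth convex functions $v_j$, using the pointwise matrix inequality $\det(A+B)\ge\det A+\det B$ for positive semidefinite $A,B$ to get $M\!A_{v_j+q}\ge M\!A_{v_j}+(2\epsilon)^2\mathcal{L}$ as measures, and then passing to the limit with the weak convergence of Lemma \ref{convergence of solutions}: the nonnegative measures $M\!A_{v_j+q}-M\!A_{v_j}-(2\epsilon)^2\mathcal{L}$ have a weak limit point that is again nonnegative, and uniqueness of the weak limit identifies it with $M\!A_{v+q}-M\!A_v-(2\epsilon)^2\mathcal{L}$. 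With that step supplied, your proof is complete.
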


\begin{cor}
Given two generalized solutions $u_1,u_2\in C^{0}(\overline \Omega)$ to the Monge-Amp\`ere equation $\det D^2 (u_i)=\nu$ on a bounded convex domain $\Omega$, if $u_1\equiv u_2$ on $\partial\Omega$, then $u_1\equiv u_2$ on $\Omega$.
\end{cor}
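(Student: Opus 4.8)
The plan is to derive this statement as an immediate consequence of the comparison principle, Theorem~\ref{comparison principle}. Since $u_1$ and $u_2$ are both generalized solutions of $\det D^2 u_i=\nu$, by definition $M\!A_{u_1}(\omega)=\nu(\omega)=M\!A_{u_2}(\omega)$ for every Borel subset $\omega\subseteq\Omega$. In particular the two Monge-Amp\`ere measures coincide, so in a trivial way $M\!A_{u_1}(\omega)\le M\!A_{u_2}(\omega)$ and $M\!A_{u_2}(\omega)\le M\!A_{u_1}(\omega)$ hold for all Borel sets $\omega$.

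First I would apply Theorem~\ref{comparison principle} with $u=u_1$ and $v=u_2$: from $M\!A_{u_1}(\omega)\le M\!A_{u_2}(\omega)$ for every $\omega$ we obtain
$$\min_{\overline\Omega}(u_1-u_2)=\min_{\partial\Omega}(u_1-u_2)\,.$$
The hypothesis $u_1\equiv u_2$ on $\partial\Omega$ makes the right-hand side equal to $0$, hence $u_1-u_2\ge 0$ on $\overline\Omega$, that is, $u_1\ge u_2$ everywhere on $\Omega$. Exchanging the roles of $u_1$ and $u_2$ and repeating the argument with the reverse inequality between the Monge-Amp\`ere measures yields $u_2\ge u_1$ on $\overline\Omega$. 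Combining the two inequalities gives $u_1\equiv u_2$ on $\Omega$, which is the assertion.

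There is essentially no obstacle here: the only places where the stated hypotheses are genuinely used are that $u_1,u_2\in C^0(\overline\Omega)$, so that the minima over $\overline\Omega$ and over $\partial\Omega$ appearing in Theorem~\ref{comparison principle} are well defined and attained, and that $\Omega$ is a bounded convex domain, so that the comparison principle applies as stated. Note in particular that one does not need $\nu$ to be absolutely continuous, finite, or of any special form; it suffices that $u_1$ and $u_2$ be generalized solutions of the \emph{same} Monge-Amp\`ere equation and agree on the boundary.
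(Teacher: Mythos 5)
Your proof is correct and is exactly the intended derivation: the paper states this as an immediate corollary of Theorem~\ref{comparison principle} without writing out a proof, and applying that theorem twice with the trivial two-sided inequality between the (equal) Monge--Amp\`ere measures is precisely the argument being invoked.
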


\begin{theorem}[{\cite[Lemma 3]{chengyau}}, \cite{pogorelov}] \label{monge ampere boundedness second derivative}
Given a bounded convex domain $\Omega$, let $u$ be a $C^4$ solution to $\det D^2u=f$ defined on $\overline\Omega$ which is constant on $\partial\Omega$. There is an estimate on the second derivatives of $u$ at $x\in \Omega$ which depends only on $$\max_{\Omega}\left\{|u|,||Du||^2,||D\log(f)||^2,\sum_{i,j}\partial_{ij}(\log(f))^2\right\}$$
and on the distance of $x$ to $\partial\Omega$.
\end{theorem}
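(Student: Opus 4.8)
This is the classical interior $C^{2}$ estimate of Pogorelov and Cheng--Yau for the Monge-Amp\`ere operator, and my plan is to run the standard maximum-principle argument on a carefully chosen auxiliary function. First I would normalize: after subtracting the constant boundary value we may assume $u\equiv0$ on $\partial\Omega$, and then $\det D^{2}u=f>0$ forces $u$ to be strictly convex with $u<0$ in $\Omega$; recall that $\max_{\Omega}|u|$ and $\max_{\Omega}\|Du\|$ are among the controlled quantities. The real target is a \emph{global} estimate of the shape
\[
(-u(y))\,e^{\frac12\|Du(y)\|^{2}}\,\lambda_{\max}\!\big(D^{2}u(y)\big)\le C\qquad\text{for every }y\in\Omega,
\]
where $\lambda_{\max}$ is the largest eigenvalue. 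Granting this, at an interior point $x$ one gets $|D^{2}u(x)|=\lambda_{\max}(D^{2}u(x))\le C/(-u(x))$, while a barrier comparison --- comparing $u$ via Theorem~\ref{comparison principle} with the paraboloids $v(y)=\tfrac{a}{2}(|y-x|^{2}-\rho^{2})$ on a ball $B_{\rho}(x)\subseteq\Omega$, with $a=(\inf_{\Omega}f)^{1/n}$ and $\rho=\mathrm{dist}(x,\partial\Omega)$ --- yields a lower bound $-u(x)\ge c\,\mathrm{dist}(x,\partial\Omega)^{2}$. Combining the two, $|D^{2}u(x)|$ is bounded in terms of the listed quantities and $\mathrm{dist}(x,\partial\Omega)$, which is the assertion.

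To prove the displayed global estimate, for a unit vector $\xi$ set
\[
w_{\xi}=\log(-u)+\log u_{\xi\xi}+\tfrac{\mu}{2}\|Du\|^{2}
\]
with $\mu>0$ a constant to be fixed, and consider $\max\{w_{\xi}(y):y\in\overline\Omega,\ |\xi|=1\}$. Since $u\in C^{2}(\overline\Omega)$ vanishes on $\partial\Omega$ while $u_{\xi\xi}$ stays bounded there, $w_{\xi}\to-\infty$ near $\partial\Omega$, so the maximum is attained at an interior point $x_{0}$; after an orthogonal change of variables take $\xi=e_{1}$ and $D^{2}u(x_{0})=\mathrm{diag}(u_{11},\dots,u_{nn})$, so $u_{11}=\lambda_{\max}\ge u_{ii}$ for all $i$. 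Write $L=u^{ij}\partial_{ij}$ for the linearized operator, $(u^{ij})=(D^{2}u)^{-1}\succ0$; then $Dw_{\xi}(x_{0})=0$ and $Lw_{\xi}(x_{0})\le0$. Using $\partial_{k}\log\det D^{2}u=u^{ij}\partial_{k}u_{ij}$ one finds at $x_{0}$
\[
L\log(-u)=\frac{n}{u}-\frac{u^{ij}u_{i}u_{j}}{u^{2}},\qquad L\!\left(\tfrac12\|Du\|^{2}\right)=\Delta u+\langle Du,D\log f\rangle,
\]
and, differentiating $\log\det D^{2}u=\log f$ twice in the $e_{1}$-direction and using $\partial_{k}u^{ij}=-u^{ia}u^{jb}\partial_{k}u_{ab}$,
\[
L\log u_{11}=\frac{(\log f)_{11}}{u_{11}}+\frac{1}{u_{11}}\,u^{ia}u^{jb}(\partial_{1}u_{ab})(\partial_{1}u_{ij})-\frac{1}{u_{11}^{2}}\,u^{ij}(\partial_{i}u_{11})(\partial_{j}u_{11}).
\]
The structural point is that the last, negative, third-order term is exactly cancelled by the part of the middle term having one of its indices equal to $1$, so that a nonnegative quadratic form in $\partial_{1}D^{2}u$ survives in reserve. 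Summing the three pieces into $Lw_{\xi}(x_{0})\le0$, using $Dw_{\xi}(x_{0})=0$ to replace $\partial_{i}u_{11}$ by first derivatives of $u$, multiplying through by $-u>0$, and estimating $\langle Du,D\log f\rangle$ and $(\log f)_{11}$ by $\max_\Omega\|D\log f\|$ and $\big(\sum_{i,j}(\partial_{ij}\log f)^{2}\big)^{1/2}$, one reaches an inequality in which the good term $\mu(-u)\Delta u\ge\mu(-u)u_{11}$ dominates once $\mu$ is taken large enough (and, if needed in dimension $n\ge3$, after replacing $\log(-u)$ by $\beta\log(-u)$ with a suitable $\beta>1$). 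This forces $(-u(x_{0}))\,u_{11}(x_{0})\le C$, i.e. $w_{\xi}(x_{0})\le C$, proving the global estimate.

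The hard part is exactly this last balancing. One has to differentiate the fully nonlinear equation twice, keep track of the cancellation of the dangerous third-order term, and then absorb the surviving lower-order terms --- notably $n/(-u)$ and $u^{ij}u_{i}u_{j}/u^{2}$, which are large precisely when $x_{0}$ is near $\partial\Omega$ --- against the reserved nonnegative third-order form, the first-order identities coming from $Dw_{\xi}(x_{0})=0$, and the exponents in the test function. In dimension two, which is the only case needed here, the bookkeeping is much lighter: $\det D^{2}u=u_{11}u_{22}-u_{12}^{2}=f$ expresses the smaller eigenvalue directly through $u_{11}$, so one may work with $\log u_{11}$ throughout and the argument closes with little extra effort.
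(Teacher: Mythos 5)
The paper itself gives no proof of this statement: it appears in a list of Monge--Amp\`ere facts explicitly ``collected without proofs'' and is attributed to Cheng--Yau and Pogorelov, so the only comparison available is with the standard literature argument --- which is precisely what you reproduce. Your outline is the right one: the test function $\log(-u)+\log u_{\xi\xi}+\tfrac{\mu}{2}\|Du\|^2$, the maximum principle for the linearized operator $L=u^{ij}\partial_{ij}$ at an interior maximum point, and the three identities you display for $L\log(-u)$, $L(\tfrac12\|Du\|^2)$ and $L\log u_{11}$ are all correct, as is the observation that the negative third-order term is cancelled by the $a=1$ or $b=1$ part of $u^{ia}u^{jb}(\partial_1u_{ab})(\partial_1u_{ij})$, leaving a nonnegative reserve; you also correctly locate the delicate step in absorbing $u^{ij}u_iu_j/u^2$ via the critical-point identity.

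Two points need attention. First, your passage from the global bound to the pointwise one rests on $-u(x)\ge \tfrac{a}{2}\,\mathrm{dist}(x,\partial\Omega)^2$ with $a=(\inf_\Omega f)^{1/n}$. That constant depends on $\inf_\Omega f$, which is \emph{not} among the quantities listed in the statement (a bound on $\|D\log f\|$ does not bound $f$ away from $0$), so your argument proves the estimate with a positive lower bound for $f$ added to the list of controlled data. This is how Gilbarg--Trudinger and Guti\'errez state the Pogorelov estimate, and it is harmless for the paper's only application (Lemma \ref{lemma boundedness second derivatives}), where $f=|K|^{-1}(1-|z|^2)^{-2}$ is bounded below on the relevant subdomain; but you should either enlarge the list of dependencies or supply a different source for the lower bound on $-u(x)$. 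Second, the claim that the good term $\mu(-u)\Delta u$ ``dominates once $\mu$ is taken large enough'' points the wrong way: squaring the critical-point identity produces a competing term of order $\mu^2\max\|Du\|^2(-u)\Delta u$, so $\mu$ must be chosen \emph{small} relative to $1/\max\|Du\|^2$ (or the weights reorganized). This is a fixable tuning of constants in a step you explicitly defer, but the sentence as written is incorrect.
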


The following property will be used repeatedly in the paper, and is a peculiar property of dimension $n=2$.

\begin{theorem}[Aleksandrov-Heinz] \label{solution strictly convex dimension 2}
A generalized solution to $\det D^2 u =f$ on a domain $\Omega\subset\R^2$ with $f>0$ must be strictly convex.
\end{theorem}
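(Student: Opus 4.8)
**Proof proposal for Theorem (Aleksandrov–Heinz): a generalized solution to $\det D^2 u = f$ on $\Omega \subset \R^2$ with $f > 0$ is strictly convex.**

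The plan is to argue by contradiction. Suppose $u$ is not strictly convex; then the graph of $u$ contains a line segment, so there is a supporting affine function $\ell$ whose contact set $E = \{z \in \Omega : u(z) = \ell(z)\}$ contains more than one point. After subtracting $\ell$ (which changes neither convexity nor the Monge–Ampère measure) we may assume $u \geq 0$ and $E = \{u = 0\}$ is a convex set with at least two points and empty interior in $\R^2$, hence contained in a line; choose coordinates so that $E \subseteq \{x_2 = 0\}$ and $E$ contains the segment $[-1,1] \times \{0\}$ (rescaling as needed, and shrinking $\Omega$ to a small rectangle $R = (-a,a)\times(-b,b)$ on which this picture persists).

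The core of the argument is a comparison on such a rectangle. On $R$, consider the function $v$ whose graph is obtained by taking the convex hull of the boundary data of $u$ together with the segment $E$; more concretely, one builds an explicit barrier. The key dimension-2 fact is that because $E$ is a segment, the normal image $N_u(E)$ has zero Lebesgue measure "from one side": for any point $w_0$ in the relative interior of $E$, every supporting plane at $w_0$ must be constant along the $x_1$-direction (otherwise it would fail to support at a nearby point of $E$), so $N_u(w_0) \subseteq \{0\} \times \R$, a line, and hence $M\!A_u(E) = 0$. Now compare $u$ with a small "tent" or "wedge" function $w_\epsilon$ built to vanish on a slightly larger segment, to be less than $u$ on $\partial R$, and to carry Monge–Ampère mass concentrated near the segment. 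One arranges $M\!A_{w_\epsilon} \geq M\!A_u$ on the relevant region while $w_\epsilon \leq u$ on $\partial R$ but $w_\epsilon > u$ somewhere in the interior near an endpoint of $E$: this contradicts the comparison principle (Theorem~\ref{comparison principle}). The positivity hypothesis $f > 0$ enters precisely here — it forces $M\!A_u$ to have no mass on the degenerate set $E$ (mass $0$) yet to be genuinely positive on open subsets away from $E$, which is what makes the tent construction give a strict contradiction; without $f>0$ (e.g. $f \equiv 0$) the segment is genuinely allowed.

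The main obstacle I expect is making the barrier $w_\epsilon$ precise enough that both inequalities — the boundary comparison $w_\epsilon \le u$ on $\partial R$ and the measure comparison $M\!A_{w_\epsilon} \ge M\!A_u$ — hold simultaneously on a fixed rectangle, uniformly as $\epsilon \to 0$. This requires a careful choice of the shape of the wedge near the endpoints of $E$ and quantitative control of the normal image of a piecewise-affine convex function, using the lower bound $f \geq$ (positive constant) on a slightly shrunk rectangle to guarantee that $u$ itself must rise at a definite rate off the segment, which then contradicts the segment having positive length. An alternative, cleaner route that sidesteps some of this: localize to a point $w_0$ in the relative interior of $E$, observe $M\!A_u(B_r(w_0)) \geq c r^2$ for small $r$ by $f > 0$, yet directly estimate $M\!A_u(B_r(w_0))$ from above using that the graph is flat along $E$ — the normal image of $B_r(w_0)$ is squeezed into a thin slab of width $o(1)$ in the $x_1$-direction as $r \to 0$, giving $M\!A_u(B_r(w_0)) = o(r^2)$, the desired contradiction. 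I would present the localized version as the main line and invoke the comparison principle only if the slab estimate needs it.
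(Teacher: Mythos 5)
The paper does not prove this theorem: it is listed among the facts ``collected without proofs'' in the Monge--Amp\`ere preliminaries and is imported as a classical black box (Aleksandrov, Heinz; see also Trudinger--Wang or Guti\'errez). So there is no in-paper argument to compare with, and your proposal has to stand on its own. Its opening moves are fine: the contact set $E$ of a supporting affine function has empty interior (else its normal image would be a single point, contradicting $f>0$), hence lies in a line; restricting to a small rectangle reduces to the case of a chord; and subgradients at relative-interior points of $E$ have vanishing component along the segment. But neither of your two closing arguments works.

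The ``cleaner route'' is based on a false estimate. For $w_0$ in the relative interior of $E$ the slab containment gives $N_u(B_r(w_0))\subseteq\{|p_1|\le Cr\}\times\{|p_2|\le C\}$: the $p_1$-spread is $O(r)$, but the $p_2$-spread is $O(1)$ (think of $u(x_1,x_2)=|x_2|+\dots$, or simply note that $u$ must rise to a fixed positive value at fixed distance off the segment, so slopes of order $1$ in the $x_2$-direction occur arbitrarily close to $E$). Hence $M\!A_u(B_r(w_0))=O(r)$, which is perfectly consistent with the lower bound $\lambda\pi r^2$; the claimed $o(r^2)$ would require the $p_2$-spread to be $o(r)$, which there is no reason to have. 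The barrier route fares no better as described: to conclude $u\ge w_\epsilon$ in the interior from $u\ge w_\epsilon$ on $\partial R$ via Theorem \ref{comparison principle} you need $M\!A_{w_\epsilon}(\omega)\ge M\!A_u(\omega)\ge\lambda|\omega|$ for \emph{every} Borel $\omega$, in particular for open sets away from the segment; a piecewise-affine tent or wedge has Monge--Amp\`ere measure concentrated on its ridge and assigns mass zero to such sets, so the hypothesis of the comparison principle fails exactly where you need it. (Replacing the tent by a genuinely convex barrier forces you to control $f$ from above as well, and the two boundary/interior inequalities then pull in opposite directions on a fixed rectangle.) The actual two-dimensional proof is more delicate --- one has to play a \emph{lower} bound on the normal image of the sections $S_h=\{u<\ell+h\}$ coming from Aleksandrov's maximum principle against the slab-type upper bound, or follow Aleksandrov's original geometric argument --- and none of that machinery appears in your sketch. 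Separately, note that the pointwise hypothesis $f>0$ should be read as a locally uniform bound $f\ge\lambda>0$ on compact subsets (automatic here, since in the paper $f$ is continuous and positive); the argument genuinely uses such a $\lambda$.
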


We will use this theorem to prove an a priori estimate of the $C^2$-norm of  support functions of surfaces of constant curvature in Minkowski space in terms
of the $C^0$-norm. Although this result is  well-known we sketch a proof for the convenience of the reader.

\begin{lemma} \label{lemma boundedness second derivatives}
Let $u_n:\D\rar\R$ be a sequence of smooth solutions of the Monge-Amp\`ere equation $$\det D^2 (u_n)=\frac{1}{|K|}(1-|z|^2)^{-2}$$ uniformly bounded on $\D$. 
Then $||u_n||_{C^2(\Omega)}$ is uniformly bounded on any compact domain $\Omega$ contained in $\D$.
\end{lemma}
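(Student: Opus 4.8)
The plan is to combine the interior $C^2$ estimate of Theorem \ref{monge ampere boundedness second derivative} (Cheng--Yau/Pogorelov) with the strict convexity furnished by Aleksandrov--Heinz (Theorem \ref{solution strictly convex dimension 2}), using a localization argument since the $u_n$ are constant on $\partial\D$ only in exceptional cases. First I would fix a compact domain $\Omega\Subset\D$ and choose an auxiliary radius $r<1$ with $\Omega\subset\D_r:=\{|z|<r\}$ and $\mathrm{dist}(\Omega,\partial\D_r)>0$. On $\overline{\D_r}$ the right-hand side $f(z)=\tfrac1{|K|}(1-|z|^2)^{-2}$ is smooth, positive, bounded away from $0$ and $\infty$, with $\log f$ and all its derivatives up to second order bounded by constants depending only on $r$ and $K$; this controls the quantities $\|D\log f\|^2$ and $\sum_{i,j}\partial_{ij}(\log f)^2$ appearing in Theorem \ref{monge ampere boundedness second derivative}.

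The obstacle is that Theorem \ref{monge ampere boundedness second derivative} requires the solution to be \emph{constant} on the boundary of the domain on which the estimate is applied, whereas $u_n$ is not constant on $\partial\D_r$. The standard device to get around this is to replace $u_n$ on a slightly smaller sublevel set by a genuine solution with constant boundary data. Concretely: since $\sup_n\|u_n\|_{C^0(\D)}=:M<\infty$, for each $n$ pick a constant $c$ (independent of $n$, e.g.\ $c>M$) and consider the convex open set $\Omega_n^c=\{z\in\D_r: u_n(z)<c\}$; by the uniform $C^0$ bound and by strict convexity of $u_n$ (Theorem \ref{solution strictly convex dimension 2}, applicable since $f>0$), for a suitable choice of $c$ one gets $\Omega\Subset\Omega_n^c\Subset\D_r$ with the inradius and the distance $\mathrm{dist}(\Omega,\partial\Omega_n^c)$ bounded below uniformly in $n$ — here strict convexity is exactly what prevents $\Omega_n^c$ from degenerating or from failing to contain $\Omega$. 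On $\Omega_n^c$ the function $u_n$ is a $C^4$ (indeed smooth) solution of $\det D^2 u_n=f$ which is constant ($\equiv c$) on $\partial\Omega_n^c$, so Theorem \ref{monge ampere boundedness second derivative} applies and yields a bound on $D^2 u_n$ at every point of $\Omega$ depending only on $M$, on $\sup_{\Omega_n^c}\|Du_n\|^2$, on the $\log f$ data, and on $\mathrm{dist}(\Omega,\partial\Omega_n^c)$.

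It remains to bound $\sup_{\Omega}\|Du_n\|$, which is immediate from convexity and the $C^0$ bound: on any $\D_{r'}$ with $r<r'<1$ one has $\|Du_n(z)\|\le \tfrac{2M}{r'-r}$ for $z\in\D_r$ by the mean value/convexity estimate $Du_n(z)\cdot v\le \tfrac{u_n(z+tv)-u_n(z)}{t}$. Feeding this into the previous paragraph gives a uniform bound on $\|D^2 u_n\|$ on $\Omega$, hence on $\|u_n\|_{C^2(\Omega)}$, completing the proof. The only genuinely delicate point is verifying that the sublevel sets $\Omega_n^c$ stay uniformly ``fat'' and uniformly contain $\Omega$; I expect this to follow cleanly from the uniform $C^0$ bound together with the uniform lower bound on $\det D^2 u_n$ (which forces the graphs not to be too flat) and the Aleksandrov--Heinz strict convexity, but it is the step that needs the most care.
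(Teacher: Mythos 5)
Your overall strategy -- reduce to the Pogorelov-type interior estimate of Theorem \ref{monge ampere boundedness second derivative} by working on sublevel sets with constant boundary data, and use Aleksandrov--Heinz to keep those sublevel sets non-degenerate -- is exactly the strategy of the paper. But as written there are two gaps, and the second is the heart of the matter.

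First, the sublevel sets you propose do not do the job. With $c>M=\sup_n\|u_n\|_{C^0(\D)}$ the set $\Omega_n^c=\{z\in\D_r:u_n(z)<c\}$ is all of $\D_r$, and $u_n$ is certainly not constant on $\partial\D_r$; for smaller $c$ the set $\{u_n<c\}$ need not contain $\Omega$, need not be compactly contained in $\D_r$, and there is no reason a single $c$ works for all $n$ and all points of $\Omega$. The correct normalization (and the one the paper uses) is Pogorelov's: for each point $z\in\Omega$ and each $n$, subtract the supporting affine function of $u_n$ at $z$, i.e.\ set $v_{n,z}=u_n+f_{n,z}$ with $v_{n,z}\geq v_{n,z}(z)=0$, and take sublevel sets of $v_{n,z}$. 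These are automatically ``centered'' at $z$ and constant on their boundary, and the uniform $C^1$ bound on $u_n$ controls the affine corrections.

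Second, and more seriously, the uniform fatness of the sublevel sets -- the step you yourself flag as delicate -- does not follow from the ingredients you list. Theorem \ref{solution strictly convex dimension 2} is qualitative: it says each individual $u_n$ is strictly convex, but gives no modulus of strict convexity uniform in $n$, and the uniform lower bound on $\det D^2u_n$ does not by itself supply one. The paper converts the qualitative statement into a uniform one by a compactness argument: assuming the conclusion fails, extract (via the uniform $C^0$/$C^1$ bounds and Ascoli--Arzel\`a) a subsequence converging locally uniformly to $u_\infty$, note that $u_\infty$ is a generalized solution by the weak convergence of Monge--Amp\`ere measures (Lemma \ref{convergence of solutions}) and hence strictly convex by Aleksandrov--Heinz, and then show by contradiction that strict convexity of the limit forces the existence of uniform constants $\epsilon_0,r_0$ such that $\min_{\partial\Omega'}v_{n,z}\geq 2\epsilon_0$ while $\max_{B(z,r_0)}v_{n,z}\leq\epsilon_0$ for all $n$ and all $z\in\Omega$. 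Without some such limiting argument your proof does not close; with it, the rest of your outline (the gradient bound by convexity, the control of $\log f$ on $\overline{\D_r}$, and the application of Theorem \ref{monge ampere boundedness second derivative}) is fine.
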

\begin{proof}
Assume that the conclusion is false and that there is a subsequence (which we still denote by $u_n$ by a slight abuse of notation) for which the $C^2$-norm goes to infinity. Hence it suffices to show that there exists a further subsequence $u_{n_k}$ for which $||u_{n_k}||_{C^2(\Omega)}$ is bounded. 
Take $\Omega'$ such that $\Omega\subset\!\subset\Omega' \subset\!\subset\D$.
Using the uniform bound on $||u_n||_{C^0(\D)}$ and the convexity, one can derive that the $C^1$-norms $||u_n||_{C^1(\overline\Omega')}$ are uniformly bounded by a constant $C$. %By adding affine functions, it is thus harmless to assume $0$ is a minimum of $u_n$. Moreover, by
By Ascoli-Arzel\`a theorem, we can extract a subsequence which converges uniformly on compact subsets of $\Omega'$. Let $u_\infty$ be the limit function. By Lemma \ref{convergence of solutions}, $u_\infty$ is a generalized solution to 
$$\det D^2(u_\infty)=\frac{1}{|K|}(1-|z|^2)^{-2}$$
and $u_\infty$ is strictly convex, by Theorem \ref{solution strictly convex dimension 2}. %and by Theorem{solution smooth} it is smooth.
%Therefore there exists $\epsilon>0$ such that $U_\infty^\epsilon=\left\{z\in\D:u_\infty(z)\leq u_\infty(0)+\epsilon\right\}$ is compact and $\partial U_\infty^\epsilon\cap\partial\Omega'=\emptyset$. Clearly, $u_\infty$ is constant on $\partial U_\infty^\epsilon$. Since $u_{n_k}$ converges uniformly on compact sets, there exists $n_k\geq n_0$ such that $U_\infty^{\epsilon/4}\subset U_{n_k}^{\epsilon/2}\subset U_\infty^\epsilon$ and thus $\partial  U_{n_k}^{\epsilon/2}\cap \partial\Omega'=\emptyset $. It is now clear that we can apply Theorem \ref{monge ampere boundedness second derivative} and thus obtain uniform bounds on the second derivatives of $u_{n_k}$.

For any $z\in\Omega$ and $n\geq 0$ we can fix an affine function $f_{n,z}$ such that $v_{n,z}=u_n+f_{n,z}$ takes its minimum at $z$ and $v_{n,z}(z)=0$.
We claim that there are $\epsilon_0>0$ and $r_0>0$ such that
\begin{itemize}
\item   $\min_{\partial\Omega'} v_{n,z}\geq 2\epsilon_0$ for any $n\geq 0$ and $z\in\Omega$.
\item   $\max_{B(z, r_0)}v_{n,z}\leq \epsilon_0$ for any $n\geq 0$ and $z\in\Omega$.
\end{itemize}

First let us show how the claim implies the statement.
Indeed for any $z$ and $n$ consider the domain $U_{n,z}=\{z\in\Omega'\,|\,v_{n,p}(z)\leq \epsilon_0\}$.
We have that $U_{n,z}\subset\!\subset\Omega'$ by the first point of the claim. In particular, $v_{n,z}$ is constant equal to $\epsilon_0$ along
the boundary of $U_{n,z}$. On the other hand the second point of the claim implies that the distance of $z$ from $\partial U_{n,z}$ is at least $r_0$.
So by Theorem \ref{monge ampere boundedness second derivative} there is a constant $C'$ depending on $C$ and $r_0$ such that
$||D^2 u_n(z)||=||D^2 v_{n,z}(z)||<C'$ for all $z\in\Omega'$ and $n\geq 0$.

To prove the claim we argue by contradiction.
Suppose there exist sequences $z_n, z'_n\in\Omega$ such that, defining $2\epsilon_n=\min_{\partial\Omega'} v_{n,z_n}$,
\begin{itemize}
\item $||z_n-z'_n||\to 0$;
\item $v_{n, z_{n}}(z'_n)> \epsilon_n$. 
\end{itemize}
Up to passing to a subsequence we may suppose that $z_n\to z_\infty$, so that $z'_n\to z_\infty$ as well.
As the $C^1$-norm of $u_n$ is bounded, the $C^1$-norm of $f_{n,z}$ is uniformly bounded for any $z\in\Omega$ and $n\geq 0$,
so we may suppose that $f_{n,z_{n}}$ converges to an affine function $f_\infty$. Therefore $v_{n, z_n}$ converges to $v_\infty= u_\infty+f_\infty$. Since, as already observed, $u_\infty$ is strictly convex, $v_\infty$ is strictly convex as well.

As $\lim v_{n, z_{n}}(z'_n)=v_\infty(z_\infty)=\lim v_{n, z_{n}}(z_n)= 0$ we conclude that $\epsilon_n\to 0$, so that $\min_{\partial\Omega'} v_\infty=0$. This gives a 
contradiction with the strict convexity of $v_\infty$, and thus concludes the proof.
\end{proof}

A refinement of the above arguments leads to the proof of the regularity of strictly convex solutions of Monge-Amp\`ere equation.

\begin{theorem}[{\cite[Theorem 3.1]{trudwang}}] \label{solution smooth}
Let $u$ be a strictly convex generalized solution to $\det D^2 u=f$ on a bounded convex domain $\Omega$ with smooth boundary. If $f>0$ and $f$ is smooth, then $u$ is smooth.
\end{theorem}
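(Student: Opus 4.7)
The plan is to apply Pogorelov's interior $C^{2}$-estimate (Theorem \ref{monge ampere boundedness second derivative}) locally around each interior point and then iterate standard elliptic bootstrap arguments. The strict convexity hypothesis on $u$ is used precisely to reduce to the setting of Theorem \ref{monge ampere boundedness second derivative}, which requires the solution to be constant on the boundary of its domain of definition.

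First I would exploit strict convexity to construct, around any given interior point $x_0 \in \Omega$, a convex subdomain compactly contained in $\Omega$ on which $u$ is constant on the boundary (after subtracting an affine function). Let $\ell$ be an affine function whose graph is a supporting hyperplane to the graph of $u$ at $(x_0, u(x_0))$. For every sufficiently small $h > 0$, strict convexity guarantees that the section
$$S_h = \{x \in \Omega : u(x) < \ell(x) + h\}$$
is a bounded convex open set with $\overline{S_h} \subset \Omega$, and $v := u - \ell - h$ is a generalized solution of $\det D^2 v = f$ vanishing on $\partial S_h$.

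Second, since Theorem \ref{monge ampere boundedness second derivative} is an \emph{a priori} estimate that presupposes $C^4$ regularity, I would use an approximation procedure. Smooth $f$ to a family $f_\varepsilon$ uniformly bounded between positive constants, smooth $\partial S_h$ by slightly shrinking the section, and solve the classical Dirichlet problem $\det D^2 v_\varepsilon = f_\varepsilon$ with zero boundary data, obtaining smooth solutions $v_\varepsilon$ by standard theory. Pogorelov's estimate then gives a uniform interior bound on $||D^2 v_\varepsilon||$ over any compact subset of $S_h$, depending only on the quantities listed in Theorem \ref{monge ampere boundedness second derivative}. Using Lemma \ref{convergence of solutions} and the comparison principle (Theorem \ref{comparison principle}) to identify the subsequential limit of $v_\varepsilon$ with $v$, one concludes that $v$ is $C^2$ in a neighbourhood of $x_0$, and hence so is $u$.

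Third, once $u$ is of class $C^2$ with $\det D^2 u = f > 0$, the Hessian $D^2 u$ is uniformly positive definite on compact subsets, so the Monge-Amp\`ere operator becomes uniformly elliptic and concave in the Hessian argument. From this point the classical bootstrap applies: Evans-Krylov theory yields interior $C^{2,\alpha}$ regularity; differentiating the equation, each partial derivative $\partial_k u$ solves a linear uniformly elliptic equation with $C^{\alpha}$ coefficients, so by Schauder theory $u \in C^{3,\alpha}_{\mathrm{loc}}$, and iterating gives $u \in C^\infty$. The main obstacle is the first step, namely bridging the gap between a generalized solution and the $C^4$ regularity required by Pogorelov's estimate; once this is settled via the approximation argument and the uniqueness afforded by the comparison principle, the subsequent improvement of regularity is entirely standard elliptic theory.
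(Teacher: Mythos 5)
Your argument is correct and is precisely the standard route the paper alludes to: the paper does not prove this statement itself but cites it from Trudinger--Wang, remarking only that it follows by ``a refinement of the above arguments'', i.e.\ the section-plus-Pogorelov scheme already used in the proof of Lemma \ref{lemma boundedness second derivatives}. Your three steps (localization to sections $S_h$ compactly contained in $\Omega$ via strict convexity, Pogorelov's interior $C^2$ estimate of Theorem \ref{monge ampere boundedness second derivative} applied to smooth approximating Dirichlet problems and identified with $v$ through Lemma \ref{convergence of solutions} and the comparison principle, then Evans--Krylov and Schauder bootstrap) constitute exactly that refinement.
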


\subsection{Universal Teichm\"{u}ller space and Zygmund fields} \label{zygmund fields}

In this section, we want to introduce the notion of Zygmund class, which will be the relevant boundary regularity for the support functions of convex surfaces with bounded principal curvatures. Since support functions are defined as 1-homogeneous functions, we first show that a vector field on $S^1$ defines a 1-homogeneous function on the boundary of the null-cone in a natural way.

We consider the boundary at infinity $\partial_\infty\Hyp^2$ of $\Hyp^2$, as $\mathbb{P}(N)\cong S^1$, where $N=\partial\Ip(0)\setminus\{0\}$. In particular, we will use vector fields on $S^1$ to define 1-homogeneous functions on $\partial\Ip(0)$. We want to show that this is well-defined, i.e. does not depend on the choice of a section $S^1\to N$.
\begin{lemma} \label{lemma campo e funzione omogenea}
There is a $1$-to-$1$ correspondence between vector fields $X$ on $S^1$ and $1$-homogeneous functions
$H:N\rar\R$ satisfying the following property:
if $\gamma:S^1\rar N$ is any $C^1$ spacelike section of the projection $N\rar S^1$ and $v$ is  the unit tangent vector field to $\gamma$, then
\begin{equation} \label{definizione campo e funzione omogenea}
\gamma_*(X(\xi))=H(\gamma(\xi))v(\gamma(\xi))\,.
\end{equation}
%\noindent and we extend $H$ to the whole $N$ by means of $H(\lambda\eta)=\lambda H(\eta)$. The function $H$ so defined does not depend on the choice of $\gamma$. Conversely, every 1-homogeneous function on $N$ gives a well-defined vector field on $S^1$. 
\end{lemma}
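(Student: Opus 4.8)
\textbf{Proof proposal for Lemma \ref{lemma campo e funzione omogenea}.}

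The plan is to produce the correspondence explicitly by choosing one convenient spacelike section and checking that the defining relation \eqref{definizione campo e funzione omogenea} is independent of the section, so that the assignment is well-posed in both directions. First I would fix the ``standard'' section $\gamma_0:S^1\rar N$ given by $\gamma_0(e^{i\theta})=(\cos\theta,\sin\theta,1)$; this is a spacelike $C^1$ (in fact analytic) section, and its unit tangent vector field is $v(\gamma_0(e^{i\theta}))=(-\sin\theta,\cos\theta,0)$, which indeed has $\langle v,v\rangle=1$. Given a vector field $X=a(\theta)\,\partial/\partial\theta$ on $S^1$, I define $H$ on the image of $\gamma_0$ by \eqref{definizione campo e funzione omogenea}, i.e. $H(\gamma_0(e^{i\theta}))=a(\theta)$ after identifying $(\gamma_0)_*(\partial/\partial\theta)=v(\gamma_0(e^{i\theta}))$, and then extend $H$ to all of $N$ by $1$-homogeneity, $H(\lambda\gamma_0(\xi))=\lambda H(\gamma_0(\xi))$ for $\lambda>0$. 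Conversely, given a $1$-homogeneous $H:N\rar\R$, the same formula read backwards defines a vector field $X$ on $S^1$. These two operations are visibly inverse to each other, so the only real content is checking that the $H$ so defined satisfies \eqref{definizione campo e funzione omogenea} for \emph{every} spacelike section $\gamma$, not just for $\gamma_0$.

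For that verification, let $\gamma:S^1\rar N$ be an arbitrary $C^1$ spacelike section. Since both $\gamma$ and $\gamma_0$ are sections of the projection $N\rar S^1\cong\PP(N)$ and points of $N$ on the same null ray project to the same point of $S^1$, there is a positive $C^1$ function $\lambda:S^1\rar\R_{>0}$ with $\gamma(\xi)=\lambda(\xi)\gamma_0(\xi)$. I would then differentiate this relation along $S^1$: writing $\xi=e^{i\theta}$ and $'=d/d\theta$, one gets $\gamma'=\lambda'\gamma_0+\lambda\gamma_0'$. The key computation is the length of $\gamma'$ with respect to the Minkowski product. Because $\gamma_0(e^{i\theta})$ lies on the null cone, $\langle\gamma_0,\gamma_0\rangle=0$; differentiating gives $\langle\gamma_0,\gamma_0'\rangle=0$; and $\langle\gamma_0',\gamma_0'\rangle=1$ by the computation above. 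Hence
\[
\langle\gamma',\gamma'\rangle=(\lambda')^2\langle\gamma_0,\gamma_0\rangle+2\lambda\lambda'\langle\gamma_0,\gamma_0'\rangle+\lambda^2\langle\gamma_0',\gamma_0'\rangle=\lambda^2\,,
\]
so $\gamma$ is automatically spacelike (consistent with the hypothesis) and its unit tangent is $v(\gamma(e^{i\theta}))=\gamma'/\lambda$. Now, pushing forward $X=a(\theta)\,\partial/\partial\theta$ by $\gamma$ gives $\gamma_*(X)=a(\theta)\gamma'$, while $H(\gamma(e^{i\theta}))=\lambda(\theta)H(\gamma_0(e^{i\theta}))=\lambda(\theta)a(\theta)$ by $1$-homogeneity; therefore $H(\gamma(\xi))v(\gamma(\xi))=\lambda a\cdot(\gamma'/\lambda)=a\gamma'=\gamma_*(X(\xi))$, which is exactly \eqref{definizione campo e funzione omogenea}. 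This shows the relation holds for every spacelike section, completing the proof.

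I expect the main (mild) obstacle to be purely bookkeeping: being careful that the identification $S^1\cong\PP(N)$ and the pushforward $\gamma_*$ are handled consistently, and that the unit tangent $v$ is taken with a fixed orientation so that the sign of $H$ is unambiguous. One should also note that the unit tangent to any spacelike section of $N\rar S^1$ is necessarily spacelike and spans the tangent line to the section — so the formula \eqref{definizione campo e funzione omogenea} genuinely determines the scalar $H(\gamma(\xi))$, and the computation $\langle\gamma',\gamma'\rangle=\lambda^2>0$ above is what makes this rigorous and shows the spacelike hypothesis is in fact automatic. No compactness or analytic subtlety is involved; the statement is a direct consequence of the conformal-type relation $\gamma=\lambda\gamma_0$ between any two sections together with $1$-homogeneity.
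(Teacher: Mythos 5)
Your proposal is correct and follows essentially the same route as the paper: fix the standard height-one section, define $H$ there by the relation and extend by $1$-homogeneity, then check independence of the section by comparing an arbitrary section $\gamma=\lambda\gamma_0$ with the standard one. The paper phrases the key step via the degenerate induced metric $r^2\,d\theta^2$ on $N$ (so the pushed-forward tangent vector has norm $r(\theta)$), which is exactly your ambient computation $\langle\gamma',\gamma'\rangle=\lambda^2$.
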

\begin{proof}
Consider coordinates $(x^1,x^2,x^3)$ on $\R^{2,1}$ and $z,\theta$ on $N$ given by $$\phi:(r,\theta)\rar(r\cos\theta,r\sin\theta,r)\in N.$$ In these coordinates, the restriction of the Minkowski metric to $N$ takes the (degenerate) form 
\begin{equation} \label{metrica cono} 
g=r^2 d\theta^2
\end{equation}
We take $\gamma_1$ to be the section $\gamma_1(\theta)=(1,\theta)$. Namely, the image of $\gamma_1$ is $N\cap\{x^3=1\}$. Any other section $\gamma_2$ is of the form $\gamma_2(\theta)=(r(\theta),\theta)$ and is obtained as $\gamma_2=f\circ\gamma_1$ by a radial map $f(1,\theta)=(r(\theta),\theta)$. 
Let $X$ be a vector field on $S^1$. 
We define a $1$-homogeneous function $H$ such that $(\gamma_1)_*(X(\theta))=H(1,\theta)v_1$ and compute 
$$(\gamma_2)_*(X(\theta))=f_*(H(1,\theta)v_1)=H(1,\theta)f_*(v_1).$$
Now $f_*(v_1)$ is a tangent vector to $\gamma_2(S^1)$ whose norm (recall the form (\ref{metrica cono}) of the metric) is $r(\theta)$.
Therefore $$(\gamma_2)_*(X(\theta))=H(1,\theta)r(\theta)v_2=H(r(\theta),\theta)v_2$$ where $v_2$ is the unit tangent vector. Conversely, given any 1-homogeneous function, (\ref{definizione campo e funzione omogenea}) defines a vector field on $S^1$ which does not depend on the choice of $\gamma$.
\end{proof}

We now introduce a class of vector fields on $S^1$ whose regularity is specially interesting for this article, namely the Zygmund fields. To do so, we first introduce the notion of earthquake of $\Hyp^2$ and quasi-symmetric homeomorphisms of the circle. A Zygmund field is a vector field which corresponds to an infinitesimal deformation of quasi-symmetric homeomorphisms, in a suitable sense.

\begin{defi} \label{definizione terremoto}
A surjective map $E:\Hyp^2\to\Hyp^2$ is a left earthquake if it is an isometry on the strata of a geodesic lamination of $\Hyp^2$ and, for every pair of strata $S$ and $S'$, the composition
$$(E|_S)^{-1}\circ(E|_{S'})$$
is a hyperbolic translation whose axis weakly separates $S$ and $S'$ and such that $S'$ is translated on the left as seen from $S$. 
\end{defi}

Let us observe that $E$ is in general not continuous. Given an earthquake $E$, there is a measured geodesic lamination associated to $E$, called the \emph{earthquake measure}. See \cite{thurstonearth}. The earthquake measure $\mu$ determines $E$ up to post-composition with an hyperbolic isometry (in other words, up to the choice of the image of one stratum), and up to the ambiguity on the weighted leaves of the lamination. Hence an earthquake whose earthquake measure is $\mu$ will be denoted by $E^\mu$.

The following theorem was proved by Thurston \cite{thurstonearth}.

\begin{theorem}
Any earthquake $E:\Hyp^2\to\Hyp^2$ extends to an orientation-preserving homeomorphism of $S^1=\partial_\infty\Hyp^2$. Conversely, every orientation-preserving homeomorphism of $S^1$ is induced by a unique earthquake of $\Hyp^2$. 
\end{theorem}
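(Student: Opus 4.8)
The plan is to follow Thurston's original argument \cite{thurstonearth}, treating the two implications separately.

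\emph{An earthquake extends to a homeomorphism of $S^1$.} Let $E=E^\mu$ be an earthquake with earthquake lamination $\lambda$. The key observation is that $E$ is monotone: the hypothesis that each comparison $(E|_S)^{-1}\circ(E|_{S'})$ is a hyperbolic translation whose axis weakly separates the strata $S,S'$ and translates $S'$ to the left as seen from $S$ forces $E$ to preserve the cyclic order of any triple of points lying in the closures of the strata through them. Restricted to the closure of a single stratum $S$, $E$ is the restriction of a hyperbolic isometry, hence extends to an embedding of the ideal endpoints of $S$ into $S^1$; two such partial boundary maps agree on a common ideal endpoint, since that point is fixed by the corresponding comparison translation. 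Patching these maps over all strata yields, on the dense set of ideal endpoints of strata, a monotone map with dense image; it therefore extends to a monotone self-map of $S^1$, and a short argument (no boundary arc can be collapsed, no gap can appear in the image, by the corresponding facts for the strata) shows this extension $\partial E$ is a homeomorphism. Finally $E$ and $\partial E$ fit together into a map of $\overline{\Hyp^2}$ which is continuous at the boundary, by the same order control.

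\emph{Every orientation-preserving homeomorphism comes from an earthquake.} Fix such an $h$ and an increasing sequence of finite subsets $F_n\subset S^1$ whose union is dense. For each $n$ I would first realize a homeomorphism $h_n$, agreeing with $h$ on $F_n$ and converging uniformly to $h$, by a \emph{finite} left earthquake $E_n$, supported on the finitely many diagonals of the ideal polygon with vertices $F_n$: this is a finite-dimensional statement, proved by triangulating the ideal polygons on $F_n$ and on $h(F_n)$ compatibly and matching up left shears, the required configuration existing by a continuity/degree argument in the space of shear parameters. Let $\mu_n$ be the earthquake measure of $E_n$. The decisive step --- and the main obstacle --- is an a priori bound: for every compact $B\subset\Hyp^2$ there is a constant $C_B$, depending only on $B$ and on the moduli of continuity of $h$ and $h^{-1}$, such that $\mu_n(\mathcal{G}_B)\le C_B$ for all $n$; a large amount of shear concentrated across a short transversal near $B$ would force $h_n$ to compress or dilate a macroscopic boundary arc, contradicting $h_n\to h$ with $h$ a fixed homeomorphism. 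Granting this bound, the $\mu_n$ lie in a compact family of measured geodesic laminations, so after passing to a subsequence $\mu_n\to\mu$ on compact sets for some measured lamination $\mu$; one then checks, using that the assignment $\mu\mapsto\partial E^\mu$ is continuous under this convergence and that $\partial E_n\to h$, that $E^\mu$ is well defined and induces $h$.

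\emph{Uniqueness.} If $E^\mu$ and $E^{\mu'}$ both induce $h$, it is enough to recover $\mu$ from $h$. Along a compact geodesic arc $k$ transverse to $\lambda$, I would subdivide $k$ into small subarcs and, for the leaves crossing each subarc, compare the cross-ratios of quadruples of ideal endpoints near their feet with the cross-ratios of their $h$-images; in the limit of finer subdivisions this expresses $\mu(k)$ as a functional of $h$ alone. Hence $\mu$ and $\mu'$ agree on every transverse arc, so $\mu=\mu'$, and the remaining indeterminacy in $E^\mu$ (weights on atomic leaves, and the image of one stratum) is pinned down by $\partial E^\mu=h$; equivalently, an earthquake inducing the identity on $S^1$ fixes the ideal endpoints of every stratum, hence restricts to the identity on each stratum and is the identity. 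As indicated above, the crux of the whole argument is the uniform local bound on the earthquake measures $\mu_n$: without it the approximating laminations could run off to infinity or accumulate infinite transverse mass on a compact arc, and no limiting earthquake would survive.
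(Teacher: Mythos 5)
The paper does not actually prove this statement: it is Thurston's earthquake theorem, quoted directly from \cite{thurstonearth}, so there is no internal proof to compare with. Your outline follows the architecture of a known proof (finite approximation plus compactness, close in spirit to Thurston's argument and to the treatment in \cite{saricweak}), and it correctly locates the difficulty; but as written it is a roadmap rather than a proof, because each of the three load-bearing steps is asserted instead of established. (i) The finite realization lemma --- that any cyclic-order-preserving bijection between two finite subsets of $S^1$ is induced by a finite \emph{left} earthquake supported on diagonals of the ideal polygon --- is a nontrivial theorem in its own right; ``a continuity/degree argument in the space of shear parameters'' does not specify the map whose degree is computed, nor why shears of a single sign suffice, and this sign constraint is exactly what makes the lemma delicate. (ii) The a priori bound $\mu_n(\mathcal{G}_B)\le C_B$ is, as you say, the crux, and the heuristic ``a large shear near $B$ would compress a macroscopic arc'' must be made quantitative: for any leaf meeting $B$ with weight $t$ one needs a quadruple of boundary points, with mutual separation bounded below in terms of $B$ alone, whose cross-ratio is distorted by a factor at least $e^{t}$, together with the verification that shears on the other leaves cannot cancel this distortion (they cannot, precisely because all shears are to the left --- but that monotonicity argument is the content of the lemma and is missing). (iii) The continuity of $\mu\mapsto\partial E^{\mu}$ under weak-$*$ convergence of locally finite measures is not obvious and is not how the limit is usually taken; one rather extracts a limit of the maps $E_n$ themselves on a dense family of strata, using (ii), and then verifies the earthquake property and the boundary value for the limit map.

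The uniqueness paragraph has the same character: recovering $\mu(k)$ from $h$ by cross-ratios is the right idea, but without the explicit functional (a limit of sums of logarithms of cross-ratios of endpoint quadruples) and a proof that it converges to $\mu(k)$ for an arbitrary, possibly non-discrete and unbounded lamination, it does not yet exclude two distinct measures inducing the same $h$. Note also that your closing remark (``an earthquake inducing the identity is the identity'') does not by itself yield uniqueness, since $E^{\mu'}\circ(E^{\mu})^{-1}$ is in general not an earthquake. None of this says the strategy is wrong --- it is essentially the standard one --- but the proposal defers precisely the statements that constitute Thurston's theorem, which is why the paper treats it as a citation rather than something to be reproved.
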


We are now going to present a characterization of earthquakes whose measured geodesic lamination is bounded in terms of their extension to the boundary at infinity, due to Gardiner-Hu-Lakic in  \cite{garhulakic}. 
Given an orientation preserving homeomorphism $T:S^1\rar S^1$, we consider a lifting of  $\tilde T:\R\rar\R$ to the universal covering so that
\[
    T(e^{i\theta})=e^{i\tilde T(\theta)}
\]

%Given a quadruple $Q$ of points of $S^1=\partial\D$ (which is thought as the boundary at infinity of $\Hyp^2$, we denote by $cr(Q)$ the cross-ratio of the four points. We are adopting the convention that, if $Q=(a,b,c,d)$, the cross-ratio of $Q$ is:
%$$cr(Q)=\frac{(a-d)(b-c)}{(a-b)(d-c)}\,,$$ 
%so that symmetric quadruples have cross-ratio equal to $-1$. We observe that the definition is invariant under M\"obius transformations which preserve $\D$.

\begin{defi}
An orientation-preserving homeomorphism $T:S^1\to S^1$ is quasi-symmetric if   there is a constant $C$ such that
\begin{equation}\label{cross-ratio norm}
\frac{1}{C}<\left|\frac{\tilde T(\theta+h)-\tilde T(\theta)}{\tilde T(\theta)-\tilde T(\theta-h)}\right|< C\,,
\end{equation}
%where the supremum is taken over all symmetric quadruples of point $Q$ (hence with $cr(Q)=-1$).
for all $\theta, h\in\R$.
\end{defi}

%\begin{theorem} \label{theorem gardiner thurston saric}
%Given an earthquake $E^\mu$ with earthquake measure $\mu$, $\mu$ is bounded if and only if the extension of $E^\mu$ to $S^1$ is a quasi-symmetric homeomorphism.
%\end{theorem}

The space of quasi-symmetric homeomorphisms of $S^1$, up to the action of M\"obius transformation by post-compositions, is called \emph{Universal Teichm\"uller space}. It is endowed with a smooth (actually, complex) structure, see for instance \cite[§16]{gardiner2}. %The expression in Equation \eqref{cross-ratio norm} gives rise to the \emph{cross-ratio norm}, which determines the topology. 
We are now going to discuss briefly the tangent space to Universal Teichm\"uller space. 
We will use the notation $\hat\varphi$ for a vector field on $S^1$ and $\varphi$ for the function from $S^1$ to $\R$ which corresponds to $\hat\varphi$ under the standard trivialization of $TS^1$.
In the following definition, we regard tangent vectors to $S^1$ as elements of $\C$. Hence, $\hat\varphi(z)=iz\varphi(z)$ for every $z\in\partial\D$.

\begin{defi}
A function $\varphi:S^1\rar\R$ is in the Zygmund class if
there is a constant $C$ such that
\begin{equation}\label{cross-ratio infinitesimal}
%\sup_Q \hat\varphi[Q]\leq +\infty\,,
|\varphi(e^{i(\theta+h)})+\varphi(e^{i(\theta-h)})-2\varphi(e^{i\theta})|<C|h|
\end{equation}
for all $\theta, h\in\R$.
A vector field $\hat\varphi$ on $S^1$ is a Zygmund field if 
the associated function $\varphi$ is in the Zygmund class.
%where the supremum is taken over all quadruples $Q=(a,b,c,d)$ with $cr(Q)=-1$ and
%$$\hat\varphi[Q]=\left| \frac{\hat\varphi(a)-\hat\varphi(d)}{a-d}+\frac{\hat\varphi(b)-\hat\varphi(c)}{b-c}-\frac{\hat\varphi(a)-\hat\varphi(b)}{a-b}-\frac{\hat\varphi(d)-\hat\varphi(c)}{d-c}\right| \,.$$
%We say the associated function $\varphi:S^1\to\R$ such that $\hat\varphi(z)=iz\varphi(z)$ is in the Zygmund class.
\end{defi}

Functions in the Zygmund class are $\alpha$-H\"older for any $\alpha\in(0,1)$, but in general they are not Lipschitz.
%Vector fields on $S^1$ for which $\varphi(z)$ is a quadratic polynomial of $z$ correspond to the .
%It is easy to show that the LHS in Equation \eqref{cross-ratio infinitesimal}, which is called again \emph{cross-ratio norm} for Zygmund fields, vanishes if and only if $\hat\varphi=p$ is a quadratic polynomial, namely $$\hat\varphi(z)=p(z)=\alpha z^2+\beta z+\gamma\,,$$
%satisfying the condition that
%$\hat\varphi(z)=iz\varphi(z)$ for some $\varphi:S^1\to\R$.
%Quadratic polynomials as above are precisely the infinitesimal deformations to a family of M\"obius transformations. 
The vector space of Zygmund fields, quotiented by the subspace of vector fields which are extensions on $S^1$ of Killing vector fields on $\Hyp^2$, is precisely the tangent space at the identity of Universal Teichm\"uller space (\cite{gardiner2}).

\begin{example} \label{infinitesimal earthquake one leaf}
Let $\mu$ be the measured geodesic lamination whose support consists of a single geodesic $l$, with weight $1$. Then, once a point $x_0\in\Hyp^2\setminus l$ is fixed, it is easy to describe the earthquake along $\mu$:
\begin{equation}
E_l ([\eta])=\begin{cases}
[\eta] & \text{if }x_0\text{ and }[\eta]\text{ are in the same component of }(\Hyp^2\cup\partial_\infty\Hyp^2)\setminus \bar l \\
[A^l(1)(\eta)] & \text{otherwise}
\end{cases}
\end{equation}
for any $\eta\in N$, where $A^l(t)\in\SO(2,1)$ induces the hyperbolic isometry of $\Hyp^2$ which translates on the left (as seen from $x_0$) along the geodesic $l$ by lenght $t$.

Hence the 1-homogeneous function $H$ associated to the Zygmund field $\dot E _l=\ddt E_{t l}$ (as in Lemma \ref{lemma campo e funzione omogenea}) has the following expression, for any section $\gamma:S^1\to N$:
\begin{equation}
H(\gamma(\xi))=
%\ddt E^{t\mu}([\eta])=
\begin{cases}
0 & \text{if }x_0\text{ and }[\gamma(\xi)]\text{ are in the same component of }(\Hyp^2\cup\partial_\infty\Hyp^2)\setminus \bar l \\
\langle\dot A^l(\gamma(\xi)),v(\gamma(\xi))\rangle & \text{otherwise}
\end{cases}
\end{equation}
where $v$ is any unit spacelike tangent vector field to $N$, in the counterclockwise orientation. Under the standard identification of $S^1$ with $\partial\D$, we obtain, for $\eta \in\partial \D$
$$\dot E _l(\eta)=\langle\dot A^l(\eta),v\rangle v\,.$$
where $v$ is now the unit tangent vector to $\partial\D$.
\end{example}

We will use the following result by Gardiner-Hu-Lakic, see \cite{garhulakic} or \cite[Appendix]{saricweak}.

\begin{theorem} \label{teorema saric}
Given a bounded measured geodesic lamination $\mu$ and a fixed point $x_0$ which does not lie on any weighted leaf of $\mu$, the integral
\begin{equation} \label{integral formula infinitesimal earthquake}
\dot E^\mu(\eta)=\int_{\mathcal G}\dot E_l(\eta)d\mu(l)
\end{equation}
converges for every $\eta\in \partial\D$ and defines a Zygmund field $\hat\varphi$ on $S^1$, which corresponds to the infinitesimal earthquake
$$\dot E^\mu=\ddt E^{t\mu}\,.$$
Conversely, for every Zygmund field $\hat\varphi$ on $S^1=\partial\D$, there exists a bounded measured geodesic lamination $\mu$ such that $\hat\varphi$ is the infinitesimal earthquake along $\mu$, namely $$\hat\varphi=\ddt E^{t\mu}\,,$$
up to an infinitesimal M\"obius transformation.
\end{theorem}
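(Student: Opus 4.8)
The plan is to prove the two directions separately, relying throughout on the linearity of $\mu\mapsto\dot E^\mu$ and on the explicit single-leaf fields of Example~\ref{infinitesimal earthquake one leaf}. Identifying $\partial\D$ with $\R\cup\{\infty\}$ by a M\"obius map sending the endpoints of $l=(a,b)$ to $0,\infty$, the generator of the translation along $l$ has trivialising function $w\mapsto c\,(w-a)(w-b)/(a-b)$ with $c$ universal, so $|\dot E_l(\eta)|\asymp |\eta-a|\,|\eta-b|/|a-b|$ on the side of $l$ away from $x_0$ and $\dot E_l(\eta)=0$ on the other side. For convergence, fix $\eta\in\partial\D$ and take $x_0=0$: only leaves separating $\eta$ from $0$ contribute, and they all cross the radial ray $r$ from $0$ to $\eta$. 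Cutting $r$ into hyperbolic unit segments $I_1,I_2,\dots$ going outwards, a leaf crossing $I_k$ has both endpoints within Euclidean distance $\asymp e^{-k}$ of $\eta$ and spread $|a-b|\gtrsim e^{-k}$, hence $|\dot E_l(\eta)|\lesssim e^{-k}$; since $\mu$ is bounded, $\mu(\mathcal G_{I_k})\le\|\mu\|_{Th}$, and therefore
\[
\int_{\mathcal G}|\dot E_l(\eta)|\,d\mu(l)\ \lesssim\ \|\mu\|_{Th}\sum_{k\ge 1}e^{-k}\ <\ \infty\,,
\]
uniformly on compact subsets of $\partial\D$. Thus the integral in \eqref{integral formula infinitesimal earthquake} converges and $\hat\varphi$ is a well-defined continuous vector field on $S^1$.

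\textbf{Zygmund bound.} By linearity, $\varphi(e^{i(\theta+h)})+\varphi(e^{i(\theta-h)})-2\varphi(e^{i\theta})=\int_{\mathcal G}\Delta_l\,d\mu(l)$, where $\Delta_l$ is the analogous second difference of the single-leaf function $\varphi_l$, so it suffices to estimate $\Delta_l$ and integrate. Write $h\asymp e^{-K}$ and $J=[\theta-h,\theta+h]$, and split the leaves by the dyadic scale $e^{-k}$ at which they meet $J$. If $l$ keeps all three points strictly on one side and lies at Euclidean distance $\asymp e^{-k}$ from $J$ (so $k\lesssim K$), then $\varphi_l$ is smooth near $e^{i\theta}$ with second $\theta$-derivative $\asymp 1/|a-b|\asymp e^{k}$, giving $|\Delta_l|\lesssim h^2 e^{k}$ by Taylor. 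If instead $l$ separates two of the three points, or has an endpoint within $e^{-k}$ of $e^{i\theta}$ with $k\gtrsim K$, then two of the three terms are pinned by the vanishing of $\dot E_l$ across $l$ and the third is $\lesssim e^{-k}$, so $|\Delta_l|\lesssim e^{-k}$. The geodesics relevant at scale $k$ cross an $O(1)$-window of a radial ray, so $\mu$ charges mass $\le\|\mu\|_{Th}$ of them; summing $\sum_{k\lesssim K}h^2 e^{k}$ and $\sum_{k\gtrsim K}e^{-k}$ yields $\lesssim e^{-K}\|\mu\|_{Th}\asymp|h|\,\|\mu\|_{Th}$. Hence $\hat\varphi$ is Zygmund with Zygmund constant $\lesssim\|\mu\|_{Th}$.

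\textbf{Identification and converse.} To see $\dot E^\mu=\ddt E^{t\mu}$, differentiate Thurston's earthquake cocycle: $E^{t\mu}(\eta)$ is the limit along refining partitions of $[x_0,\eta]$ of ordered products $\prod_j A^{l_j}(t\,m_j)$, each factor being $\mathrm{Id}+t\,m_j\dot A^{l_j}+O(t^2 m_j^2)$; taking $\ddt$ and passing to the limit, which is justified by the uniform tail estimates above, produces exactly $\int_{\mathcal G[x_0,\eta]}\dot A^l(\eta)\,d\mu(l)$. For the converse, integrate a Zygmund field $\hat\varphi$ to a $C^1$ path $t\mapsto f_t$ of quasi-symmetric homeomorphisms with $f_0=\mathrm{Id}$ and $\dot f_0=\hat\varphi$; by Thurston's theorem $f_t=E^{\mu_t}$ with $\mu_0=0$, and one shows $\mu_t/t$ converges as $t\to0$ to a \emph{bounded} measured geodesic lamination $\mu$. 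Granted this, the forward part makes $\dot E^\mu$ Zygmund, and differentiating $f_t=E^{\mu_t}$ at $t=0$ gives $\dot E^\mu=\hat\varphi$ up to the normalisation freedom in the earthquake (the image of one stratum), i.e.\ up to an infinitesimal M\"obius transformation.

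\textbf{Main obstacle.} The crux — and the reason we invoke the theorem of Gardiner--Hu--Lakic rather than reprove it — is the a priori estimate underlying the last step: extracting a \emph{bounded} transverse measure from a merely Zygmund boundary field, i.e.\ bounding $\limsup_{t\to0}\tfrac1t\,\mu_t(\mathcal G_I)$ by the Zygmund constant of $\varphi$ for every segment $I$ of length $\le 1$. One reconstructs $\mu_t(\mathcal G_I)$ near a geodesic as a telescoping sum of values of a lift of $f_t$ on a boundary grid adapted to $I$ and controls each increment, at first order in $t$, by the Zygmund modulus; making this effective and compatible with the inverse of Thurston's earthquake map is the delicate point. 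The forward Zygmund bound, by contrast, is the self-contained summation sketched above.
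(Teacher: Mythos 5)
This theorem is not proved in the paper at all: it is quoted verbatim as a result of Gardiner--Hu--Lakic, with the reader sent to \cite{garhulakic} and to the appendix of \cite{saricweak}. So there is no in-paper argument to measure you against; the comparison is between your partial reconstruction and a bare citation. Your forward direction is a faithful sketch of the standard argument from those references: the convergence estimate (unit subdivision of the radial ray, $|\dot E_l(\eta)|\lesssim e^{-k}$ for leaves crossing $I_k$, per-segment mass $\le\|\mu\|_{Th}$) is correct and self-contained, and the scale-by-scale second-difference estimate giving the Zygmund bound with constant $\lesssim\|\mu\|_{Th}$ is the right mechanism. Two places in that half are thinner than they look. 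First, in the ``separating'' case of the Zygmund estimate, a leaf separating two of the three points need not be small (one endpoint in the $h$-arc, the other far away), and such leaves do not all cross a single $O(1)$-window of a radial ray; one needs the refined observation that for these leaves all three values of $\varphi_l$ are already $O(h)$, together with a stratified mass count, to close the sum. Second, the identification $\dot E^\mu=\ddt E^{t\mu}$ via term-by-term differentiation of Thurston's cocycle requires a genuine dominated-convergence argument to swap $\ddt$ with the limit over refining partitions; you assert the uniform tail estimates justify it but do not carry this out.

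The converse — that a Zygmund field is $\ddt E^{t\mu}$ for a \emph{bounded} $\mu$ — you explicitly do not prove; you name it as the crux and invoke Gardiner--Hu--Lakic for exactly this step. That is a genuine gap if the goal were a self-contained proof: the a priori bound $\limsup_{t\to0}\mu_t(\mathcal G_I)/t\lesssim C_{\mathrm{Zyg}}(\varphi)$ is the hard analytic content of the theorem, and nothing in your sketch produces it. Relative to the paper, however, this is no worse than what the authors themselves do, since they cite the entire statement. You should make explicit in your write-up that the converse is being imported, not proved.
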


Analogously to the case of earthquakes, although the infinitesimal earthquake $\dot E^\mu$ is not continuous in $\Hyp^2$, its boundary value is a continuous field. Theorem \ref{teorema saric} can be regarded as the infinitesimal version of the Theorem (\cite{garhulakic}, anticipated by Thurston) which states that every quasi-symmetric homeomorphism is the extension on the boundary of an earthquake with bounded measured geodesic lamination.

\section{The Minkowski problem in Minkowski space} \label{sec:convex existence}

%In the following, the aim is to prove the implication $i)\Rightarrow iv)$. Actually, we will give a more general proof, to show that for every measured geodesic lamination $\mu$, it is possible to construct a Cauchy surface $S$ of constant curvature in the domain of dependence obtained from $\mu$. 

The aim of this section is to prove that, for every domain of dependence in $\R^{2,1}$ contained in the cone over a point, there exists a unique smooth Cauchy surface with prescribed (\`a la Minkowski) negative curvature, which is an entire graph. Equivalently, the main statement is the following.

\begin{theorem} \label{theorem minkowski problem lsc}
Given a bounded lower semicontinuous function $\varphi:\partial\D\rar \R$ and a smooth function $\psi:\D\to[a,b]$ for some $0<a<b<+\infty$, there exists a unique smooth spacelike surface $S$ in $\R^{2,1}$ with support function at infinity $\varphi$ and curvature $K(G^{-1}(x))=-\psi(x)$. Moreover, $S$ is an entire graph and is contained in the past of the $(1/\sqrt{\inf{\psi})}$-level surface of the cosmological time of the domain of dependence with support function $h=\co(\varphi)$.

%Moreover, $S$ has principal curvatures uniformly bounded below if and only if $\bar h$ is in the Zygmund class.
\end{theorem}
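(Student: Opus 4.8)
The plan is to follow the outline sketched in the introduction: approximate the given data by group-invariant data, solve the problem equivariantly via the result of Barbot–Béguin–Zeghib, and pass to the limit. More precisely, let $\mu$ be the dual lamination of the domain of dependence $D = D(\mu, x_0, y_0)$ with support function $h = \co(\varphi)$. First I would choose a sequence of discrete measured geodesic laminations $\mu_n$, each invariant under a cocompact Fuchsian group $G_n$, together with points so that $\mu_n \to \mu$ on compact subsets of $\Hyp^2$ in the appropriate weak sense; correspondingly the domains $D_n = D(\mu_n, x_0, y_0)$ have support functions $h_n \to h$ locally uniformly on $\D$ (using the integral formula of Proposition \ref{support function from lamination} and Lemma \ref{lemma limit radial geodesics}). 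Choose also $G_n$-invariant smooth functions $\psi_n$ with values in $[a,b]$ converging to $\psi$ locally uniformly. By \cite{barbotzeghib} there is, for each $n$, a unique $\Gamma_n$-invariant entire graph $S_n$ which is a Cauchy surface for $D_n$ and has curvature function $-\psi_n$; let $u_n$ be its support function on $\D$, a smooth solution of $\det D^2 u_n = (1/\psi_n)(1-|z|^2)^{-2}$.

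The first main step is the a priori control of the $u_n$. Comparing with the cosmological-time leaves of $D_n$: the surface of constant curvature $-\psi_n \le -a$ lies, by the comparison principle (Theorem \ref{comparison principle}) applied to the Monge–Ampère measures on exhausting discs, below the $(1/\sqrt{a})$-leaf and above $\partial_s D_n$, hence $h_n \le u_n \le h_n + c$ with $c = 1/\sqrt{a}\cdot\sqrt{1-|z|^2}$ on $\D$; since $S_n$ is Cauchy for $D_n$, $u_n$ and $h_n$ agree on $\partial\D$ (Lemma \ref{remark support functions and Cauchy}). Thus $u_n$ is uniformly bounded on compact subsets of $\D$; by convexity the $C^1$-norms are locally bounded, so after passing to a subsequence $u_n \to u$ locally uniformly, with $u$ convex. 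By Lemma \ref{convergence of solutions}, $u$ is a generalized solution of $\det D^2 u = (1/\psi)(1-|z|^2)^{-2}$; by Theorem \ref{solution strictly convex dimension 2} it is strictly convex, hence by Theorem \ref{solution smooth} smooth, and $S := \{\grad\bar u(x) - \bar u(x)x\}$ is a smooth strictly convex spacelike surface with curvature $-\psi$. Moreover $h \le u \le h + 1/\sqrt{a}\,\sqrt{1-|z|^2}$, so $u$ extends $\varphi$ on $\partial\D$ and $S$ lies in the past of the $(1/\sqrt{a})$-level surface of the cosmological time of $D$; in particular $S$ is a Cauchy surface for $D$.

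The hard part, as anticipated in the introduction, is showing that $S$ is an \emph{entire graph}, i.e.\ that the boundary of the future-convex domain dual to $u$ contains no lightlike ray — equivalently that the Gauss map image of $S$ is all of $\Hyp^2$, equivalently $\bar u < +\infty$ everywhere. Here I would use the parabolic-invariant constant-curvature barriers constructed in Appendix \ref{appendix parabolic}: for each $\eta_0 \in \partial\D$ there is an entire graph $P_{\eta_0}$ of constant curvature $-b$ (or any fixed value) whose support function at infinity is constant on $\partial\D \setminus \{\eta_0\}$ and finite. Suppose $\bar u(x_\infty) = +\infty$ for some $x_\infty$; pushing such a barrier around and using the comparison principle on large discs — here the key point is that since $\varphi$ is \emph{bounded}, one can position $P_{\eta_0}$ so that it lies in $D$ (its support function at infinity dominated by $\varphi$) yet its support function forces $u$ to be finite at $x_\infty$ — one derives a contradiction. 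The delicate points are: making the approximation $\mu_n \to \mu$ precise enough that $h_n \to h$ locally uniformly and the boundary values behave well; and the barrier argument, which must be run uniformly enough to conclude finiteness of $\bar u$ at every point of $\Hyp^2$. Finally, uniqueness follows from the comparison principle: two solutions with the same curvature and the same (finite) boundary values on $\partial\D$ coincide on each disc of an exhaustion, hence on $\D$, so the surfaces agree.
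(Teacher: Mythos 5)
Your overall strategy coincides with the paper's: approximate the dual lamination by Fuchsian-invariant laminations and the curvature function by invariant data, solve equivariantly via Barbot--B\'eguin--Zeghib, pass to the limit using an a priori estimate against the cosmological time, upgrade the limit to a smooth strictly convex solution by Aleksandrov--Heinz and regularity, and rule out lightlike rays with the parabolic-invariant barriers of Appendix \ref{appendix parabolic}. Two small slips that do not affect the structure: the support-function inequalities go the other way (the paper's estimate is $h-C\sqrt{1-|z|^2}\le u\le h$, not $h\le u\le h+c$), and the barrier must have boundary value \emph{dominating} $\varphi$ away from the bad lightlike direction and constant curvature $-\sup\psi$ (not an arbitrary value), so that the comparison of Lemma \ref{confronto sci} applies with the inequalities in the right direction.

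The genuine gap is in the uniqueness step. You assert that two solutions with the same boundary value $\varphi$ ``coincide on each disc of an exhaustion.'' They do not: on an interior disc the comparison principle only says that the minimum of $u_1-u_2$ is attained on the boundary of that disc, and nothing forces $u_1=u_2$ there. To let the discs exhaust $\D$ you need $u_1-u_2\to 0$ at $\partial\D$, and for merely lower semicontinuous $\varphi$ this is not automatic from $\liminf_{z\to z_0}u_i(z)=\varphi(z_0)$: the two solutions could approach the same boundary value at different rates, and neither need extend continuously to $\overline\D$. This is exactly why the paper proves Proposition \ref{estimate cosmological any cc surface}: \emph{every} solution --- not only the one constructed as a limit of invariant surfaces --- satisfies $h-C\sqrt{1-|z|^2}\le u\le h$ with $h=\co(\varphi)$, which is obtained by the rescaling $u_r(z)=u(rz)$ (reducing to the continuous case) together with the estimate $h(z_0)\le\liminf_r h_r(z_0)$ for the convex envelopes of the rescaled boundary data. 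Once both solutions are squeezed in this way, $|u_1-u_2|\le C\sqrt{1-|z|^2}$, the difference extends continuously by zero to $\partial\D$, and the comparison principle on $\overline\D$ closes the argument. Without this a priori estimate for an \emph{arbitrary} solution your uniqueness proof does not go through; note also that the barrier comparison for lsc data (Lemma \ref{confronto sci}), which you invoke for the entire-graph step, itself relies on uniqueness in its proof, so the gap propagates there as well.
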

%\begin{proof}
%Any continuous function $\bar h$ defines a domain of dependence $D$, which is the intersection of the half-spaces bounded by the light-like planes defined by $\bar h$ for every direction $\eta\in\partial\Ip(0)$. More precisely, the support function $H$ of $\partial_s D$ is given by $$H(x)=\sup \left\{F(x):F\text{ convex function with }F|_{\partial\D}=\bar h\right\}.$$ Applying Theorem \ref{existance K surface domain} to the lamination dual to $\partial_s D$ gives the first assertion. The second assertion follows from Proposition \ref{boundedness curvature boundedness lamination} for one implication, and Proposition \ref{prop thurston norm and principal curvatures} for the other one. SHOW UNIQUENESS.
%\end{proof}

The proof will be split in several steps. In Subsection \ref{subsec existance} we construct a solution to the Monge-Amp\`ere equation 
\begin{equation} \label{monge ampere constan curvature}
\det D^2u(z)=\frac{1}{\psi(z)}(1-|z|^2)^{-2} \tag{MA}
\end{equation}
\noindent with the prescribed boundary condition at infinity
\begin{equation} \label{monge ampere boundary condition}
u|_{\partial\D}=\varphi\,. \tag{BC}
\end{equation}
 In Subsection \ref{cauchy and uniqueness} we study the behavior of  Cauchy surfaces in terms of the support functions, and we use this condition to prove uniqueness by applying the theory of Monge-Amp\`ere equations. Finally, in Subsection \ref{no light rays} we prove that the surface is not tangent to the boundary of the domain of dependence, and hence is a spacelike entire graph.

\subsection{Existence of solutions} \label{subsec existance}

The surface $S$ will be obtained as a limit of surfaces $S_\Gamma$ invariant under the action of discrete groups $\Gamma<\isom(\R^{2,1})$, isomorphic to the fundamental group of a closed surface, acting freely and properly discontinuously on some future-convex domain in $\R^{2,1}$ for which $S_\Gamma$ is a Cauchy surface. Indeed, such a surface $S_\Gamma$ can be obtained as the lift to the universal cover of a closed Cauchy surface $S_\Gamma/\Gamma$ in a maximal globally hyperbolic spacetime $D(S_\Gamma)/\Gamma$, and the existence of surfaces with prescribed curvature in such spacetimes is guaranteed by results of Barbot-B\'eguin-Zeghib in \cite{barbotzeghib}. 

In this subsection we prove the following existence result for the Monge-Amp\`ere equation \eqref{monge ampere constan curvature}.

\begin{theorem} \label{theorem existence lsc}
Given a bounded lower semicontinuous function $\varphi:\partial\D\rar \R$ and a smooth function $\psi:\D\to[a,b]$ for some $0<a<b<+\infty$, there exists a smooth solution $u:\D\rar\R$ to the equation
\begin{equation}
\det D^2u(z)=\frac{1}{\psi(z)}(1-|z|^2)^{-2}  \tag{\ref{monge ampere constan curvature}}
\end{equation}
such that $u$ extends to a lower semicontinuous function on $\overline \D$ with 
\begin{equation} 
u|_{\partial\D}=\varphi\,.  \tag{\ref{monge ampere boundary condition}}
\end{equation}
Moreover, $u$ satisfies the inequality
\begin{equation} \label{inequality support functions constant curvature surface}
h(z)-C\sqrt{1-|z|^2}\leq u(z)\leq h(z)\,, \tag{CT}
\end{equation}
where $h$ is the convex envelope of $\varphi$
%, namely $$h(z)=(\co \varphi)(z)=\sup\{f(z):f\text{ is an affine function on }\D,f|_{\partial\D}\leq\varphi\}\,,$$ 
and $C=1/\sqrt{\inf\psi}$.
\end{theorem}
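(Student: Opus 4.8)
\textbf{Proof strategy for Theorem \ref{theorem existence lsc}.}

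The plan is to obtain $u$ as a limit of support functions of equivariant constant-curvature-type surfaces, exploiting the results of Barbot--B\'eguin--Zeghib \cite{barbotzeghib}, and then to upgrade the convergence and regularity using the Monge-Amp\`ere machinery collected in the preliminaries. First I would approximate the data $(\varphi,\psi)$ by equivariant data: using Mess' correspondence, let $\mu$ be the dual lamination of the domain of dependence $D$ with support function $h=\co(\varphi)$, and construct a sequence of measured geodesic laminations $\mu_n$, each invariant under a cocompact Fuchsian group $G_n$, with $\mu_n\to\mu$ on compact subsets of $\Hyp^2$. The domain of dependence $D_n$ dual to $\mu_n$ is then invariant under a discrete group $\Gamma_n<\isom(\R^{2,1})$ projecting isomorphically onto $G_n$ (Proposition \ref{proposition translation part mess}). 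Similarly approximate $\psi$ by a sequence of $G_n$-invariant smooth functions $\psi_n:\D\to[a,b]$ converging to $\psi$ on compact sets (the uniform bounds $a,b$ being preserved). By \cite{barbotzeghib}, for each $n$ there is a $\Gamma_n$-invariant smooth spacelike Cauchy surface $S_n$ in $D_n$ with curvature function $-\psi_n$; let $u_n:\D\to\R$ be its support function.

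Next I would establish the a priori bound \eqref{inequality support functions constant curvature surface} at the level of $u_n$, i.e. $h_n(z)-C\sqrt{1-|z|^2}\leq u_n(z)\leq h_n(z)$ where $h_n=\co(\varphi_n)$ is the support function of $D_n$. The upper bound is immediate from Lemma \ref{lemma disuguaglianza funzioni supporto} since $S_n\subseteq D_n$. For the lower bound, compare $S_n$ with the level surface $L_{1/C}$ of the cosmological time of $D_n$ at height $1/\sqrt{\inf\psi}$, whose support function on the disc is $h_n(z)-C\sqrt{1-|z|^2}$ (as recalled in the subsection on Cauchy surfaces). The surface $S_n$ lies in the past of $L_{1/C}$: indeed at any point where $S_n$ touches an equidistant leaf $L_d$ from outside, the curvature of $L_d$ there gives, via the comparison principle (Theorem \ref{comparison principle}) applied to the corresponding Monge-Amp\`ere measures, a bound $\psi_n\leq 1/d^2$, forcing $d\leq C$; hence the translated inclusion yields the lower bound. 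The two bounds together give a uniform (on compact sets of $\D$) bound on $\|u_n\|_{C^0}$, since $h_n\to h$ uniformly on compact sets.

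Then I would pass to the limit. By convexity and the $C^0$ bound, the $u_n$ are locally uniformly Lipschitz, so after extracting a subsequence they converge uniformly on compact subsets of $\D$ to a convex function $u$ satisfying \eqref{inequality support functions constant curvature surface}. Since $u_n$ solves $\det D^2 u_n = (1/\psi_n)(1-|z|^2)^{-2}$ in the generalized sense and $\psi_n\to\psi$, Lemma \ref{convergence of solutions} shows the Monge-Amp\`ere measures converge weakly, so $u$ is a generalized solution of \eqref{monge ampere constan curvature}. By Theorem \ref{solution strictly convex dimension 2} (Aleksandrov--Heinz, valid since the right-hand side is positive), $u$ is strictly convex; by the interior regularity Theorem \ref{solution smooth} it is smooth. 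Finally, the boundary condition \eqref{monge ampere boundary condition}: the squeeze \eqref{inequality support functions constant curvature surface} together with $h=\co(\varphi)$ and $\liminf_{z\to z_0} h(z)=\varphi(z_0)$ for $z_0\in\partial\D$ (and $\sqrt{1-|z|^2}\to 0$) forces $\liminf_{z\to z_0} u(z)=\varphi(z_0)$, i.e. $u$ extends lower semicontinuously to $\overline\D$ with $u|_{\partial\D}=\varphi$.

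\textbf{Main obstacle.} The delicate point is the construction and control of the equivariant approximations $\mu_n$, $G_n$, $\psi_n$: one must ensure both that $\mu_n\to\mu$ in a sense strong enough that the support functions $h_n=\co(\varphi_n)$ converge uniformly on compact subsets of $\D$ (so that the $C^0$ bounds on $u_n$ are genuinely uniform on compacta), and that the cocompact groups $G_n$ can be found so as to carry the prescribed curvature data, while respecting the uniform pinching $a\le\psi_n\le b$ needed both for applying \cite{barbotzeghib} and for the cosmological-time comparison. The comparison argument giving the lower bound in \eqref{inequality support functions constant curvature surface} --- phrasing "a smooth surface of curvature $\ge -1/d^2$ cannot touch a $d$-equidistant leaf from the convex side" precisely in terms of Monge-Amp\`ere measures on the disc --- also requires some care, but is essentially a maximum-principle computation. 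The interior regularity and strict convexity, by contrast, are black-boxed from the quoted theorems.
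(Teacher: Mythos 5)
Your proposal follows essentially the same route as the paper: equivariant approximation of the dual lamination and curvature data, the Barbot--B\'eguin--Zeghib existence theorem, the cosmological-time comparison giving the two-sided bound \eqref{inequality support functions constant curvature surface} (the paper phrases the touching argument via tangency with a hyperboloid of curvature $-1/T_{\max}^2$ at the point where the cosmological time restricted to $S_n$ attains its maximum, which exists by cocompactness), and then Ascoli--Arzel\`a, weak convergence of Monge--Amp\`ere measures, Aleksandrov--Heinz and interior regularity. The obstacles you single out (weak* approximation by Fuchsian-invariant laminations and the locally uniform convergence $h_n\to h$) are exactly the content of the paper's Lemmas \ref{fuchsian weak approximation} and \ref{limit weak uniform compact}, so the strategy is sound and matches the paper's proof.
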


%Hence, in this subsection we prove a more general result.
%\begin{theorem} \label{existance K surface domain}
%Given any measured geodesic lamination $\mu$ on $\Hyp^2$, let $D(\mu)$ be a domain of dependence with dual lamination $\mu$. For every $K\in(-\infty,0)$ there exists a Cauchy surface $S$ of constant curvature $K$ in $D(\mu)$.
%\end{theorem}

There are several notions of convergence of measured geodesic laminations, as discussed for instance in \cite{saricweak}. Recall in Definition \ref{defi mgl} we defined a measured geodesic lamination as a locally finite Borel measure on the set of (unoriented) geodesics $\mathcal{G}$ of $\Hyp^2$, with support a closed set of pairwise disjoint geodesics. 

\begin{defi}
A sequence $\{\mu_n\}_n$ of measured geodesics laminations converges in the weak* topology to a measured geodesic lamination, $\mu_n\war\mu$, if
$$\lim_{n\rar\infty}\int_{\mathcal{G}}f d\mu_n=\int_{\mathcal{G}}f d\mu$$
for every $f\in C_0^0(\mathcal{G})$.
\end{defi}

A stronger notion of convergence is given by the Fr\'echet topology on the space of measured geodesic laminations. This is defined by interpreting a measured geodesic lamination as a linear functional on the space of $\alpha$-H\"older functions (for every $\alpha\in(0,1)$) with compact support. The Fr\'echet topology hence comes from a family of $\alpha$-seminorms obtained by considering the supremum of the linear functional on $\alpha$-H\"older functions supported in a box of geodesics $Q=[a,b]\times[c,d]$, where $(a,b,c,d)$ is a symmetric quadruple of points in $\partial\D$. See \cite{saricweak} for more details.

We are going to approximate a measured geodesic lamination in the weak* topology by measured geodesic laminations which are invariant under the action of a cocompact Fuchsian group. 

\begin{lemma} \label{fuchsian weak approximation}
Given a measured geodesic lamination $\mu$, there exists a sequence of measured geodesic laminations $\mu_n$ such that $\mu_n$ is invariant under a torsion-free cocompact Fuchsian group $G_n<\isom(\Hyp^2)$ and $\mu_n\war \mu$.
\end{lemma}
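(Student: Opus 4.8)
The plan is to approximate the measured geodesic lamination $\mu$ by restricting it to a large hyperbolic disc, and then to use the classical fact that any compactly supported measured geodesic lamination can be made invariant under a cocompact Fuchsian group by a cut-and-paste / doubling construction. First I would fix a base point $x_0\in\Hyp^2$ and, for each $n$, consider the metric ball $B_n=B(x_0,n)$. The lamination $\mu$ restricted to $\mathcal G_{B_n}$ is a compactly supported measured geodesic lamination (possibly with leaves cut off at $\partial B_n$, which I would handle by replacing the cut leaves by complete geodesics through the same region, or simply by choosing $n$ avoiding a null set of radii so that $\mu(\partial(\mathcal G_{B_n}))=0$). By construction $\mu|_{\mathcal G_{B_n}}\war\mu$ as $n\to\infty$, since any $f\in C^0_0(\mathcal G)$ is supported in some $\mathcal G_{B_N}$ and then $\int f\,d\mu|_{\mathcal G_{B_n}}=\int f\,d\mu$ for all $n\geq N$.

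Next I would realize each truncated lamination inside a closed hyperbolic surface. The standard argument: take the convex hull (or a slightly larger compact convex piece) $P_n$ of the support of $\mu|_{\mathcal G_{B_n}}$, which is a compact subsurface of $\Hyp^2$ with geodesic-ish boundary, equip it with the restricted lamination, and then double it (or take finitely many isometric copies glued along boundary components, adjusting by a further compact piece to make all boundary lengths match) to produce a closed hyperbolic surface $\Sigma_n$ carrying a measured geodesic lamination $\bar\mu_n$ whose restriction to $P_n$ is $\mu|_{\mathcal G_{B_n}}$. Passing to the universal cover, $\pi_1(\Sigma_n)$ acts as a torsion-free cocompact Fuchsian group $G_n<\isom(\Hyp^2)$, and the lift $\mu_n$ of $\bar\mu_n$ is a $G_n$-invariant measured geodesic lamination which, after conjugating so that the chosen fundamental domain contains $B_n$ and agrees there with $\mu|_{\mathcal G_{B_n}}$, satisfies $\mu_n|_{\mathcal G_{B_n}}=\mu|_{\mathcal G_{B_n}}$.

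Finally I would check $\mu_n\war\mu$. Given $f\in C^0_0(\mathcal G)$, choose $N$ with $\mathrm{supp}(f)\subset\mathcal G_{B_N}$; then for all $n\geq N$ we have $\int_{\mathcal G}f\,d\mu_n=\int_{\mathcal G_{B_N}}f\,d\mu_n=\int_{\mathcal G_{B_N}}f\,d\mu=\int_{\mathcal G}f\,d\mu$, using that $\mu_n$ agrees with $\mu$ on $\mathcal G_{B_n}\supseteq\mathcal G_{B_N}$. This gives the desired convergence.

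The main obstacle is the surgery step: producing a \emph{closed} hyperbolic surface carrying a measured geodesic lamination that restricts \emph{exactly} to $\mu|_{\mathcal G_{B_n}}$ on a fundamental piece. One must be careful that the boundary of the chosen compact convex region can be made geodesic (or can be completed to geodesic boundary without touching the support of the lamination), that the several copies can be glued with matching boundary lengths (this may require inserting an auxiliary hyperbolic piece disjoint from the lamination, so that $\mu_n$ is genuinely zero there), and that the resulting lamination is still a locally finite Borel measure supported on a \emph{disjoint} family of complete geodesics — disjointness is automatic on the piece coming from $\mu$, and one simply takes zero measure elsewhere. Since only weak* convergence is claimed (not Fr\'echet), no quantitative control on the Thurston norm is needed here, which keeps this step purely topological; this is exactly the classical realization of measured laminations on closed surfaces, and I would cite the relevant references rather than reprove it in detail.
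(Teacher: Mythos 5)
Your overall skeleton matches the paper's: truncate $\mu$ to $\mathcal G_{B(0,n)}$, produce a lamination invariant under a torsion-free cocompact Fuchsian group that agrees with the truncation on a fundamental domain containing $B(0,n)$, and then deduce weak* convergence from the fact that any $f\in C^0_0(\mathcal G)$ is eventually supported in $\mathcal G_{B(0,n)}$. The first and last of these steps are fine and essentially identical to the paper's. The problem is that the middle step --- which is the entire content of the lemma --- is delegated to a ``classical fact'' that does not exist in the form you need, and the doubling construction as you sketch it does not work.

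Concretely: the leaves of $\mu$ restricted to $\mathcal G_{B(0,n)}$ are complete geodesics of $\Hyp^2$ running out to infinity, so they necessarily cross the boundary of any compact piece $P_n$ containing $B(0,n)$; there is no way to choose $P_n$ ``without touching the support of the lamination.'' When you double $P_n$ (equivalently, take the orbit under the group generated by reflections in its sides), an arc of a leaf glues up with its mirror image to a complete geodesic of the quotient surface \emph{only if the leaf meets the side orthogonally}; otherwise the orbit of the lamination consists of broken or mutually intersecting geodesics and is not a geodesic lamination at all. Arranging this orthogonality is exactly what the paper spends most of its proof on: it first replaces the truncated lamination by a \emph{finite, discrete} one (Step 1, using \cite{saricweak}), then perturbs the finitely many leaves to be pairwise ultraparallel (Step 2), and only then can it build a right-angled polygon $P_n\supset B(0,n)$ whose sides meet every leaf orthogonally (Step 3), so that the reflection-group orbit is a genuine $G_n$-invariant measured lamination (Step 4). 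Your proposal skips the discretization entirely --- for a non-discrete lamination one cannot in general find a single geodesic side orthogonal to all the leaves it crosses --- and it never addresses the orthogonality issue, which is the actual obstruction. Thurston's realization of measured laminations on closed surfaces is a statement about laminations that already live on a closed surface; it does not provide the extension-from-a-ball-with-exact-agreement that you invoke. As written, the proof has a genuine gap at its central step.
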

\begin{proof} Recall that $\mathcal{G}_B$ denotes the set of geodesics of $\Hyp^2$ which intersect the subset $B\subseteq \Hyp^2$. We construct the approximating sequence in several steps.

\textit{Step 1.} We show there is a sequence of discrete measured geodesic laminations $\mu_n$ which converge to $\mu$ in the weak* topology. In \cite[§7]{saricweak} it was proved that, if $\mu$ is bounded, there exists a sequence of discrete measured geodesic laminations which converge to $\mu$ in the Fr\'echet topology, which implies weak* convergence. So, assume $\mu$ is not bounded. We define $\nu_n$ by $\nu_n(A)=\mu(A\cap\mathcal{G}_{B(0,n)})$, i.e. the support of $\nu_n$ consists of the geodesics of $\mu$ which intersect $B(0,n)$. By the results in \cite[§7]{saricweak}, for every $n$, there exists a sequence $(\nu_{n,m})_m$ which converges to $\nu_n$ in the Fr\'echet sense. As a consequence of Fr\'echet convergence, for every $n$ we can find $m=m(n)$ such that 
$$\sup_{f\in C_0^0(\mathcal{G}_{B(0,n)})}\left|\int_\mathcal{G} fd\nu_n-\int_\mathcal{G} fd\nu_{n,m(n)}\right|\leq\frac{1}{n}\,.$$
It follows that, for every $f$ compactly supported in $\mathcal{G}$, if $supp(f)\subset \mathcal{G}_{B(0,n_0)}$, then for $n\geq n_0$
$$\left|\int_\mathcal{G} fd\mu-\int_\mathcal{G} fd\nu_{n,m(n)}\right|=\left|\int_\mathcal{G} fd\nu_n-\int_\mathcal{G} fd\nu_{n,m(n)}\right|\xrightarrow{n\to\infty} 0\,.$$
Hence $\mu_n:=\nu_{n,m(n)}$ gives the required approximation.

\textit{Step 2.} We now modify the sequence $\mu_n$ to obtain a sequence $\mu'_n\war\mu$ of finite measured laminations with ultraparallel geodesics. This step is necessary because we would like to construct a fundamental polygon $P_n$ for a cocompact Fuchsian group, so that the edges of $P_n$ intersect the geodesics of $\mu_n$ orthogonally. We will choose edges of $P_n$ so as to ``separate'' the endpoints of the geodesics of $\mu_n$. However, for this purpose one needs that the geodesics of $\mu_n$ are ultraparallel, and thus we first need to modify $\mu_n$ to a new sequence $\mu_n'$.  We can assume the discrete laminations $\mu_n$ constructed in Step 1 are finite (namely they consist of a finite number of weighted geodesics), by taking the intersection with $\mathcal{G}_{B(0,n)}$. Let $d_{\mathcal{G}}$ be the distance induced by a Riemannian metric on $\mathcal{G}$. Suppose the leaves of $\mu_n$ are $l_1^n,\ldots,l_{p(n)}^n$ with weights $a_1^n,\ldots,a^n_{p(n)}$. Then we construct a finite lamination $\mu'_{n}$ by replacing $l_1^n,\ldots,l_{p(n)}^n$ by leaves $k_1^n,\ldots,k_{p(n)}^n$ so that
\begin{itemize}
\item $k_i^n$ and $k_j^n$ are ultraparallel for every $i\neq j$;
\item $d_{\mathcal{G}}(l^n_i,k^n_i)\leq 1/n$;
\item The weight of $k^n_i$ is $a_i^n$.
\end{itemize}
Let us show that $(\mu'_n)_n$ converges weak* to $\mu$. For this purpose, fix a function $f$ with $supp(f)\subset \mathcal{G}_{B(0,n_0)}$ for some $n_0$. Fix $\epsilon>0$. Since $f$ is uniformly continuous, there exists $n_1$ such that if $d_{\mathcal{G}}(l,k)<1/n_1$, then $|f(l)-f(k)|<\epsilon$.
We have
$$\left|\int_\mathcal{G}fd\mu-\int_\mathcal{G}fd\mu_n'\right|\leq\left|\int_\mathcal{G}fd\mu-\int_\mathcal{G}fd\mu_n\right|+\left|\int_\mathcal{G}fd\mu_n-\int_\mathcal{G}fd\nu'_{n,n}\right|\,.$$
By construction, there exists $n_2$ such that the first term in the RHS is smaller than $\epsilon$ provided $n\geq n_2$. Now for every $n$, if $m\geq\max\left\{n_0,n_1\right\}$, 
$$\left|\int_\mathcal{G}fd\mu_n-\int_\mathcal{G}fd\nu'_{n,m}\right|=\sum_{i=1}^{p(n)}\left(f(l^n_i)-f(k^n_i)\right)a_i^n\leq \epsilon \mu_n(\mathcal{G}_{(B(0,n_0))})\,.$$
Since $\mu_n\war\mu$, there exists a constant $C$ such that $\mu_n(\mathcal{G}_{(B(0,n_0))})\leq C$ for $n\geq n_3$. In conclusion, if $n\geq \max\left\{n_0,n_1,n_2,n_3\right\}$, then 
$$\left|\int_\mathcal{G}fd\mu-\int_\mathcal{G}fd\mu_n'\right|\leq(1+C)\epsilon\,.$$

\textit{Step 3.} We claim it is possible to find a polygon $P_n$ with the following properties:
\begin{itemize}
\item $P_n$ contains the ball $B(0,n)$;
\item The angles of $P_n$ are $\pi/2$;
\item $P_n$ intersects the leaves of $\mu'_n$ orthogonally.
\end{itemize}

\begin{figure}[htb]
\centering
\begin{minipage}[c]{.45\textwidth}
\centering
\includegraphics[height=5.5cm]{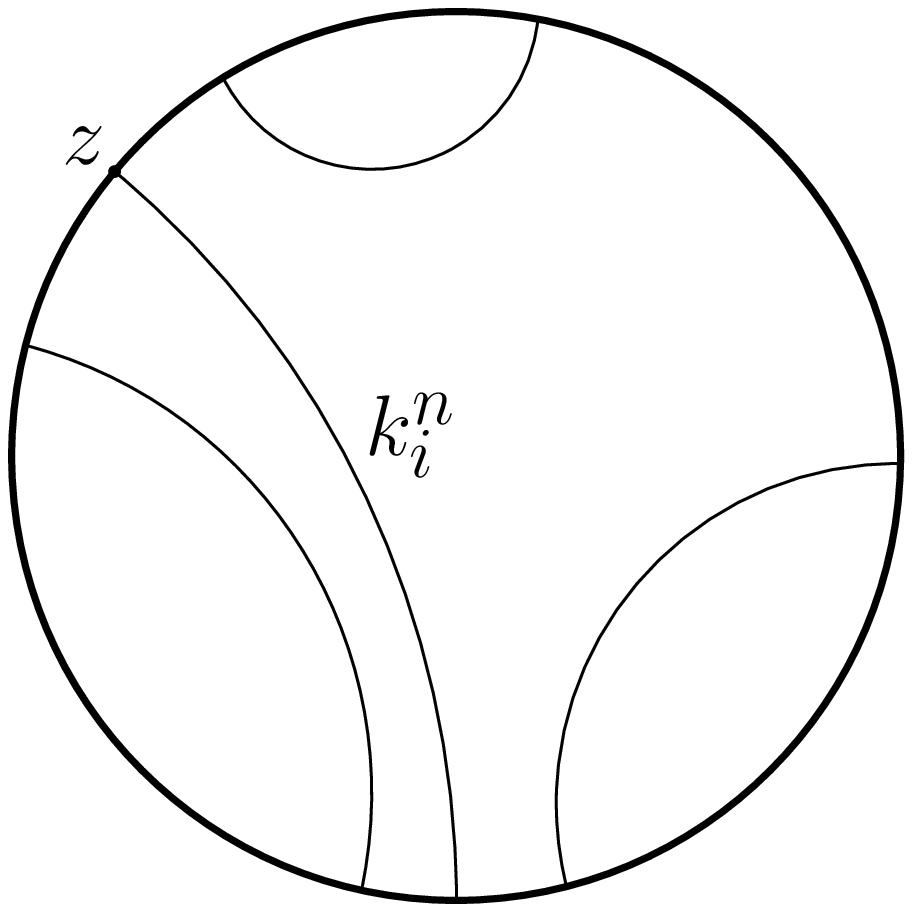}
%\captionsetup{labelformat=empty}
\caption{Start from a finite geodesic laminations with leaves $k_i^n$.} \label{fig:mgl1}
\end{minipage}%
\hspace{5mm}
\begin{minipage}[c]{.45\textwidth}
\centering
\includegraphics[height=5.5cm]{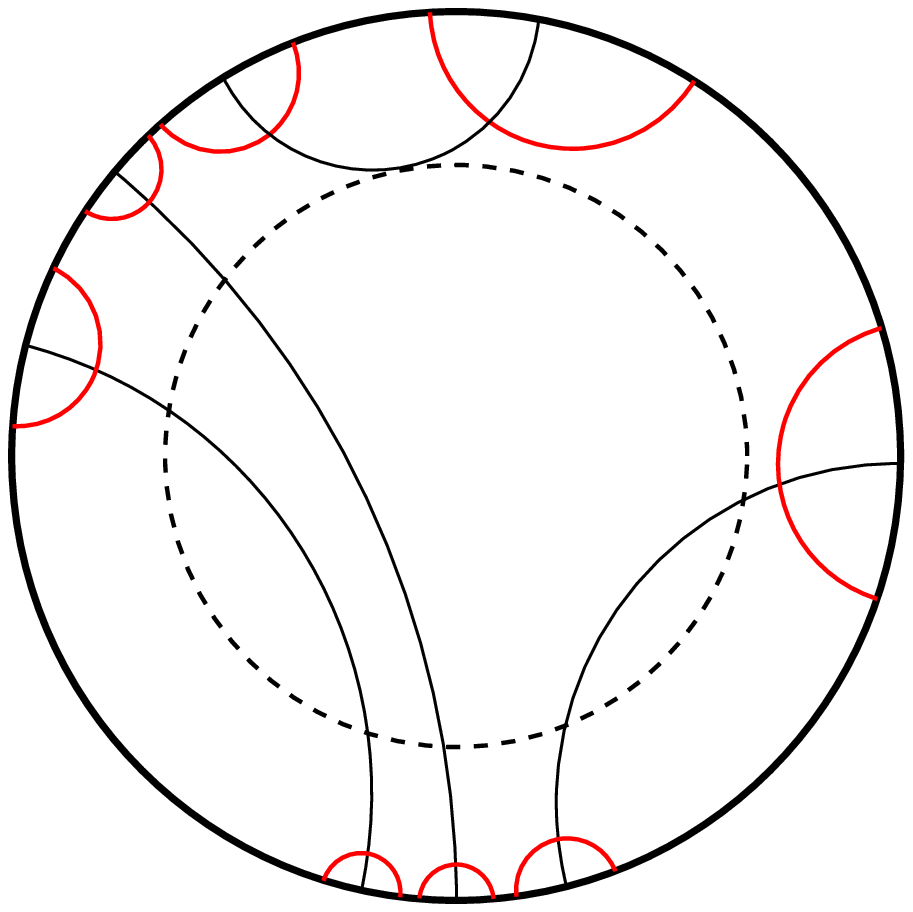}
%\captionsetup{labelformat=empty}
\caption{The construction of the geodesics $g_1,\ldots,g_p$.} \label{fig:mgl2}
\end{minipage}
\end{figure}

We construct the polygon $P_n$ in the following way. For every point $z\in\partial\D$ which is limit of a leaf $k^n_i$ of $\mu_n'$, we pick a geodesic orthogonal to $k^n_i$ which separates $z$ from $B(0,n)$ and from all the other limit points of $\mu_n'$. Let $\left\{g_1,\ldots,g_p\right\}$ be the geodesics obtained in this way. Replacing the $g_i$ by other geodesics further from $B(0,n)$, we can assume the geodesics $g_1,\ldots,g_p$ are pairwise ultraparallel. See Figures \ref{fig:mgl1} and \ref{fig:mgl2}.

 We can now extend the family of geodesics $\{g_1,\ldots,g_p\}$ to a larger family $\{g'_1,\ldots,g'_{p'}\}$ satisfying:
\begin{itemize}
\item $\left\{g_1,\ldots,g_p\right\}\subset\{g'_1,\ldots,g'_{p'}\}$;
\item $g'_1,\ldots,g'_{p'}$ are contained in $\Hyp^2\setminus B(0,n)$
\item $g'_1,\ldots,g'_{p'}$ are pairwise ultraparallel;
\item No geodesic $g'_i$ separates two geodesics $g'_j$ and $g_k'$ in the family (so we can assume the indices in $\{g'_1,\ldots,g'_{p'}\}$ are ordered counterclockwise);
\item The geodesics $h_i$ orthogonal to $g'_i$ and $g'_{i+1}$ (if the indices $i$ are considered $\mathrm{mod} \,p'$) are contained in $\Hyp^2\setminus B(0,n)$.
\end{itemize}

\begin{figure}[htb]
\centering
\begin{minipage}[c]{.45\textwidth}
\centering
\includegraphics[height=5.5cm]{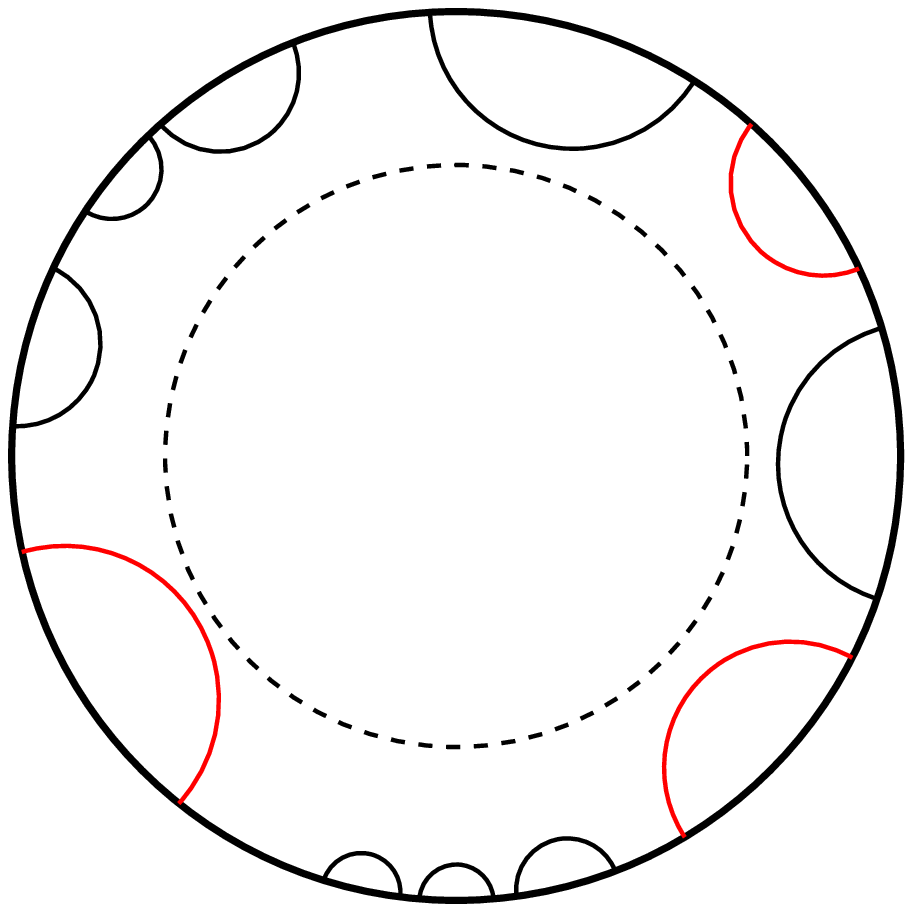}
%\captionsetup{labelformat=empty}
\caption{The geodesics in $\left\{g'_1,\ldots,g'_{p'}\right\}\setminus \{g_1,\ldots,g_p\}$.} \label{fig:mgl3}
\end{minipage}%
\hspace{5mm}
\begin{minipage}[c]{.45\textwidth}
\centering
\includegraphics[height=5.5cm]{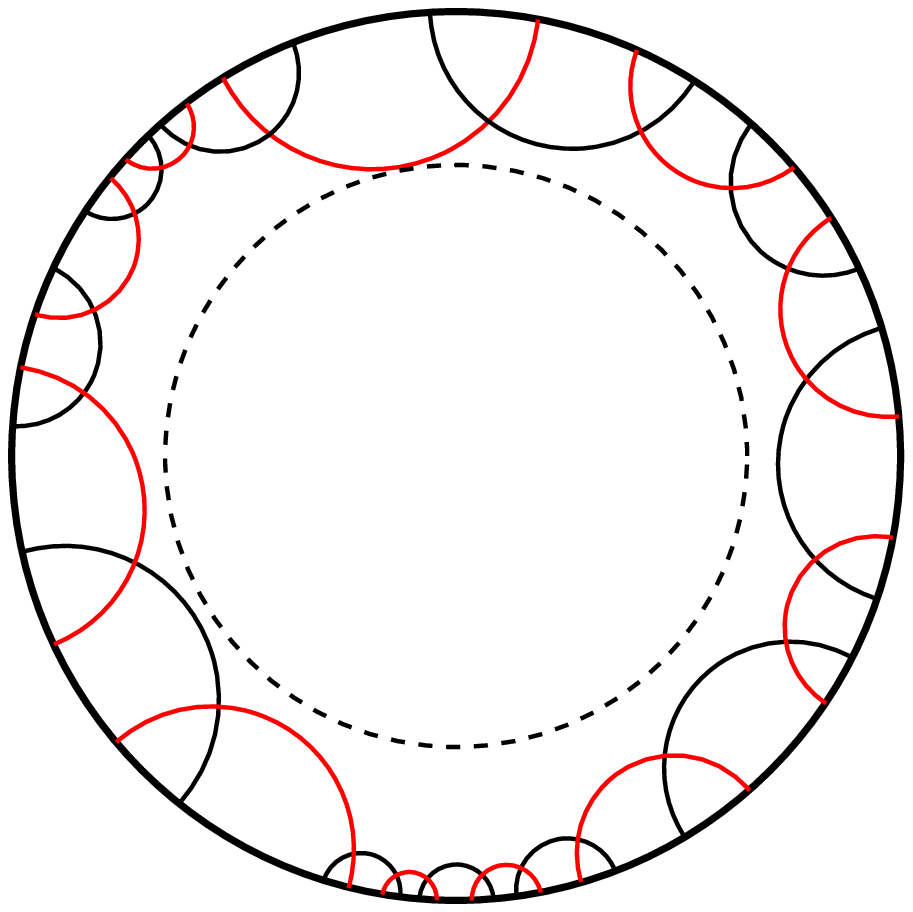}
%\captionsetup{labelformat=empty}
\caption{The geodesics $h_1,\ldots,h_{p'}$ orthogonal to $g'_1,\ldots,g'_{p'}$.} \label{fig:mgl4}
\end{minipage}
\end{figure}

The reader can compare with Figures \ref{fig:mgl3} and \ref{fig:mgl4}. It is clear that the polygon $P_n$ (Figure \ref{fig:mgl5}) given by the connected component of $\Hyp^2\setminus\{g'_1,\ldots,g'_{p'},h_1,\ldots,h_{p'}\}$ containing $B(0,n)$ satisfies the given properties.

\begin{figure}[htb]
\centering
\includegraphics[height=5.5cm]{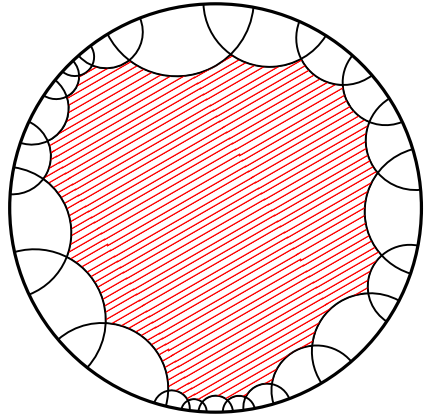}
\caption{The polygon $P_n$. \label{fig:mgl5}}
\end{figure}

\textit{Step 4.} We finally construct a sequence $\mu''_n\war\mu$ with $\mu''_n$ invariant under the action of a torsion-free cocompact Fuchsian group. We consider the discrete group of isometries of $\Hyp^2$ generated by reflections in the sides of the polygon $P_n$ constructed in Step 3. The index 2 subgroup $\overline G_n$ of orientation-preserving isometries  is a discrete cocompact group and it is well-known that $\overline G_n$ contains a finite index torsion-free cocompact Fuchsian group $G_n$. We define $\mu_n''$ as the $G_n$-orbit of $\mu_n'$. It turns out that $\mu_n''$ is a measured geodesic lamination since the leaves of $\mu_n'$ intersect the sides of $P_n$ orthogonally.
Note that $\mu''_n$ is obtained by modifying $\mu'_n$ only in the complement of $B(0,n)$, since by construction $B(0,n)\subset P_n$. Hence it is clear that, if $n_0$ is such that $supp(f)\subset \mathcal{G}_{B(0,n_0)}$, then $\int_\mathcal{G}fd\mu_n'=\int_\mathcal{G}fd\mu_n''$ for $n>n_0$, and thus $\mu''_n\war \mu$. 
\end{proof}

\begin{lemma} \label{limit weak uniform compact}
Given a sequence $\mu_n$ of measured geodesic laminations converging to $\mu$ in the weak* sense, let $D_n=D(\mu_n,x_0,y_0)$ and $D=D(\mu,x_0,y_0)$ be the domains of dependence having $\mu_n$ and $\mu$ respectively as dual laminations, and $P=y_0+x_0^\perp$ as a support plane tangent to the boundary at $y_0$, where $x_0$ does not belong to a weighted leaf of $\mu$. Let $h_n$ and $h$ be the support functions of $D_n$ and $D$. Then $h_n$ converges uniformly on compact sets of $\D$ to $h$.
\end{lemma}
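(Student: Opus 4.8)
The plan is to base everything on the explicit integral representation of the support function given by Proposition~\ref{support function from lamination}. For $z\in\D$ put $x=\pi^{-1}(z)\in\Hyp^2$ and, identifying $z$ with $(z,1)\in\Ip(0)$, set
$f_z(l)=\langle (z,1),\boldsymbol{\sigma}(l)\rangle\,\mathbf 1_{\mathcal G[x_0,z]}(l)$ on $\mathcal G$, where $\mathcal G[x_0,z]$ denotes the geodesics meeting the geodesic segment $[x_0,(z,1)]$. Then $h_n(z)=\langle (z,1),y_0\rangle+\int_{\mathcal G}f_z\,d\mu_n$, and likewise for $h$ with $\mu$; note these integrals are finite because $\mathcal G[x_0,z]$ is a compact subset of $\mathcal G$ and $\mu_n,\mu$ are locally finite. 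Fixing a compact $\mathcal K\subset\D$ and letting $B$ be the convex hull of $\{x_0\}\cup\pi^{-1}(\mathcal K)$, every $l\in\mathcal G[x_0,z]$ (for $z\in\mathcal K$) meets $B$, so $f_z$ is supported in the fixed compact set $\mathcal G_B$; moreover such an $l$ has $d_{\Hyp^2}(x,l)\le d_{\Hyp^2}(x_0,x)$, whence $|f_z|\le\sqrt{1-|z|^2}\,|\langle x,\boldsymbol{\sigma}(l)\rangle|\le\sinh(\operatorname{diam}_{\Hyp^2}B)=:M$ uniformly in $z\in\mathcal K$.

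Next I would extract the a priori bounds. Since $\mu_n\war\mu$, sandwiching $\mathbf 1_{\mathcal G_B}$ between compactly supported continuous functions shows $\sup_n\mu_n(\mathcal G_B)<\infty$; combined with the previous paragraph this gives a uniform bound on $|h_n|$ over $\mathcal K$. Each $h_n$ is convex on $\D$ (restriction of the sublinear support function $H_n$), and a uniform sup bound on a slightly larger compact set forces a uniform Lipschitz bound on $\mathcal K$. By Arzel\`a--Ascoli, $\{h_n\}$ is precompact in $C^0_{\mathrm{loc}}(\D)$, and every subsequential limit is a convex function.

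The core step is pointwise convergence $h_n(z)\to h(z)$ on a dense subset $\mathcal D\subset\D$, which I take to be the set of $z$ such that $x=\pi^{-1}(z)$ lies on no weighted leaf of $\mu$; this is dense because the weighted leaves form an at most countable family of geodesics. For $z\in\mathcal D$ the discontinuity set of $f_z$ is contained in $\mathcal G_{\{x_0\}}\cup\mathcal G_{\{x\}}$ (the indicator can only jump at geodesics through an endpoint of $[x_0,x]$), and this set is $\mu$-null: at most one leaf of $\operatorname{supp}\mu$ passes through each of $x_0$ and $x$, and by hypothesis, respectively by the choice of $\mathcal D$, that leaf carries no atom. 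Given $\delta>0$, outer regularity of $\mu$ yields an open neighbourhood $V$ of this set with compact closure and $\mu(\overline V)<\delta$; sandwiching $\mathbf 1_{\overline V}$ between continuous compactly supported functions and using $\mu_n\war\mu$ gives $\limsup_n\mu_n(\overline V)<\delta$ as well. By Urysohn choose $g\in C^0_c(\mathcal G)$ with $\operatorname{supp}g\subset\mathcal G_B$, $\|g\|_\infty\le M$, and $g=f_z$ off $V$, so that $|f_z-g|\le 2M\,\mathbf 1_V$. Then $\int g\,d\mu_n\to\int g\,d\mu$ by weak$^*$ convergence, while $\int|f_z-g|\,d\mu\le 2M\delta$ and $\limsup_n\int|f_z-g|\,d\mu_n\le 2M\limsup_n\mu_n(\overline V)<2M\delta$; letting $\delta\to0$ gives $\int f_z\,d\mu_n\to\int f_z\,d\mu$, i.e. $h_n(z)\to h(z)$.

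Finally, combining the last two steps: any $C^0_{\mathrm{loc}}$-subsequential limit $g$ of $\{h_n\}$ is a convex function on $\D$ agreeing with the convex function $h$ on the dense set $\mathcal D$, hence $g\equiv h$; therefore the whole sequence $h_n$ converges to $h$ uniformly on compact subsets of $\D$. I expect the only delicate point to be the core step — reconciling the discontinuity of the indicator $\mathbf 1_{\mathcal G[x_0,z]}$ with weak$^*$ convergence — and the key is that the only possible atoms on the discontinuity set come from weighted leaves, which are avoided by $x_0$ by hypothesis and by a generic $z$, so the Portmanteau-type estimate above goes through. (The finitely many $\mu_n$ that might carry an atom on that set are irrelevant since only the $\limsup$ in $n$ enters.)
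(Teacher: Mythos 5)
Your proposal is correct and follows essentially the same route as the paper: reduce by convexity to pointwise convergence at the (dense/full-measure) set of points not lying on weighted leaves, write $h_n$ via the integral formula of Proposition \ref{support function from lamination}, and control the discontinuity of the indicator of $\mathcal{G}[x_0,x]$ by a Portmanteau-type estimate using that the boundary geodesics pass through $x_0$ or $x$ and hence carry no $\mu$-mass. The paper implements the same idea with explicit bump functions supported on slightly enlarged geodesic intervals rather than your outer-regularity/Urysohn formulation, but the argument is the same.
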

\begin{proof}
It suffices to prove that the convergence is pointwise, since the functions $h_n$ and $h$ are convex on $\D$, and therefore pointwise convergence implies uniform convergence on compact sets. In fact, it suffices to prove pointwise convergence for almost every point.

We will actually prove that the support functions $\bar h_n$ restricted to $\Hyp^2$ converge pointwise to $\bar h$ almost everywhere, which is clearly equivalent to the claim. Let us assume $x_0$ and $x$ are points which do not lie on weighted leaves of the lamination $\mu$. Let $\mathcal{G}[x_0,x]$ be the set of geodesics which intersect the closed geodesic segment $[x_0,x]$ and  $\boldsymbol{\sigma}:\mathcal{G}[x_0,x]\rar\R^{2,1}$ be the function which assigns to a geodesic $l$ the corresponding point in $\dS^2$, namely, the spacelike unit vector in $\R^{2,1}$ orthogonal to $l$ with respect to Minkowski product, pointing outward with respect to the direction from $x_0$ to $x$. The support function of $D$ can be written as (compare expression (\ref{support function formula}) in Proposition \ref{support function from lamination}):
$$\bar h_n(x)=\langle x,y_0\rangle+\int_{\mathcal{G}[x_0,x]} \langle x,\boldsymbol{\sigma}\rangle d\mu_n\,.$$
We thus want to show that
$$\int_{\mathcal{G}[x_0,x]} \langle x,\boldsymbol{\sigma}\rangle d\mu_n\xrightarrow{n\to\infty} \int_{\mathcal{G}[x_0,x]} \langle x,\boldsymbol{\sigma}\rangle d\mu\,.$$
Note that $\mathcal{G}[x_0,x]$ is compact in $\mathcal{G}$; define $\varphi_i$ a smooth function such that $\varphi_i(l)=1$ for every $l\in\mathcal{G}[x_0,x]$ and $supp(\varphi_i)\subset\mathcal{G}(x_0-\frac{1}{i},x+\frac{1}{i})$. Here $(x_0-\frac{1}{i},x+\frac{1}{i})$ denotes the open geodesic interval which extends $[x_0,x]$ of a lenght $1/i$ on both sides. Hence we have:
\begin{align}
\left| \int_{\mathcal{G}} \langle x,\boldsymbol{\sigma}\rangle \chi_{\mathcal{G}[x_0,x]}d\mu_n- \int_{\mathcal{G}} \langle x,\boldsymbol{\sigma}\rangle \chi_{\mathcal{G}[x_0,x]} d\mu \right| & \leq  \left| \int_{\mathcal{G}} \langle x,\boldsymbol{\sigma}\rangle \chi_{\mathcal{G}[x_0,x]}d\mu_n- \int_{\mathcal{G}} \langle x,\boldsymbol{\sigma}\rangle \varphi_i d\mu_n \right| \tag{$\star$} \label{1 term} \\
&+ \left| \int_{\mathcal{G}} \langle x,\boldsymbol{\sigma}\rangle \varphi_i d\mu_n- \int_{\mathcal{G}} \langle x,\boldsymbol{\sigma}\rangle \varphi_i d\mu \right| \tag{$\star\star$} \label{2 term} \\
&+\left| \int_{\mathcal{G}} \langle x,\boldsymbol{\sigma}\rangle \varphi_i d\mu- \int_{\mathcal{G}} \langle x,\boldsymbol{\sigma}\rangle \chi_{\mathcal{G}[x_0,x]} d\mu \right|\,. \tag{$\star\star\star$} \label{3 term}
\end{align}
Let $\mathcal{F}=\overline{\mathcal{G}(x_0-\frac{1}{i},x+\frac{1}{i})\setminus \mathcal{G}[x_0,x]}$. Since we have assumed $x$ and $x_0$ are not on weighted leaves of $\mu$, we have
$\eqref{3 term}\leq K\mu(\mathcal{F})\leq K\epsilon$
if $i\geq i_0$, for some fixed $i_0$.
By definition of weak* convergence, the term numbered (\ref{2 term}) converges to zero as $n\to\infty$ for $i=i_0$ fixed, so $\eqref{2 term}\leq\epsilon$ for $n\geq n_0$. Finally, $\limsup_{n\to\infty}\mu_n(\mathcal{F})\leq \mu(\mathcal{F})$ by Portmanteau Theorem (which in this case can be easily proved again by an argument of enlarging the interval and approximating by bump functions). Hence there exists $n_0'$ such that $\eqref{1 term}\leq K\mu_n(\mathcal{F})\leq 2K\epsilon$ if $n\geq n_0'$ and $i\geq i_0$. Choosing $n\geq\max\left\{n_0,n_0'\right\}$, the proof is concluded.
\end{proof}

Let us now consider an arbitrary measured geodesic lamination $\mu$ and take the sequence $\mu_n\war\mu$ constructed as in Lemma \ref{fuchsian weak approximation}. Let $D(\mu_n,x_0,y_0)$ be the domain of dependence having dual lamination $\mu_n$ and $P=y_0+x_0^\perp$ as a support plane tangent at $y_0$. Since $\mu_n$ is invariant under the action of a Fuchsian cocompact group $G_n$, $D(\mu_n,x_0,y_0)$ is a domain of dependence invariant under a discrete group $\Gamma_n$. The linear part of $\Gamma_n$ is $G_n$ and the translation part is determined (up to conjugacy) by $\mu_n$ (see Proposition \ref{proposition translation part mess}). This means that $D(\mu_n)/\Gamma_n$ is a maximal globally hyperbolic flat spacetime.

\begin{theorem}[\cite{barbotzeghib}] \label{theorem bbz}
Let $G_0$ be a Fuchsian cocompact group and $\mu_0$ be a $G_0$-invariant measured geodesic lamination. Let $\psi_0:\Hyp^2\rar(0,\infty)$ be a $G_0$-invariant smooth function. Let $\Gamma_0$ be a subgroup of $\isom(\R^{2,1})$ whose linear part is $G_0$ and whose translation part is determined by $\mu_0$. Then there exists a unique smooth Cauchy surface $S_0$ of curvature $K(G^{-1}(x))=-\psi_0(x)$ in the maximal future-convex domain of dependence $D_0$ invariant under the action of the group $\Gamma_0$.
\end{theorem}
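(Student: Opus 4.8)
This is the central existence result of \cite{barbotzeghib}; we sketch the strategy one would follow. The plan is to reduce the problem to a scalar Monge--Amp\`ere equation on the closed hyperbolic surface $\Sigma=\Hyp^2/G_0$ and to solve it by a continuity method. Observe first that $\psi_0$ is automatically pinched, $0<a\le\psi_0\le b<\infty$, since it is $G_0$-invariant and hence descends to the compact $\Sigma$. Fix a reference $\Gamma_0$-invariant convex Cauchy surface $S_\ast$ of $D_0$ --- for instance a level set of the cosmological time --- with support function $\bar u_\ast:\Hyp^2\to\R$. By Lemma \ref{remark support functions and Cauchy}, a $\Gamma_0$-invariant convex spacelike surface $S\subseteq D_0$ is a Cauchy surface for $D_0$ exactly when its support function $\bar u$ differs from $\bar u_\ast$ by a $G_0$-invariant function, i.e.\ by a function on $\Sigma$; and by Lemma \ref{lemma formulae minkowski} the curvature condition $K(G^{-1})=-\psi_0$ becomes the elliptic Monge--Amp\`ere equation $\det(\Hess\bar u-\bar u\,\id)=1/\psi_0$, to be solved in the cone where $\Hess\bar u-\bar u\,\id$ is positive definite. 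Thus the unknown is effectively a function on the compact surface $\Sigma$.

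To produce a solution one interpolates $\psi_s=(1-s)+s\,\psi_0$ for $s\in[0,1]$ and sets $\mathcal S\subseteq[0,1]$ to be the set of parameters for which the equation with right-hand side $1/\psi_s$ has a smooth convex $\Gamma_0$-invariant solution. Nonemptiness at $s=0$ is the constant curvature case, which one treats separately. Openness of $\mathcal S$ follows from the implicit function theorem: the linearization of $\bar u\mapsto\det(\Hess\bar u-\bar u\,\id)$ at a convex solution is a linear second-order elliptic operator on $\Sigma$ whose zeroth-order coefficient is negative --- it comes from the $-\bar u\,\id$ term --- so it is invertible on $C^{2,\alpha}(\Sigma)$ by the maximum principle on the closed surface. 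Closedness is the analytic heart and amounts to a priori bounds in $C^{2,\alpha}(\Sigma)$ uniform in $s$; once these are in hand, the Schauder theory and Theorem \ref{solution smooth} give $C^\infty$ regularity and one passes to the limit along $s\to\sup\mathcal S$, whence $\mathcal S=[0,1]$.

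There are three layers of a priori estimates. For the $C^0$ bound: since the curvature of $S_s$ lies in $[-b,-a]$ and $\psi\mapsto 1/\psi$ is order-reversing, the comparison principle of Theorem \ref{comparison principle}, in its form on the closed surface $\Sigma$, sandwiches $\bar u$ between the support functions of the constant curvature $-a$ and $-b$ surfaces --- fixed $\Gamma_0$-invariant surfaces lying in a fixed compact subset of $\mathrm{int}(D_0)$. The $C^1$ (uniform spacelikeness) estimate then follows, because this $C^0$ control confines $S_s$ to a fixed compact region of $\mathrm{int}(D_0)$, forcing a uniform bound $\|Df_s\|\le 1-\delta$ on the functions defining $S_s$ as graphs, equivalently a uniform bound for $\bar u$ on $\Hyp^2$. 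The delicate step is the $C^2$ estimate: a uniform two-sided pinching of the principal curvatures of $S_s$ away from $0$. Since $\det B=\psi_s$ is already pinched, this reduces to an \emph{upper} bound on the largest principal curvature, obtained by a Pogorelov-type argument --- one applies the maximum principle on $\Sigma$ to an auxiliary quantity of the form $\lambda_{\max}(B^{-1})\,e^{\Phi}$, with $\Phi$ a suitable function of $\bar u-\bar u_\ast$ and of $\|\Grad(\bar u-\bar u_\ast)\|^2$, and absorbs the third- and mixed-order terms using the equation together with the $C^1$ bound. I expect this $C^2$ estimate to be the main obstacle; everything else is either soft or a routine application of the maximum principle, and on the closed surface $\Sigma$ these estimates are automatically global.

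Uniqueness follows from the maximum principle. If $S_0,S_0'$ are two solutions with support functions $\bar u,\bar u'$, then $w:=\bar u-\bar u'$ descends to $\Sigma$. After possibly interchanging the two surfaces we may assume $\max_\Sigma w\ge 0$; at a maximum point $p$ one has $\Grad w(p)=0$ and $\Hess w(p)\le 0$, so $\Hess\bar u(p)-\bar u(p)\,\id\le\Hess\bar u'(p)-\bar u'(p)\,\id$, while both matrices are positive definite with the same determinant $1/\psi_0(p)$; this forces them to coincide at $p$, so $w$ has an interior maximum equal to $0$. The strong maximum principle applied to the (locally uniformly elliptic) linearized equation then gives $w\equiv 0$, hence $S_0=S_0'$.
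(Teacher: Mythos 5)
This statement is not proved in the paper at all: it is imported verbatim from Barbot--B\'eguin--Zeghib \cite{barbotzeghib}, and the paper only uses it as a black box. So there is no ``paper's own proof'' to compare against; what you have written is a reconstruction of how the cited result would be established. As such, your outline is sound and follows the standard elliptic route: the reduction to a Monge--Amp\`ere equation on the compact quotient $\Sigma=\Hyp^2/G_0$ is correct (the difference of the support functions of two $\Gamma_0$-equivariant convex sets is automatically $G_0$-invariant by the transformation rule \eqref{transf rule support function} -- note this characterizes $\Gamma_0$-equivariance rather than the Cauchy property, which is a separate, essentially automatic, point for cocompact $\Gamma_0$); the linearization does have negative zeroth-order coefficient $-\tr(\mathrm{cof}(\Hess\bar u-\bar u\,\id))$, giving openness; and the uniqueness argument via the Loewner order and the determinant at a maximum of $\bar u-\bar u'$ is correct. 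For the $C^0$ bound you could avoid presupposing the constant-curvature surfaces by using the cosmological-time barrier exactly as in Lemma \ref{lemma inequality support functions} of this paper, which only needs compactness of the quotient.

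The genuine gaps, which you partly acknowledge, are exactly where the content of \cite{barbotzeghib} lies: the base case $s=0$ of your continuity path (existence of the constant-curvature Cauchy surface, itself a nontrivial theorem), the uniform spacelikeness ($C^1$) estimate, which you assert in one line but which requires an argument ruling out degenerating support planes for convex surfaces trapped between two cosmological-time levels, and above all the Pogorelov-type $C^2$ estimate, which you name but do not carry out. It is also worth recording that the published proof in \cite{barbotzeghib} is organized differently: rather than running the continuity method from scratch, the authors construct suitable upper and lower barriers in the quotient spacetime (from level sets of the cosmological time and constant-curvature surfaces) and then invoke Gerhardt's general existence theorem for hypersurfaces of prescribed curvature between barriers in globally hyperbolic spacetimes, where these a priori estimates are packaged. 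Your scheme would work, but as written it is a plan rather than a proof.
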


We will construct a Cauchy surface $S$ for $D=D(\mu,x_0,y_0)$ with prescribed curvature as a limit of Cauchy surfaces $S_n$ in $D_n=D(\mu_n,x_0,y_0)$. Let $h_n$ be the support function of $D_n$. By Lemma \ref{limit weak uniform compact}, $h_n$ converges uniformly on compact sets of $\D$ to $h$, the support function of $D$. Let $u_n$ be the support function of $S_n$.

\begin{lemma} \label{lemma inequality support functions}
Let $S_0$ be a smooth strictly convex Cauchy surface in a domain of dependence $D_0$ invariant under the action of a discrete group $\Gamma_0<\isom(\R^{2,1})$, such that $D_0/\Gamma_0$ is a maximal globally hyperbolic flat spacetime. Let $K:S_0\rar(-\infty,0)$ be the curvature function of $S_0$. Then the support functions $u_0$ of $S_0$ and $h_0$ of $\partial_s D_0$ satisfy
\begin{equation} \label{inequality support functions}
h_0(z)-C\sqrt{1-|z|^2}\leq u_0(z)\leq h_0(z)
\end{equation}
\noindent for every $z\in\D$. Moreover, one can take $C=1/\sqrt{\inf|K|}$.
\end{lemma}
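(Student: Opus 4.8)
The plan is to establish the two inequalities separately. The upper bound $u_0\le h_0$ merely records the inclusion $S_0\subseteq D_0$; the lower bound is the substantial one, and the constant $C=1/\sqrt{\inf|K|}$ enters through a comparison of $S_0$ with the level set of the cosmological time of $D_0$ at Lorentzian height $C$. One should keep in mind that the bound is sharp: for $D_0=\Ip(0)$ the level set $L_d$ of the cosmological time is the hyperboloid of curvature $-1/d^2$, and there equality holds throughout.

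\emph{Upper bound.} Since $S_0$ is a Cauchy surface for $D_0$, and $D_0$, being a domain of dependence, is an intersection of future half-spaces (hence closed under taking causal futures), the future-convex domain bounded by $S_0$ is contained in $D_0$. Reading off the support functions as suprema over these domains, $u_0(z)\le h_0(z)$ for every $z\in\D$.

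\emph{Lower bound.} Put $k_0:=\inf_{S_0}|K|$, which is positive because $K$ is continuous and $\Gamma_0$-invariant and hence descends to the compact quotient $S_0/\Gamma_0$; set $C:=1/\sqrt{k_0}$ and $v:=h_0-C\sqrt{1-|z|^2}$. By the computation of the support functions of the leaves of the cosmological time recalled above (via $h_d(z)=h_0(z)-d\sqrt{1-|z|^2}$), $v$ is the support function of the level set $L_C=\{T=C\}$ of the cosmological time of $D_0$; in particular $v$ is convex and $v\le h_0$. We must show $u_0\ge v$ on $\D$, i.e. that $S_0$ lies in the past of $L_C$. The proof is a Monge-Amp\`ere comparison. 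On one side, $S_0$ is smooth and strictly convex, so by Lemma~\ref{lemma formulae minkowski} one has $\det D^2u_0(z)=|K(G^{-1}(z))|^{-1}(1-|z|^2)^{-2}\le k_0^{-1}(1-|z|^2)^{-2}$, whence $M\!A_{u_0}(\omega)\le\int_\omega k_0^{-1}(1-|z|^2)^{-2}\,d\mathcal{L}$ for every Borel $\omega$. On the other side, $-\sqrt{1-|z|^2}$ is precisely the restriction to $\D$ of the support function of the hyperboloid $\Hyp^2$, which has constant curvature $-1$, so by Lemma~\ref{lemma formulae minkowski} it is smooth, strictly convex, and $\det D^2(-\sqrt{1-|z|^2})=(1-|z|^2)^{-2}$. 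Hence at every point where $h_0$ is twice differentiable (almost everywhere, by Aleksandrov's theorem) one has $D^2v=D^2h_0+C\,D^2(-\sqrt{1-|z|^2})\succeq C\,D^2(-\sqrt{1-|z|^2})\succ 0$ by convexity of $h_0$, and monotonicity of the determinant on positive semidefinite $2\times 2$ matrices gives $\det D^2v(z)\ge C^2(1-|z|^2)^{-2}=k_0^{-1}(1-|z|^2)^{-2}$. Since the singular part of $M\!A_v$ is nonnegative, $M\!A_v(\omega)\ge\int_\omega k_0^{-1}(1-|z|^2)^{-2}\,d\mathcal{L}\ge M\!A_{u_0}(\omega)$ for all Borel $\omega$.

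To conclude, note that since $S_0$ is a Cauchy surface for $D_0$, Lemma~\ref{remark support functions and Cauchy} gives $u_0|_{\partial\D}=h_0|_{\partial\D}=:\varphi$; in the cocompact setting $\varphi$ is continuous, so $u_0$ and $h_0=\co(\varphi)$ both extend continuously to $\overline\D$, and therefore $w:=u_0-v=(u_0-h_0)+C\sqrt{1-|z|^2}$ is continuous on $\overline\D$ and vanishes on $\partial\D$. Exhausting $\D$ by bounded convex domains $\Omega_n\subset\!\subset\D$ with $\bigcup_n\Omega_n=\D$ and applying the comparison principle (Theorem~\ref{comparison principle}) on each $\overline{\Omega_n}$ to $u_0$ and $v$, we get $\min_{\overline{\Omega_n}}w=\min_{\partial\Omega_n}w$, and the right-hand side tends to $0$ as $n\to\infty$. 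Thus $w\ge 0$ on $\D$, that is $u_0\ge h_0-C\sqrt{1-|z|^2}$ with $C=1/\sqrt{\inf|K|}$, as claimed.

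The step that requires the most care is the Monge-Amp\`ere inequality $M\!A_v\ge M\!A_{u_0}$: the support function $h_0$ of a domain of dependence is in general not $C^2$ (it is only affine on the strata of its dual lamination), so one cannot differentiate it twice everywhere and must argue through the Alexandrov Hessian together with the nonnegativity of the singular part of the Monge-Amp\`ere measure, invoking the elementary fact that $0\preceq A\preceq B$ implies $\det A\le\det B$ in dimension two. The remaining subtlety is the matching of boundary values and the passage to the limit in the exhaustion, which relies on the continuity up to $\partial\D$ of the support function at infinity in the cocompact case.
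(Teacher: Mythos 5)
Your proof is correct, but it takes a genuinely different route from the paper's. The paper argues purely geometrically: since $S_0/\Gamma_0$ is compact, the cosmological time $T$ of $D_0$ attains a maximum $T_{\max}$ on $S_0$, so $S_0$ is tangent from the past to the level set $L_{T_{\max}}$ and hence to a hyperboloid of curvature $-1/T_{\max}^2$ lying in the future of $L_{T_{\max}}$; the shape-operator comparison at the tangency point forces $\inf|K|\leq 1/T_{\max}^2$, i.e. $T_{\max}\leq C$, and $S_0\subset \Ipm(L_C)$ follows with no discussion of boundary values. You instead replace the tangency argument by a Monge--Amp\`ere comparison between $u_0$ and the support function $v=h_0-C\sqrt{1-|z|^2}$ of $L_C$, correctly handled at the level of Aleksandrov Hessians (using that $\det$ is monotone on positive semidefinite $2\times 2$ matrices and that the singular part of $M\!A_v$ is nonnegative), followed by an exhaustion of $\D$. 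This is valid and is in fact closer in spirit to how the paper later extends the estimate beyond the invariant setting (Proposition \ref{estimate cosmological any cc surface}). The one input your route needs that the paper's does not is the claim that the support function at infinity $\varphi$ is continuous in the cocompact-invariant case, so that $u_0-h_0\to 0$ uniformly at $\partial\D$ and $\min_{\partial\Omega_n}w\to 0$. This is true but not free: Lemma \ref{remark support functions and Cauchy} only gives equality of the lower semicontinuous boundary extensions, which would not suffice for a merely lower semicontinuous $\varphi$; continuity here follows from the boundedness of the dual lamination of a cocompactly invariant domain together with Proposition \ref{zygmund e lamination} and Theorem \ref{teorema saric} (material the paper develops only in Section \ref{sec:convex zygmund}), or from Mess's work, and you should cite one of these.
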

\begin{proof}
Since $S_0\subset \Ip(\partial_s D_0)$, it is clear that $u_0\leq h_0$. For the converse inequality, recall that we denote by $\bar u_0$ the restriction to $\Hyp^2$ of the 1-homogeneous extension $U_0$ of $u_0$, and analogously for $\bar h_0$. Let us consider $x\in\Hyp^2$ and show that $\bar u_0(x)\geq \bar h_0(x)-C$, for $C=1/\sqrt{\inf|K|}$. The inequality (\ref{inequality support functions}) then follows, since $u_0(z)=U_0(z,1)=\sqrt{1-|z|^2}\bar u_0(x)$ and $h_0(z)=H_0(z,1)=\sqrt{1-|z|^2}\bar h_0(x)$, for $x\in\Hyp^2$ which projects to $(z,1)$.
Let us consider the foliation of $D_0$ by leaves of the cosmological time, namely the surfaces $L_{T}$ whose support function is $\bar h_T(x)=\bar h_0(x)-T$ if $x\in \Hyp^2$, with $T\in(0,\infty)$. The surface $S_0$ descends to a compact surface in $D_0/\Gamma_0$ and therefore the time function $T$ on $S_0$ achieves a maximum $T_{\max}$ at a point $p$ (actually, a full discrete $\Gamma_0-$orbit) on $S_0$. It follows that the level surface $L_{T_{\max}}$ is entirely contained in the future of $S_0$ and tangent to $S_0$ at $p$.

By the construction of the cosmological time, for any point $p\in L_T$ there exists a hyperboloid of curvature $-1/T^2$ which is tangent to $L_T$ and contained in the future of $L_T$.
Hence, the surface $S_0$ is contained in the past of a hyperboloid of curvature $-1/T^2$, and tangent to such hyperboloid at some point, which implies that $\inf|K|\leq 1/T^2$. Therefore $T\leq C$ for $C=1/\sqrt{\inf|K|}$. This shows that the surface $S_0$ is contained in the past of the level surface $L_{C}$ and its support function on $\Hyp^2$ satisfies $\bar h_0(x)-C\leq \bar u_0(x)$. 
\end{proof}

%We now use the above Lemma \ref{lemma inequality support functions} to obtain a support function $u:\D\rar\R$ as a limit of $u_n$. To conclude, we will need to show that the limit $u$ is the support function of a smooth Cauchy $K$-surface.

%\begin{claim} \label{claim subsequence converging for convex functions}
%There exists a subsequence $u_{n_k}$ of the sequence of support functions $(u_n)_n$ which converges uniformly on compact sets of $\D$.
%\end{claim}
%\begin{proof}
%By Lemma \ref{limit weak uniform compact}, $h_n$ converges uniformly on compact sets of $\D$ to $h=H|_\D$; moreover by inequality (\ref{inequality support functions}), the $u_n$ are uniformly bounded on every compact set of $\D$. Take a ball $B(0,R)$. The $u_n$ are uniformly bounded on $B(0,R)$; we show they are equicontinuous. Indeed, since the slope of a convex function is increasing on a line, for $z,w\in B(0,R)$, assume $u_n(z)\geq u_n(w)$ and let $l$ be the half-line from $w$ through $z$. Let $z'$ be the intersection of $l$ with $\partial B(0,R')$, for $R'>R$. Then
%$$\left|\frac{u_n(z)-u_n(w)}{z-w}\right|\leq \frac{u_n(z')-u_n(w)}{|z'-w|}\leq \frac{\sup u_n-\inf u_n}{R'-R}$$
%where the supremum and infimum are taken on $B(0,R')$. This shows that the $u_n$ are equicontinuous and therefore, by the Ascoli-Arzel\`a Theorem, there exists a subsequence converging uniformly on $B(0,R)$. By a diagonal process on an exhaustion $B(0,R_n)$ of $\D$, it is possible to extract a subsequence $u_{n_k}$ as in the statement.
%\end{proof}

We are now ready to conclude the proof.
\begin{proof}[Proof of Theorem \ref{theorem existence lsc}]
Given the lower semicontinuous function $\varphi:\partial\D\rar \R$, let us consider the dual lamination $\mu$ of the domain of dependence $D$ defined by $\varphi$. Hence $D=D(\mu,x_0,y_0)$ for some $x_0,y_0$ and the support function $h$ of $D$ is %the 1-homogeneous function $H:\Ip(0)\rar\R$ such that $h=H|_\D$ is the 
the convex envelope of $\varphi$.

By Lemma \ref{fuchsian weak approximation}, there exist measured geodesic laminations $\mu_n$, invariant under the action of torsion-free cocompact Fuchsian groups $G_n$, which converge weakly to $\mu$. 
Recall from the proof of Lemma \ref{fuchsian weak approximation} that the Fuchsian group $G_n$ has a fundamental domain $P_n'$ which contains the ball $B(0,n)$ for the hyperbolic metric. Let us define a $G_n$-invariant function $\psi_n:\Hyp^2\rar\R$, which approximates $\psi$. We take a partition of unity $\{\rho_n,\varrho_n\}$ subordinate to the covering $\{B(0,n),P'_n\setminus B(0,n/2)\}$ of $P'_n$. We define
$$\psi_n(x)=\rho_n(x)\psi(x)+\varrho_n(x)(\inf\psi)$$
and we extend $\psi_n$ to $\Hyp^2$ by invariance under the isometries in $G_n$.
It is clear that the sequence $\psi_n$ converges to $\psi$ uniformly on compact sets of $\Hyp^2$, since $\psi_n$ agrees with $\psi$ on $B(0,n/2)$. Since $\psi_n$ is constant on $P'_n\setminus B(0,n/2)$, the $G_n$-invariant extension is smooth. Finally, $\inf\psi_n=\inf\psi$.
Applying Theorem \ref{theorem bbz}, we obtain a solution $u_n$ to the equation 
$$\det D^2u_n(z)=\frac{1}{\psi_n(z)}(1-|z|^2)^{-2}$$
for every $n$. 

Let $D_n=D(\mu_n,x_0,y_0)$ be the domain of dependence associated with $\mu_n$, so that $y_0+x_0^\perp$ is a support plane of the domain $D_n$. %Up to composing with a translation, 
We can assume $x_0$ is a point of $\Hyp^2$ which does not belong to a weighted leaf of any $\mu_n$. Let $H_n$ be the extended support function of $D_n$ and let $h_n$ be its usual restriction to $\D$. By Lemma \ref{limit weak uniform compact}, $h_n$ converges uniformly on compact sets of $\D$ to $h$. Moreover by inequality (\ref{inequality support functions}) of Lemma \ref{lemma inequality support functions}, the convex functions $u_n$ are uniformly bounded on every compact set of $\D$. Hence, by convexity, the $u_n$ are equicontinuous on compact sets of $\D$ and therefore, by the Ascoli-Arzel\`a Theorem, there exists a subsequence converging uniformly on compact sets to a convex function $u$. The limit function $u$ is a generalized solution of the equation
\begin{equation}
\det D^2u(z)=\frac{1}{\psi(z)}(1-|z|^2)^{-2}\,.  \tag{\ref{monge ampere constan curvature}}
\end{equation}

By Theorem \ref{solution strictly convex dimension 2}, $u$ is strictly convex and therefore is smooth by Theorem \ref{solution smooth}. Moreover, the functions $u_n$ satisfy the inequality in (\ref{inequality support functions}) for every $z\in\D$:
$$h_n(z)-C\sqrt{1-|z|^2}\leq u_n(z)\leq h_n(z)\,,$$
hence for the limit function $u$ we have
\begin{equation} \label{inequality support functions constant curvature surface}
h(z)-C\sqrt{1-|z|^2}\leq u(z)\leq h(z)\,, \tag{CT}
\end{equation}
where $h$ is the limit of the support functions $h_n$ of $D_n$ and is the support function of $D$ by Lemma \ref{limit weak uniform compact}. %By Remark \ref{remark support functions and Cauchy 2}, $S$ is a Cauchy surface for $D(\mu_n)$.
Since both $u$ and $h$, extended to $\overline \D$, are lower semicontinuous and convex functions, and the value on a point $z\in\partial\D$ coincides with the limit along a radial geodesic (see Lemma \ref{lemma limit radial geodesics}), we have the boundary condition
 $u|_{\partial\D}=h|_{\partial\D}=\varphi$. This shows that the condition \eqref{monge ampere boundary condition} holds, and concludes the proof.
\end{proof}

\subsection{Uniqueness of solutions} \label{cauchy and uniqueness}

In this subsection, we discuss the uniqueness of the solution of Equation \eqref{monge ampere constan curvature}, for which the existence was proved in Theorem \ref{theorem existence lsc}. More precisely, we prove the following:

\begin{prop} \label{theorem uniqueness lsc}
Given a bounded lower semicontinuous function $\varphi:\partial\D\rar \R$ and a smooth function $\psi:\Hyp^2\to[a,b]$ for some $0<a<b<+\infty$, the smooth solution $u:\D\rar\R$ to the equation
\begin{equation}
\det D^2u(z)=\frac{1}{\psi(z)}(1-|z|^2)^{-2}  \tag{\ref{monge ampere constan curvature}}
\end{equation}
satisfying
\begin{equation} 
u|_{\partial\D}=\varphi\,.  \tag{\ref{monge ampere boundary condition}}
\end{equation}
is unique.
\end{prop}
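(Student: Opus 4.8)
# Proof Proposal for Proposition \ref{theorem uniqueness lsc}

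The plan is to reduce uniqueness to the comparison principle on subdiscs $\D_r=\{|z|<r\}$, combined with a two-sided a priori bound on the support function that is valid for \emph{every} solution. Let $u_1,u_2:\D\to\R$ be smooth solutions of \eqref{monge ampere constan curvature} with $u_i|_{\partial\D}=\varphi$. By Theorem \ref{solution strictly convex dimension 2} each $u_i$ is strictly convex, hence by Lemma \ref{lemma formulae minkowski} it is the support function of a smooth strictly convex spacelike surface $S_i$ of curvature $K(G^{-1}(z))=-\psi(z)$. Write $h=\co(\varphi)$ for the support function of the domain of dependence $D$ determined by $\varphi$, and set $C=1/\sqrt{\inf\psi}$.

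The first step is to prove that each $u_i$ satisfies the estimate \eqref{inequality support functions constant curvature surface}, i.e. $h(z)-C\sqrt{1-|z|^2}\le u_i(z)\le h(z)$ on $\D$. The upper bound follows from convexity and the boundary condition alone: if $u_i(z_1)>h(z_1)$ for some $z_1$, a supporting affine function $\ell\le u_i$ with $\ell(z_1)=u_i(z_1)$ would have $\ell(z_1)>h(z_1)$, hence $\ell(z_0)>\varphi(z_0)$ for some $z_0\in\partial\D$ by the definition of the convex envelope; but $\ell\le u_i$ forces $\ell(z_0)\le\liminf_{z\to z_0}u_i(z)=\varphi(z_0)$, a contradiction. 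Since then $u_i|_{\partial\D}=h|_{\partial\D}=\varphi$ and $u_i\le h$, Lemma \ref{remark support functions and Cauchy} shows $S_i$ is a Cauchy surface for $D$. For the lower bound I would run a cosmological time argument as in Lemma \ref{lemma inequality support functions}, but now for the general (not necessarily equivariant) domain $D$ — this is the place where the analysis of Subsection \ref{cauchy and uniqueness} of Cauchy surfaces in terms of support functions enters: at a point of $S_i$ where the cosmological time of $D$ equals $t$ there is a hyperboloid of curvature $-1/t^{2}$ contained in $D$ and tangent to $S_i$, and comparing with $K_{S_i}\le-\inf\psi$ forces $t\le C$; hence $S_i$ lies in the past of the $C$-level set of the cosmological time of $D$, whose support function on $\D$ is $h(z)-C\sqrt{1-|z|^2}$, and Lemma \ref{lemma disuguaglianza funzioni supporto} gives the lower bound.

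Granting \eqref{inequality support functions constant curvature surface} for both solutions, we obtain $|u_1(z)-u_2(z)|\le C\sqrt{1-|z|^2}$ on $\D$. Now fix $z_0\in\D$ and $r\in(|z_0|,1)$. On the bounded convex domain $\overline{\D_r}$ both $u_i$ are continuous and $M\!A_{u_1}=M\!A_{u_2}$, so applying the comparison principle (Theorem \ref{comparison principle}) to $(u_1,u_2)$ and to $(u_2,u_1)$ yields $\max_{\overline{\D_r}}|u_1-u_2|=\max_{\partial\D_r}|u_1-u_2|\le C\sqrt{1-r^2}$. In particular $|u_1(z_0)-u_2(z_0)|\le C\sqrt{1-r^2}$ for every $r<1$; letting $r\to1$ gives $u_1(z_0)=u_2(z_0)$, and since $z_0$ is arbitrary, $u_1\equiv u_2$.

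The main obstacle is the lower bound in \eqref{inequality support functions constant curvature surface} for an arbitrary solution: in Theorem \ref{theorem existence lsc} the analogous estimate used that the surface descended to a closed surface, so the cosmological time attained a maximum, whereas here one must argue directly on a general entire spacelike graph. The point to make precise is the tangent-hyperboloid comparison above, together with the (minor) fact that a solution of \eqref{monge ampere constan curvature} with boundary value $\varphi$ genuinely produces a Cauchy surface of $D$, which follows from $u_i\le h$, $u_i|_{\partial\D}=\varphi$ and Lemma \ref{remark support functions and Cauchy}. Everything else is a routine application of the comparison principle.
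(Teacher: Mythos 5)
Your overall architecture coincides with the paper's: first establish the two-sided a priori bound \eqref{inequality support functions constant curvature surface} for \emph{every} solution, then conclude by the comparison principle using that $|u_1-u_2|\le C\sqrt{1-|z|^2}$ forces the minimum of $u_1-u_2$ to be attained on $\partial\D$ where it vanishes. Your upper bound $u_i\le h$ and your final limiting argument on the subdiscs $\D_r$ are both correct (the paper does the last step on $\D$ itself after extending $u_1-u_2$ by zero, which is equivalent).

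The genuine gap is exactly where you flag "the point to make precise": the lower bound for an arbitrary solution. Your tangent-hyperboloid comparison does not work at a general point $p\in S_i$ with cosmological time $T(p)=t$. The hyperboloid of curvature $-1/t^2$ through $p$ is tangent to the level set $L_t$ of the cosmological time and lies in its future, but it is \emph{not} tangent to $S_i$ at $p$: the tangent planes of $S_i$ and $L_t$ at $p$ generically differ, so $S_i$ pokes into the future of the hyperboloid near $p$ and no curvature comparison at $p$ is available. In Lemma \ref{lemma inequality support functions} the tangency is obtained precisely by choosing $p$ to be a point where $T|_{S_0}$ attains its \emph{maximum} — there $L_{T_{\max}}$ lies in the future of $S_0$ and is therefore tangent to it — and the existence of such a maximum is guaranteed by cocompactness of $\Gamma_0$. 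For a general entire graph the supremum of $T|_{S_i}$ need not be attained, and an "almost-maximum" version of the argument is not routine. The paper avoids this entirely: Proposition \ref{estimate cosmological any cc surface} is proved by the rescaling trick $u_r(z)=u(rz)$, which is continuous on $\overline\D$ and satisfies a Monge--Amp\`ere equation with curvature function $\psi_r\ge\inf\psi$; the continuous-boundary case is settled first (uniqueness there is immediate from Theorem \ref{comparison principle}, so the solution is the one built in Theorem \ref{theorem existence lsc}, which satisfies the estimate), giving $h_r(z)-C\sqrt{1-|z|^2}\le u_r(z)$ with $h_r=\co(u_r|_{\partial\D})$, and one then checks $h(z_0)\le\liminf_r h_r(z_0)$ and passes to the limit. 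You would need to substitute an argument of this kind (or otherwise genuinely handle the non-attained supremum) for your proof to close.
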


The claim in Proposition \ref{theorem uniqueness lsc} holds if $\varphi$ is continuous, by a direct application of Theorem \ref{comparison principle}. The rest of this subsection will be devoted to the proof of the claim when $\varphi$ is only assumed to be lower semicontinuous. The key property is that every solution of \eqref{monge ampere constan curvature} with boundary value $\varphi$, for $\psi>a>0$, satisfies the condition \eqref{inequality support functions constant curvature surface}. Geometrically, this means that every Cauchy surface with curvature bounded away from zero has bounded cosmological time. %This bound only depends on the lower bound of the curvature.

\begin{prop} \label{estimate cosmological any cc surface}
Given a smooth function $\psi:\D\rar[a,b]$ for some $0<a<b<+\infty$, any smooth solution $u:\D\rar\R$ to the equation
\begin{equation}
\det D^2u(z)=\frac{1}{\psi(z)}(1-|z|^2)^{-2}  \tag{\ref{monge ampere constan curvature}}
\end{equation}
with 
\begin{equation} 
u|_{\partial\D}=\varphi\,.  \tag{\ref{monge ampere boundary condition}}
\end{equation}
satisfies
\begin{equation} \label{inequality support functions constant curvature surface}
h(z)-C\sqrt{1-|z|^2}\leq u(z)\leq h(z)\,, \tag{CT}
\end{equation}%where $h(z)=\sup\{f(z):f\text{ is an affine function on }\D,f|_{\partial\D}\leq\bar h\}$
for some constant $C>0$, where $h=\co\varphi$.
\end{prop}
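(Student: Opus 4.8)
The plan is to prove the two inequalities of (CT) separately, the upper one being elementary and the lower one being the real content.

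\smallskip

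For $u\le h$ it suffices to argue with convex functions. Since $u$ is convex and finite on the open disc, it is the supremum of its affine minorants $\ell$; any such $\ell$ satisfies $\ell(z_0)=\lim_{z\to z_0}\ell(z)\le\liminf_{z\to z_0}u(z)=\varphi(z_0)$ for every $z_0\in\partial\D$, hence $\ell|_{\partial\D}\le\varphi$, so $\ell\le\co\varphi=h$, and taking the supremum over $\ell$ gives $u\le h$ on $\D$.

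\smallskip

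For the lower inequality I would put $C=1/\sqrt{\inf\psi}$ (finite because $\psi\ge a>0$) and use as a barrier the level set $L_C=\{T=C\}$ of the cosmological time of the domain of dependence $D$ associated to $\varphi$: its support function on $\D$ is $h_C(z):=h(z)-C\sqrt{1-|z|^2}$, it is a $C^{1,1}$ convex spacelike surface, and it has the same boundary value $\varphi$ as $u$ (since $h|_{\partial\D}=\varphi$ and $\sqrt{1-|z|^2}\to 0$). The key estimate is the inequality of Monge--Amp\`ere measures $M\!A_{h_C}\ge M\!A_u$. To obtain it, note that by construction of the cosmological time there is, at (almost) every point of $L_C$, a hyperboloid of curvature $-1/C^2$ tangent to $L_C$ and contained in its future; comparing the shape operators at the tangent point — exactly as in the proof of Lemma \ref{lemma inequality support functions} — gives $|K_{L_C}|\le 1/C^2=\inf\psi$ a.e., which by the curvature formula of Lemma \ref{lemma formulae minkowski} means $\det D^2h_C\ge(\inf\psi)^{-1}(1-|z|^2)^{-2}\ge\psi(z)^{-1}(1-|z|^2)^{-2}=\det D^2u$ a.e., hence $M\!A_{h_C}(\omega)\ge M\!A_u(\omega)$ for every Borel $\omega\subseteq\D$. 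Granting this together with the coincidence of boundary values, the comparison principle (Theorem \ref{comparison principle}) yields $u\ge h_C$, i.e. the lower half of (CT).

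\smallskip

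The hard part is that $\varphi$ is only lower semicontinuous, so neither $u$ nor $h$ need be continuous up to $\partial\D$ and Theorem \ref{comparison principle} cannot be invoked directly on $\D$. I would get around this by approximation: take continuous $\varphi_n\uparrow\varphi$, put $h^{(n)}=\co\varphi_n$ (continuous on $\overline\D$, equal to $\varphi_n$ on $\partial\D$) and $h_C^{(n)}=h^{(n)}-C\sqrt{1-|z|^2}$; check $h^{(n)}\uparrow h$ on $\D$ (a Dini-type argument using compactness of $\partial\D$ and lower semicontinuity of $\varphi$, applied to the affine minorants of $h$), and check $M\!A_{h_C^{(n)}}\ge M\!A_u$ by repeating the barrier computation for the domain of dependence $D_n$ defined by $\varphi_n$ (the constant $C$ is unchanged). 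Then, for each $\rho<1$, apply Theorem \ref{comparison principle} on the disc $\{|z|<\rho\}$, where $u$ and $h_C^{(n)}$ are continuous, to get $\min_{|z|\le\rho}(u-h_C^{(n)})=\min_{|z|=\rho}(u-h_C^{(n)})$; letting $\rho\to1$, along any boundary point $z_0$ approached by minimizing points one has $\liminf(u-h_C^{(n)})\ge\varphi(z_0)-\varphi_n(z_0)\ge0$, and since these boundary minima are monotone in $\rho$ they must all be $\ge0$, so $u\ge h_C^{(n)}$ on $\D$; finally $n\to\infty$ gives $u\ge\sup_n h_C^{(n)}=h_C$. Thus the whole difficulty is this boundary-regularity bookkeeping around the comparison principle; the geometric content — that a Cauchy surface with curvature $\ge\inf\psi$ lies in the past of the slice $L_C$ — is exactly Lemma \ref{lemma inequality support functions}, except that there compactness of $S_0/\Gamma_0$ supplied the needed maximum of the cosmological time, whereas here the comparison principle does.
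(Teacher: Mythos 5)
Your argument is correct, but it follows a genuinely different route from the paper's. The paper reduces to the case of continuous boundary data by an \emph{interior dilation}: it sets $u_r(z)=u(rz)$, observes that $u_r$ solves a Monge--Amp\`ere equation with curvature function $\psi_r\geq\inf\psi$ and has continuous (indeed smooth) boundary value $u_r|_{\partial\D}$, invokes the continuous case (which follows from uniqueness for continuous data together with the solution already constructed in Theorem \ref{theorem existence lsc}, which satisfies \eqref{inequality support functions constant curvature surface}), and then passes to the limit $r\to 1$ using the inequality $h(z_0)\leq\liminf_r\co(u_r|_{\partial\D})(z_0)$. You instead compare $u$ directly with the explicit barrier $h_C=h-C\sqrt{1-|z|^2}$ and regularize the \emph{boundary datum} rather than the solution, via $\varphi_n\uparrow\varphi$ continuous and the comparison principle on shrinking discs $\{|z|\le\rho\}$. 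Your route is more self-contained (it does not lean on the existence theorem for the lower bound), at the price of two pieces of bookkeeping that the dilation trick avoids: (a) the measure inequality $M\!A_{h_C^{(n)}}\geq M\!A_u$ should not be justified through the curvature formula of Lemma \ref{lemma formulae minkowski}, which assumes a $C^2$ support function, whereas $h^{(n)}$ is merely convex; the clean argument is Aleksandrov's a.e.\ second differentiability together with the Minkowski determinant inequality $\det(A+B)^{1/2}\geq\det A^{1/2}+\det B^{1/2}$ applied to $h^{(n)}_C=h^{(n)}+\bigl(-C\sqrt{1-|z|^2}\bigr)$, which gives $\partial^2 h^{(n)}_C\geq C^2(1-|z|^2)^{-2}$ a.e.\ and hence the measure inequality, since the singular part of $M\!A_{h^{(n)}_C}$ only helps; and (b) the convergence $h^{(n)}\uparrow h$ is not a Dini-type \emph{uniform} convergence statement (a decreasing sequence of lower semicontinuous functions need not converge uniformly); the correct argument is that for any affine $f$ with $f|_{\partial\D}<\varphi$ strictly, the open sets $\{\varphi_n>f\}$ increase to cover the compact $\partial\D$, so $f\le\varphi_N$ for some $N$ and thus $f\le h^{(N)}$, and taking the supremum over such $f$ (after subtracting an arbitrary $\epsilon$) recovers $h$. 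With these two points tightened, your proof is complete; the remaining steps --- the monotonicity in $\rho$ of the boundary minima and the boundary $\liminf$ estimate using $\liminf_{z\to z_0}u(z)=\varphi(z_0)$ --- are sound as written.
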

\begin{proof}
The statement is true if $\varphi$ is continuous, as we have already observed that in that case the solution is unique, and in Theorem \ref{theorem existence lsc} we have proved the existence of a solution satisfying the required condition.
Let us now consider the general case. Since $u$ is convex, it is clear that $u\leq h$. We show the other inequality. Let $r\in(0,1]$ and $u_r:\D\rar\R$ be defined as
$$u_r(z)=u(rz)\,.$$
Since $u$ is continuous (actually, smooth) on $\D$, $u_r$ converges uniformly on compact sets of $\D$ to $u$ as $r\rar 1$. Let $\psi_r$ be such that  $$\det D^2u_r(z)=\frac{1}{\psi_r(z)}(1-|z|^2)^{-2}\,.$$
We have
$$\det D^2 u_r(z)=r^4 \det D^2u(rz)=\frac{r^4}{\psi(rz)}(1-r^2|z|^2)^{-2}\leq \frac{1}{\inf\psi}(1-|z|^2)^{-2}$$
and therefore $\psi_r(z)\geq\inf\psi$.
Since $u_r$ is continuous on $\overline \D$, by the continuous case and the above inequality we obtain
\begin{equation} \label{stima limite da dentro}
h_r(z)-\frac{1}{\sqrt{\inf\psi}}\sqrt{1-|z|^2}\leq u_r(z)\,,
\end{equation}
where $h_r=\co (u_r|_{\partial\D})$.

Fix a point $z_0\in\D$. We claim that $h(z_0)\leq \liminf_r h_r(z_0)$. Indeed, given an arbitrary affine function $f:\D\rar\R$ such that $f|_{\partial \D}< \varphi$, the set $\{z:u(z)\leq f(z)\}$ is compact in $\D$. Suppose it is contained in a ball of radius $r_0$.
Hence if $r>r_0$, for every $z_1\in\partial\D$
$$u_r(z_1)=u(rz_1)>f(rz_1)=f_r(z_1)\,,$$
where of course we defined $f_r(z)=f(rz)$.
This shows that $u_r|_{\partial\D}> f_r|_{\partial\D}$, and therefore $h_r\geq f_r$. 
As $r\to 1$, we obtain $f(z_0)\leq \liminf_r h_r(z_0)$, and the claim follows since $f$ is arbitrary.

Now taking limits in Equation \eqref{stima limite da dentro}, we conclude that $h(z_0)-C\sqrt{1-|z_0|^2}\leq u(z_0)$ for $C=1/\sqrt{\inf\psi}$. As the point $z_0$ is arbitrary, we conclude the proof.
\end{proof}

\begin{proof}[Proof of Proposition \ref{theorem uniqueness lsc}]
Let $u_1,u_2$ be two solutions with $u_1|_{\partial\D}=u_2|_{\partial\D}=\varphi$. By Proposition \ref{estimate cosmological any cc surface}, there exists a constant $C$ such that
$$-C\sqrt{1-|z|^2}\leq u_1(z)-u_2(z)\leq C\sqrt{1-|z|^2}\,.$$
Hence the function $u_1-u_2$ extends continuously to zero at the boundary $\partial\D$. Therefore $u_1-u_2$ has a minimum on $\overline\D$. By Theorem \ref{comparison principle}, the minimum cannot be achieved at an interior point. Therefore the minimum is achieved on $\partial\D$, which means that $u_1\geq u_2$. By exchanging the roles of $u_1$ and $u_2$, one can conclude that $u_1\equiv u_2$.
\end{proof}

%\begin{remark} \label{remark support functions and Cauchy 2}
%When the support function $h:\D\rar\R$ of the initial singularity of a domain of dependence $D$ does not extend to a continuous function on $\bar\D$, a sufficient condition for a convex surface $S$ to be a Cauchy surface is the following. Let $U:\D\rar\R$ be the extended support function of $S$ and let $u=U|_\D$. If 
%$$h(z)-C\sqrt{1-|z|^2}<u(z)<h(z)$$
%then we have that $H(x)-C<U(x)<H(x)$ for every $x\in\Hyp^2$, which means that $S$ is contained in the past of the leaf of the cosmological time $L_C=\left\{T=C\right\}$. Therefore, it is clear that every inextensible causal curve from a point in $D$ intersects $S$ (as it intersects $L_C$), and thus $S$ is a Cauchy surface.
%\end{remark}

\subsection{The solution is an entire graph} \label{no light rays}

In this subsection we prove that the solutions constructed in Theorem \ref{theorem existence lsc} are the support functions of spacelike entire graphs in $\R^{2,1}$. We will make use of barriers which are constant curvature surfaces invariant under a parabolic group. The proof of the following proposition is given in Appendix \ref{appendix parabolic}.

\begin{prop} \label{surface parabolic graph}
For every $K<0$, $C>0$ and every null vector $v_0\in\R^{2,1}$ there exists 
%an isometric embedding 
%$$i_{K,C,v_0}:\widetilde{\Hyp^2_K\setminus {p}}\to\R^{2,1}$$ 
%with image a Cauchy surface 
an entire graph $S_{C,v_0}(K)$ of constant curvature $K$ which is a Cauchy surface in the domain of dependence with support function at infinity
$$\varphi_{C,v_0}(z)=\begin{cases} -\sqrt{C} & [z]=[v_0] \\ 0 & [z]\neq[v_0] \end{cases}\,.$$
%The surface $S_{K,C,v_0}$ is an entire graph and $i_{K,C,v_0}$ is equivariant with respect to the group of isometries $\widetilde{R(p)}$ of $\widetilde{\Hyp^2_K\setminus {p}}$ and the parabolic linear subgroup of $\isom(\R^{2,1})$ fixing $v_0$.
\end{prop}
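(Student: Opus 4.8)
The plan is to construct the surface $S_{C,v_0}(K)$ explicitly by an ansatz exploiting the one-parameter parabolic symmetry, reducing the Monge-Amp\`ere equation \eqref{monge ampere constan curvature} to an ODE. After a Lorentz transformation, we may assume $v_0 = (1,0,1)$, so that $[v_0]$ corresponds to the point $\eta_0 = (1,0) \in \partial\D$. The relevant parabolic group fixing $\eta_0$ acts on $\D$; writing $z = (x,y) \in \D$, the orbits of the parabolic group (in suitable coordinates, e.g. passing to the upper-half-plane model or using affine coordinates adapted to the fixed point) are horocycle-type curves, and we look for a support function $u$ that is invariant under this action. Concretely, one expects $u$ to depend, in appropriate coordinates, on a single combination of the variables — say $u$ is (up to an affine term forced by the group action) a function $w(s)$ of one real parameter $s$ parametrizing the space of orbits. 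Substituting this ansatz into $\det D^2 u(z) = \frac{1}{|K|}(1-|z|^2)^{-2}$ collapses the PDE to a second-order ODE for $w$.

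The key steps, in order: \emph{(i)} Normalize $v_0$ and write down the parabolic subgroup $P_t \subset \isom(\R^{2,1})$ fixing the null line through $v_0$ and the origin; identify its action on $\D$ and on the boundary circle (fixing only $\eta_0$). \emph{(ii)} Determine the transformation rule \eqref{transf rule support function} for support functions under $P_t$ and deduce the form of a $P_t$-invariant (equivariant) support function: it will be an affine function of one coordinate plus a function $w$ of the transverse coordinate. \emph{(iii)} Plug into \eqref{monge ampere constan curvature} and obtain the ODE; solve it (it should be explicitly integrable, with the constant $C$ and $K$ entering as integration/scaling parameters). \emph{(iv)} Check that the resulting convex function $u$ is defined on all of $\D$, is smooth, strictly convex, and that $\Hess\bar u - \bar u\,\id$ is positive definite, so by Lemma \ref{lemma formulae minkowski} it is the support function of a smooth convex spacelike surface $S_{C,v_0}(K)$ of constant curvature $K$. \emph{(v)} Compute the boundary value $u|_{\partial\D}$ using Lemma \ref{lemma limit radial geodesics}: one must verify the limit along radial geodesics is $-\sqrt{C}$ at $\eta_0$ and $0$ elsewhere, which identifies $\varphi_{C,v_0}$. \emph{(vi)} Verify $S_{C,v_0}(K)$ is an entire graph, i.e. its dual domain $\partial_s D$ contains no lightlike rays; since the curvature is bounded away from zero and the cosmological-time estimate of Proposition \ref{estimate cosmological any cc surface} applies (or by a direct check that the Gauss map image is all of $\Hyp^2$, equivalently $\bar u < +\infty$ everywhere on $\Hyp^2$), this follows; alternatively one checks directly from the explicit formula that the associated graph function $f$ on $\R^2$ has $\|Df\| < 1$ everywhere. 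Finally, $S_{C,v_0}(K)$ is a Cauchy surface for the domain of dependence with support function at infinity $\varphi_{C,v_0}$ by Lemma \ref{remark support functions and Cauchy}, since the two support functions agree on $\partial\D$.

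The main obstacle I anticipate is step \emph{(iii)--(v)}: setting up the correct invariant ansatz so that the Monge-Amp\`ere equation genuinely reduces to a tractable ODE, and then identifying the right solution branch — the ODE will have a family of solutions, and pinning down the one whose radial boundary limit is the step function $\varphi_{C,v_0}$ (in particular producing exactly the value $-\sqrt C$ at $\eta_0$, with the correct normalization in terms of $K$) requires careful asymptotic analysis of $w(s)$ as $s$ approaches the two ends of its interval, corresponding to $z \to \eta_0$ and $z \to \partial\D\setminus\{\eta_0\}$. A secondary subtlety is checking global convexity and the spacelike/entire-graph condition directly from the explicit formula rather than appealing to the general existence theory, since this proposition is used as an input (a \emph{barrier}) to that very theory in Subsection \ref{no light rays}, so the argument must be self-contained. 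These computations are deferred to Appendix \ref{appendix parabolic}.
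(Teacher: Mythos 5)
Your plan follows essentially the same route as the paper's Appendix \ref{appendix parabolic}: a parabolic-invariant ansatz reducing the Monge-Amp\`ere equation to an ODE, explicit integration with $C$ entering as the integration constant, identification of the boundary value along radial limits, and a separate entire-graph check. One small remark on step (ii): since the parabolic subgroup is linear and fixes the origin, the transformation rule \eqref{transf rule support function} forces a genuinely invariant (not merely equivariant) support function, and the reduction is cleanest on the hyperboloid rather than on $\D$: in horocyclic coordinates $\sigma(t,s)=A_t(\gamma_0(s))$ adapted to $v_0$ one takes $\bar u(t,s)=f(s)$, and $\det(\Hess\bar u-\bar u\,\id)=1/|K|$ becomes $(f''-f)(-f'-f)=1/|K|$, solved via $g=-f'-f$, $g(s)=\sqrt{|K|^{-1}+Ce^{2s}}$.

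The genuine gap is in step (vi). Finiteness of $\bar u$ on all of $\Hyp^2$ (equivalently, surjectivity of the Gauss map of $\partial_s D$ onto $\Hyp^2$) does \emph{not} imply that the surface is an entire graph: the paper's Remark \ref{remark counterexample constant curvature not entire} exhibits $u(z)=|z|^2/2$, finite, smooth and strictly convex on all of $\D$, whose dual surface is a graph over $\D$ only and is tangent to a light cone. Likewise, verifying $\|Df\|<1$ addresses local spacelikeness but not entireness, i.e.\ not whether the domain of $f$ is all of $\R^{2}$; and, as you correctly note, the cosmological-time route is circular because these surfaces are the barriers for the general theory. What is actually needed (and what the paper proves, Lemma \ref{criterion spacelike graph properness}) is a properness criterion: if $D^2u>0$ and $G^{-1}:\D\to\R^{2,1}$ is proper, then $\partial_s D$ is both open and closed in $\partial D$, hence all of $\partial D$. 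Properness is then checked from the explicit parametrization $G^{-1}(\sigma(t,s))=\grad\bar u-\bar u\,\sigma(t,s)$ by showing that the height function $(t,s)\mapsto -\langle G^{-1}(\sigma(t,s)),(v_0+v_1)/\sqrt{2}\rangle$ tends to infinity whenever $(t,s)$ leaves every compact set. Your plan should be amended to include this (or an equivalent) argument; the rest matches the paper's proof.
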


In Appendix \ref{appendix parabolic} the surfaces of Proposition \ref{surface parabolic graph} are described in more detail. For instance, we remark that the surface $S_{C,v_0}(K)$ is not complete. The domain of dependence with support function equal to $\co(\varphi_{C,v_0})$ is the future of a parabola, as pictured in Figure \ref{fig:parabola}, see also the discussion in the proof of of Theorem \ref{theorem minkowski problem lsc} below. In order to use the surface $S_{C,v_0}(K)$ as a barrier, we need to prove a technical lemma.

\begin{lemma} \label{confronto sci}
Let $\varphi_1,\varphi_2:\partial\D\rar \R$ be two bounded lower semicontinuous functions  and let $\psi_1,\psi_2:\Hyp^2\to[a,b]$ be two smooth functions, for some $0<a<b$. If $\varphi_1\leq\varphi_2$ and $\psi_1\leq\psi_2$, then the smooth solutions $u_i:\D\rar\R$ (for $i=1,2$) to the equation
$$
\det D^2u_i(z)=\frac{1}{\psi_i(z)}(1-|z|^2)^{-2}
$$
with  
$$u_i|_{\partial\D}=\varphi_i$$ 
satisfy $u_1\leq u_2$ on $\D$.
\end{lemma}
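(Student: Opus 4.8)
This is an application of the comparison principle for the Monge--Amp\`ere equation; the only delicate point is that the boundary data $\varphi_i$ are merely lower semicontinuous, so $u_2-u_1$ need not extend continuously to $\overline\D$ and one cannot invoke Theorem \ref{comparison principle} directly on $\D$. The plan is to compare $u_1$ and $u_2$ on each ball $B(0,r)$ with $r\in(0,1)$, to control their difference on $\partial B(0,r)$ by means of the cosmological time estimate of Proposition \ref{estimate cosmological any cc surface}, and then to let $r\to1$. (Alternatively one could apply the same idea to the rescaled functions $u_i(rz)$, as in the proof of Proposition \ref{estimate cosmological any cc surface}, but this is not needed.)

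First, since $\psi_1\le\psi_2$ the densities satisfy $1/\psi_1\ge 1/\psi_2$, hence $M\!A_{u_1}(\omega)\ge M\!A_{u_2}(\omega)$ for every Borel set $\omega\subseteq\D$. The solutions $u_1,u_2$ are smooth and convex on $\D$, so in particular they are convex functions on $\overline{B(0,r)}$ for $r<1$, and Theorem \ref{comparison principle} applied on $\Omega=B(0,r)$ with $u=u_2$, $v=u_1$ gives
\[
\min_{\overline{B(0,r)}}(u_2-u_1)=\min_{\partial B(0,r)}(u_2-u_1)\,.
\]
To bound the right-hand side, set $h_i=\co(\varphi_i)$. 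Since $\varphi_1\le\varphi_2$, any affine function lying below $\varphi_1$ on $\partial\D$ also lies below $\varphi_2$, so $h_1\le h_2$ on $\D$. By convexity $u_1\le h_1$, while Proposition \ref{estimate cosmological any cc surface} applied to $u_2$ provides a constant $C=1/\sqrt{\inf\psi_2}\le 1/\sqrt a$ with $u_2(z)\ge h_2(z)-C\sqrt{1-|z|^2}$. Hence, for every $z\in\D$,
\[
u_2(z)-u_1(z)\ \ge\ \bigl(h_2(z)-h_1(z)\bigr)-C\sqrt{1-|z|^2}\ \ge\ -C\sqrt{1-|z|^2}\,,
\]
so $\min_{\partial B(0,r)}(u_2-u_1)\ge -C\sqrt{1-r^2}$. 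Combining the two displays, $u_2(z)-u_1(z)\ge -C\sqrt{1-r^2}$ for all $z\in B(0,r)$; fixing $z\in\D$ and letting $r\to1$ yields $u_2(z)\ge u_1(z)$, and since $z$ is arbitrary we conclude $u_1\le u_2$ on $\D$.

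The argument is routine once Proposition \ref{estimate cosmological any cc surface} is in hand; the only step requiring care is the boundary control, which is where the hypothesis $\psi_2\ge a>0$ genuinely enters, so that $C$ is finite and the error term $-C\sqrt{1-r^2}$ vanishes as $r\to1$. The hypothesis $\psi_1\le\psi_2$ is used only to compare the Monge--Amp\`ere measures, and $\varphi_1\le\varphi_2$ only through the monotonicity of the convex envelope.
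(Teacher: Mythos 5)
Your proof is correct, but it takes a genuinely different route from the paper's. The paper argues in two steps: first it treats the case where $\varphi_1$ is continuous (there $u_1$ extends continuously to $\overline\D$ by the estimate \eqref{inequality support functions constant curvature surface}, so $u_2-u_1$ is lower semicontinuous on $\overline\D$, attains its minimum, and Theorem \ref{comparison principle} forces that minimum onto $\partial\D$ where it is nonnegative); it then reduces the general lower semicontinuous case to this one by approximating $\varphi_1$ monotonically from below by continuous functions $\varphi_n$, passing to the limit of the corresponding solutions $u_n$, and identifying the limit with $u_1$ via the uniqueness result of Proposition \ref{theorem uniqueness lsc}. You instead exhaust $\D$ by balls $B(0,r)$, apply the comparison principle on each ball, and control the boundary term quantitatively by combining $u_1\le h_1\le h_2$ with the lower bound $u_2\ge h_2-C\sqrt{1-|z|^2}$ of Proposition \ref{estimate cosmological any cc surface}, so that the error $-C\sqrt{1-r^2}$ vanishes as $r\to1$. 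This is in the spirit of the paper's own proof of Proposition \ref{theorem uniqueness lsc} rather than of its proof of this lemma. What your approach buys is economy: it avoids both the monotone approximation and the appeal to uniqueness, and it handles the lower semicontinuous data in one stroke. All the ingredients you use (the comparison principle, the convex-envelope monotonicity $h_1\le h_2$, and the two-sided estimate of Proposition \ref{estimate cosmological any cc surface}, which is established before this lemma and without reference to it) are available at this point, so there is no circularity.
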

\begin{proof}
Suppose first $\varphi_1$ is continuous. Therefore also the solution $u_1$ is continuous on $\overline\D$, since it satisfies the condition $$h_1(z)-C\sqrt{1-|z|^2}\leq u_1(z)\leq h_1(z)\,,$$
where $h_1$ is the convex envelope of $\varphi_1$. Then the function $u_2-u_1$ is lower semicontinuous and is positive on the boundary, therefore it achieves a minimum. By Theorem \ref{comparison principle}, the minimum has to be on the boundary, hence $u_2\geq u_1$ on $\overline\D$.

Now for the general case, let $\varphi_1$ be lower semicontinuous and let $\varphi_n$, $n\geq 3$, be a sequence of continuous functions which converge to $\varphi_1$ monotonically from below, namely $\varphi_n\leq \varphi_{n+1}$ and $\varphi_n\leq \varphi_1$ for every $n\geq 3$. Let $u_n$ be the solution of the equation
$$
\det D^2u_n(z)=\frac{1}{\psi_1(z)}(1-|z|^2)^{-2}
$$
with $$u_n|_{\partial\D}=\varphi_n\,.$$ By the previous case, we know (if $n\geq 3$) that $u_n\leq u_{n+1}$ and $u_n\leq u_1$. Hence the $u_n$ are uniformly bounded and convex, thus by convexity the sequence $u_n$ converges uniformly on compact sets (up to a subsequence) to a generalized solution $u_\infty$ of the same equation:
$$
\det D^2u_\infty(z)=\frac{1}{\psi_1(z)}(1-|z|^2)^{-2}\,.
$$
%We have $h_n(z)-C\sqrt{1-|z|^2}\leq u_n(z)\leq h_n(z)$ for the convex envelope $h_n=\co\varphi_n$.
It is clear that $u_\infty\leq u_1$. Let $z\in\partial\D$. Recall the value of $u_\infty$ on $z$ coincides with the limit on radial geodesics. Hence we have $u_\infty(z)=\lim_{r\to 1} u_\infty(rz)\geq \lim_{r\to 1} u_n(rz) =\varphi_n(z)$ for every $n$. Therefore $u_\infty|_{\partial\D}\equiv\varphi_1$. By the uniqueness proved in Proposition \ref{theorem uniqueness lsc}, $u_\infty\equiv u_1$. Since $u_n(z)\leq u_2(z)$ for $n\geq 3$ and for every $z\in \D$, we conclude that $u_1\leq u_2$.
\end{proof}

We are finally ready to conclude the proof of Theorem \ref{theorem minkowski problem lsc}.
\begin{proof}[Proof of Theorem \ref{theorem minkowski problem lsc}]
We have showed in Theorem \ref{theorem minkowski problem lsc} and Proposition \ref{theorem uniqueness lsc} that there exists a unique solution $u$ to Equation \eqref{monge ampere constan curvature}, hence having the required curvature function. Moreover, the solution satisfies Equation \eqref{inequality support functions constant curvature surface}, which ensures that the surface $S$ with support function $u$ is a Cauchy surface and satisfies the estimate on the cosmological time.

It only remains to show that $S$ is a spacelike entire graph. Suppose it is not. Therefore $S$ is tangent to the boundary of the domain of dependence (recall Subsection \ref{preliminaries subsec convex surfaces}) and develops a lightlike ray $R$ at the tangency point. Suppose the lightlike ray is parallel to the null vector $v_0$ of $\R^{2,1}$. Let $\alpha$ be such that $\varphi(z)\leq \alpha$ for every $z\in\partial\D$.

We consider the function
$$\varphi_0(z)=\begin{cases} \varphi(z) & [z]=[v_0] \\ \alpha & [z]\neq[v_0] \end{cases}\,.$$
From Proposition \ref{surface parabolic graph}, there exists a $K_0$-surface $S_0$ with support function at infinity $\varphi_0$, for $K_0=\inf K=-\sup|K|$, which is obtained by translating vertically in $\R^{2,1}$ a suitably chosen surface $S_{C,v_0}(K_0)$. Geometrically, this is equivalent to choosing the domain of dependence whose boundary is the future of a parabola (see Figure \ref{fig:parabola}). The parabola is obtained by intersecting  the plane containing the lightlike ray $R$ with a cone $\Ip(p)$ over a point $p$ on the $z$-axis, sufficiently in the past, so as to contain the original surface $S$.

Applying Lemma \ref{confronto sci}, we see that $S$ is in the future of $S_0$. However, $S_0$ is an entire graph (see Appendix \ref{appendix parabolic}) and both $S$ and $S_0$ have the same lightlike support plane with normal vector $v_0$. This gives a contradiction and concludes the claim that $S$ is an entire graph.
\end{proof}

\begin{figure}[htb]
\centering
\includegraphics[height=5.5cm]{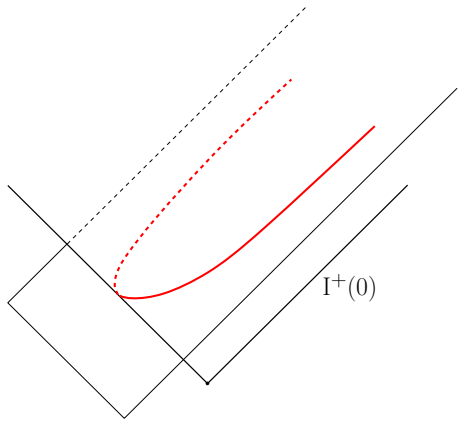}
\caption{The future of a parabola is a domain of dependence invariant for the parabolic group. Its support function at infinity is lower-semicontinuous and is affine on the complement of a point of $\partial\D$. \label{fig:parabola}}
\end{figure}

\begin{remark} \label{remark counterexample constant curvature not entire}
In the above proof of Theorem \ref{theorem minkowski problem lsc}, we have actually showed that every smooth convex bounded solution $u:\D\rar\R$ of
\begin{equation}
\det D^2u(z)=\frac{1}{\psi(z)}(1-|z|^2)^{-2}  \tag{\ref{monge ampere constan curvature}}
\end{equation}
with $\psi:\D\rar(0,\infty)$, $\sup\psi<\infty$, corresponds to a spacelike entire graph in $\R^{2,1}$ with curvature $K(G^{-1}(z))=-\psi(z)$. On the other hand, the hypothesis that $\sup\psi<\infty$ is essential. We give an explicit counterexample. Let us consider the function $u:\D\rar\R$ defined by $u(z)=|z|^2/2$. It is easily checked that the dual surface $S$ defined by $u$ is the graph of $u$ itself, and is only defined over $\D$. One can see that $S$ is tangent to the lightcone centered at $(0,0,-1/2)$ and that its curvature function is $K(G^{-1}(z))=-\psi(z)=-\frac{1}{(1-|z|^2)^2}$, hence is unbounded. See also Figure \ref{fig:counterexample}.
\end{remark}
\begin{figure}[htb]
\centering
\includegraphics[height=4.5cm]{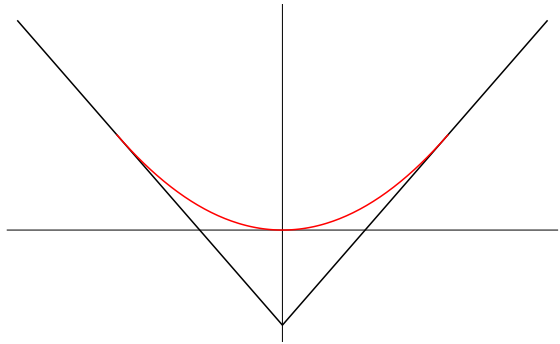}
\caption{A Cauchy surface in the cone over a point, of unbounded negative curvature, which is not a spacelike entire graph. The surface (red) is obtained by revolution around the vertical axis. \label{fig:counterexample}}
\end{figure}

\section{Foliations by constant curvature surfaces}

In this section we prove that every domain of dependence defined by a bounded support function at infinity is foliated by $K$-surfaces, as $K$ varies in $(-\infty,0)$. The main statement is the following.

\begin{theorem} \label{theorem foliation bounded}
Every domain of dependence $D$, with bounded support function at infinity $\varphi:\partial \D\rar \R$, is foliated by smooth spacelike entire graphs of constant curvature $K\in(-\infty,0)$.
\end{theorem}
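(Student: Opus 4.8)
The plan is to combine the existence and uniqueness result for the Minkowski problem with constant curvature (Theorem \ref{theorem minkowski problem lsc}, applied with $\psi\equiv|K|$) together with the monotonicity lemma already established (Lemma \ref{confronto sci}), and then to prove that the family $\{S_K\}_{K\in(-\infty,0)}$ of $K$-surfaces obtained in this way actually sweeps out all of $D$. First I would invoke Theorem \ref{theorem minkowski problem lsc} with constant curvature function: for every $K<0$ there is a unique smooth spacelike entire graph $S_K\subset D$ which is a Cauchy surface for $D$, whose support function $u_K:\D\to\R$ solves $\det D^2 u_K(z)=(1/|K|)(1-|z|^2)^{-2}$ with $u_K|_{\partial\D}=\varphi$. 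Applying Lemma \ref{confronto sci} with $\varphi_1=\varphi_2=\varphi$ and constant curvature functions $\psi_1=|K_1|\le|K_2|=\psi_2$ (i.e.\ $K_1\le K_2<0$), one gets $u_{K_1}\le u_{K_2}$ on $\D$, with strict inequality at interior points by the comparison principle (Theorem \ref{comparison principle}) since the Monge--Amp\`ere measures are not equal. Geometrically, on $\Hyp^2$ the restricted support functions satisfy $\bar u_{K_1}<\bar u_{K_2}$, which translates (via Lemma \ref{lemma disuguaglianza funzioni supporto} applied pointwise, or directly via the inverse Gauss map formula \eqref{formula inverse gauss map}) into the statement that $S_{K_1}$ lies strictly in the past of $S_{K_2}$; so the surfaces are pairwise disjoint and monotonically ordered in $K$.

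The remaining task is to show that $\bigcup_{K<0}S_K=D$, i.e.\ that every point $x\in D$ lies on exactly one $S_K$. Uniqueness of $K$ through a given point is immediate from the strict monotonicity just established. For existence I would argue through the cosmological time $T:D\to(0,\infty)$. Using the estimate \eqref{inequality support functions constant curvature surface} from Theorem \ref{theorem existence lsc}, namely $h(z)-C_K\sqrt{1-|z|^2}\le u_K(z)\le h(z)$ with $C_K=1/\sqrt{|K|}$, the surface $S_K$ is trapped between $\partial_s D$ (the leaf $T=0$) and the cosmological time level set $L_{C_K}=\{T=C_K\}$. As $K\to 0^-$ we have $C_K\to\infty$, so the surfaces $S_K$ escape toward the future, and as $K\to-\infty$ we have $C_K\to 0$, so $u_K\to h$ uniformly on compact subsets of $\D$ and $S_K$ accumulates onto $\partial_s D$. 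Fix $x\in D$ with cosmological time $T(x)=t_0>0$. For $|K|$ large enough we have $C_K<t_0$, hence $x$ lies in the future of $S_K$; for $|K|$ small enough $x$ lies in the past of $S_K$. By the monotone dependence and a continuity argument — the map $K\mapsto u_K(z)$ is continuous for each fixed $z$, which follows from uniqueness together with the Ascoli--Arzel\`a/stability argument used in the proof of Theorem \ref{theorem existence lsc} — there is some $K$ with $x\in S_K$. Concretely, set $K^*=\sup\{K<0 : x\text{ is in the future of } S_K\}$; one shows $S_{K^*}$ passes through $x$ because otherwise $x$ would be strictly in the past or strictly in the future of $S_{K^*}$, and by continuity of $K\mapsto S_K$ the same would hold for nearby $K$, contradicting the definition of $K^*$.

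The main obstacle I expect is making the continuity of the family $K\mapsto S_K$ (equivalently $K\mapsto u_K$) rigorous and upgrading it to the statement that the surfaces fill $D$ without gaps: one must rule out a ``jump'' in the family, i.e.\ that $\lim_{K\to K_0^-}u_K$ and $\lim_{K\to K_0^+}u_K$ differ on an open set. This is handled exactly as in Subsection \ref{subsec existance}: the $u_K$ are convex and locally uniformly bounded (by \eqref{inequality support functions constant curvature surface}), so any sequence has a subsequence converging uniformly on compact sets to a convex function which, by Lemma \ref{convergence of solutions} and Theorem \ref{solution strictly convex dimension 2} and Theorem \ref{solution smooth}, is the smooth solution for the limiting curvature, and by the uniqueness of Proposition \ref{theorem uniqueness lsc} this limit is $u_{K_0}$ — so the whole family converges, i.e.\ $K\mapsto u_K$ is continuous in the topology of uniform convergence on compact sets. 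A secondary point requiring care is the behavior near $\partial D$ and near the future end: one should check that the leaves $\{S_K\}$ together with the boundary leaf $\partial_s D$ form a foliation of $\overline D$ in the appropriate sense, but since the theorem only asserts a foliation of the open domain $D$ by the smooth entire graphs $S_K$, the estimates above suffice. Finally, that the leaves are smooth spacelike entire graphs is exactly the content of Theorem \ref{theorem minkowski problem lsc}, so no further work is needed there.
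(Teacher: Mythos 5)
Your overall strategy (monotonicity in $K$ plus a continuity/intermediate-value argument) is close in spirit to the paper's, and much of it is sound: the existence of each $S_K$, the monotone ordering, and the continuity of $K\mapsto u_K$ via local compactness of the family together with the uniqueness of Proposition \ref{theorem uniqueness lsc} are all workable (the paper's Step 2 reaches the same conclusion by routing through the approximating invariant surfaces and the Barbot--B\'eguin--Zeghib foliation, but your route is equally legitimate). There is, however, one genuine gap, at the step ``for $|K|$ small enough $x$ lies in the past of $S_K$.'' You justify this by saying that $C_K=1/\sqrt{|K|}\to\infty$, ``so the surfaces $S_K$ escape toward the future.'' But the estimate \eqref{inequality support functions constant curvature surface} only traps $S_K$ between $\partial_s D$ and the cosmological-time level $\{T=C_K\}$; it bounds $S_K$ from the \emph{future}, not from the past. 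As $K\to 0^-$ it therefore gives no information: a priori the whole family $\{S_K\}$ could accumulate on a surface strictly inside $D$, leaving the far future of $D$ unfoliated, in which case your $K^*=\sup\{K<0: x\in \Ip(S_K)\}$ would be $0$ for some $x$ and the argument would stop. Ruling this out is exactly the content of the paper's Step 4, which is a separate, nontrivial argument: if some $x$ were in the future of every $S_K$, the graph functions $f_K$ would be uniformly bounded and convex, hence subconverge as $K\to 0$ to a convex $f_\infty$ satisfying $\det D^2 f_\infty=0$ in the generalized sense; Lemma \ref{lemmaaffine} then forces $f_\infty$ to be affine along an entire line of $\R^2$, so the limit surface would contain a complete line, impossible for a surface contained in a domain of dependence lying in the future cone of a point. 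Without some version of this degeneration argument your proof does not show that the leaves exhaust $D$.

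Two smaller points. First, the monotonicity step contains two sign slips that happen to cancel: for $K_1\le K_2<0$ one has $|K_1|\ge|K_2|$ (not $\le$), so Lemma \ref{confronto sci} yields $u_{K_1}\ge u_{K_2}$; and it is the \emph{smaller} support function that corresponds to the surface further in the future (compare the cosmological-time leaves, $\bar h_d=\bar h-d$), so $u_{K_2}\le u_{K_1}$ is what translates into ``$S_{K_2}$ lies in the future of $S_{K_1}$.'' The ordering you end up asserting is correct, but the intermediate inequalities are not. Second, deducing \emph{strict} separation (disjointness of the leaves) from Theorem \ref{comparison principle} is not immediate from the statement you cite, which only locates the minimum of $u-v$ on the boundary; the paper instead rules out tangency pointwise by comparing shape operators ($|K_1|>|K_2|$ forces a principal curvature of $S(K_1)$ to exceed the largest one of $S(K_2)$, which is incompatible with $S(K_1)$ touching $S(K_2)$ from the past). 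Either route can be completed, but the strictness does need an argument.
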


The existence of such $K$-surfaces follows from Theorem \ref{theorem minkowski problem lsc}, by choosing the constant function $\psi\equiv |K|$. We now show that the $K$-surfaces foliate the domain of dependence $D$.

\begin{lemma} \label{convergence support functions implies convergence graphs}
Let $S_n=\mathrm{graph}(f_n)$ and $S_\infty=\mathrm{graph}(f_\infty)$ be spacelike entire graphs in $\R^{2,1}$
 with $C^1$ support functions $u_n:\D\rar\R$ and $u_\infty:\D\rar\R$. 
 If $u_n$ converges to $u_\infty$ uniformly on compact sets, then $f_n$ converges uniformly on compact sets to $f_\infty$.
\end{lemma}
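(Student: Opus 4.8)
The plan is to use the Legendre duality between the graph function and the support function. If $S=\mathrm{graph}(f)$ is a spacelike entire graph, the future-convex domain $D=\{(w,t):t\ge f(w)\}$ has support function $u(z)=\sup_{(w,t)\in D}(\langle w,z\rangle-t)=\sup_{w\in\R^2}(\langle w,z\rangle-f(w))$, so $u=f^\star$ is the convex conjugate of $f$; since $f$ is finite and convex on $\R^2$, Fenchel--Moreau duality gives $f=u^\star$, that is
\[
f(w)=\sup_{z\in\D}\bigl(\langle w,z\rangle-u(z)\bigr)
\]
(the supremum may be restricted to the open disc, since $\langle w,\cdot\rangle-u(\cdot)$ is concave and upper semicontinuous on $\overline\D$, hence its value at a boundary point is the limit of its values along an incoming segment). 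Applying this to the pairs $f_n,u_n$ and $f_\infty,u_\infty$, it suffices to prove that $f_n\to f_\infty$ pointwise: for convex functions $f_n$ converging pointwise to the finite (hence continuous) convex function $f_\infty$, uniform convergence on compact sets follows, e.g.\ because the subgradients of $f_n$ have norm at most $1$ (spacelikeness), so $f_n(w)\ge f_n(w')-|w-w'|$ yields a uniform lower bound on compact sets, convexity gives a uniform upper bound, and then the standard local Lipschitz estimate for convex functions together with Ascoli--Arzel\`a concludes.

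The lower bound $\liminf_n f_n(w)\ge f_\infty(w)$ is immediate: for every fixed $z\in\D$ one has $f_n(w)\ge\langle w,z\rangle-u_n(z)\to\langle w,z\rangle-u_\infty(z)$, and taking the supremum over $z\in\D$ gives the claim. The delicate point, which I expect to be the main obstacle, is the reverse inequality $\limsup_n f_n(w)\le f_\infty(w)$: the near-maximizers $z_n\in\D$ of $\langle w,\cdot\rangle-u_n(\cdot)$ could a priori escape to $\partial\D$, where $u_n\to u_\infty$ is not available. To handle this, pass to a subsequence along which $f_n(w)$ tends to $\limsup_n f_n(w)$, choose $z_n\in\D$ with $\langle w,z_n\rangle-u_n(z_n)\ge f_n(w)-1/n$, and (passing to a further subsequence) assume $z_n\to z_0\in\overline\D$. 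If $z_0\in\D$ we are done at once, since $u_n(z_n)\to u_\infty(z_0)$ by local uniform convergence. If $z_0\in\partial\D$, fix $t\in(0,1)$ and use convexity of $u_n$ on the segment $[0,z_n]$, which gives $u_n(z_n)\ge\tfrac1t\,u_n(tz_n)-\tfrac{1-t}{t}\,u_n(0)$; hence
\[
f_n(w)-\tfrac1n\ \le\ \langle w,z_n\rangle-\tfrac1t\,u_n(tz_n)+\tfrac{1-t}{t}\,u_n(0).
\]

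Now $tz_n\to tz_0\in\D$, so letting $n\to\infty$ (using local uniform convergence of $u_n$ near $tz_0$ and near $0$, and continuity of $u_\infty$) yields
\[
\limsup_n f_n(w)\ \le\ \tfrac1t\bigl(\langle w,tz_0\rangle-u_\infty(tz_0)\bigr)+\tfrac{1-t}{t}\,u_\infty(0)\ \le\ \tfrac1t\,f_\infty(w)+\tfrac{1-t}{t}\,u_\infty(0),
\]
where the last step uses $tz_0\in\D$. Since this holds for every $t\in(0,1)$, letting $t\to1^-$ gives $\limsup_n f_n(w)\le f_\infty(w)$. Combining with the lower bound gives pointwise convergence $f_n\to f_\infty$, and hence, by the convexity argument recalled in the first paragraph, uniform convergence on compact subsets of $\R^2$. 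The essential content is that the convexity of $u_n$ lets us "pull back" a boundary-escaping maximizer into a fixed compact subset of $\D$ at the cost of a factor $1/t$ whose excess $\tfrac{1-t}{t}u_\infty(0)$ is harmless as $t\to1$; no control of $u_n$ on $\partial\D$ is needed.
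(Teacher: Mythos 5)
Your proof is correct, and it takes a genuinely different route from the one in the paper. The paper's argument goes through the Gauss map: it uses the $C^1$ hypothesis to get $Du_n\to Du_\infty$ pointwise (convergence of gradients of convex functions), then applies the formula $G^{-1}(x)=\grad\bar u(x)-\bar u(x)x$ to deduce $G_n^{-1}(z)\to G_\infty^{-1}(z)$, extracts a locally uniform limit $g$ of the $1$-Lipschitz functions $f_n$ by Ascoli--Arzel\`a, identifies $g$ with $f_\infty$ on the image of the vertical projection of $G_\infty^{-1}$, and finally invokes the hypothesis that $S_\infty$ is an \emph{entire} graph to conclude that this image is all of $\R^2$. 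You instead observe that $u=f^\star$ on $\D$ and recover $f=u^\star$ by Fenchel--Moreau, reducing everything to a $\liminf$/$\limsup$ argument for the conjugates; the only delicate point, the possible escape of near-maximizers $z_n$ to $\partial\D$ where no convergence is assumed, is neutralized by the convexity inequality $u_n(z_n)\ge\tfrac1t u_n(tz_n)-\tfrac{1-t}{t}u_n(0)$, which pulls the competitor back into a fixed compact set at a cost that vanishes as $t\to1^-$. Your approach buys two things: it does not use the $C^1$ regularity of the support functions at all (only convexity and finiteness), and it avoids the paper's implicit reliance on surjectivity of the limiting projection $p_\infty$; what the paper's argument buys is that it directly tracks the geometric correspondence (Gauss map and its inverse) that is used elsewhere in the article. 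Both proofs are complete; yours is, if anything, more self-contained.
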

\begin{proof}
%By the assumption $u_{}
%As $\Ip(S_n)=\{x\in\R^{2,1}| \langle x, y\rangle<u_n(y) \textrm{ for all }y\in \Hyp^2\}$ we immediately deduce that
%$\Ip(S_\infty)\subset\{x\in\R^{2,1}|\langle x, y\rangle<u_\infty(y) \textrm{ for all }y\in \Hyp^2\}$.
By a slight abuse of notation, we consider here the Gauss map $G_n:S_n\to\D$ using the canonical identification $\pi:\Hyp^2\to \D$.
As by convexity  we have that $Du_n(z)\to Du_\infty (z)$ for any $z\in\D$, Formula \eqref{formula inverse gauss map}  implies that
$$ G_n^{-1}(z)\to G_\infty^{-1}(z)$$
for all $z\in\D$.
Let us set  $G_n^{-1}(z)=(p_n(z), f_n(p_n(z))$, where $p_n(z)$ is the vertical projection to $\R^2$.
We have that 
\begin{equation}\label{eq:fff}
p_n(z)\to p_\infty(z)\quad \textrm{ and }\quad f_n(p_n(z))\to f_\infty(p_\infty(z))\,.
\end{equation}
Using that $f_n$'s are $1$-Lipschitz, by a standard use of Ascoli-Arzel\`a Theorem, we get that
up to a subsequence, $f_n$ converges uniformly on compact subset of $\R^2$ to some function $g$.

In order to prove that $g=f_\infty$, let us use again \eqref{eq:fff}. 
We get that $f_\infty(p_\infty(z))=\lim f_n(p_n(z))=g(p_\infty(z))$.
So $f$ and $g$ coincide on the image of $p_\infty$. As we are assuming that $S_\infty$ is a spacelike entire
graph we conclude that they coincide everywhere.
\end{proof}

Recall that a $K_0$-surface $S(K_0)$ in $D$ is constructed as limit of $K_0$-surfaces $S_n(K_0)$ invariant under the action of a surface group. By the work of \cite{barbotzeghib}, the $K$-surfaces $S_n(K)$ foliate the domain of dependence $D_n$ of $S_n(K_0)$, as $K$ varies in $(-\infty,0)$.

\begin{theorem}[\cite{barbotzeghib}] \label{theorem bbz foliation}
Let $G_0$ be a Fuchsian cocompact group 
%and $\mu_0$ be a $G_0$-invariant measured geodesic lamination.
and let $\Gamma_0$ be a discrete subgroup of $\isom(\R^{2,1})$ whose linear part is $G_0$. 
%and whose translation part is determined by $\mu_0$. 
Then the Cauchy surfaces $S_0(K)$ of constant curvature $K\in(-\infty,0)$ foliate the maximal domain of dependence $D_0$ invariant under the action of the group $\Gamma_0$, in such a way that if $K_1<K_2$, then $S_0(K_2)$ is contained in the future of $S_0(K_1)$.
\end{theorem}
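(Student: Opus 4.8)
The plan is to realize the family $\{S_0(K)\}_{K\in(-\infty,0)}$, whose members exist and are unique by Theorem~\ref{theorem bbz}, through their support functions, and then to run a monotonicity/continuity argument in the variable $K$. Fix $\Gamma_0$, $G_0$ as in the statement, let $D_0$ be the associated maximal domain of dependence, $\bar h_0\colon\Hyp^2\to\R$ its support function and $\varphi=\bar h_0|_{\partial\D}$ its (bounded, continuous) support function at infinity, so $h_0=\co\varphi$ is the restriction of $\bar h_0$ to $\D$. For each $K<0$ let $\bar u_K$ be the support function of $S_0(K)$; being a Cauchy surface of $D_0$, $S_0(K)$ is complete and hence an entire graph (\cite{Mess}), and $\bar u_K$ solves $\det(\Hess\bar u_K-\bar u_K\,\id)=1/|K|$, equivalently $\det D^2u_K(z)=\tfrac1{|K|}(1-|z|^2)^{-2}$ on $\D$, with $u_K|_{\partial\D}=\varphi$. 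Since the curvature is the constant $K$, Lemma~\ref{lemma inequality support functions} gives the cosmological-time estimate
\begin{equation}\label{plan:CT}
h_0(z)-\tfrac1{\sqrt{|K|}}\sqrt{1-|z|^2}\le u_K(z)\le h_0(z),\qquad z\in\D .
\end{equation}

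First I would establish \emph{strict monotonicity}: if $K_1<K_2<0$ then $u_{K_1}>u_{K_2}$ on $\D$, so that $S_0(K_2)$ lies strictly in the future of $S_0(K_1)$. Indeed $|K_1|>|K_2|$ gives $M\!A_{u_{K_1}}\le M\!A_{u_{K_2}}$, and since $u_{K_1},u_{K_2}$ agree on $\partial\D$ (both equal $\varphi$, both continuous on $\overline\D$ by \eqref{plan:CT}), the comparison principle (Theorem~\ref{comparison principle}) yields $u_{K_1}\ge u_{K_2}$; if equality held at an interior point $z_0$ one would get $D^2u_{K_1}(z_0)\ge D^2u_{K_2}(z_0)$ and hence $\det D^2u_{K_1}(z_0)\ge\det D^2u_{K_2}(z_0)$, contradicting the strict inequality of the right-hand sides. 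Next, \emph{continuity}: $K\mapsto u_K$ is continuous pointwise, hence (by monotonicity and convexity) locally uniformly. For a monotone sequence $K_n\to K_*\neq 0$, the limit $\tilde u$ of $u_{K_n}$ is a generalized solution of the Monge--Amp\`ere equation with datum $\tfrac1{|K_*|}(1-|z|^2)^{-2}$ by Lemma~\ref{convergence of solutions}; by \eqref{plan:CT} with the uniformly bounded constant $1/\sqrt{|K_n|}$ it still satisfies that estimate, hence extends to $\overline\D$ with boundary value $\varphi$, so by the uniqueness in Proposition~\ref{theorem uniqueness lsc} it equals $u_{K_*}$.

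Then I would analyse the two limiting regimes. As $K\to-\infty$, the constant in \eqref{plan:CT} tends to $0$, so $u_K\to h_0$ uniformly, i.e.\ $S_0(K)\to\partial_sD_0$; more precisely, for $p\in\mathrm{int}(D_0)$ there is $\epsilon>0$ with $\bar h_0(x)-\langle p,x\rangle\ge\epsilon$ for all $x\in\Hyp^2$ (since $D_0-p\supseteq B(0,\epsilon)$ and $|Mx|\ge1$ on $\Hyp^2$, $M=\mathrm{diag}(1,1,-1)$), whence $\bar u_K(x)-\langle p,x\rangle\ge\epsilon-1/\sqrt{|K|}>0$ once $|K|>\epsilon^{-2}$: so $p$ lies strictly in the future of $S_0(K)$. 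As $K\to0^-$, I claim $u_K\to-\infty$ pointwise: otherwise the monotone limit $u_*$ would be a finite convex function on $\D$ with, by Lemma~\ref{convergence of solutions}, $M\!A_{u_*}(\Omega)\ge\limsup_{K\to0^-}\int_\Omega\tfrac1{|K|}(1-|z|^2)^{-2}\,dz=+\infty$ for every compact $\Omega\subset\D$, impossible since the Monge--Amp\`ere measure of a finite convex function is locally finite. Hence for fixed $x_0\in\Hyp^2$, $\bar u_K(x_0)\to-\infty$, so $\langle p,x_0\rangle>\bar u_K(x_0)$ for $K$ near $0$, i.e.\ $p$ lies strictly in the past of $S_0(K)$.

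Finally, combining these: for $p\in D_0$ the set $\{K<0:\ p\ \text{in the closed future of}\ S_0(K)\}$ is, by monotonicity, a half-line $(-\infty,K^*]$; it is non-empty (by the $K\to-\infty$ analysis) and satisfies $K^*<0$ (by the $K\to0^-$ analysis); by continuity $p\in S_0(K^*)$, while strict monotonicity forces $p\notin S_0(K)$ for $K\neq K^*$. Thus the leaves $S_0(K)$, $K\in(-\infty,0)$, partition $D_0$, and smooth dependence on $K$ (elliptic regularity for the family of Monge--Amp\`ere solutions together with the implicit function theorem) upgrades this to a smooth foliation, with $S_0(K_2)$ in the future of $S_0(K_1)$ for $K_1<K_2$. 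I expect the main obstacle to be the $K\to0^-$ regime — showing the leaves escape to the future — where the Monge--Amp\`ere-measure blow-up above is the clean argument, together with the (routine but fiddly) passage between support-function inequalities and genuine causal past/future statements, which rests on each $S_0(K)$ being a Cauchy surface and an entire graph.
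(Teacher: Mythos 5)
The statement you are proving is \emph{cited} verbatim from Barbot--B\'eguin--Zeghib \cite{barbotzeghib}; the paper offers no proof of it and uses it as a black box (in Step 1 and Step 2 of the proof of Theorem \ref{theorem foliation bounded}). So there is no ``paper's own proof'' to compare against, and what you have produced is a genuinely different route: a self-contained derivation of the foliation statement from the \emph{existence} part of Barbot--B\'eguin--Zeghib (Theorem \ref{theorem bbz}) together with the PDE toolkit the present paper builds in Section \ref{sec:convex existence} --- the comparison principle (Theorem \ref{comparison principle}), the cosmological-time bound (Lemma \ref{lemma inequality support functions}), weak stability of Monge--Amp\`ere measures (Lemma \ref{convergence of solutions}), and the uniqueness of Proposition \ref{theorem uniqueness lsc}. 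Since those ingredients only depend on Theorem \ref{theorem bbz} and not on Theorem \ref{theorem bbz foliation}, there is no circularity, and in fact your argument suggests that the foliation input from \cite{barbotzeghib} used in Theorem \ref{theorem foliation bounded} could be replaced by the elementary monotonicity/continuity scheme you describe. This is worth noting as a simplification of the paper's dependency structure, though it does not affect any final result.

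The argument itself is essentially sound. Two small points to tighten. In the $K\to 0^-$ step, the inequality you write, $M\!A_{u_*}(\Omega)\ge\limsup\int_\Omega\frac1{|K|}(1-|z|^2)^{-2}$, is the Portmanteau inequality for \emph{closed} sets, so you should take $\Omega$ a closed ball rather than an open one (or instead choose a $\Omega$ with $M\!A_{u_*}(\partial\Omega)=0$ and invoke convergence on continuity sets). This is in any case dominated by the cleaner observation that a locally finite limit is impossible: pick a nonnegative $f\in C_c$ equal to $1$ on a closed ball; then $\int f\,dM\!A_{u_K}\to\infty$ while $\int f\,dM\!A_{u_*}<\infty$, contradicting weak convergence, since the Monge--Amp\`ere measure of any finite convex function is locally finite. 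The second point is that the concluding appeal to ``elliptic regularity plus the implicit function theorem'' for smooth dependence on $K$ is stated but not carried out; however, the theorem as stated only asserts that the $S_0(K)$ foliate $D_0$ with the given monotonicity, and your partition--by--leaves argument (each $p\in D_0$ lies on exactly one $S_0(K^*)$, with $K^*$ increasing in the causal direction) establishes exactly that. No gap of substance.
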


\begin{proof}[Proof of Theorem \ref{theorem foliation bounded}]
The proof is split in several steps. First we prove the constant curvature surfaces are pairwise disjoint, then that the portion contained between two constant curvature surfaces is filled by other constant curvature surfaces, and finally that one can find a constant curvature surface arbitrarily close to the boundary of the domain of dependence and to infinity. 

\textit{Step 1.} Let us show that, if $K_1<K_2$, then the constant curvature surfaces $S(K_1)$ and $S(K_2)$ are disjoint, and $S(K_2)$ is in the future of $S(K_1)$. Let $S_n(K_1)$ and $S_n(K_2)$ be approximating sequences as in the proof of Theorem \ref{theorem existence lsc}, and let $u_n(K_1)$ and $u_n(K_2)$ be the corresponding support functions. From Theorem \ref{theorem bbz foliation} of \cite{barbotzeghib}, we know that $u_n(K_2)<u_n(K_1)$. Hence in the limit $u(K_2)\leq u(K_1)$, where $u(K_i)$ is the support function of $S(K_i)$. Hence $S(K_1)$ and $S(K_2)$ do not intersect transversely. Moreover $S(K_1)$ is in the closure of the past of $S(K_2)$. Finally $S(K_1)$ and $S(K_2)$ cannot be tangent at a point, since $|K_1|>|K_2|$ and thus at least one of the eigenvalues of the shape operator of $S(K_1)$ is larger than the largest eigenvalue of $S(K_2)$.

\textit{Step 2.} We show that, given two Cauchy surfaces $S(K_1)$, $S(K_2)$ in $D$ of constant curvature $K_1<K_2$, every point between $S(K_1)$ and $S(K_2)$ lies on a Cauchy surface of constant curvature. Let $x$ be a point in $\R^{2,1}$ contained in the past of $S(K_2)$ and in the future of $S(K_1)$. For $n$ large, $x$ is in the past of $S_n(K_2)$ and in the future of $S_n(K_1)$. Therefore there exists a surface $S_n(K_n)$ through $x$, with $K_1<K_n<K_2$. Up to a subsequence, let us assume $K_n\rar K_\infty$. Using the same argument we gave in the proof of Theorem \ref{theorem existence lsc}, the support functions $u_n(K_n)$ converge (up to a subsequence) uniformly of compact sets to $u_\infty(K_\infty)$, which is the support function of the $K_\infty$-surface $S(K_\infty)$ in $D$. Since $x\in S_n(K_n)$ for every $n$, Lemma \ref{convergence support functions implies convergence graphs} implies that $x\in S(K_\infty)$.

\textit{Step 3.} We show that for every point $x$ nearby the boundary of the domain of dependence there is a constant curvature Cauchy surface $S(K_0)$ such that $x\in S(K_0)$. This follows from Equation \eqref{inequality support functions constant curvature surface}, which states that the $K$-surface $S(K)$ in $D$ is contained in the past of the $(1/\sqrt{|K|})$-level surface of the cosmological time. Since the level surfaces $L_C=\{T=C\}$ of the cosmological time get arbitrarily close to the boundary of the domain of dependence as $C$ gets close to $0$, it is clear that $x$ is in the future of $S(K_0)$ if $|K_0|$ is large enough. The claim follows by Step 2. 

\textit{Step 4.} Finally, we show that every point far off at infinity lies on some constant curvature Cauchy surface. By contradiction, suppose there is a point $x$ which is in the future of every $K$-surface $S(K)$. Let $S(K)$ be the graph of $f_K:\R^2\rar\R$. Then the $f_K$ are uniformly bounded (they are all smaller than the function which defines $\Ip(x)$) and convex. Up to a subsequence $f_{K}\rar f_\infty$ uniformly on compact sets. The function $f_\infty$ defines a surface $S_\infty$ contained in the domain of dependence $D$.

Now, $f_K$ satisfies (see for instance \cite{Li})
$$\det D^2 f_K=|K|(1-||Df||^2)^2\,.$$
Therefore, taking the limit as $K\to 0$, $\det D^2 f_\infty=0$ in the generalized sense. 
The following Lemma states that $f_\infty$ is affine along a whole line of $\R^2$,
and this gives a contradiction, since $S_\infty$ would contain an entire line and thus could not  be contained in the domain of dependence $D$.
\end{proof}

\begin{lemma} \label{lemmaaffine}
Let $f:\R^2\to \R$ be a convex function which satisfies the equation $\det D^2f=0$ in the generalized sense.
Then there exist a point $x_0\in \R^2$, a vector $v\in\R^2$, and $\alpha\in\R$ 
such that $f(x_0+tv)=f(x_0)+\alpha t$ for every $t\in\R$.
\end{lemma}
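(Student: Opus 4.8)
\textit{Approach.} The plan is to argue via the Legendre--Fenchel transform, which converts the degenerate Monge--Amp\`ere condition into a statement about the dimension of the domain of the conjugate function. Since $f:\R^2\to\R$ is convex and real-valued everywhere, it is continuous, proper and lower semicontinuous, so $f=f^{**}$, where $f^*(p)=\sup_{x\in\R^2}(\langle p,x\rangle-f(x))$ and $f^{**}$ is its biconjugate. The hypothesis that $\det D^2 f=0$ in the generalized sense means that $M\!A_f(\omega)=0$ for every Borel set $\omega$. Since $M\!A_f(\omega)=\mathcal L(N_f(\omega))$ and, for a convex function, the normal mapping $N_f$ is exactly the subdifferential $\partial f$, taking $\omega=B(0,n)$ and letting $n\to\infty$ (continuity from below of Lebesgue measure) gives $\mathcal L\big(\bigcup_{x\in\R^2}\partial f(x)\big)=0$; that is, the range of $\partial f$ is Lebesgue-null.

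\textit{Reduction to a line.} Next I would show that $C:=\mathrm{dom}(f^*)=\{p:f^*(p)<\infty\}$ is contained in an affine line. This rests on two standard facts: (i) $\mathrm{ri}(C)\subseteq \mathrm{range}(\partial f)$, because a proper lsc convex function (here $f^*$) has nonempty subdifferential at every relative-interior point of its domain, and $p\in\partial f(x)\Leftrightarrow x\in\partial f^*(p)$; and (ii) a convex subset of $\R^2$ of positive Lebesgue measure has nonempty interior. By Step~1, $\mathrm{range}(\partial f)$ is null, hence so is $\mathrm{ri}(C)$, hence $C$ has empty interior, hence $C$ is contained in a line $L$. (Here $C\neq\emptyset$ since $f$ finite forces $\partial f(x)\neq\emptyset$ for every $x$, which produces a point of $C$; and if $C$ reduces to a single point then $f$ is globally affine and the lemma is trivial.)

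\textit{Conclusion.} Write $L=\{p_0+te_1:t\in\R\}$ with $|e_1|=1$, and choose a unit vector $e_2\perp e_1$. Using $f=f^{**}$ and $f^*\equiv+\infty$ off $L$, for every $x_0\in\R^2$ and $s\in\R$ one computes
\begin{equation*}
f(x_0+se_2)=\sup_{t\in\R}\Big(\langle p_0+te_1,\,x_0+se_2\rangle-f^*(p_0+te_1)\Big)=\langle p_0,x_0\rangle+s\langle p_0,e_2\rangle+\sup_{t\in\R}\Big(t\langle e_1,x_0\rangle-f^*(p_0+te_1)\Big),
\end{equation*}
where we used $\langle e_1,e_2\rangle=0$. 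The final supremum is finite (it equals $f(x_0)-\langle p_0,x_0\rangle$, obtained by setting $s=0$) and independent of $s$, so $f(x_0+se_2)=f(x_0)+s\langle p_0,e_2\rangle$ for all $s\in\R$. This is the assertion of the lemma with $v=e_2$ and $\alpha=\langle p_0,e_2\rangle$ — in fact it holds at every $x_0$, and shows the stronger fact that $f$ is, up to an affine function, a function of the single coordinate $\langle e_1,\cdot\rangle$.

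\textit{Main obstacle.} The only genuinely non-routine point is the reduction "$M\!A_f\equiv 0\Rightarrow\mathrm{dom}(f^*)$ lies in a line", which uses essentially that $f$ is real-valued on all of $\R^2$ (so that $f^{**}=f$ and the relevant suprema are finite) together with the two convex-analytic facts recalled above; the remainder is bookkeeping with the Legendre transform. A more self-contained alternative would run as follows: for each $R$, the comparison principle (Theorem~\ref{comparison principle}) forces $f|_{\overline{B(0,R)}}$ to coincide with the convex envelope of $f|_{\partial B(0,R)}$, whose graph is ruled, so $f$ is affine along a chord of $B(0,R)$ through any prescribed point; letting $R\to\infty$ and extracting a limiting direction yields a full line. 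I expect the latter route to be fiddlier, since one must control the chords uniformly in the limit, whereas the transform computation is essentially immediate.
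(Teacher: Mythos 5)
Your proof is correct, but it takes a genuinely different route from the paper. The paper's argument is local-to-global on the primal side: it invokes \cite[Theorem 1.5.2]{gutierrez} to get that $f$ restricted to any bounded convex domain is the convex envelope of its boundary values, hence through every point there is a short segment on which $f$ is affine, and then runs a maximality argument (extend the segment as far as possible, look at the new direction at the endpoint, and derive a contradiction from the level sets) to promote one such segment to a full line. You instead work on the dual side: $M\!A_f\equiv 0$ says exactly that $\mathrm{range}(\partial f)$ is Lebesgue-null, the inclusion $\mathrm{ri}(\mathrm{dom}\,f^*)\subseteq\mathrm{range}(\partial f)$ (via $p\in\partial f(x)\Leftrightarrow x\in\partial f^*(p)$ and Fenchel--Moreau) then forces $\mathrm{dom}(f^*)$ into a line $L$, and the biconjugate formula shows $f$ is affine along \emph{every} line orthogonal to $L$. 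All the steps check out: $f$ finite and convex on $\R^2$ gives $f=f^{**}$ and $\partial f(x)\neq\emptyset$ everywhere, the paper's normal mapping coincides with the subdifferential for convex functions, and the degenerate case $\mathrm{dom}(f^*)=\{p_0\}$ is handled. Your approach buys a stronger and more structural conclusion (up to an affine function, $f$ depends on one coordinate only) with no delicate case analysis, at the cost of importing convex-duality machinery foreign to the Monge--Amp\`ere toolkit the paper is otherwise using; the paper's proof stays entirely within that toolkit and is self-contained modulo the cited structure theorem, but the extension of a maximal segment to a line is the fiddly part you correctly anticipated in your closing remark.
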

\begin{proof}
By \cite[Theorem 1.5.2]{gutierrez} for any bounded convex  domain $\Omega\subset\R^2$,
$f|_{\Omega}$ coincides with the convex envelope of $f|_{\partial\Omega}$.
It follows that for any $x\in\R^2$ there is $v=v(x)\in\R^2$ and $\alpha=\alpha(x)\in\R$ such that
$f(x+tv)=f(x)+\alpha t$ for $t\in(-\epsilon,\epsilon)$, for some $\epsilon=\epsilon(x)>0$.

Fix a point $x_1$, and set $v_1=v(x_1)$.
Up to adding an affine function we may assume that $\alpha(x_1)=0$ and that $f(x)\geq f(x_1)$ for any $x\in\R^2$.
If $f$ is affine along the whole line $x_1+\R v_1$, we have done.
Otherwise take the maximal $t_1$ such that $f(x_1+t_1v_1)=f(x_1)$ and put $x_0=x_1+t_1v_1$.
Let $v_0=v(x_0)$, clearly $v_0\neq v_1$. As $f(x)\geq f(x_0)=f(x_1)$ for every $x\in\R^2$, necessarily  $\alpha(x_0)=0$. See Figure \ref{fig:lemmaaffine1}.

\begin{figure}[htb]
\centering
\includegraphics[height=4cm]{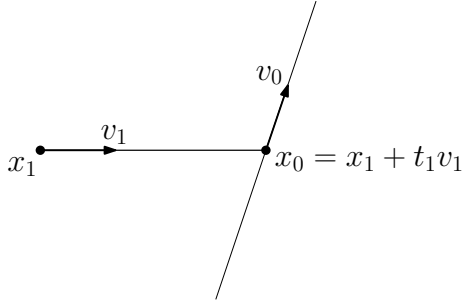}
\caption{The setting of the proof of Lemma \ref{lemmaaffine}. Composing with an affine map, we can assume $f$ is constant on the drawn segments. \label{fig:lemmaaffine1}}
\end{figure}

We claim that $f(x)>f(x_0)$ on the half-plane $P_0$ bounded by $x_0+\R v_0$ which does not contain $x_1$.
Otherwise we should have that $f\equiv f(x_0)$ on the triangle with vertices $x, x_0+\epsilon v_0, x_0-\epsilon v_0$,
but then $f$ would be  constant equal to $f(x_1)$ on some segment $[x_1, x_0+\eta v_1]$, violating he maximality of $t_1$. See Figure \ref{fig:lemmaaffine2}.

Now suppose that $f(x_0+tv_0)>f(x_0)$ for some $t$ and take the maximal $t_0$ for which $f(x_0+t_0 v_0)=f(x_0)$.
Define $x_2=x_0+t_0v_0$. As before we have that $v_2$ is different form $v_0$ and that $f\equiv f(x_0)$ on some segment
of the form $[x_2-\epsilon v_2, x_2+\epsilon v_2]$ (see Figure \ref{fig:lemmaaffine3}). As this segment contains points of $P_0$ we get a contradiction.
\end{proof}

\begin{figure}[htb]
\centering
\begin{minipage}[c]{.45\textwidth}
\centering
\includegraphics[height=4cm]{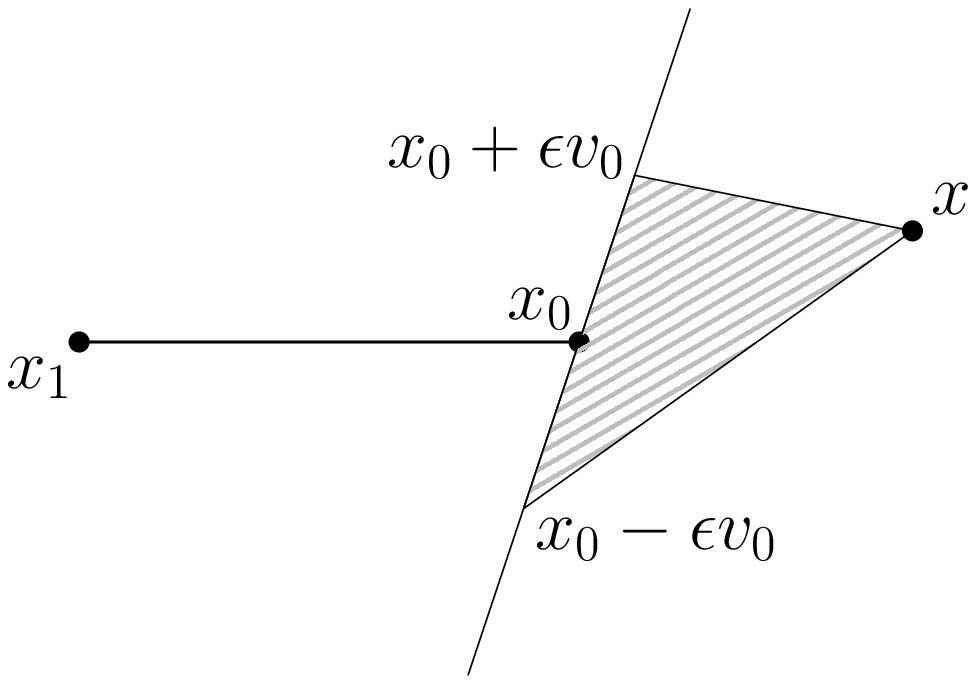}
%\captionsetup{labelformat=empty}
\caption{We have $f(x)>f(x_1)$ on the right halfplane bounded by the line $x_0+\R v_0$, for otherwise $f$ would be constant on the ruled triangle, contradicting the maximality of $t_1$.} \label{fig:lemmaaffine2}
\end{minipage}%
\hspace{5mm}
\begin{minipage}[c]{.45\textwidth}
\centering
\includegraphics[height=4cm]{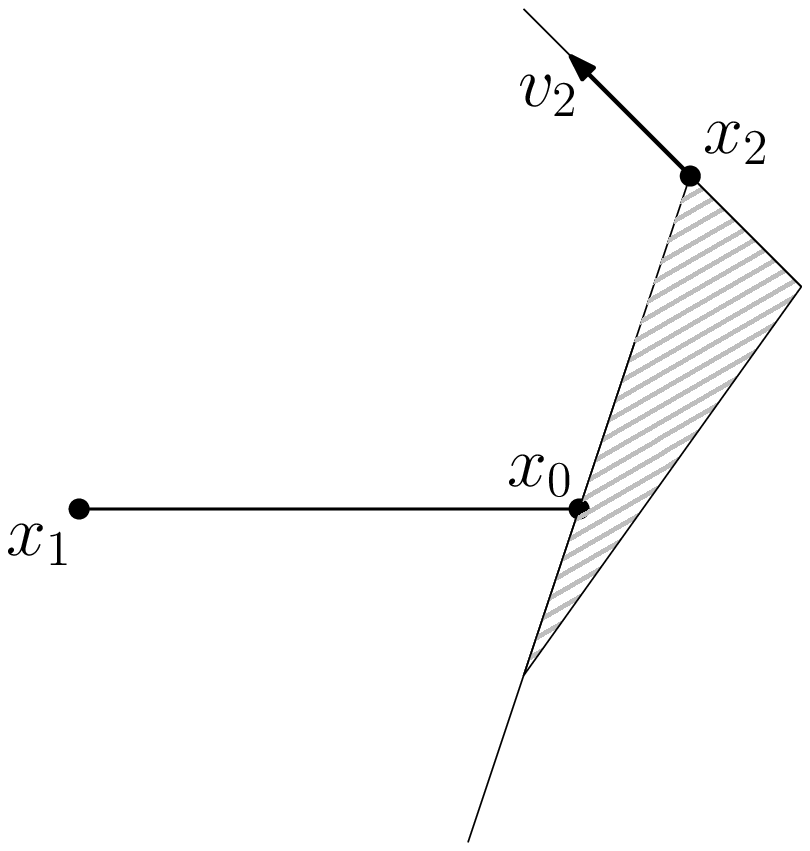}
%\captionsetup{labelformat=empty}
\caption{By a similar argument, $f$ has to be constant on the entire line $x_0+\R v_0$.} \label{fig:lemmaaffine3}
\end{minipage}
\end{figure}

\section{Constant curvature surfaces} \label{sec:convex zygmund}

In this section we give a characterization of $K$-surfaces with bounded principal curvatures.

\begin{theorem} \label{big theorem zygmund finite dual lamination bounded curvatures}
Let $D$ be a domain of dependence in $\R^{2,1}$. The following are equivalent:
\begin{itemize}
\item[i)] The measured geodesic lamination $\mu$ dual to $\partial_s D$ is bounded, i.e. $||\mu||_{Th}<+\infty$.
%$||\mu||_{Th}<+\infty$, where $\mu$ is the measured geodesic lamination dual to the regular boundary $\partial_s D$.
\item[ii)] The support function at infinity $h=H|_{\partial\D}:\partial\D\rar\R$ of $D$ is in the Zygmund class.
\item[iii)] The domain of dependence $D$ contains a convex Cauchy surface with principal curvatures bounded from below by some constant $d>0$.
%\item[iv)] For every $K\in(-\infty,0)$, $D(S)$ contains a convex Cauchy surface of constant curvature $K$ with principal curvatures bounded below by some constant $d>0$.
\item[iv)] The domain of dependence  $D$ is foliated by complete convex Cauchy surfaces of constant curvature $K$ with principal curvatures bounded from below by some constant $d=d(K)>0$, where $K\in(-\infty,0)$.
\end{itemize} 
\end{theorem}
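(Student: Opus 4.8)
The plan is to establish the cycle of implications (i) $\Leftrightarrow$ (ii), (i) $\Rightarrow$ (iv) $\Rightarrow$ (iii) $\Rightarrow$ (i). The equivalence (i) $\Leftrightarrow$ (ii) will come from matching two a priori different constructions, while the implication (iii) $\Rightarrow$ (i) is the crucial and hardest one. For (i) $\Leftrightarrow$ (ii): write $h$ for the support function of $D$, $\mu$ for its dual lamination and $\varphi=h|_{\partial\D}$ for the support function at infinity, so $h=\co(\varphi)$. Regarding the $1$-homogeneous function $h|_{\partial\D}$ as a vector field on $S^1$ via Lemma \ref{lemma campo e funzione omogenea}, the plan is to prove that it equals, up to the vector field of an infinitesimal isometry of $\Hyp^2$ (harmless for Zygmund regularity), the infinitesimal earthquake $\dot E^\mu$ along $\mu$. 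Concretely, in the Mess formula \eqref{support function formula} the term $\langle x,y_0\rangle$ is the infinitesimal isometry contribution, while the integrand $\langle x,\boldsymbol\sigma\rangle$ is, under the identification of Lemma \ref{lemma campo e funzione omogenea}, exactly the one-leaf infinitesimal earthquake $\dot E_l$ of Example \ref{infinitesimal earthquake one leaf} — because the element of $\so(2,1)\cong\R^{2,1}$ generating the hyperbolic translation along a geodesic $l$ is precisely the de Sitter point $\boldsymbol\sigma(l)$; comparison with the integral formula \eqref{integral formula infinitesimal earthquake} then identifies $h|_{\partial\D}$ with $\dot E^\mu$ modulo infinitesimal isometries. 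Since the earthquake measure is determined by the earthquake, $\mu$ is the earthquake lamination of $h|_{\partial\D}$, and Theorem \ref{teorema saric} says exactly that $\mu$ is bounded iff $h|_{\partial\D}$ is Zygmund.

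For (i) $\Rightarrow$ (iv): by the previous step $\varphi$ is Zygmund, hence bounded, so $D$ lies in the future cone of a point and Theorem \ref{theorem foliation bounded} provides the foliation by entire graphs $S(K)$ of constant curvature $K\in(-\infty,0)$. Since the principal curvatures of $S(K)$ multiply to $|K|$, proving they are bounded below by $d(K)>0$ is the same as a uniform upper bound on the shape operator, which I would obtain by contradiction. If $\sup|B_{S(K)}|=\infty$, choose $p_n\in S(K)$ with largest principal curvature $\to\infty$ and isometries $\alpha_n\in\isom(\R^{2,1})$ so that $\alpha_n(p_n)$ is a fixed point with horizontal tangent plane there (Gauss image $z=0$) and the $\alpha_n(D)$ are normalized (e.g.\ their lightlike support plane orthogonal to $(0,0,1)$ through a fixed point). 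Isometries act on dual laminations through the $\isom(\Hyp^2)$-action, hence preserve the Thurston norm, so $\alpha_n(D)$ has dual lamination $\mu_n$ with $\|\mu_n\|_{Th}=\|\mu\|_{Th}$; weak-$*$ compactness of uniformly bounded laminations gives $\mu_n\war\mu_\infty$ with $\|\mu_\infty\|_{Th}\le\|\mu\|_{Th}$, and Lemma \ref{limit weak uniform compact} makes the support functions $h_n$ of $\alpha_n(D)$ converge locally uniformly to that of a domain of dependence with bounded dual lamination. By \eqref{inequality support functions constant curvature surface} the support functions $u_n$ of $\alpha_n(S(K))$ satisfy $h_n-C\sqrt{1-|z|^2}\le u_n\le h_n$ with $C=1/\sqrt{|K|}$, hence are locally uniformly bounded; Lemma \ref{lemma boundedness second derivatives} gives a uniform $C^2$ bound and, by elliptic bootstrap, $u_n\to u_\infty$ in $C^\infty_{\mathrm{loc}}$, so by \eqref{formula shape operator euclidean hessian} the shape operator of $\alpha_n(S(K))$ at the point with Gauss image $z=0$ converges to a finite invertible limit, contradicting divergence of its largest eigenvalue. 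Finally, the Gauss map of each $S(K)$ is a diffeomorphism onto $\Hyp^2$ (its image is open, convex and dense, hence all of $\Hyp^2$), so the third fundamental form is complete and, the principal curvatures now being pinched, the first fundamental form is bi-Lipschitz to it and thus complete; this yields (iv).

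The implication (iv) $\Rightarrow$ (iii) is immediate: any single leaf, say $S(-1)$, is a convex Cauchy surface with principal curvatures $\ge d(-1)>0$. For (iii) $\Rightarrow$ (i), let $S\subset D$ be a convex Cauchy surface with principal curvatures $\ge d>0$; then $S$ contains no lightlike ray, hence is a spacelike entire graph, and by \eqref{formula inverse gauss map}--\eqref{formula shape operator euclidean hessian} its inverse Gauss map $G_S^{-1}$ is $(1/d)$-Lipschitz from $(\Hyp^2,g_{\Hyp^2})$ to $S$ with its induced metric. The plan is a direct geometric estimate: given a geodesic segment $I\subset\Hyp^2$ of length $\le 1$ transverse to $\mu$, normalize by an isometry of $\R^{2,1}$ so that $I$ lies in a fixed compact region of $\Hyp^2$ (this affects neither $\mu(\mathcal{G}_I)$ nor the curvature bound). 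Every support plane of $\partial_s D$ is a support plane of $D$, hence lies to the past of $S$, and shares its normal direction with a support plane of $S$; since $S$ with principal curvatures $\ge d$ is squeezed, near each of its support planes, against a hyperboloid of curvature $-d^2$, and since $G_S^{-1}$ moves slowly, the normal directions of the pleated strata of $\partial_s D$ met along $I$ cannot spread far apart in $\Hyp^2$ — and a bound on this spread in terms of $d$ translates, using that the leaves crossing $I$ are pairwise disjoint, into a bound $\mu(\mathcal{G}_I)\le F(d)$ uniform in $I$, i.e.\ $\|\mu\|_{Th}<\infty$.

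I expect the main obstacle to be (iii) $\Rightarrow$ (i): turning the intuition that ``a spacelike surface with principal curvatures $\ge d$ cannot bend faster than a hyperboloid of curvature $-d^2$'' into the quantitative estimate above requires controlling the non-smoothness of the pleated boundary $\partial_s D$, locating and comparing the extreme strata met by a transverse arc, and passing between the ambient Euclidean metric and the induced metric on $S$ (which the isometry normalization is designed to tame). A secondary technical point is the stability assertion used in (i) $\Rightarrow$ (iv), namely lower semicontinuity of $\|\cdot\|_{Th}$ under weak-$*$ convergence of the domains $\alpha_n(D)$, which combines weak-$*$ compactness of bounded laminations with the continuity of the Mess correspondence in Lemma \ref{limit weak uniform compact}.
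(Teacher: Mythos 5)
Your treatment of $i)\Leftrightarrow ii)$ is the paper's argument (Proposition \ref{zygmund e lamination} plus Theorem \ref{teorema saric}), and your contradiction argument for the curvature bound in $i)\Rightarrow iv)$ also follows the paper (Proposition \ref{boundedness curvature boundedness lamination}), with one small deviation: you invoke weak-$*$ compactness of laminations of bounded Thurston norm and Lemma \ref{limit weak uniform compact} to control the normalized support functions $h_n$, whereas the paper proves directly (Proposition \ref{uniform boundedness support function bounded lamination}) that $\|\mu\|_{Th}<M$ forces $h\leq M e^2/(e-1)$ after normalization, by cutting $[x_0,x]$ into unit subsegments and summing $\sinh(i)\cdot\mu$. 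The direct bound is what you actually need (only uniform boundedness, not convergence, feeds into Lemma \ref{lemma boundedness second derivatives}), and it sidesteps both the compactness claim and the possibility that the base point lands on a weighted leaf of the limit lamination.

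The genuine gap is in $iii)\Rightarrow i)$, which you yourself flag as the hard step and leave at the level of intuition. Two concrete ingredients are missing, and the quantity you propose to bound is not the right one. The ``spread of the normal directions met along $I$'' is automatically at most $1$: the normals of the support planes of $\partial_s D$ crossed by a transverse arc $I=[x_1,x_2]$ of length $\leq 1$ are exactly the points of $I$. What must be bounded instead is the Minkowski displacement $\|y_1-y_2\|_-$ of the tangency points $y_i\in\partial_s D$ of the two extreme support planes $y_i+x_i^\perp$. The conversion from measure to displacement is the paper's Lemma \ref{inequality weight arc minkowski distance}: since the leaves crossed by $I$ are pairwise disjoint, their de Sitter normals satisfy $\langle\boldsymbol{\sigma}(g_i),\boldsymbol{\sigma}(g_j)\rangle\geq 1$, so expanding $\langle y_2-y_1,y_2-y_1\rangle$ with $y_2-y_1=\int_{\mathcal{G}[x_1,x_2]}\boldsymbol{\sigma}\,d\mu$ gives $\mu(\mathcal{G}[x_1,x_2])^2\leq\|y_1-y_2\|_-^2$ (first for finite laminations, then by weak-$*$ approximation). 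This is the precise use of ``pairwise disjointness'' that your sketch gestures at but does not carry out. The second missing ingredient is the bound $\|y_1-y_2\|_-\leq 2\sqrt{2(1+\cosh 1)}/d$: from the curvature hypothesis one shows $S$, and hence all of $D(S)$, is contained in $\Ip\bigl(p-\tfrac{1}{d}G(p)\bigr)$ for every $p\in S$ (Lemmas \ref{surface bounded curvature hyperboloid} and \ref{surface bounded curvature cone}); each tangency point then lies in a region isometric to $\{x_3^2\geq x_1^2+x_2^2,\ x_3\leq 1/d\}$, whose spacelike chords have length at most $2/d$, and a three-case analysis (depending on whether $y_1$, $y_2$ lie in each other's wedge, or whether one must pass through a point $y_3\in P_1\cap P_2$) yields the stated constant using $d_{\Hyp^2}(x_1,x_2)\leq 1$ to control the cross term $\langle y_2-y_3,y_3-y_1\rangle$. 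Without these two steps the implication $iii)\Rightarrow i)$, and hence the equivalence, is not established. Your auxiliary observation that $G_S^{-1}$ is $(1/d)$-Lipschitz is correct but is not used anywhere in a way that closes the argument.
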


We will give the proof in several steps. It is obvious that $iv)\Rightarrow iii)$. In Subsection \ref{zygmund and boundedness} we prove that $i)\Leftrightarrow ii)$. The existence part of $ii)\Rightarrow iv)$ follows by Theorem \ref{theorem foliation bounded}; in Subsection \ref{boundedness curvature} we complete the proof by showing that, if the dual lamination has finite Thurston norm, then the principal curvatures are bounded and the surface is complete. %In Subsection \ref{foliation} we achieve $ii)\Rightarrow v)$, by showing that the $K$-surface previously constructed foliate the domain of dependence as $K$ varies in $(-\infty,0)$. 
Finally, Subsection \ref{estimate norm curvatures} proves $iii)\Rightarrow i)$, by giving an explicit estimate of Thurston norm of the dual lamination in terms of the supremum of the principal curvatures, which holds for any convex Cauchy surface.

\subsection{Zygmund fields and bounded measured geodesic laminations} \label{zygmund and boundedness}

In this part, we discuss the equivalence between $i)$ and $ii)$. We prove here the key fact for this equivalence. Given a function $\varphi:S^1\to\R$, we denote by $\hat\varphi$ the vector field on $S^1$ associated to $\varphi$ by means of the standard trivialization.

\begin{prop} \label{zygmund e lamination}
Given an infinitesimal earthquake $\hat\varphi=\ddt E^{t\mu}$, the function $\varphi:S^1\rar\R$ is the support function at infinity on $\partial\D=S^1$ of a domain of dependence $D$ with dual lamination $\mu$.
\end{prop}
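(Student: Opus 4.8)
The plan is to compare two explicit integral formulas: the one for the boundary value of the support function $H(x) = \langle x, y_0\rangle + \int_{\mathcal{G}[x_0,x]} \langle x, \boldsymbol{\sigma}\rangle\, d\mu$ from Proposition \ref{support function from lamination}, restricted to $\partial\D$, and the one for the infinitesimal earthquake $\dot E^\mu(\eta) = \int_{\mathcal G} \dot E_l(\eta)\, d\mu(l)$ from Theorem \ref{teorema saric}. The key observation is that, via the correspondence of Lemma \ref{lemma campo e funzione omogenea}, the vector field $\dot E_l$ corresponds to a $1$-homogeneous function $H_l$ on $N = \partial\Ip(0)\setminus\{0\}$, and by the explicit description in Example \ref{infinitesimal earthquake one leaf} this $H_l$ is precisely (up to the normalization coming from the choice of base stratum containing $x_0$) the support function at infinity of the domain of dependence $D(\delta_l, x_0, 0)$ associated to the lamination consisting of the single leaf $l$ with unit weight. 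Indeed, $H_l(\gamma(\xi)) = \langle \dot A^l(\gamma(\xi)), v(\gamma(\xi))\rangle$ on the far side of $\bar l$ and $0$ on the side of $x_0$, which matches $\langle x, \boldsymbol{\sigma}_l\rangle$ evaluated on the lightlike ray through $\gamma(\xi)$, where $\boldsymbol{\sigma}_l = \dot A^l$ is the derivative of the one-parameter hyperbolic group and equals the point of $\dS^2$ dual to $l$.

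Concretely, first I would verify the single-leaf case: for $\mu = \delta_l$ (weight $1$ on the geodesic $l$), check that the $1$-homogeneous function on $N$ associated to the Zygmund field $\dot E_l$ agrees with $H_{D(\delta_l,x_0,0)}|_{\partial\Ip(0)}$. This is a direct computation identifying $\boldsymbol{\sigma}$ (the outward spacelike unit normal to $l$) with $\dot A^l$ (the generator of the left hyperbolic flow along $l$ as seen from $x_0$): both are the point of $\dS^2$ orthogonal to $l$, with the sign determined by the $x_0$-side convention. One must also check that $\langle x, \boldsymbol\sigma\rangle$ restricted to the lightcone is exactly $\langle \dot A^l(\eta), v\rangle$ where $v$ is the unit tangent to $\partial\D$; this follows from Lemma \ref{lemma campo e funzione omogenea}'s defining relation \eqref{definizione campo e funzione omogenea} together with the metric computation \eqref{metrica cono}. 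Then, integrating over $\mathcal G$ against $d\mu$, linearity of both sides in $\mu$ immediately gives that the $1$-homogeneous function associated to $\hat\varphi = \dot E^\mu$ equals the support function at infinity of $D(\mu, x_0, 0)$; finiteness/convergence of the integral on $\partial\D$ is exactly the content of Theorem \ref{teorema saric}, and the fact that the resulting function is $\co$ of its boundary values and has $\mu$ as dual lamination is then automatic from the construction of $D(\mu,x_0,y_0)$ and Proposition \ref{support function from lamination} combined with Lemma \ref{lemma limit radial geodesics}.

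The main obstacle I anticipate is purely bookkeeping rather than conceptual: keeping the orientation and sign conventions consistent between the ``left earthquake'' convention (Definition \ref{definizione terremoto}, ``$S'$ translated on the left as seen from $S$''), the ``outward'' convention for $\boldsymbol\sigma$ (``pointing outward with respect to the direction from $x_0$ to $x$''), and the counterclockwise orientation of $v$ on $\partial\D$ used in Example \ref{infinitesimal earthquake one leaf}. One must check that these conventions are mutually compatible so that the two formulas agree on the nose (not just up to sign), and in particular that the ambiguity ``up to an infinitesimal M\"obius transformation'' in Theorem \ref{teorema saric} corresponds exactly to the ambiguity ``up to translation in $\R^{2,1}$'' (equivalently, up to an affine function on $\D$, equivalently up to the restriction to $\partial\D$ of a linear form, which is the support-function avatar of a Killing field of $\Hyp^2$). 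Once the single-leaf identification is pinned down with correct signs, the general case is a formal consequence of linearity and dominated convergence, already guaranteed by the cited results.
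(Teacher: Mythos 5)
Your proposal is correct and follows essentially the same route as the paper: reduce to the single-leaf identity $\langle\eta,\boldsymbol{\sigma}(l)\rangle=\langle\dot A^l(\eta),v\rangle$ (which the paper verifies via $\dot A^l(\eta)=\eta\boxtimes\boldsymbol{\sigma}(l)$ and an orthonormal-frame computation), then integrate against $d\mu$ using Theorem \ref{teorema saric} and Proposition \ref{support function from lamination}, with the normalization handled by an infinitesimal M\"obius transformation placing $x_0$ in a fixed stratum. The sign/normalization bookkeeping you flag is exactly what the paper's computation and its subsequent remark address.
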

\begin{proof}
By composing $\hat\varphi$ with an infinitesimal M\"obius tranformation, (compare Section \ref{zygmund fields}) we can suppose the point $x_0\in \Hyp^2$ lies in a stratum of $\mu$ which is fixed by the earthquakes $E^{t\mu}$, for $t\in\R$. By Proposition \ref{support function from lamination}, the support function at infinity of the domain of dependence $D=D(\mu,x_0,0)$ which has dual lamination $\mu$ and $x_0^\perp$ as a support plane is 
\begin{equation} \label{eq: earthquake and support 1}
H(\eta)=\int_{\mathcal{G}[x_0,\eta)} \langle\eta,\boldsymbol{\sigma}\rangle d\mu\,,
\end{equation}
for every $\eta$ in $\overline{\partial\Ip(0)}$. Here $[x_0,\eta)$ denotes the geodesic ray obtained by projecting to $\Hyp^2$ the line segment connecting $x_0$ and $\eta$ (recall the convention introduced before Proposition \ref{support function from lamination}).

%Note that the above formula being well-defined, for $\eta\in N$, by Lemma \ref{support function extends} below. 
By Lemma \ref{lemma campo e funzione omogenea}, the vector field $\hat\varphi$ on $S^1$ defines a 1-homogeneous function $\Phi$ on $\partial\Ip(0)$. Since $H$ is 1-homogeneous, it suffices to check that $H$ and $\Phi$ agree on $\partial\D= \partial\Ip(0)\cap\{x_3=1\}$. Let $\eta\in\partial\D$ and let $v$ be the unit vector tangent to $\partial \D$ in the counterclockwise orientation. By Lemma \ref{lemma campo e funzione omogenea}, under the standard identification of $\partial\D$ with $S^1$, we have $$\varphi(\eta)=\langle \hat\varphi(\eta),v\rangle=\Phi(\eta)\,.$$
We now compute the infinitesimal earthquake $\varphi$ at a point $\eta$. If $l$ is a leaf of $\mu$, the infinitesimal earthquake along the lamination composed of the only leaf $l$ (as in Example \ref{infinitesimal earthquake one leaf}) is
$$\dot E_l(\eta)=\langle\dot A^l(\eta),v\rangle v=\langle\eta\boxtimes\boldsymbol{\sigma}(l),v\rangle v\,,$$
where $\dot A^l=\ddt A^l(t)\in\so(2,1)$ is the infinitesimal generator of the 1-parameter subgroup of hyperbolic isometries $A^l(t)$ which translate on the left (as seen from $x_0$) along the geodesic $l$ by lenght $t$. Here $\boxtimes$ denotes the Minkowski cross product. In the second equality we have used the fact that $\dot A^l(\eta)=\eta\boxtimes\boldsymbol{\sigma}(l)$ (see for instance \cite[Appendix B]{bonschlfixed}). 

Using Equation \eqref{integral formula infinitesimal earthquake} in Theorem \ref{teorema saric}, we obtain
\begin{equation} \label{eq: earthquake and support 2}
\langle \hat\varphi(\eta),v\rangle =\int_{\mathcal{G}[x_0,\eta)} \dot E_l(\eta)d\mu=\int_{\mathcal{G}[x_0,\eta)} \langle \eta\boxtimes\boldsymbol{\sigma},v\rangle d\mu\,.
\end{equation}
We will show that $\langle\eta,\boldsymbol{\sigma}(l)\rangle=\langle \eta\boxtimes\boldsymbol{\sigma}(l),v\rangle$, from which the claim follows, by comparing Equations \eqref{eq: earthquake and support 1} and \eqref{eq: earthquake and support 2}. For this purpose, let $p=(0,0,1)$ and $\eta=p+w$, so that $(p,w,v)$ gives an orthonormal oriented triple. Suppose $\boldsymbol{\sigma}(l)=a p+b w+c v$. Then 
$$\langle\eta,\boldsymbol{\sigma}(l)\rangle=a\langle p,p\rangle+b \langle w,w\rangle=b-a$$
whereas
$$\eta\boxtimes\boldsymbol{\sigma}(l)=b(p\boxtimes w)+a(w\boxtimes p)+c(p\boxtimes v+w\boxtimes v)$$
and in conclusion $\langle \eta\boxtimes\boldsymbol{\sigma}(l),v\rangle=b-a=\langle\eta,\boldsymbol{\sigma}(l)\rangle$.
\end{proof}

\begin{remark} We observe that if a different base point $x_0'$ (out of any weighted leaf of $\mu$) is chosen, we obtain the 1-homogeneous function $H'$ such that 
$$
H'(\eta)=\int_{\mathcal{G}[x_0',\eta)} \langle\eta,\boldsymbol{\sigma}\rangle d\mu=
\int_{\mathcal{G}[x_0',x_0]} \langle\eta,\boldsymbol{\sigma}\rangle d\mu+
\int_{\mathcal{G}[x_0',\eta)} \langle\eta,\boldsymbol{\sigma}\rangle d\mu
$$
where this equality follows from the fact that $\mu$ is a measured geodesic lamination, and hence in Equation \eqref{eq: earthquake and support 1} the interval $[x_0,x]$ can be replaced by any path from $x_0$ to $x$ transverse to the support of the lamination (see also \cite{Mess}). Clearly, if a suitable normalization is chosen, also the infinitesimal earthquake $\hat\varphi'$ changes in the same way. Therefore $H'$ agrees with the function $\Phi'$ obtained from $\hat\varphi'$.

Finally, recall that a tangent element to Universal Teichm\"uller space is defined as a vector field on $S^1$ satisfying the Zygmund condition \eqref{cross-ratio infinitesimal} up to a first-order deformation by M\"{o}bius transformations. Therefore a different representative $\hat\varphi'$ in the same equivalence class of $\hat\varphi$ differs by
$$\hat\varphi'(\eta)=\hat\varphi(\eta)+\ddt A(t)(\eta)=\hat\varphi(\eta)+\dot A(\eta)=\hat\varphi(\eta)+\eta\boxtimes y_0$$
where $A(t)\in\isom(\R^{2,1})$, $A(0)=\id$ and $\dot A=\ddt A(t)\in\so(2,1)$. %We have put $\dot A=\Psi(y_0)$ for $\Psi:\R^{2,1}\rar\so(2,1)$ the canonical isomorphism. 
By the same computation as above, $\langle \hat\varphi'(\eta),v\rangle=H'(\eta)$ where $H'$ is the support function of the domain of dependence $D(\mu,x_0,y_0)$.
\end{remark}

By applying Proposition \ref{zygmund e lamination} and results presented in \cite{garhulakic} or \cite{saricweak} on the convergence of the integral in Equation \eqref{integral formula infinitesimal earthquake}, one deduces that, if $D$ is a domain of dependence whose dual lamination has finite Thurston norm, then the support function at infinity of $D$ is finite. We will give a quantitative version of this fact in Proposition \ref{uniform boundedness support function bounded lamination} - which will be useful because it gives a uniform bound on the support function in terms of the Thurston norm. Here we draw another consequence of Proposition \ref{zygmund e lamination}, namely the equivalence of conditions $i)$ and $ii)$.

\begin{cor} \label{cor support function extends equivalence 12} 
Given a domain of dependence $D$, the dual lamination $\mu$ has finite Thurston norm if and only if the support function $h$ of $D$ extends to a Zygmund field on $\partial \D$.
\end{cor}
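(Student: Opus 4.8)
The plan is to deduce Corollary \ref{cor support function extends equivalence 12} directly from Proposition \ref{zygmund e lamination} together with the Gardiner--Hu--Lakic characterization recalled in Theorem \ref{teorema saric}. The two statements to be matched are: (i) $\|\mu\|_{Th}<+\infty$, i.e.\ $\mu$ is a bounded measured geodesic lamination; and (ii) the support function at infinity $h=H|_{\partial\D}$ is in the Zygmund class. The bridge between them is the identity, established in Proposition \ref{zygmund e lamination}, that $h$ coincides (under the standard trivialization of $TS^1$, via Lemma \ref{lemma campo e funzione omogenea}) with the boundary vector field $\hat\varphi=\ddt E^{t\mu}$ of the infinitesimal earthquake along $\mu$. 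So the corollary is really just the observation that ``$\mu$ bounded $\iff$ $\hat\varphi$ Zygmund'' once we know ``$h=\hat\varphi$''.

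First I would fix a base point $x_0\in\Hyp^2$ lying off every weighted leaf of $\mu$, and let $D=D(\mu,x_0,0)$ be the associated domain of dependence; by Proposition \ref{zygmund e lamination} its support function at infinity $h$ equals the $1$-homogeneous function associated to the Zygmund field $\hat\varphi=\ddt E^{t\mu}$, restricted to $\partial\D$. For the forward implication, assume $\|\mu\|_{Th}<+\infty$. Then by Theorem \ref{teorema saric} the integral \eqref{integral formula infinitesimal earthquake} converges for every $\eta\in\partial\D$ and defines a Zygmund field $\hat\varphi$ on $S^1$; since $h=\hat\varphi$ by Proposition \ref{zygmund e lamination}, $h$ is Zygmund. (Here one should note that the Zygmund condition \eqref{cross-ratio infinitesimal} is a genuine property of the vector field, unaffected by the $1$-homogeneous bookkeeping, and that changing the normalization of $x_0$ or $y_0$ alters $\hat\varphi$ only by an infinitesimal M\"obius transformation, hence preserves the Zygmund class---this is exactly the content of the Remark following Proposition \ref{zygmund e lamination}.)

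For the converse, assume $h$ is in the Zygmund class. Then $\hat\varphi$ (which equals $h$ under the trivialization) is a Zygmund field on $S^1=\partial\D$, so by the converse half of Theorem \ref{teorema saric} there exists a bounded measured geodesic lamination $\mu'$ with $\hat\varphi=\ddt E^{t\mu'}$ up to an infinitesimal M\"obius transformation. Applying Proposition \ref{zygmund e lamination} again, $\mu'$ is the dual lamination of the domain of dependence whose support function at infinity is $\hat\varphi=h$ (up to the M\"obius ambiguity, which only translates $D$ in $\R^{2,1}$ and does not change the dual lamination). Since the dual lamination of a domain of dependence is uniquely determined (as recalled after Proposition \ref{support function from lamination}, using \cite{Mess, bebo}), we conclude $\mu=\mu'$, and hence $\|\mu\|_{Th}<+\infty$.

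The only subtle point---and the step I would write most carefully---is the bookkeeping around the M\"obius ambiguity and the choice of base point: Theorem \ref{teorema saric} delivers $\mu'$ only ``up to an infinitesimal M\"obius transformation'', and Proposition \ref{zygmund e lamination} requires $x_0$ to lie off the weighted leaves of $\mu$. One must check that a first-order M\"obius deformation $\eta\mapsto\eta\boxtimes y_0$ added to $\hat\varphi$ corresponds exactly to replacing the support plane $x_0^\perp$ by $y_0+x_0^\perp$, i.e.\ to a translation of $D$, which is precisely the indeterminacy in associating a domain of dependence to $\mu$; and that a change of admissible base point $x_0\to x_0'$ changes $H$ by the constant $\int_{\mathcal{G}[x_0',x_0]}\langle\cdot,\boldsymbol\sigma\rangle\,d\mu$-term, again only a translation. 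Both of these are spelled out in the Remark after Proposition \ref{zygmund e lamination}, so the corollary follows by invoking that Remark together with the uniqueness of the dual lamination. Everything else is a direct quotation of Theorem \ref{teorema saric}.
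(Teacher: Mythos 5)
Your proposal is correct and follows essentially the same route as the paper: the forward direction combines Proposition \ref{zygmund e lamination} with the convergence statement of Theorem \ref{teorema saric}, and the converse uses the Gardiner--Hu--Lakic converse plus Proposition \ref{zygmund e lamination} again together with uniqueness of the dual lamination. Your extra care with the M\"obius/base-point bookkeeping is exactly the content of the Remark following Proposition \ref{zygmund e lamination}, which the paper's own (terser) proof leaves implicit.
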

\begin{proof}
If $||\mu||_{Th}<+\infty$, from Proposition \ref{zygmund e lamination} we know that $h|_{\partial\D}$ coincides with the infinitesimal earthquake along $\mu$, hence is a Zygmund field. Viceversa, if $h|_{\partial\D}$ is a Zygmund field, by Theorem \ref{teorema saric} there exists a bounded lamination $\mu$ such that $h|_{\partial\D}$ is the infinitesimal earthquake along $\mu$, and we conclude again by Proposition \ref{zygmund e lamination}.
\end{proof}

\subsection{Boundedness of curvature} \label{boundedness curvature}
To prove the implication $i)\Rightarrow iv)$, we have showed in Theorem \ref{theorem minkowski problem lsc} the existence of constant curvature Cauchy surfaces $S(K)$, while in Theorem \ref{theorem foliation bounded} we proved that the surfaces $S(K)$ foliate the domain of dependence as $K\in(-\infty,0)$. It remains to show that the principal curvatures of the Cauchy $K$-surfaces $S(K)$ we constructed are bounded provided the dual measured geodesic lamination has finite Thurston norm. This will also imply that $S(K)$ is complete, since (by boundedness of the curvature and of the principal curvatures) the Gauss map is bi-Lipschitz with respect to the induced metric on $S$ and the hyperbolic metric of $\Hyp^2$.

\begin{prop} \label{boundedness curvature boundedness lamination}
Given a $K$-surface $S$, if the lamination $\mu$ dual to the domain of dependence $D(S)$ has finite Thurston norm, then the principal curvatures of $S$ are uniformly bounded.
\end{prop}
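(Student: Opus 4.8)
The plan is to argue by contradiction, using the "compactness" machinery outlined in the introduction: if the principal curvatures of the $K$-surface $S$ were not bounded, we would produce a sequence of points $p_n\in S$ at which the largest principal curvature diverges, move each $p_n$ back to a fixed base point by an isometry $\alpha_n$ of $\R^{2,1}$ (normalizing so that the tangent plane to $\alpha_n(S)$ at $\alpha_n(p_n)$ is horizontal), and then extract a limit of the translated surfaces $\alpha_n(S)$. The key analytic input is Lemma \ref{lemma boundedness second derivatives} (or more precisely Theorem \ref{monge ampere boundedness second derivative}): an a priori interior $C^2$-bound on the support function depending only on its $C^0$-norm on a slightly larger domain, the distance to the boundary, and the (smooth, bounded) right-hand side of the Monge--Ampère equation. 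Since each $\alpha_n(S)$ is again a $K$-surface, its support function $u_n$ solves the same equation $\det D^2 u_n = |K|^{-1}(1-|z|^2)^{-2}$, so the only thing that can go wrong in applying this bound is a loss of control on $\|u_n\|_{C^0}$ near a fixed interior point, i.e.\ the support functions could escape to infinity or the domains of dependence could degenerate.

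The heart of the matter is therefore to control the behavior of the domains of dependence $D_n := D(\alpha_n(S))$. Here is where the hypothesis $\|\mu\|_{Th}<+\infty$ enters: the isometries $\alpha_n$ act on the dual lamination by an isometry of $\Hyp^2$ together with a translation, and the Thurston norm is invariant under $\isom(\Hyp^2)$. Hence all the laminations $\mu_n$ dual to $D_n$ have the same finite Thurston norm $\|\mu\|_{Th}$. One then needs the statement — announced in the introduction and which I would isolate as a preliminary lemma — that a sequence of domains of dependence with uniformly bounded dual laminations, all normalized to contain a fixed point $p$ with a fixed (horizontal) support plane at $p$, subconverges to a domain of dependence of the same type (in particular with uniformly bounded support function at infinity, and hence uniformly bounded support function on any compact subset of $\D$). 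This relies on the quantitative estimate on the support function at infinity in terms of the Thurston norm (Proposition \ref{uniform boundedness support function bounded lamination}, referred to above) together with the integral formula \eqref{support function formula} and the weak-$*$ compactness of measures of bounded total variation on compact subsets of $\mathcal{G}$.

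Granting this, the argument closes as follows. The support functions $u_n$ of $\alpha_n(S)$ are, by the normalization, squeezed between $h_n - C\sqrt{1-|z|^2}$ and $h_n$ via inequality \eqref{inequality support functions constant curvature surface}, where $h_n = \co(\varphi_n)$ are the support functions of $D_n$; since the $\varphi_n$ are uniformly bounded (bounded Thurston norm) the $u_n$ are uniformly bounded on compact subsets of $\D$. Then Lemma \ref{lemma boundedness second derivatives} gives a uniform bound on $\|u_n\|_{C^2(\Omega)}$ for any fixed compact $\Omega\subset\D$. Via Lemma \ref{lemma formulae minkowski}, $B^{-1}=\Hess\bar u-\bar u\,\id$ and the uniform $C^2$-bound on $u_n$ near the point corresponding to the (Gauss image of the) base point translates into a uniform two-sided bound on the eigenvalues of the shape operator of $\alpha_n(S)$ at $\alpha_n(p_n)$ — using also that $\det B = -K$ is fixed, so an upper bound on one eigenvalue forces a lower bound on the other. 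This contradicts the assumption that the principal curvature at $p_n$ was diverging. The main obstacle, and the step requiring the most care, is the convergence statement for domains of dependence with uniformly bounded dual lamination: one must check that the normalization "$p\in\alpha_n(D)$ with prescribed support plane at $p$" together with $\|\mu_n\|_{Th}\le \|\mu\|_{Th}$ genuinely prevents the base point from escaping toward the boundary at infinity of the limiting domain (equivalently, that $\co(\varphi_n)$ stays finite and locally uniformly bounded near the relevant interior point), which is exactly what the quantitative bound of Proposition \ref{uniform boundedness support function bounded lamination} is designed to supply.
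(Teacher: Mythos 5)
Your proposal is correct and follows essentially the same route as the paper: normalize with isometries so the degenerating points sit at a fixed base point with horizontal support plane, use the isometry-invariance of the Thurston norm together with Proposition \ref{uniform boundedness support function bounded lamination} and the squeeze inequality \eqref{inequality support functions constant curvature surface} to get a uniform $C^0$ bound on the support functions, then apply the interior $C^2$ estimate of Lemma \ref{lemma boundedness second derivatives} and the identity $B^{-1}=\Hess\bar u-\bar u\,\id$ at the base point to contradict the blow-up. The only difference is that you treat a subconvergence statement for the normalized domains of dependence as the crucial missing lemma, whereas the paper's argument never extracts a limit of the domains at all --- the uniform bound on the support functions $h_n$ supplied by Proposition \ref{uniform boundedness support function bounded lamination} is already enough to feed into the a priori estimate, so that step is not actually needed.
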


%To give the proof, we will use the following compactness property of bounded measured geodesic laminations:

%\begin{lemma} \label{compactness measured geodesic laminations}
%Given a sequence $\mu_n$ of uniformly bounded measured geodesic laminations, that is there exists  $M>0$ such that $||\mu_n||_{Th}\leq M$ for every $n$, there exists a subsequence $\mu_{n_k}$ converging weak* to a measured geodesic lamination $\mu$. Moreover, $||\mu||_{Th}\leq 2M$.
%\end{lemma}

%Let $D(S_n)=graph(f_n')$ be the domain of dependence of $S_n$ and let $\mu_n$ be the dual lamination to $D(S_n)$. By the same argument as above, we can assume $f_n'\rar f'_\infty$. Note that $\mu_n$ is the image of the lamination $\mu=\mu(S)$ under isometries of $\Hyp^2$, which are the linear parts of $A_n$. Therefore $||\mu_n||_{Th}=||\mu||_{Th}<\infty$ and by Lemma \ref{compactness measured geodesic laminations} we can assume $\mu_n\war\mu_\infty$, for $\mu_\infty$ a bounded lamination.

We will prove the proposition by contradiction. If the statement did not hold, there would exist a sequence of points $x_n\in S$ such that the principal curvatures diverge (since the product of the principal curvatures is constant, necessarily one principal curvature will tend to zero and the other to infinity). Roughly speaking, we will choose isometries $A_n$ so that the points $x_n$ are sent to a compact region of $\R^{2,1}$, and consider the surfaces $S_n=A_n(S)$. Essentially, a contradiction will be obtained by showing that the sequence $S_n$ contains a subsequence converging to a constant curvature smooth surface $S_\infty$ - using the boundedness of the dual lamination - and that this gives bounds on the principal curvatures at $x_n$. Hence it is not possible that principal curvatures diverge.

In order to apply the above argument, we need to prove a uniform bound on the support functions, depending only on the Thurston norm of the dual lamination.

\begin{prop} \label{uniform boundedness support function bounded lamination}
Let $D_0=D(\mu_0,x_0,y_0)$ be a domain of dependence whose dual lamination $\mu_0$ has Thurston norm $||\mu_0||_{Th}<M$, such that $P=y_0+x_0^\perp$ is a support plane tangent to $\partial_s D_0$ at $y_0$. Let $h_0$ be the support function of $D_0$. Then $h_0\leq C$ on $\overline\D$ for a constant $C$ which only depends on $M,x_0,y_0$.

%If there exist bounded sequences $\left\{x_n\right\}\subset\Hyp^2$ and $\left\{y_n\right\}\subset\R^{2,1}$, $y_n\in\partial_s D_n$.
%such that $P_n=y_n+x_n^\perp$ are support planes for $\partial_s D_n$, 
\end{prop}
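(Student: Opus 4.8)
The plan is to combine the explicit integral expression \eqref{support function formula} for the support function with an exponential decay estimate for its integrand along geodesic rays, and then telescope the Thurston norm bound into a geometric series. First I would reduce to bounding the support function at infinity: since $H_0$ is a supremum of linear functions it is lower semicontinuous, hence so is $\varphi_0:=H_0|_{\partial\D}$, and it suffices to prove $\varphi_0\le C$ on $\partial\D$. Indeed $h_0=H_0|_{\D}$ is convex, and by Lemma \ref{lemma limit radial geodesics} the radial segment $t\mapsto(tz_0,1)$ is spacelike, so $\lim_{t\to 1^-}h_0(tz_0,1)=\varphi_0(z_0,1)$ for $|z_0|=1$; restricting $h_0$ to a diameter of $\D$ gives a one-variable convex function whose one-sided limits at the endpoints are values of $\varphi_0$ (finite, once the boundary bound is known), so it lies below the chord joining them and $h_0\le\max_{\partial\D}\varphi_0\le C$ on $\D$. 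The lower semicontinuous extension to $\overline\D$ only lowers the boundary values, so this yields $h_0\le C$ on $\overline\D$.

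For the boundary bound, apply Lemma \ref{lemma limit radial geodesics} to \eqref{support function formula} to write, for $\eta\in\partial\D$,
$$\varphi_0(\eta)=\langle\eta,y_0\rangle+\int_{\mathcal{G}[x_0,\eta)}\langle\eta,\boldsymbol{\sigma}\rangle\,d\mu_0\,,$$
where $[x_0,\eta)$ is the geodesic ray of $\Hyp^2$ from $x_0$ towards $[\eta]\in\partial_\infty\Hyp^2$. The first term is $\le 2|y_0|$ since $|\eta|=\sqrt2$. Parametrize the ray by arclength, $\gamma(t)=\cosh t\,x_0+\sinh t\,w$ with $w$ the unit spacelike vector orthogonal to $x_0$ pointing at $[\eta]$; then $x_0+w=\rho\,\eta$ with $\rho=x_0^3+w^3>0$, and writing $x_0=(\xi,\tau)$ with $\tau^2-|\xi|^2=1$ one checks $|w^3|\le|\xi|$ and $\tau-|\xi|\ge 1/(2\tau)$, whence $\rho^{-1}\le 2x_0^3$. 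If a leaf $l$ crosses $\gamma$ at parameter $t$, then $\langle\gamma(t),\boldsymbol{\sigma}(l)\rangle=0$ forces $\langle w,\boldsymbol{\sigma}(l)\rangle=-\coth t\,\langle x_0,\boldsymbol{\sigma}(l)\rangle$, so
$$\langle\eta,\boldsymbol{\sigma}(l)\rangle=\rho^{-1}\langle x_0,\boldsymbol{\sigma}(l)\rangle\,(1-\coth t)=-\rho^{-1}\langle x_0,\boldsymbol{\sigma}(l)\rangle\,\frac{e^{-t}}{\sinh t}\,.$$
Since $|\langle x_0,\boldsymbol{\sigma}(l)\rangle|=\sinh d(x_0,l)\le\sinh t$ (as $\gamma(t)\in l$), this gives $|\langle\eta,\boldsymbol{\sigma}(l)\rangle|\le 2x_0^3\,e^{-t}$. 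We may assume $x_0\notin\mathrm{supp}\,\mu_0$, so that each leaf meeting the ray crosses it transversally at a single point; leaves that run along the ray satisfy $\langle\eta,\boldsymbol{\sigma}\rangle=0$ and cause no trouble.

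Finally, push $\mu_0$ restricted to $\mathcal{G}[x_0,\eta)$ forward under the crossing-parameter map $l\mapsto t(l)$ to a locally finite Borel measure $\lambda$ on $[0,\infty)$; every length-one subarc of $\gamma$ is transverse to the support of $\mu_0$, so $\lambda([k,k+1])\le\|\mu_0\|_{Th}<M$ for all $k\ge 0$, and therefore
$$\left|\int_{\mathcal{G}[x_0,\eta)}\langle\eta,\boldsymbol{\sigma}\rangle\,d\mu_0\right|\le 2x_0^3\int_0^\infty e^{-t}\,d\lambda(t)\le 2x_0^3\,M\sum_{k=0}^\infty e^{-k}=\frac{2x_0^3\,Me}{e-1}\,.$$
Hence $\varphi_0(\eta)\le 2|y_0|+\frac{2x_0^3\,Me}{e-1}=:C$ for every $\eta\in\partial\D$, and by the reduction step $h_0\le C$ on $\overline\D$. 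The main obstacle is the decay estimate $|\langle\eta,\boldsymbol{\sigma}(l)\rangle|\le 2x_0^3\,e^{-t}$: the cancellation producing the decisive factor $1-\coth t$ comes precisely from the colinearity $x_0+w=\rho\,\eta$; everything else is routine bookkeeping, the only mild care being to verify that the crossing-parameter pushforward inherits the Thurston-norm bound on length-one arcs.
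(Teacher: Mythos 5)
Your proof is correct and follows essentially the same route as the paper's: both rest on the integral formula \eqref{support function formula}, on applying the bound $||\mu_0||_{Th}<M$ to a unit-length partition of the geodesic emanating from $x_0$, and on summing the resulting geometric series $\sum_k Me^{-k}$. The only difference is organizational — you estimate directly at the null direction $\eta$, where the decay factor $e^{-t}$ comes out algebraically from $x_0+w=\rho\eta$, and then pass to the interior by convexity, whereas the paper estimates $\bar h_0$ on $\Hyp^2$ via $|\langle x,\boldsymbol{\sigma}\rangle|\le\sinh d_{\Hyp^2}(x,\gamma(s))$, divides by $\cosh d_{\Hyp^2}(x_0,x)$ at the end, and extends to $\partial\D$ by radial limits.
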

\begin{proof}
The support function of $D_0$ restricted to $\Hyp^2$, under the hypothesis, is given by (see Proposition \ref{support function from lamination}):
$$\bar h_0(x)=\langle x,y_0\rangle+\int_{\mathcal{G}[x_0,x]} \langle x,\boldsymbol{\sigma}\rangle d\mu_0\,.$$
%where $x,x_0\in\Hyp^2$, $\mathcal{G}[x_0,x]$ is the set of geodesics which intersect the closed geodesic segment $[x_0,x]$ and  $\sigma:\mathcal{G}[x_0,x]\rar\R^{2,1}$ is the function which assigns to a geodesic $l$ the unit normal vector in $\R^{2,1}$, outward with respect to the direction from $x_0$ to $x$. 
It is harmless to assume that $x_0=(0,0,1)$ and $y_0=0$; indeed, composing with an isometry of $\R^{2,1}$, the support function $h$ changes by an affine map on $\overline\D$.
%Recall $\sigma_{\gamma}(s)$ is the outward unit vector normal to $l$ when $l$ is a leaf of $\mu_n$ through ${\gamma_n}(s)$.
Hence we give an estimate of the integral term in $\bar h_0$. Let $\gamma$ be a unit speed parametrization of the geodesic segment $[x_0,x]$. Note that if ${\gamma}(s)$ is on a geodesic $l$, for every $x\in\Hyp^2$, $$|\langle x,\boldsymbol{\sigma}(l)\rangle|=\sinh d_{\Hyp^2}(x,l)\leq\sinh d_{\Hyp^2}(x,{\gamma}(s)).$$
Hence, consider the partition ${\gamma}(0)=x_0,{\gamma}(1),\ldots,{\gamma}(N),{\gamma}(d_{\Hyp^2}(x_0,x))=x$, for $N$ the integer part of $d_{\Hyp^2}(x_0,x)$. We have
\begin{align*}
\left| \int_{\mathcal{G}[x_0,x]} \langle x,\boldsymbol{\sigma}\rangle d\mu_0\right| &\leq \sum_{i=1}^{N+1} \sinh(i)\mu_0([{\gamma}(i-1),{\gamma}(i)]) \\
&\leq \sum_{i=1}^{N+1}\frac{M}{2} e^i=\frac{M}{2}e\frac{e^{N+1}-1}{e-1}\leq \frac{M}{2} \frac{e^2}{e-1}e^{d_{\Hyp^2}(x_0,x)}\,.
\end{align*}
We can finally give a bound for the support function $h_0$ on $\D$. If $\pi(x)=z\in\D$,
$$h_0(z)=\frac{\bar h_0(x)}{\cosh d_{\Hyp^2}(x_0,x)}\leq\frac{M}{2} \frac{e^2}{e-1}\frac{e^{d_{\Hyp^2}(x_0,x)}}{\cosh d_{\Hyp^2}(x_0,x)}\leq M \frac{e^2}{e-1}\,.$$
This shows that the function $h_0$ is bounded by a constant which only depends on $M$ and on $x_0,y_0$. By Lemma \ref{lemma limit radial geodesics}, the bound also holds on $\partial\D$.
\end{proof}

\begin{proof}[Proof of Proposition \ref{boundedness curvature boundedness lamination}]
Let $A_n\in\SO_0(2,1)$ be a linear isometry such that $A_n(G(x_n))=(0,0,1)$, where $G:S\rar\Hyp^2$ is the Gauss map of $S$. Let $D'_n$ be the domain of dependence of the surface $A_n(S)$. Now let $t_n\in\R^{2,1}$ be such that the $t_n$-translate of $\partial_s D'_n$ has a support plane with normal vector $(0,0,1)$ with tangency point the origin.

Let $S_n=A_n(S)+t_n$. Let $u_n$ be the support function on $\D$ of $S_n$ and $h_n$ be the support function of its domain of dependence $D_n=D_n'+t_n$.

The support functions $u_n$ are uniformly bounded on any $\Omega$ with compact closure in $\D$, since we have $h_n-(1/\sqrt{|K|})\sqrt{1-|z|^2}\leq u_n\leq h_n$ and by Proposition \ref{uniform boundedness support function bounded lamination} the support functions $h_n$ of $D_n$ are uniformly bounded on $\Omega$. Hence $||u_n||_{C^0(\D)}<C$ for some constant $C$. Let $B_n$ be the shape operator of $S_n$. Equation (\ref{formula shape operator euclidean hessian}) in Lemma \ref{lemma formulae minkowski}, for $z=0$, gives $B^{-1}_n=\Hess(u_n)$. Applying Lemma \ref{lemma boundedness second derivatives}, the inverse of the principal curvatures of $S_n$, which are the eigenvalues of $B_n^{-1}$, cannot go to infinity at the origin. This concludes the proof that principal curvatures of $S_n$ cannot become arbitrarily small.
\end{proof}

\subsection{Cauchy surfaces with bounded principal curvatures have finite dual lamination} \label{estimate norm curvatures}

In this part, we will show that a Cauchy surface $S$ with principal curvatures bounded below, $\lambda_i\geq d$ for some $d>0$, are such that the measured geodesic lamination $\mu$ dual to $D(S)$ has finite Thurston norm. This shows the implication $iii)\Rightarrow i)$. More precisely, we prove:

\begin{prop} \label{prop thurston norm and principal curvatures}
Let $B$ be the shape operator of a convex spacelike surface $S$ in $\R^{2,1}$ such that the Gauss map is a homeomorphism. Let $\mu$ be the measured geodesic lamination dual to $D(S)$, where $D(S)$ is the domain of dependence of $S$. Then
\begin{equation} \label{estimate thurston norm operator norm}
||\mu||_{Th}\leq 2\sqrt{2(1+\cosh(1))}{||B^{-1}||_{op}}
\end{equation}
\noindent where $||B^{-1}||_{op}=\sup\{||B^{-1}(v)||/||v||:v\in TS\}$ is the operator norm.
\end{prop}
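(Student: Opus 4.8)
The plan is to estimate the transverse measure $\mu(\mathcal{G}_I)$ for an arbitrary geodesic segment $I$ of length at most $1$ by comparing it with the geometry of the dual surface $S$. Recall from Mess's construction (Equation \eqref{eq: boundary domain mess}) that the boundary point $y(x)$ of $D(S)$ and the Gauss map of $\partial_s D(S)$ encode $\mu$: if $x_0,x\in\Hyp^2$ are the images under the Gauss map of two points $p_0,p\in S$, then the vector $y(x)-y(x_0)=\int_{\mathcal G[x_0,x]}\boldsymbol{\sigma}\,d\mu$. The key geometric point is that, since $S$ is a Cauchy surface for $D(S)$ and the Gauss map $G:S\to\Hyp^2$ is a homeomorphism, for each $x\in\Hyp^2$ the point $G^{-1}(x)$ lies on the support plane $y(x)+x^\perp$, so $\langle G^{-1}(x),x\rangle = \langle y(x),x\rangle = \bar u(x)$, where $\bar u = U|_{\Hyp^2}$ is the support function of $S$. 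Thus the restriction of $S$'s support function to $\Hyp^2$ \emph{equals} the restriction of $D(S)$'s support function, and one can read the bending measure of the pleated boundary of $D(S)$ off the second-order behaviour of $\bar u$, which in turn is controlled by $B^{-1} = \Hess\bar u - \bar u\,\id$ via Equation \eqref{formula shape operator euclidean hessian}.

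Concretely, I would fix a geodesic segment $I$ of hyperbolic length $\ell(I)\le 1$ transverse to the support of $\mu$, with endpoints $x_0,x_1$, and apply the Gauss map backwards: let $p_i = G^{-1}(x_i)\in S$. Parametrize $I$ by arclength $\gamma:[0,\ell]\to\Hyp^2$. Along $\gamma$ the function $x\mapsto \bar u(x)\,x = G^{-1}(x) + (\text{correction})$ — more precisely, using \eqref{formula inverse gauss map}, $G^{-1}(x) = \grad\bar u(x) - \bar u(x)x$ — traces a curve on $S$ whose derivative is controlled by $\Hess\bar u - \bar u\,\id = B^{-1}$. The total variation of the de Sitter Gauss lift $\boldsymbol{\sigma}$ of the leaves crossing $I$, integrated against $\mu$, is exactly the ``turning'' of the normal to $I$ inside the pleated surface, which equals the length of the curve $s\mapsto \sigma(\gamma(s))$ in $\dS^2$. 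Comparing with the smooth surface $S$: the corresponding curve on $S$ is $s\mapsto G^{-1}(\gamma(s))$, whose speed is $\|B^{-1}(\dot\gamma)\| \le \|B^{-1}\|_{op}$, so its length over $[0,\ell]$ is at most $\ell\,\|B^{-1}\|_{op}\le \|B^{-1}\|_{op}$. One then needs a geometric lemma saying that, because $S$ lies in $D(S)$ and agrees with $\partial_s D(S)$ at the two endpoints of the relevant support planes, the pleated-boundary curve of $D(S)$ cannot be much longer than the corresponding curve on $S$ — intuitively, the pleated surface is ``shorter'' — with the loss quantified by a factor involving $\cosh(1)$ coming from the bound $\ell\le 1$ and the comparison of chordal versus geodesic/Lorentzian distances on a segment of bounded length. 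This is where the constant $2\sqrt{2(1+\cosh(1))}$ enters.

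The main obstacle, and the step requiring the most care, is making rigorous the comparison between the total mass $\mu(\mathcal G_I) = \int_{\mathcal G_I}d\mu$ and the operator norm of $B^{-1}$ — i.e.\ translating ``the bending of $\partial_s D(S)$ across $I$'' into ``$\|B^{-1}\|_{op}$ times a controlled constant''. The subtlety is that $\mu$ measures bending of the piecewise-linear envelope $h=\co(\varphi)$, while $B^{-1}$ measures the Hessian of the \emph{smooth} support function $\bar u$ of $S$; these share boundary values on $\partial\D$ and $\bar u\le \bar h$ on $\Hyp^2$, and one must argue that each leaf of $\mu$ crossing $I$ forces a definite amount of ``second-derivative mass'' of $\bar u$ along $I$. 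I would do this by a telescoping/estimate argument: discretize $I$ into subsegments, on each of which the contribution of $\mu$ is bounded by the difference of gradients of $\bar u$ (hence by an integral of $\Hess\bar u - \bar u\,\id = B^{-1}$), then sum; the factor $2$ accounts for the two sides of each leaf and the symmetrization $(a,b)\sim(b,a)$ in the definition of $\mathcal G$, while $\sqrt{2(1+\cosh(1))}$ is the Euclidean-to-hyperbolic/de Sitter normalization over a unit-length segment. Once this estimate is established pointwise in $I$, taking the supremum over all such $I$ gives $\|\mu\|_{Th}\le 2\sqrt{2(1+\cosh(1))}\,\|B^{-1}\|_{op}$, completing the proof.
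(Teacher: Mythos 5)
Your overall plan---bound $\mu(\mathcal G_I)$ for each geodesic segment $I$ of length at most $1$ and take the supremum---is the right starting point, but the argument has a genuine gap, beginning with a false identity at its base. The support function $\bar u$ of $S$ restricted to $\Hyp^2$ does \emph{not} equal the support function $\bar h$ of $D(S)$: the point $G^{-1}(x)\in S$ lies on the support plane of $S$ with normal $x$, which is parallel to but strictly in the past of the support plane $y(x)+x^\perp$ of $D(S)$, so in general $\bar u(x)=\langle G^{-1}(x),x\rangle<\langle y(x),x\rangle=\bar h(x)$ (for the unit hyperboloid inside $\Ip(0)$ one has $\bar u\equiv-1$ while $\bar h\equiv 0$). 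The two support functions agree only at infinity, on $\partial\D$ (Lemma \ref{remark support functions and Cauchy} and the estimate \eqref{inequality support functions constant curvature surface}). Hence one cannot read the bending measure of $\partial_s D(S)$ off $\Hess\bar u-\bar u\,\id=B^{-1}$; transferring curvature information from the smooth surface $S$ to the pleated boundary of the strictly larger domain $D(S)$ is precisely the content of the proposition. The step you yourself flag as requiring the most care---that the pleated curve is ``not much longer'' than $G^{-1}\circ\gamma$---is exactly this content and is left unproved; a telescoping of $\Hess\bar u$ along $I$ cannot work, since the leaves of $\mu$ crossing $I$ force no lower bound on the Hessian of $\bar u$ at any particular point.

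The paper closes the gap by a different mechanism, in two steps that your sketch would need to replace its missing lemma. First (Lemma \ref{inequality weight arc minkowski distance}), $\mu(\mathcal G[x_1,x_2])\leq ||y_1-y_2||_-$, where $y_i\in\partial_s D(S)$ are the tangency points of the support planes $y_i+x_i^\perp$ of $D(S)$; this uses only Mess's formula $y_2-y_1=\int_{\mathcal G[x_1,x_2]}\boldsymbol{\sigma}\,d\mu$ and the inequality $\langle\boldsymbol{\sigma}(g_i),\boldsymbol{\sigma}(g_j)\rangle\geq 1$ for disjoint geodesics. Second, the curvature hypothesis enters through a barrier argument: since the principal curvatures are at least $d=1/||B^{-1}||_{op}$, the surface $S$ lies in the future of the tangent hyperboloid of curvature $-d^2$ at each point, whence $D(S)\subset\Ip\bigl(p-\tfrac1d G(p)\bigr)$ for every $p\in S$. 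This confines $y_1,y_2$ to explicit solid cones, and a three-case analysis gives $||y_1-y_2||_-\leq \tfrac2d\sqrt{2(1+\cosh 1)}$; the $\cosh 1$ arises from the reverse triangle inequality in a timelike plane combined with $d_{\Hyp^2}(x_1,x_2)\leq 1$, not from the Euclidean-to-hyperbolic normalization or the symmetrization of $\mathcal G$ that you invoke to explain the constant.
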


Note that $||B^{-1}||_{op}$ is the supremum of the inverse of the principal curvatures of $S$; alternatively, it is the inverse of the infimum of the principal curvatures of $S$. To prove Proposition \ref{prop thurston norm and principal curvatures}, we consider $x_1,x_2\in\Hyp^2$ with $d_{\Hyp^2}(x_1,x_2)\leq 1$ and take points $y_1$, $y_2$ on $\partial_s D(S)$ such that $P_1=y_1+x_1^\perp$ and $P_2=y_2+x_2^\perp$ are support planes of $D$. Recall $\mu(\mathcal{G}[x_1,x_2])$ denotes the value taken by the dual lamination $\mu$ on the geodesic segment $[x_1,x_2]$ which joins $x_1$ and $x_2$. We will also denote $||v||_-=\sqrt{\langle v,v\rangle}$ if $v\in\R^{2,1}$ is spacelike.

\begin{lemma} \label{inequality weight arc minkowski distance}
Let $y_1,y_2\in \partial_s D(S)$ and $P_1=y_1+x_1^\perp$ and $P_2=y_2+x_2^\perp$ be support planes for $\partial_s D(S)$ tangent to $\partial_s D(S)$ at $y_1$ and $y_2$. If $x_1$ and $x_2$ do not lie on any weighted leaf of the dual lamination $\mu$ of $D(S)$, then $$\mu(\mathcal{G}[x_1,x_2])\leq ||y_1-y_2||_-\,.$$
\end{lemma}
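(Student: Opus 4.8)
The plan is to combine Mess's integral formula for the boundary of $D(\mu,x_0,y_0)$ with a pointwise inequality for the Minkowski product of two of the spacelike unit vectors $\boldsymbol{\sigma}(l)$. Concretely, I would first represent $y_2-y_1$ as an $\R^{2,1}$-valued integral over $\mathcal{G}[x_1,x_2]$, and then bound its Minkowski square below by $\mu(\mathcal{G}[x_1,x_2])^2$.

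\emph{Step 1: the integral representation.} Since $P_1=y_1+x_1^\perp$ is a support plane for $D(S)$ tangent at $y_1\in\partial_s D(S)$ and $x_1$ lies on no weighted leaf of $\mu$, the point $y_1$ is the unique point of $\partial_s D(S)$ with normal $x_1$, so $D(S)=D(\mu,x_1,y_1)$. By Equation \eqref{eq: boundary domain mess}, $y(x_2)=y_1+\int_{\mathcal{G}[x_1,x_2]}\boldsymbol{\sigma}\,d\mu$ lies on $\partial_s D(S)$ and $y(x_2)+x_2^\perp$ is a support plane there; as $x_2$ also lies on no weighted leaf, the support plane with normal $x_2$ touches $\partial_s D(S)$ at a unique point, hence $y_2=y(x_2)$ and
\[
y_2-y_1=\int_{\mathcal{G}[x_1,x_2]}\boldsymbol{\sigma}\,d\mu\,.
\]
Since $\mathcal{G}[x_1,x_2]$ is compact and $\mu$ is a Radon measure, this integral is finite.

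\emph{Step 2: the key inequality.} I claim $\langle\boldsymbol{\sigma}(l),\boldsymbol{\sigma}(l')\rangle\geq 1$ for every pair $l,l'\in\mathcal{G}[x_1,x_2]$ which are disjoint or equal; this holds for $\mu\otimes\mu$-almost every pair, since $\mathrm{supp}\,\mu$ is a geodesic lamination. If $l=l'$ this is just $\langle\boldsymbol{\sigma}(l),\boldsymbol{\sigma}(l)\rangle=1$. If $l\neq l'$ are disjoint, they are ultraparallel or asymptotic; order them so that, along the oriented segment from $x_1$ to $x_2$, the geodesic $l$ is met first. Then $l'$ lies entirely on the $x_2$-side of $l$, so $\boldsymbol{\sigma}(l)$ (which points to that side) points toward $l'$, while $\boldsymbol{\sigma}(l')$ points away from $l$. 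In the ultraparallel case, let $P$ be the unit-speed common perpendicular geodesic, parametrized from $a=P(0)=P\cap l$ to $b=P(r)=P\cap l'$ with $r=d_{\Hyp^2}(l,l')>0$. Since the dual point of a geodesic perpendicular to $P$ at a point equals the velocity of $P$ there, the orientation discussion forces $\boldsymbol{\sigma}(l)=\dot P(0)$ and $\boldsymbol{\sigma}(l')=\dot P(r)$. On the hyperboloid $P(r)=\cosh r\,P(0)+\sinh r\,\dot P(0)$ and $\dot P(r)=\sinh r\,P(0)+\cosh r\,\dot P(0)$, so, using $\langle P(0),\dot P(0)\rangle=0$ and $\langle\dot P(0),\dot P(0)\rangle=1$, we get $\langle\boldsymbol{\sigma}(l),\boldsymbol{\sigma}(l')\rangle=\cosh r\geq 1$. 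The asymptotic case follows by a limit of ultraparallel pairs, or directly: both dual points are orthogonal to the null vector spanning the common ideal direction, hence coincide up to sign modulo that null vector, and the coherent orientation forces the value $+1$.

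\emph{Step 3: conclusion.} Applying Fubini to the bounded bilinear Minkowski product,
\[
||y_1-y_2||_-^2=\langle y_2-y_1,\,y_2-y_1\rangle=\iint_{\mathcal{G}[x_1,x_2]^2}\langle\boldsymbol{\sigma}(l),\boldsymbol{\sigma}(l')\rangle\,d\mu(l)\,d\mu(l')\geq\iint_{\mathcal{G}[x_1,x_2]^2}d\mu(l)\,d\mu(l')=\mu(\mathcal{G}[x_1,x_2])^2\,,
\]
which in particular is nonnegative (reconfirming that $y_2-y_1$ is spacelike or null, so $||y_1-y_2||_-$ is well-defined). Taking square roots yields $\mu(\mathcal{G}[x_1,x_2])\leq||y_1-y_2||_-$.

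The main obstacle is Step 2: pinning down the correct orientation so that the inequality $\langle\boldsymbol{\sigma}(l),\boldsymbol{\sigma}(l')\rangle\geq 1$ (rather than $\leq -1$) holds, and handling the degenerate asymptotic case cleanly; the rest is bookkeeping with Mess's formula and Fubini.
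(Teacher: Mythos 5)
Your proof is correct, and its core is the same as the paper's: write $y_2-y_1=\int_{\mathcal{G}[x_1,x_2]}\boldsymbol{\sigma}\,d\mu$ and exploit the pairwise inequality $\langle\boldsymbol{\sigma}(l),\boldsymbol{\sigma}(l')\rangle\geq 1$ for disjoint leaves. Where you diverge is in handling a general (non-discrete) lamination. The paper first carries out the computation for a finite lamination, expanding $\langle\sum_i a_i\boldsymbol{\sigma}(g_i),\sum_j a_j\boldsymbol{\sigma}(g_j)\rangle\geq(\sum_i a_i)^2$, and then passes to the general case by approximating $\mu$ in the weak* topology by finite laminations and invoking the convergence of both $\int\boldsymbol{\sigma}\,d\mu_n$ and $\mu_n(\mathcal{G}[x_1,x_2])$ (as in Lemma \ref{limit weak uniform compact}). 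You instead apply Fubini to the product measure $\mu\otimes\mu$ on the compact set $\mathcal{G}[x_1,x_2]^2$, where $\boldsymbol{\sigma}$ is bounded and $\mu$ is finite, obtaining the double-integral identity directly; since $\mathrm{supp}\,\mu$ is a lamination, the pointwise bound holds $\mu\otimes\mu$-a.e., and the inequality follows in one step. Your route is cleaner in that it avoids re-running the approximation machinery; the paper's route keeps the argument at the level of elementary sums at the cost of the limiting step. You also supply two things the paper leaves implicit: a proof of the pairwise inequality via the common perpendicular (yielding $\cosh r\geq 1$, with the asymptotic case as a degenerate limit), and the justification that $y_2$ is indeed the point produced by Mess's formula, which is exactly where the hypothesis that $x_1,x_2$ avoid the weighted leaves is used. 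Both additions are sound.
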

\begin{proof}
Assume first $\mathrm{supp}\mu\cap \mathcal{G}[x_1,x_2]$ determines a finite lamination, i.e. $\mu$ restricted to $\mathcal{G}[x_1,x_2]$ is composed of a finite number of weighted leaves $g_1,\ldots,g_p$ with weights $a_1,\ldots,a_p$. Then we have (compare Proposition \ref{support function from lamination})
$$y_2-y_1=\int_{\mathcal{G}[x_1,x_2]} \boldsymbol{\sigma} d\mu=\sum_{i=1}^p a_i \boldsymbol{\sigma}(g_i).$$
Since the geodesics $g_1,\ldots,g_p$ are pairwise disjoint, the unit normal vectors $\boldsymbol{\sigma}(g_1),\ldots,\boldsymbol{\sigma}(g_p)$ are such that $\langle \boldsymbol{\sigma}(g_i),\boldsymbol{\sigma}(g_j)\rangle\geq 1$. Hence
%\begin{equation} 
$$\langle y_2-y_1,y_2-y_1\rangle=\sum_{i=1}^p a_i^2+2\sum_{i< j}a_i a_j\langle \boldsymbol{\sigma}(g_i),\boldsymbol{\sigma}(g_j)\rangle\geq \left(\sum_{i=1}^p a_i\right)^2=\mu(\mathcal{G}[x_1,x_2])^2.
$$%\end{equation}
This shows that the following inequality holds for a finite lamination $\mu$:
\begin{equation} \label{inequality weight arc minkowski distance for finite laminations}
\left(\int_{\mathcal{G}[x_1,x_2]} d\mu\right)^2\leq \langle \int_{\mathcal{G}[x_1,x_2]} \boldsymbol{\sigma} d\mu,\int_{\mathcal{G}[x_1,x_2]} \boldsymbol{\sigma} d\mu\rangle\,.
\end{equation}
In general, if $\mu$ restricted to $\mathcal{G}[x_1,x_2]$ is not a finite lamination, we can approximate in the weak* topology the lamination $\mu$ by finite laminations $\mu_n$ (compare Lemma \ref{fuchsian weak approximation}). As in Lemma \ref{limit weak uniform compact}, one can show 
\begin{equation} \label{approximation by finite laminations 1}
\int_{\mathcal{G}[x_1,x_2]} \boldsymbol{\sigma} d\mu_n\xrightarrow{n\rar\infty} \int_{\mathcal{G}[x_1,x_2]} \boldsymbol{\sigma} d\mu=y_2-y_1
\end{equation}
and 
\begin{equation} \label{approximation by finite laminations 2}
\int_{\mathcal{G}[x_1,x_2]} d\mu_n\xrightarrow{n\rar\infty} \int_{\mathcal{G}[x_1,x_2]} d\mu=\mu(\mathcal{G}[x_1,x_2])\,.
\end{equation}
Since (\ref{inequality weight arc minkowski distance for finite laminations}) holds for the finite laminations in the LHS of Equations \eqref{approximation by finite laminations 1} and \eqref{approximation by finite laminations 2}, the proof is complete.
\end{proof}

\begin{lemma} \label{surface bounded curvature hyperboloid}
Let $S$ be a convex surface in $\R^{2,1}$ such that the principal curvatures $\lambda_i$ of $S$ are bounded below, $\lambda_i\geq d>0$. % and that the Gauss map is a diffeomorphism. 
For every point $p\in S$, the surface $S$ is contained in the future of the hyperboloid through $p$, tangent to $T_p S$ and with curvature $-d^2$.
\end{lemma}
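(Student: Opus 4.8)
The plan is to translate the statement into an inequality between support functions and then reduce it to a one–dimensional differential inequality along geodesics of $\Hyp^2$.

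\textbf{Reduction.} After composing with an isometry of $\R^{2,1}$ we may assume that the hyperboloid in the statement is $H_d=\{x\in\R^{2,1}:\langle x,x\rangle=-1/d^2,\ x^3>0\}$, whose vertex is $p=\tfrac1d(0,0,1)$ and whose tangent plane at $p$ is horizontal; thus $G(p)=e_3:=(0,0,1)$ and the support function of $H_d$ restricted to $\Hyp^2$ is the constant $\bar u_{H_d}\equiv-1/d$. Write $\bar u=\bar u_S$ for the support function of $S$ restricted to $\Hyp^2$; it is a $C^2$ function on the convex set $\Omega:=G(S)\subset\Hyp^2$, with $e_3\in\Omega$. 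Evaluating \eqref{formula inverse gauss map} at $e_3$ gives $p=\grad\bar u(e_3)-\bar u(e_3)e_3$, and matching the parts along $e_3$ and orthogonal to it yields $\bar u(e_3)=-1/d$ and $\grad\bar u(e_3)=0$. Finally, ``$S$ lies in the future of $H_d$'' means that the future-convex domain bounded by $S$ is contained in the one bounded by $H_d$, which by the correspondence between inclusion of future-convex domains and their support functions (Lemma \ref{lemma disuguaglianza funzioni supporto}) is equivalent to $\bar u_S\leq\bar u_{H_d}$, i.e. to $\bar u\leq-1/d$ on $\Omega$ (here one takes $S=\partial_sD(S)$ and uses that $G(S)=\Hyp^2$, since the curvature of $S$ is bounded away from $0$, so that $\bar u_S$ is finite everywhere).

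\textbf{The differential inequality.} It remains to prove that $w:=\bar u-\bar u_{H_d}=\bar u+\tfrac1d\leq0$ on $\Omega$. The hypothesis $\lambda_i\geq d$ says precisely that the inverse shape operator, which by \eqref{formula shape operator euclidean hessian} equals $B^{-1}=\Hess\bar u-\bar u\,\id$, satisfies $B^{-1}\leq\tfrac1d\,\id$ as a symmetric endomorphism; equivalently $\Hess\bar u\leq(\bar u+\tfrac1d)\,\id$, which (as $\bar u_{H_d}$ is constant) reads $\Hess w\leq w\,\id$ on $\Omega$, while $w(e_3)=0$ and $\grad w(e_3)=0$. Now fix $x\in\Omega$, let $\gamma:[0,\ell]\to\Omega$ be the unit-speed geodesic from $e_3$ to $x$ (it stays in $\Omega$ by convexity of $\Omega$), and set $\phi:=w\circ\gamma$. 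Then $\phi(0)=0$, $\phi'(0)=\langle\grad w(e_3),\gamma'(0)\rangle=0$, and $\phi''(t)=\Hess w(\gamma'(t),\gamma'(t))\leq w(\gamma(t))=\phi(t)$. From $\tfrac{d}{dt}\bigl(e^{-t}(\phi'+\phi)\bigr)=e^{-t}(\phi''-\phi)\leq0$ together with $(\phi'+\phi)(0)=0$ we get $\phi'+\phi\leq0$ on $[0,\ell]$; hence $\tfrac{d}{dt}(e^{t}\phi)=e^{t}(\phi'+\phi)\leq0$, and with $\phi(0)=0$ this forces $\phi\leq0$ on $[0,\ell]$. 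Therefore $w(x)=\phi(\ell)\leq0$, which is the desired inequality.

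\textbf{Main obstacle.} The point that needs care is that one must use the \emph{full} operator inequality $B^{-1}\leq\tfrac1d\id$, not merely the determinant bound $\det B^{-1}\leq1/d^2$: in dimension two the latter does not control $\Hess\bar u$ (one eigenvalue may be large while the other is small), so the plain comparison principle for Monge--Amp\`ere equations (Theorem \ref{comparison principle}), which in addition only bounds the \emph{minimum} of a difference of solutions and not its maximum, does not suffice; the argument genuinely exploits $\lambda_1,\lambda_2\geq d$ through the tangential second derivatives. Equally essential is the tangency hypothesis entering as $\grad w(e_3)=0$, for without it the same computation yields only $\phi(t)\leq\langle\grad w(e_3),\gamma'(0)\rangle\,\sinh t$, consistent with the fact that two convex surfaces through $p$ with distinct tangent planes cross transversally there. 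The remaining, purely bookkeeping, point is the passage in the Reduction step from the support-function inequality on $G(S)$ back to the geometric inclusion, which relies on surjectivity of the Gauss map of a complete convex Cauchy surface.
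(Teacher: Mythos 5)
Your proof is correct, and its core comparison step genuinely differs from the paper's. The paper, after the same normalization and the same deduction $\bar u(e_3)=-1/d$, $\grad\bar u(e_3)=0$, transfers the operator inequality $\Hess\bar u-\bar u\,\id\leq\Hess\bar v-\bar v\,\id$ to the Klein disc, where it becomes $D^2(v-u)\geq 0$; then $v-u$ is a convex function on $\D$ with an interior critical point of value $0$, hence nonnegative. This is shorter but silently uses the congruence between $\Hess\bar u-\bar u\,\id$ on $\Hyp^2$ and the Euclidean Hessian of the restriction to $\D$ (only the determinant identity is quoted in Lemma \ref{lemma formulae minkowski}). You instead stay on the hyperboloid and integrate $\Hess w\leq w\,\id$ along radial geodesics via the two-step Gronwall argument; this is self-contained, makes the role of the tangency condition $\grad w(e_3)=0$ transparent, and correctly identifies that the determinant bound alone (hence the Monge--Amp\`ere comparison principle) would not suffice — a point the paper's argument also exploits, just less visibly. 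Both routes then invoke Lemma \ref{lemma disuguaglianza funzioni supporto} to pass from $\bar u\leq\bar v$ to the set inclusion.

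One small caveat on your final remark: the implication ``curvature bounded away from $0$ $\Rightarrow$ $G(S)=\Hyp^2$'' is not true for an arbitrary convex surface (a compact cap of a hyperboloid is a counterexample), so surjectivity of the Gauss map should be taken as part of the setting rather than derived. This does not affect the paper, which applies the lemma only to Cauchy surfaces whose Gauss map is a homeomorphism onto $\Hyp^2$ (cf. the hypothesis of Proposition \ref{prop thurston norm and principal curvatures}), and the paper's own proof has exactly the same implicit restriction; but as written your justification of that step is the one weak sentence in an otherwise clean argument.
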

\begin{proof}
Let $u:\D\rar \R$ and $\bar u:\Hyp^2\rar \R$, as usual, denote the support function of $S$ restricted to $\D$ and to $\Hyp^2$. Analogously, let $v:\D\to\R$ and $\bar v:\Hyp^2\rar \R$ be the support function of the hyperboloid $p+(1/d)\Hyp^2$, as in the hypothesis. Composing with an isometry, we can assume $p=(0,0,1/d)$ and $T_p S$ is the horizontal plane $x_3=1/d$. Hence $\bar v\equiv -1/d$, 
while from Equation \eqref{formula inverse gauss map} in Lemma \ref{lemma formulae minkowski} for the inverse of the Gauss map of $S$
$$G^{-1}(x)=\grad \bar u(x)-\bar u(x)x\,,$$
one can deduce $\bar u((0,0,1))=-1/d$ and $\grad \bar u((0,0,1))=0$. 
From the hypothesis, the eigenvalues of the shape operator of $S$ are larger than $d$ at every point. On the other hand the shape operator of the hyperboloid of curvature $-d^2$ is $d\id$.
Therefore $\Hess \bar u-\bar u\id<\Hess \bar v-\bar v\id$ and it follows (recalling the technology from Subsection \ref{preliminaries subsec support} of the support function restricted on $\Hyp^2$ and $\D$) that $v-u$ is a convex function on $\D$ with a minimum at $0\in\D$, where $(v-u)(0)=0$. Therefore $v-u$ is positive on $\D$, which shows that $v\geq u$ and thus proves the statement by Lemma \ref{lemma disuguaglianza funzioni supporto}.
\end{proof}

The following Lemma is a direct consequence.

\begin{lemma} \label{surface bounded curvature cone}
Let $S$ be a convex spacelike surface in $\R^{2,1}$ such that the principal curvatures of $S$ are bounded below, $\lambda_i\geq d>0$, and that the Gauss map G is a homeomorphism. Then for every $p\in S$, $D(S)\subset \Ip(r_d(p))$, where 
$$r_d(p)=p-\frac{1}{d}G(p).$$
\end{lemma}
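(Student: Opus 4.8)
\textbf{Plan for the proof of Lemma \ref{surface bounded curvature cone}.}
The statement is essentially a reformulation of Lemma \ref{surface bounded curvature hyperboloid} together with the description of the domain of dependence as the intersection of the futures of the lightlike support planes that do not disconnect $S$. The plan is as follows. Fix $p \in S$. By Lemma \ref{surface bounded curvature hyperboloid}, the surface $S$ is contained in the future of the hyperboloid $\Hyp_d(p)$ passing through $p$, tangent to $T_p S$, and of curvature $-d^2$. This hyperboloid is a translate of $(1/d)\Hyp^2$; a direct computation shows that its vertex (the point where the normal is vertical, or equivalently the center of the corresponding cone) is exactly $r_d(p) = p - \tfrac1d G(p)$. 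Indeed, writing $x = G(p) \in \Hyp^2$ for the future unit normal of $S$ at $p$, the inward normal direction at $p$ on the hyperboloid of curvature $-d^2$ through $p$ is $x$, and the center of a hyperboloid $q + (1/d)\Hyp^2$ is recovered from any of its points $p$ by subtracting $(1/d)$ times the future unit normal at $p$; hence the center is $p - (1/d)x = r_d(p)$.

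Next I would use that $\Hyp_d(p) = r_d(p) + (1/d)\Hyp^2$ is itself a Cauchy surface for the domain of dependence $\Ip(r_d(p))$: its support function restricted to $\Hyp^2$ is the constant $-1/d$ shifted by the translation $r_d(p)$, hence its support function at infinity on $\partial\D$ agrees with that of $\Ip(r_d(p))$ (which is just the affine function $\eta \mapsto \langle \eta, r_d(p)\rangle$, since the future cone over a point has lightlike support planes all passing through that point). Therefore $D(\Hyp_d(p)) = \Ip(r_d(p))$. Since $S \subseteq \Ip(\Hyp_d(p)) \subseteq \Ip(r_d(p))$ and $S$ is a Cauchy surface for $D(S)$, and since $\Ip(r_d(p))$ is a future-convex domain containing $S$ in which every inextensible causal curve meets $S$ (as it does already in the smaller domain, but one must check it does not escape), we get $D(S) \subseteq \Ip(r_d(p))$. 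Concretely, the cleanest way to phrase this last inclusion is via support functions and Lemma \ref{lemma disuguaglianza funzioni supporto}: the support function $h$ of $D(S)$ is the convex envelope of the support function at infinity of $S$, which coincides with that of $\Hyp_d(p)$ and hence is $\leq$ the support function of $\Ip(r_d(p))$ on $\partial\D$; taking convex envelopes (the support function of $\Ip(r_d(p))$ is already affine, hence its own convex envelope) gives $h \leq \langle \cdot, r_d(p)\rangle$ on $\overline\D$, which by Lemma \ref{lemma disuguaglianza funzioni supporto} is exactly $D(S) \subseteq \Ip(r_d(p))$.

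The only mild subtlety — and the step I would be most careful about — is the identification of the vertex of the tangent hyperboloid with $r_d(p)$ and the claim that a hyperboloid of the form $q + (1/d)\Hyp^2$ has $\Ip(q)$ as its domain of dependence; both are elementary but require correctly tracking the normalization (curvature $-d^2$ versus radius $1/d$, and the sign/direction of the future unit normal). Once these are pinned down, everything else is a formal consequence of the two preceding lemmas and the monotone correspondence between future-convex domains and their support functions. No genuine analytic difficulty arises here; the lemma is a packaging step feeding into the estimate of Proposition \ref{prop thurston norm and principal curvatures}.
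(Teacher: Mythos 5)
Your proof is correct and follows essentially the same route as the paper: apply Lemma \ref{surface bounded curvature hyperboloid}, identify the vertex of the tangent hyperboloid as $r_d(p)$, and pass from $S\subset \Ip(r_d(p))$ to $D(S)\subset \Ip(r_d(p))$ (the paper does this last step in one line; your support-function argument is a legitimate way to fill it in). One small inaccuracy: the support function at infinity of $S$ does not in general \emph{coincide} with that of the tangent hyperboloid $\Hyp_d(p)$ --- the hyperboloid is only a barrier --- but all you actually use is the inequality $u_S|_{\partial\D}\leq \langle\cdot,r_d(p)\rangle$, which follows directly from the inclusion $S\subseteq\Ip(r_d(p))$, so the argument is unaffected.
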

\begin{proof}
By Lemma \ref{surface bounded curvature hyperboloid}, $S\subset \Ip(r_d(p))$ for every $p\in S$. It follows that the entire domain of dependence of $S$ is contained in $\Ip(r_d(p))$.
\end{proof}

\begin{proof}[Proof of Proposition \ref{prop thurston norm and principal curvatures}]
Assume the principal curvatures of $S$ are bounded below by $d>0$, and $d$ is the infimum of the principal curvatures of $S$. Hence $||B^{-1}||_{op}= 1/d$. 

Let us take $x_1,x_2\in\Hyp^2$, which do not lie on any weighted leaf of the dual lamination $\mu$ of $D(S)$, with $d_{\Hyp^2}(x_1,x_2)\leq 1$. 
Suppose $y_1,y_2\in \partial_s D(S)$ are such that  $P_1=y_1+x_1^\perp$ and $P_2=y_2+x_2^\perp$ are tangent planes for $\partial_s D(S)$. We will show that $||y_1-y_2||_-\leq 2\sqrt{2(1+\cosh(1))}/d$ and thus the estimate \eqref{estimate thurston norm operator norm} will follow by Lemma \ref{inequality weight arc minkowski distance}.

Suppose moreover $p_1,p_2\in S$ are such that $p_1+x_1^\perp$ and $p_2+x_2^\perp$ are tangent planes for $S$. Let us denote $U_i=\Ip(r_d(p_i))\cap\overline{\Ipm(T_{p_i}S)}$ and $V_i=\Ip(r_d(p_i))\cap\overline{\Ipm(P_i)}$, for $i=1,2$. See Figure \ref{fig:definitions} and \ref{fig:definitions2}. Note that $y_1,y_2\in\Ip(r_d(p_i))$, for $i=1,2$, by Lemma \ref{surface bounded curvature cone}.

\begin{figure}[htb]
\centering
\includegraphics[height=5.5cm]{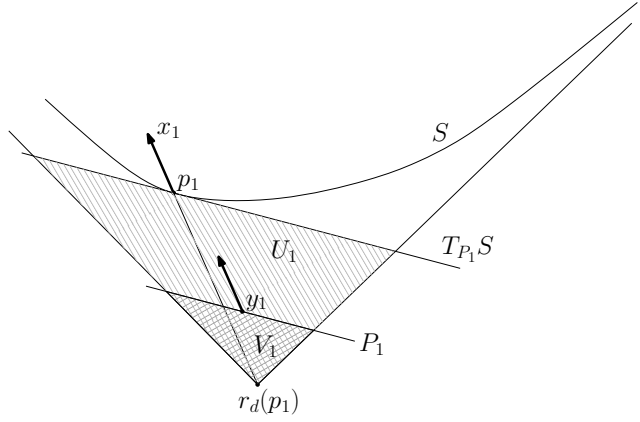}
\caption{The setting of the proof and the definitions of the sets $U_1$ and $V_1$. \label{fig:definitions}}
\end{figure}

\begin{figure}[htb]
\centering
\includegraphics[height=8.5cm]{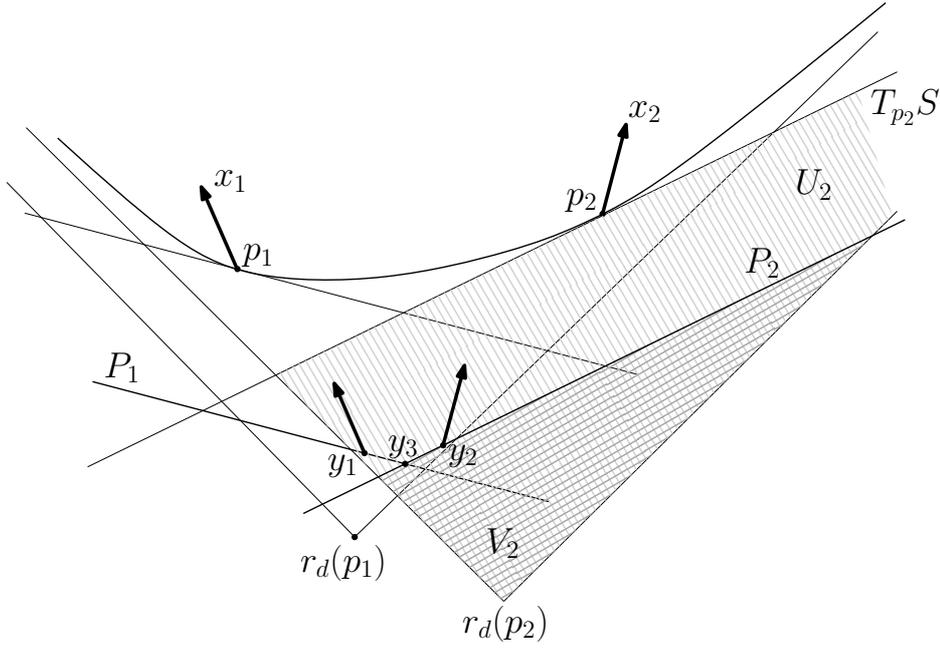}
\caption{Analogously, the definitions of $U_2$ and $V_2$. The case pictured is for $y_1\notin V_2$ and $y_2\notin V_1$, hence there is a point $y_3$ in $P_1\cap P_2$, which lies in $\Ip(r_d(p_1))\cap\Ip(r_d(p_2))$. \label{fig:definitions2}}
\end{figure}

\begin{figure}[htb]
\centering
\includegraphics[height=4cm]{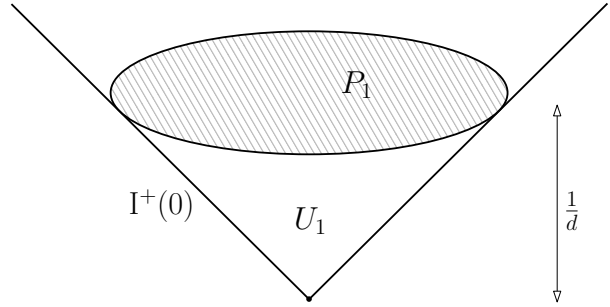}
\caption{An isometric image of $U_1$. \label{fig:imageU1}}
\end{figure}

By construction, $y_1\in V_1\subset U_1$ and $y_2\in V_2\subset U_2$. Let us consider separately three cases:

\emph{Case 1}: $y_2\in V_1$. Then both $y_1$ and $y_2$ are contained in $U_1$. Since $U_1$ can be mapped isometrically to the region $\left\{(x_1,x_2,x_3): x_3^2\geq x_1^2+x_2^2,\, x_3\leq 1/d\right\}$ (see Figure \ref{fig:imageU1}), it is easy to see that a spacelike segment contained in $U_1$ can have lenght at most $2/d$, which gives the statement in this particular case. 

\emph{Case 2}: $y_1\in V_2$. The estimate $||y_1-y_2||_-\leq 2/d$ is obtained in a completely analogous way.

\emph{Case 3}: $y_1\notin V_2$ and $y_2\notin V_1$. We claim that in this case $P_1\cap P_2$ contains a point $y_3$ in $\Ip(r_d(p_1))\cap\Ip(r_d(p_2))$. Indeed, if $P_1\cap P_2$ did not contain such a point, then the line $P_1\cap P_2$ would be disjoint from $\Ip(r_d(p_1))\cap\Ip(r_d(p_2))$ and there would be two possibilities. Either $P_2\cap \Ip(r_d(p_1))\cap\Ip(r_d(p_2))$ is contained in $\Ip(P_1)\cap \Ip(r_d(p_1))\cap\Ip(r_d(p_2))$, or $P_1\cap \Ip(r_d(p_1))\cap\Ip(r_d(p_2))$ is contained in $\Ip(P_2)\cap \Ip(r_d(p_1))\cap\Ip(r_d(p_2))$. The former case implies that $V_2$ contains $P_1\cap \Ip(r_d(p_1))\cap\Ip(r_d(p_2))$, which is not possible since by assumption $y_1\in P_1$ is not in $V_2$. The latter case analogously contradicts $y_2\notin V_1$.

Consider now the geodesic segments $y_3-y_1$ and $y_2-y_3$. Since $y_3$ and $y_1$ are both contained in $V_1$, we have $||y_3-y_1||_-\leq 2/d$. Analogously $||y_2-y_3||_-\leq 2/d$. If the plane $Q$ containing $y_1,y_2,y_3$ is spacelike or lightlike, then $$||y_2-y_1||_-\leq||y_2-y_3||_-+||y_3-y_1||_-\leq \frac{4}{d}\,.$$
If $Q$ is timelike (meaning that the induced metric on $Q$ is a Lorentzian metric), then
$$\langle y_2-y_1,y_2-y_1\rangle=||y_2-y_3||_-^2+||y_3-y_1||_-^2+2\langle y_2-y_3,y_3-y_1\rangle\,.$$

Let $v_1$ and $v_2$ be the future unit vectors in $Q$ orthogonal to $y_3-y_1$ and $y_2-y_3$. It is easy to check that $$|\langle y_2-y_3,y_3-y_1\rangle|=||y_2-y_3||_-||y_3-y_1||_-|\langle v_1,v_2\rangle|=||y_2-y_3||_-||y_3-y_1||_-\cosh d_{\Hyp^2}(v_1,v_2)\,.$$
We claim that $v_i$ is the orthogonal projection in $\Hyp^2$ of $x_i$ to the geodesic determined by $Q$ (namely, the geodesic through $v_1$ and $v_2$). Composing with an isometry, we can assume $y_3=0$, the direction spanned by $y_3-y_1$ is the line $x_2=x_3=0$ and $Q=\left\{x_2=0\right\}$. Then $v_1=(0,0,1)$ and $x_1=(0,\sinh t,\cosh t)$, where $t=d_{\Hyp^2}(v_1,x_1)$, and thus the claim holds. Of course the proof for $x_2$ and $v_2$ is analogous. This concludes the proof by Lemma \ref{inequality weight arc minkowski distance}, since
$$||y_2-y_1||_-^2\leq \frac{4}{d^2}(2+2\cosh(1))\,,$$
where we have used that $y_3$ and $y_2$ are contained in $P_2\subset U_2$, $y_1$ and $y_3$ are contained in $P_1\subset U_1$ and so $||y_2-y_3||_-,||y_3-y_1||_-\leq 2/d$ as above, and (since the projection to a line in $\Hyp^2$ is distance-contracting) $d_{\Hyp^2}(v_1,v_2)\leq d_{\Hyp^2}(x_1,x_2)\leq 1$.
\end{proof}

\subsection{Proof of Theorem \ref{big theorem zygmund finite dual lamination bounded curvatures}}

All the elements to conclude the proof of Theorem \ref{big theorem zygmund finite dual lamination bounded curvatures} have been obtained. Let us summarize the necessary steps. We must show the equivalence of the four points $i)$-$iv)$. The equivalence of $i)$ and $ii)$ is the content of Corollary
\ref{cor support function extends equivalence 12}. 

To show that $ii)$ implies $iv)$, the existence of a foliation by surfaces of constant curvature was proved in Theorem \ref{theorem foliation bounded}, while the fact that the principal curvatures are bounded was proved in Proposition \ref{boundedness curvature boundedness lamination}. As observed, this also implies completeness of the induced metric.

The condition of point $iv)$ is obviously stronger than the condition of $iii)$. Hence to conclude, it suffices to show that $iii)$ implies either $i)$ or $ii)$. In fact, Proposition \ref{prop thurston norm and principal curvatures} proves that if $iii)$ holds, namely if the principal curvatures are bounded from below by a positive constant, then $i)$ holds, i.e. the Thurston norm of the dual lamination is bounded (and actually Proposition \ref{prop thurston norm and principal curvatures} provides a quantitative version of this fact).

\appendix
\section{Constant curvature surfaces invariant under a parabolic group} \label{appendix parabolic}

In the appendix we construct some explicit solutions to the Monge-Amp\`ere equation associated to surfaces with constant curvature $K<0$, namely
\begin{equation} \label{monge ampere K}
\det D^2u(z)=\frac{1}{|K|}(1-|z|^2)^{-2}\,.
\end{equation}
We study constant curvature surfaces invariant under a one-parameter parabolic subgroup of isometries. In order to have such a surface, the subgroup must necessarily fix the origin. %We start by considering a parabolic group.

Hence, let us  denote by $A_\bullet:\R\rar\isom(\R^{2,1})$ the representation associated to the linear parabolic subgroup. Let us choose a basis $\{v_0,v_1,v_2\}$ of $\R^{2,1}$ such that $v_0$ is the null vector fixed by the parabolic group, $v_1$ is a null vector with $\langle v_0,v_1\rangle=-1$, and $v_2$ is a spacelike unit vector orthogonal to both $v_0$ and $v_1$.

The parabolic group is acting by
\begin{align*}
A_t(v_0)=&v_0\,; \\
A_t(v_1)=&(t^2/2)v_0+v_1+tv_2\,; \\
A_t(v_2)=&v_2+tv_0\,.
\end{align*}

Let $\gamma_0(s)=\frac{\sqrt{2}}{2}(e^s v_0+e^{-s}v_1)$ be the unit speed geodesic of $\Hyp^2$ with endpoints $[v_1]$ (for $s\to -\infty$) and $[v_0]$ (for $s\to +\infty$). Let us consider the following parametrization of $\Hyp^2$:
$$\sigma(t,s)=A_t(\gamma_0(s))\,,$$
namely, the levels $\{s=c\}$ are horocycles, while the levels $\{t=c\}$ are geodesics asymptotic to $[v_0]$. In these coordinates, the metric of $\Hyp^2$ takes the form $ds^2+(e^{-2s}/2)dt^2$.

We consider support functions restricted to $\Hyp^2$, which we denote as usual by $\bar u:\Hyp^2\to\R$, corresponding to surfaces of constant curvature. Hence we want to find solutions of the equation
\begin{equation} \label{constant curvature det hess - id}
\det(\Hess\,\bar u-\bar u\,\id)=\frac{1}{|K|}\,,
\end{equation}
where $K$ is a negative constant. Since are imposing that the surface dual to $\bar u$ is invariant for the parabolic group (with no translation), recalling Equation \eqref{transf rule support function} for the transformation of support functions under isometries of $\R^{2,1}$, we look for a solution which only depends on $s$, namely a solution of the form $\bar u(t,s)=f(s)$.

By a direct computation, one can see that the gradient and the Hessian of $\bar u$ for the hyperbolic metric in this coordinate frame has the form:
\begin{align*}
\grad \bar u=&f'(s)\partial_s \\
\Hess (\bar u)(\partial_s)=&f''(s)\partial_s \\
\Hess (\bar u)(\partial_t)=&-f'(s)\partial_t\,,
\end{align*}
therefore the constant curvature condition \eqref{constant curvature det hess - id} gives
\begin{equation} \label{equation constant curvature parabolic}
(f''(s)-f(s))(-f'(s)-f(s))=1/|K|\,.
\end{equation}
We now solve Equation \eqref{equation constant curvature parabolic}. By convexity, we impose that both eigenvalues $(f''(s)-f(s))$ and $(-f'(s)-f(s))$ are positive.
Let us perform the change of variables 
\begin{equation} \label{change variables}
g(s)=-f'(s)-f(s)\,,
\end{equation} 
so that Equation \eqref{equation constant curvature parabolic} becomes
\begin{equation} \label{equation constant curvature parabolic change}
g(s)(g(s)-g'(s))=1/|K|\,,
\end{equation}
whose general positive solution is, as $C$ varies in $\R$,
\begin{equation}
g(s)=\sqrt{|K|^{-1}+Ce^{2s}}\,.
\end{equation}

\subsection{Solutions for $C=0$}
We observe that the case $C=0$ gives the trivial solution, namely the hyperboloid. Indeed $f$ can be recovered by integrating \eqref{change variables}, hence obtaining
\begin{equation} \label{explicit solution parabolic}
f(s)=e^{-s}\left(D-\int_0^s e^xg(x)dx\right)\,.
\end{equation}
Observe that the term $e^{-s}D$ corresponds to a translation in the direction $-\sqrt{2}D v_0$. Hence, as the parameter $D$ varies over $\R$, the corresponding surface varies by a translation in the line spanned by $v_0$. %Hence by simplicity, we will assume $D=0$ from this point.
If $C=0$, we have $g\equiv1/\sqrt{|K|}$. By choosing $D$ suitably, we obtain the solution
\begin{equation} \label{solution parabolic hyperboloid}
f_{0,K}(s)=-\frac{1}{\sqrt{|K|}}\,,
\end{equation} 
which is the support function of a hyperboloid of curvature $K$ centered at the origin. 
%Therefore the solution in \eqref{solution parabolic hyperboloid} corresponds to the support function of hyperboloid centered at $-\sqrt{2}v_0$.

\subsection{Solutions for $C>0$}

If $C> 0$, from Equation \eqref{explicit solution parabolic} we obtain the solution (for a constant $D$ which we will fix later)
\begin{equation}
f_{C,K}(s)=-\frac{1}{2}\sqrt{|K|^{-1}+Ce^{2s}}-\frac{1}{2|K|\sqrt{C}}e^{-s}\log\left(\sqrt{C}\sqrt{|K|^{-1}+Ce^{2s}}+Ce^s\right)+e^{-s}D\,.
\end{equation}
We now describe some of the properties of the surface $S_{C,v_0}(K)$ whose support function is $\bar u_{C,K}(t,s)=f_{C,K}(s)$, for any fixed curvature $K<0$.

First, we want to determine the value on $\partial \D$ of the support function $ u_{C,K}$, namely the restriction to $\D$ of the 1-homogeneous extension of $\bar u_{C,K}$. Let us denote by $\sigma(t,s)_z$ the vertical component of $\sigma(t,s)$. Then we have
$$u_{C,K}(t,s)=\frac{f_{C,K}(s)}{\sigma(t,s)_z}\,.$$
Using the invariance for the parabolic group in $\isom(\R^{2,1})$, it suffices to consider the case $t=0$. We have 
$$\sigma(0,s)_z=-\langle \sigma(0,s), \frac{v_0+v_1}{\sqrt{2}}\rangle=\cosh(s)\,.$$
Observe that the term $e^{-s}D/\cosh(s)$ tends to $0$ when $s\to\infty$ and to $2D$ when $s\to -\infty$. By an explicit computation, choosing $D$ suitably, we can obtain
$$\lim_{s\rar-\infty}u_{C,K}(t,s)=\lim_{s\rar-\infty}\frac{f_{C,K}(s)}{\sigma(t,s)_z}=0\,,$$
while
$$\lim_{s\rar-\infty}u_{C,K}(t,s)=\lim_{s\rar+\infty}\frac{f_{C,K}(s)}{\sigma(t,s)_z}=-\sqrt{C}\,,$$
hence the support function at infinity is
$$u_{C,K}|_{\partial\D}(z)=\begin{cases} -\sqrt{C} & [z]=[v_0] \\ 0 & [z]\neq[v_0] \end{cases}\,.$$
Geometrically, this means that the domain of dependence of the surface is the future of a parabola, obtained as the intersection of the cone centered at the origin (whose support function is identically 0) with a lightlike plane with normal vector $v_0$ (whose intercept on the $z$-axis is $\sqrt{C}$). See Figure \ref{fig:parabola} in subsection \ref{cauchy and uniqueness}.
It is easy to see that the dual lamination is the measured geodesic lamination of $\Hyp^2$ whose leaves are all geodesics asymptotic to $[v_0]$, with a measure invariant under the parabolic group (a multiple of the Lebesgue measure, in the upper half-space model, pictured in Figures \ref{fig:laminationparabolic} and \ref{fig:laminationhalfspace}). 

\begin{figure}[htb]
\centering
\begin{minipage}[c]{.45\textwidth}
\centering
\includegraphics[height=4cm]{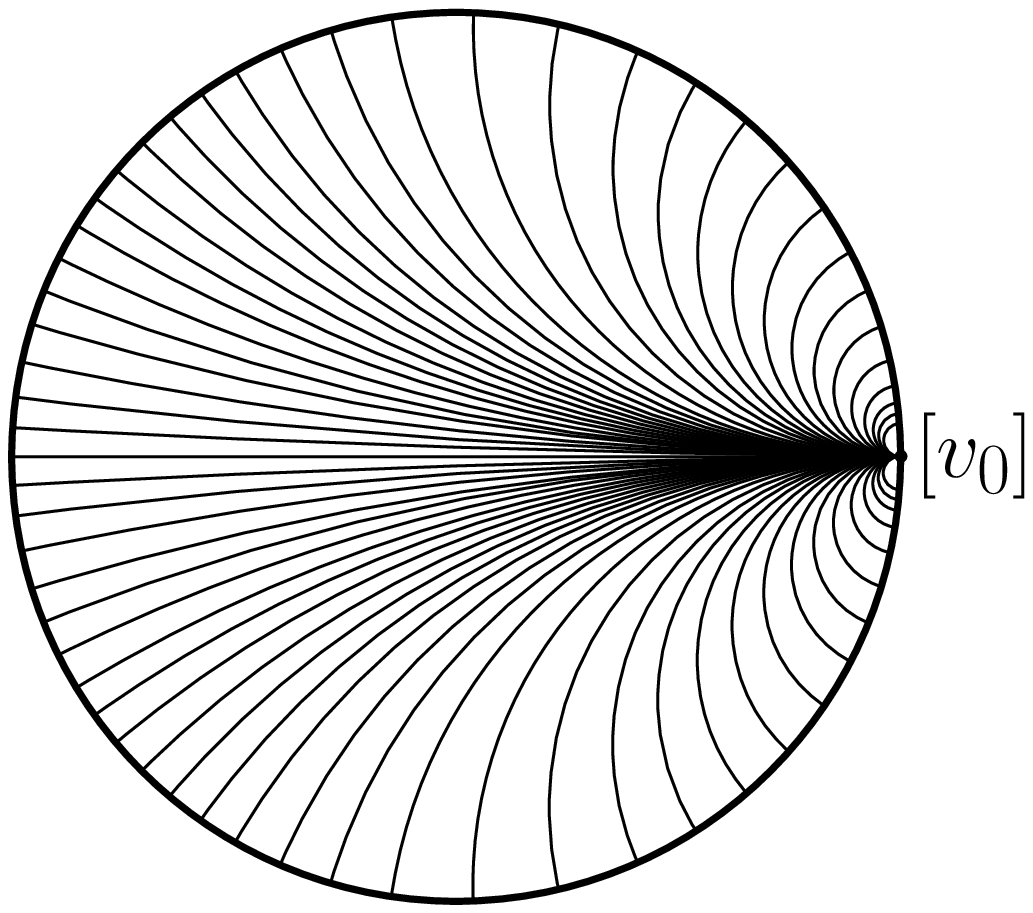}
%\captionsetup{labelformat=empty}
\caption{The dual lamination to the boundary of the domain of dependence, which is the future of a parabola.} \label{fig:laminationparabolic}
\end{minipage}%
\hspace{5mm}
\begin{minipage}[c]{.45\textwidth}
\centering
\includegraphics[height=4cm]{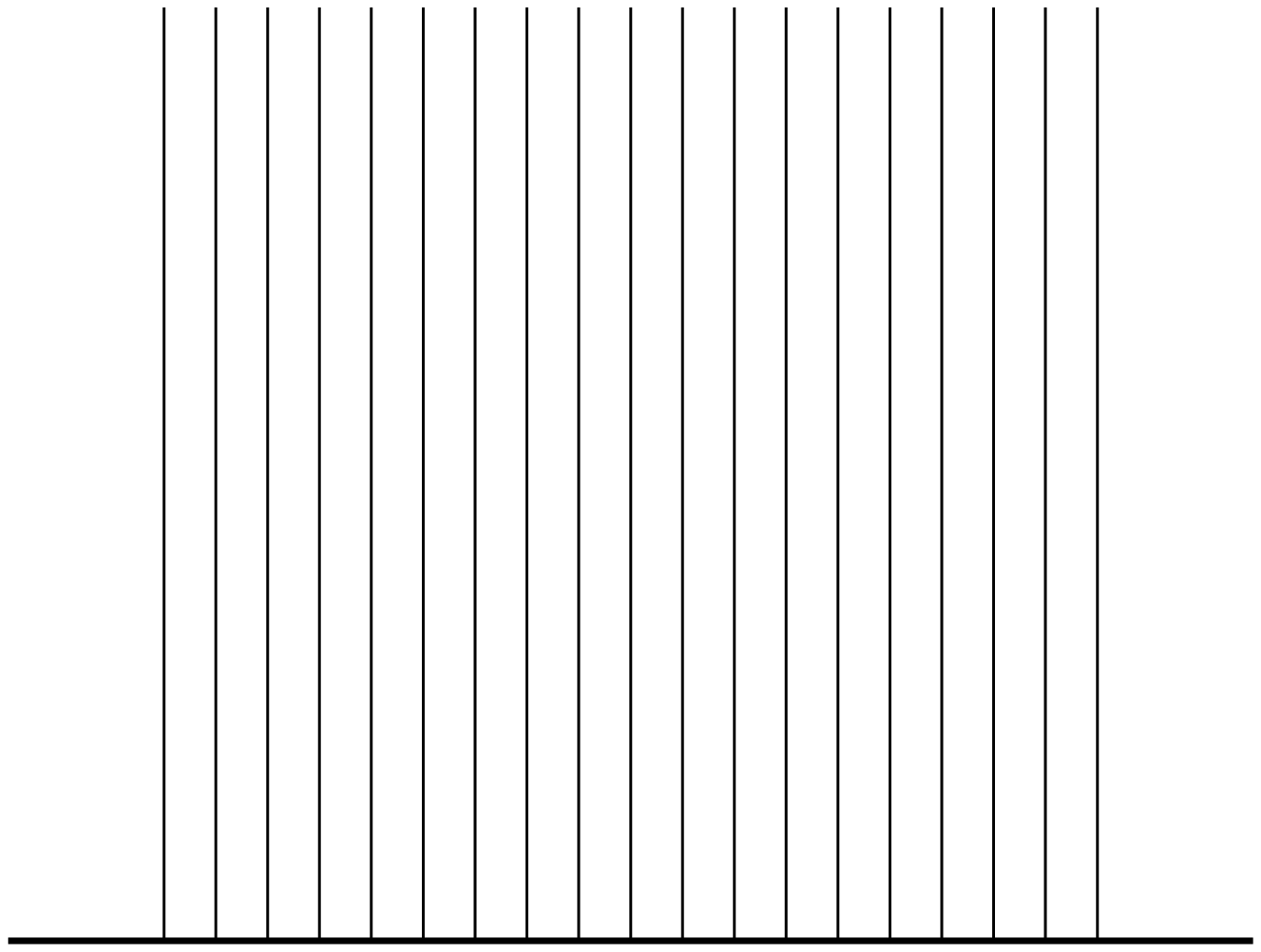}
%\captionsetup{labelformat=empty}
\caption{In the upper half space model, with fixed point at infinity, the measure is the Lebesgue measure.} \label{fig:laminationhalfspace}
\end{minipage}
\end{figure}

The surface $S_{C,v_0}(K)$ dual to $u_{C,K}$ can be described by an explicit parametrization, recalling that the inverse of the Gauss map of the surface is $G^{-1}(x)=\grad \bar u_{C,K}(x)-\bar u_{C,K}(x)x$. In these coordinates, 
\begin{align} \label{inverse gauss map surface parabolic group c positive}
G^{-1}(\sigma(t,s))&=\grad \bar u_{C,K}(t,s)-\bar u_{C,K}(t,s)\sigma(t,s)\\
&=\frac{\sqrt{2}}{2}f'(s)(e^s v_0-e^{-s}A_t(v_1))-\frac{\sqrt{2}}{2}f(s)(e^s v_0+e^{-s}A_t(v_1))\\
&=\frac{\sqrt{2}}{2}\left((f'(s)-f(s))e^s v_0-(f'(s)+f(s))e^{-s}A_t (v_1)\right)\\
&=\frac{\sqrt{2}}{2}\left(-(g(s)+2f(s))e^s v_0+g(s)e^{-s}A_t (v_1)\right)\,.
\end{align}

We now want to show that the surface is an entire graph. For this purpose, we will use the following criterion.
\begin{lemma} \label{criterion spacelike graph properness}
Let $u:\D\rar\R$ be a $C^2$ support function with positive Hessian, and suppose that the inverse of the Gauss map $G^{-1}:\D\rar\R^{2,1}$ is proper.
Then the boundary of the future-convex domain $D$ defined by $u$ is a spacelike entire graph.
\end{lemma}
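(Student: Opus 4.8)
The plan is to show that the vertical projection $\Pi\colon\R^{2,1}\to\R^2$, $\Pi(x^1,x^2,x^3)=(x^1,x^2)$, restricts to a homeomorphism of $\partial_s D$ onto $\R^2$, and that the resulting graph is $C^1$ with Euclidean gradient of norm $<1$; this is exactly the definition of a spacelike entire graph. Set $\sigma:=G^{-1}\colon\D\to\R^{2,1}$. Since $u$ is $C^2$, by Lemma~\ref{lemma formulae minkowski} and Equation~\eqref{formula inverse gauss map} the map $\sigma$ is well defined, of class $C^1$, and satisfies $\sigma(\D)\subseteq\partial_s D$; its differential is identified with $B^{-1}=\Hess\bar u-\bar u\,\id$ (Equation~\eqref{formula shape operator euclidean hessian}), which is positive definite by hypothesis, so $\sigma$ is an immersion. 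I would also use that $\partial D$ is the graph of a convex $1$-Lipschitz function $f\colon\R^2\to\R$ with $\|Df\|\le 1$ almost everywhere, so that $\sigma(z)=\bigl(\Pi(\sigma(z)),\,f(\Pi(\sigma(z)))\bigr)$ for every $z\in\D$.

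The central object is $F:=\Pi\circ\sigma\colon\D\to\R^2$. First I would check that $F$ is a local $C^1$-diffeomorphism: the tangent plane to $\sigma(\D)$ at $\sigma(z)$ is a translate of $x^\perp$ for $x=\pi^{-1}(z)\in\Hyp^2$, hence spacelike, so $\Pi$ maps it linearly isomorphically onto $\R^2$; composing with the isomorphism $d\sigma_z$ shows that $dF_z$ is invertible, and the inverse function theorem applies. In particular $F$ is an open map.

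Next — and this is the step I expect to be the main obstacle — I would prove that $F$ is proper, transferring the hypothesis that $\sigma=G^{-1}$ is proper. If $z_n$ leaves every compact subset of $\D$, then $\|\sigma(z_n)\|_{\mathrm{euc}}\to\infty$, and since
\[
\|\sigma(z_n)\|_{\mathrm{euc}}^2=|F(z_n)|^2+f(F(z_n))^2\le |F(z_n)|^2+\bigl(|f(0)|+|F(z_n)|\bigr)^2
\]
by the $1$-Lipschitz bound on $f$, this forces $|F(z_n)|\to\infty$. The delicate point is precisely that properness of $\sigma$ is a statement about $\R^{2,1}$, whereas one needs control of the horizontal coordinates alone, and this is where the global shape of a future-convex boundary (its being a Lipschitz graph) enters.

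Finally I would conclude as follows. The image $F(\D)$ is open (local homeomorphism) and closed (a proper map into a locally compact Hausdorff space is closed), hence $F$ maps onto the connected space $\R^2$; a proper surjective local homeomorphism is a covering map, and as $\R^2$ is simply connected while $\D$ is connected, $F$ is a homeomorphism, indeed a $C^1$-diffeomorphism. Therefore $\sigma(\D)=\{(p,f(p)):p\in\R^2\}=\mathrm{graph}(f)=\partial D$, so $\partial_s D=\partial D$ contains no lightlike ray; moreover $f=x^3\circ\sigma\circ F^{-1}$ is $C^1$, and $\|Df\|<1$ everywhere because every tangent plane of $\partial D$ is a translate of $x^\perp$ with $x\in\Hyp^2$ and hence spacelike. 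So $\partial D$ is a spacelike entire graph.
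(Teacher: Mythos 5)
Your proof is correct and follows essentially the same route as the paper: the paper identifies the vertical projection of $G^{-1}$ with the gradient map $Du$, whose image is open because the Hessian is positive, and closed because $G^{-1}$ is proper, so $\partial_s D=\partial D$ by connectedness. Your version spells out the transfer of properness to the horizontal projection via the $1$-Lipschitz bound and adds the covering-map step (which also yields injectivity of $G^{-1}$), but the underlying open-plus-closed argument is the same.
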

\begin{proof}
As $u$ is $C^1$, we can use Equation \eqref{formula inverse gauss map} and get $G^{-1}(x)=\grad\bar u(x)-\bar u(x)x$ for every $x\in\Hyp^2$.
It can be readily shown that this implies that the vertical projection of $G^{-1}(x)$ is $Du(z)$, where $z=\pi(x)\in\D$, see Lemma 2.8 of \cite{bonfill}.
As the Hessian of $u$ is positive, the image of the gradient map of $u$ is an open subset of $\R^2$, so it follows that $\partial_s D$ is open in $\partial D$.
Since $G^{-1}$ is proper, $\partial_s D$ is also closed in $\partial D$, and this concludes the proof.
%By contradiction suppose that $\partial D\setminus\partial_s D\neq\emptyset$, then there exists a sequence $p_n\in\partial_s D$ converging to a 
%point $p_\infty\in \partial D\setminus\partial_s D$. Let $\nu_n\in\Hyp^2$ be the normal of the spacelike support plane at $p_n$. As we are assuming $G^{-1}$ proper,
%the sequence $\nu_n$ must be contained in some compact subset of $\Hyp^2$. Hence we may assume that $\nu_n\to\nu_\infty\in\Hyp^2$, so the support planes 
%$P_n=p_n+\nu_n^\perp$ converge to the spacelike support plane $P_\infty=p_\infty+\nu_\infty^\perp$ contradicting the assumpion $p_\infty\notin \partial_sD$.
\end{proof}

We will actually show that the height function given by
$$z(t,s)=-\langle G^{-1}(\sigma(t,s)),\frac{v_0+v_1}{\sqrt{2}}\rangle$$
is proper. By a direct computation,
\begin{align*}
2z(t,s)&=-(g(s)+2f(s))e^s+g(s)e^{-s}\left(\frac{t^2}{2}+1\right)\\
&=\frac{1}{|K|{\sqrt{C}}}\log\left(\sqrt{C}\sqrt{|K|^{-1}+Ce^{2s}}+Ce^s\right)+e^{-s}\sqrt{|K|^{-1}+Ce^{2s}}\left(\frac{t^2}{2}+1\right)\\
&\geq \frac{1}{|K|{\sqrt{C}}}s+e^{-s}|K|^{-1/2}\left(\frac{t^2}{2}+1\right)-C_0
\,,
\end{align*}
for some constant $C_0$. Observe that $z(t,s)$ tends to infinity for $s\to\pm\infty$. It is easily checked that on a sequence $\sigma(t_n,s_n)$ which escapes from every compact, $t_n^2+s_n^2\to\infty$, and thus $z(t_n,s_n)\to\infty$. This concludes the claim that $S_{C,v_0}(K)$ is a spacelike entire graph, by Lemma \ref{criterion spacelike graph properness}.

Finally, we briefly discuss the isometry type of the induced metric. %We remark that the surface $S_{C,v_0}(K)$ is invariant under reflections in timelike planes of $\R^{2,1}$ containing $v_0$. This can be easily seen directly and follows also from the uniqueness in Proposition \ref{theorem uniqueness lsc}. It follows that for every $t_0$ the curve $\gamma(s)=G^{-1}(\sigma(t_0,s))$, whose image is $S_C\cap \mathrm{Span}(v_0,A_{t_0}(v_1))$, is a (non unit-speed) parametrization of a geodesic for the induced metric. By a computation (fixing $t_0=0$),
By an explicit computation using the expression in Equation \eqref{inverse gauss map surface parabolic group c positive}, we find the pull-back of the induced metric via $G^{-1}$:
\begin{equation} \label{equation pull-back gauss map parabolic}
(G^{-1}\circ\sigma)^*(g_{\R^{2,1}})=(f''(s)-f(s))^2 ds^2+\frac{1}{2}e^{-2s}g(s)^2 dt^2\,,
\end{equation}
where it turns out that
$$f''(s)-f(s)=\frac{1}{|K|\sqrt{|K|^{-1}+Ce^{2s}}}$$
By an explicit change of variables
$$r(s)=\frac{1}{\sqrt{|K|}}\arctanh\left(\frac{1}{\sqrt{1+|K|Ce^{2s}}}\right)\,,$$
so that $(r'(s))^2=(f''(s)-f(s))^2$, by computing
$$g(s)^2=\frac{1}{|K|}(1+|K|Ce^{2s})=\frac{1}{|K|\tanh^2(r\sqrt{|K|})}$$
and
$$e^{-2s}=|K|C\sinh^2\left(r\sqrt{|K|}\right)$$
%Therefore one sees that the lenght of the curve $G^{-1}(\sigma(0,s))$ is finite as $s\rar\infty$, namely when $\sigma(0,s)$ tends to $[v_0]$, and infinite in the other direction. This shows that $S_{C,v_0}(K)$ is not complete.
%On the other hand, we observe that for any point $p=G^{-1}(\sigma(t_0,s_0))\in S_{C,v_0}(K)$, the curve $$t\mapsto A_t(p)=G^{-1}(\sigma(t,s_0))$$ has infinite lenght and is orthogonal at $p$ to the geodesic $\gamma(s)=G^{-1}(\sigma(t_0,s))$. Assuming $K=-1$ for the moment, this suffices to conclude that $S_{C,v_0}(-1)$ is isometric to the universal cover of the complement of a point in $\Hyp^2$.
one obtains that the induced metric is
$$dr^2+\left(\frac{C}{2}\right)\cosh^2\left({r}{\sqrt{|K|}}\right)dt^2\,.$$
Rescaling $t$, one obtains
$$dr^2+\cosh^2\left({r}{\sqrt{|K|}}\right)dt^2\,,$$
that is, the first fundamental form of $S_{C,v_0}(K)$ is isometric to a half-plane of constant curvature $K$, namely, to the region of a hyperboloid bounded by a geodesic $l$.
\begin{figure}[htb]
\centering
\includegraphics[height=5.5cm]{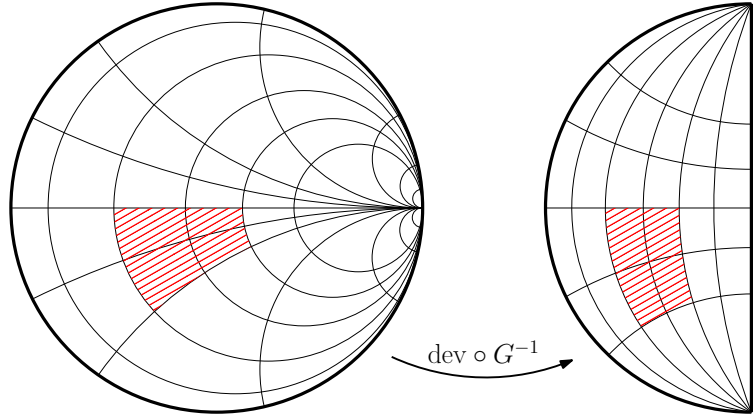}
\caption{A developing map $\dev:S_{C,v_0}(K)\rar\Hyp^2$ for the induced metric, composed with the inverse of the Gauss map $G^{-1}:\Hyp^2\rar S_{C,v_0}(K)$, in the Poincar\'e disc model of $\Hyp^2$. \label{fig:developing}}
\end{figure}

We resume the content of this subsection in the following proposition. Let us denote by $\Hyp^2(K)$ the rescaled hyperbolic plane, of curvature $K<0$, and by $\Hyp^2(K)_+$ a half-plane in $\Hyp^2(K)$. Observe that $\Hyp^2(K)_+$ has a one-parameter group of isometries $T(l)$ which consists of (the restriction of) hyperbolic translations along the geodesic $l$ which bounds $\Hyp^2(K)_+$.
\begin{prop} %\label{surface parabolic graph}
For every $K<0$, $C>0$ and every null vector $v_0\in\R^{2,1}$ there exists an isometric embedding 
$$i_{K,C,v_0}:\Hyp^2(K)_+\to\R^{2,1}$$ 
with image a Cauchy surface $S_{C,v_0}(K)$ in the domain of dependence whose support function at infinity is 
$$\varphi(z)=\begin{cases} -\sqrt{C} & [z]=[v_0] \\ 0 & [z]\neq[v_0] \end{cases}\,.$$
The surface $S_{C,v_0}(K)$ is a spacelike entire graph and $i_{K,C,v_0}$ is equivariant with respect to the group of isometries $T(l)$ of $\Hyp^2(K)_+$ and the parabolic linear subgroup of $\isom(\R^{2,1})$ fixing $v_0$.
\end{prop}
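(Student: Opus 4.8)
The proposition collects the computations of this subsection, so the plan is to assemble them. First I would start from the explicit solution $f_{C,K}(s)$ of \eqref{equation constant curvature parabolic} found above. Since we imposed that both $f''(s)-f(s)$ and $-f'(s)-f(s)$ are positive, the function $\bar u_{C,K}(t,s)=f_{C,K}(s)$ is a support function on $\Hyp^2$ for which $\Hess\bar u_{C,K}-\bar u_{C,K}\id$ is positive definite and which solves \eqref{constant curvature det hess - id}. By the second part of Lemma \ref{lemma formulae minkowski}, the dual object $\partial_s D$ is then a convex $C^2$ surface of constant curvature $K$ whose Gauss map is a diffeomorphism onto its image, with inverse given by \eqref{formula inverse gauss map}; call this surface $S_{C,v_0}(K)$.

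Next I would fix the free integration constant $D$ appearing in $f_{C,K}$. Writing $u_{C,K}$ for the restriction to $\D$ of the $1$-homogeneous extension of $\bar u_{C,K}$ and using $\sigma(0,s)_z=\cosh s$, one has $u_{C,K}(0,s)=f_{C,K}(s)/\cosh s$. The contribution $e^{-s}D/\cosh s$ of the $D$-term tends to $0$ as $s\to+\infty$ and to $2D$ as $s\to-\infty$, so there is a unique value of $D$ realizing the two radial limits $-\sqrt C$ (as $s\to+\infty$, towards $[v_0]$) and $0$ (as $s\to-\infty$); by Lemma \ref{lemma limit radial geodesics} these are precisely the values of the support function at infinity along the corresponding null directions. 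Since $\bar u_{C,K}$ depends only on $s$ and the parabolic group $A_t$ acts on the coordinates $(t,s)$ of $\Hyp^2$ by translating $t$ with trivial translation part in $\R^{2,1}$, the transformation rule \eqref{transf rule support function} shows $S_{C,v_0}(K)$ is invariant under the parabolic linear subgroup fixing $v_0$; in particular the support function at infinity is constant equal to $0$ off $[v_0]$, so $u_{C,K}|_{\partial\D}=\varphi$ as claimed, and (by Lemma \ref{remark support functions and Cauchy}) $S_{C,v_0}(K)$ is a Cauchy surface in the domain of dependence with that data.

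Then I would show $S_{C,v_0}(K)$ is a spacelike entire graph via Lemma \ref{criterion spacelike graph properness}: since the Hessian is positive it suffices to check that $G^{-1}$ is proper. Using the explicit formula \eqref{inverse gauss map surface parabolic group c positive} for $G^{-1}(\sigma(t,s))$, the height coordinate $z(t,s)=-\langle G^{-1}(\sigma(t,s)),(v_0+v_1)/\sqrt2\rangle$ satisfies
\[
2z(t,s)\geq \frac{1}{|K|\sqrt C}\,s+e^{-s}|K|^{-1/2}\!\left(\frac{t^2}{2}+1\right)-C_0
\]
for a constant $C_0$; the right-hand side tends to $+\infty$ whenever $t^2+s^2\to\infty$, hence $z$, and therefore $G^{-1}$, is proper. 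Finally, to identify the isometry type and build the chart, compute the pull-back of the induced metric under $G^{-1}\circ\sigma$, which takes the form \eqref{equation pull-back gauss map parabolic}; substituting $f''(s)-f(s)=\bigl(|K|\sqrt{|K|^{-1}+Ce^{2s}}\bigr)^{-1}$ and making the change of variable $r(s)=|K|^{-1/2}\arctanh\bigl((1+|K|Ce^{2s})^{-1/2}\bigr)$, a diffeomorphism onto a half-line, together with a rescaling of $t$, turns the metric into $dr^2+\cosh^2(r\sqrt{|K|})\,dt^2$, the first fundamental form of a half-plane $\Hyp^2(K)_+$ bounded by a geodesic. Reading this change of coordinates backwards gives a chart for $\Hyp^2(K)_+$; precomposing $G^{-1}$ with it yields $i_{K,C,v_0}$, which is an embedding because $G^{-1}$ is injective (its image being a graph), and equivariant for $T(l)$ and the parabolic subgroup by the $t$-translation invariance noted above.

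The one point that deserves care is the simultaneous fitting of the two prescribed radial limits by the single free constant $D$: this works because the $D$-term contributes $0$ towards $[v_0]$ and $2D$ in the opposite direction, while the relative offset $-\sqrt C$ between the two limits is already produced by the $C$-dependent part of $f_{C,K}$. The other thing to check carefully is that $r(s)$ is a genuine change of variable with the correct range, so that the resulting metric is complete and is exactly $\Hyp^2(K)_+$ rather than a proper sub-region; everything else is bookkeeping with the explicit formulas above.
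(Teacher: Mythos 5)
Your proposal is correct and follows essentially the same route as the paper: it assembles the explicit solution $f_{C,K}$, fixes the constant $D$ via the two radial limits, verifies the boundary value and parabolic invariance, proves properness of the height function to get an entire graph via Lemma \ref{criterion spacelike graph properness}, and identifies the induced metric with $\Hyp^2(K)_+$ by the change of variable $r(s)$. Your explicit remark that the limit towards $[v_0]$ is $D$-independent (so a single constant suffices to fit both prescribed limits) is a point the paper passes over with ``choosing $D$ suitably,'' and is a worthwhile clarification.
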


\subsection{Solutions for $C<0$}

If $C< 0$, the function $g(s)=\sqrt{|K|^{-1}+Ce^{2s}}$ is only defined for 
$$s\leq \frac{1}{2}\log\left(\frac{1}{|CK|}\right)\,.$$
From Equation \eqref{explicit solution parabolic} we can explicitly write the solution (again $D$ is a constant to be fixed):
\begin{equation} \label{explicit solution singular}
f_{C,K}(s)=-\frac{1}{2}\sqrt{|K|^{-1}+Ce^{2s}}-\frac{1}{2|K|\sqrt{|C|}}e^{-s}\arctan\left(\frac{\sqrt{|C|}e^s}{\sqrt{|K|^{-1}+Ce^{2s}}}\right)+e^{-s}D\,.
\end{equation}
Again, we study briefly the properties of the surface $S_{C,v_0}(K)$ whose support function is $\bar u_{C,K}(t,s)=f_{C,K}(s)$. Observe that the solution \eqref{explicit solution singular} is only defined in the range $s\leq \frac{1}{2}\log\left(\frac{1}{|CK|}\right)$, namely, in the complement of a horoball. Let us notice that, in the same notation as before, the limit of the support function (which only makes sense for $s\to -\infty$) is
$$\lim_{s\rar-\infty} u_{C,K}(t,s)=\lim_{s\rar-\infty}\frac{f_{C,K}(s)}{\sigma(t,s)_z}=0\,,$$
provided we choose $D=0$. On the other hand, as $s\to\frac{1}{2}\log\left(\frac{1}{|CK|}\right)$, the function $f_{C,K}(s)$ has the finite limit $-(\pi/4)\sqrt{|K|^{-1}}$. We observe that $\bar u_{C,K}(t,s)=f_{C,K}(s)$ can be extended to a convex function defined on the whole $\Hyp^2$ by declaring 
$$f_{C,K}(s)=-\frac{\pi}{4}\frac{1}{|K|\sqrt{|C|}}e^s$$
for $s\geq \frac{1}{2}\log\left(\frac{1}{|CK|}\right)$. We will now denote by $\bar u_{C,K}(t,s)=f_{C,K}(s)$ the function extended in this way. The surface $S_{C,v_0}(K)$ is thus a constant curvature surface which develops a singular point, namely it intersects the boundary of the domain of dependence, which in this case is just $\Ip(0)$. The inverse of the Gauss map sends the whole horoball $\{s\geq \frac{1}{2}\log\left(\frac{1}{|CK|}\right)\}$ to the point $\frac{\sqrt{2}}{2}\frac{1}{|K|\sqrt{|C|}}\frac{\pi}{4}v_0$.

%We can now apply the same considerations as the previous case to determine the isometry type of the induced hyperbolic metric. In this case, it is clear that the lenght of the curve  $G^{-1}(\sigma(0,s))$ is finite as $s\rar\infty$. By the same argument as before, one can show that the first fundamental form is isometric to the universal cover of the complement of a point $\Hyp^2$ in the case $K=-1$, or to its rescaling if $K\neq -1$. We collect all the information in the following proposition.

We remark that $u_{C,K}$ is a generalized solution to the Monge-Amp\`ere equation on the disc
$\det D^2 u=\nu,$
where $\nu$ in this case is a measure which coincides with $(1/|K|)(1-|z|^2)^{-2}\mathcal{L}$ on the complement of the horoball (where $\mathcal{L}$ is the Lebesgue measure), and is $0$ inside the horoball.

By a computation analogous to the previous case,
to compute the induced metric we manipulate Equation \eqref{equation pull-back gauss map parabolic}: setting
$$r(s)=\frac{1}{\sqrt{|K|}}\arctanh(\sqrt{1+|K|Ce^{2s}})\,,$$
and replacing
$$g(s)^2=\frac{1}{|K|}(1+|K|Ce^{2s})=\frac{1}{|K|}\tanh^2(r\sqrt{|K|})$$
and
$$e^{-2s}=|K|C\cosh^2(r\sqrt{|K|})$$
we obtain the expression for the metric
$$dr^2+\left(\frac{C}{2}\right)\sinh^2\left({r}{\sqrt{|K|}}\right)dt^2\,,$$
or, after rescaling of $t$,
$$dr^2+\sinh^2\left({r}{\sqrt{|K|}}\right)dt^2\,.$$
This shows that the first fundamental form of $S_{C,v_0}(K)$ is isometric to the universal cover of the complement of a point in $\Hyp^2(K)$, which we will denote by $\widetilde{\Hyp^2(K)\setminus {p}}$. Let $R(p)$ the group of rotations of $\Hyp^2(K)$ fixing a point $p$ and let $\widetilde{R(p)}$ be its universal cover. We conclude by including all the information in the following proposition.

\begin{figure}[htb]
\centering
\includegraphics[height=5.5cm]{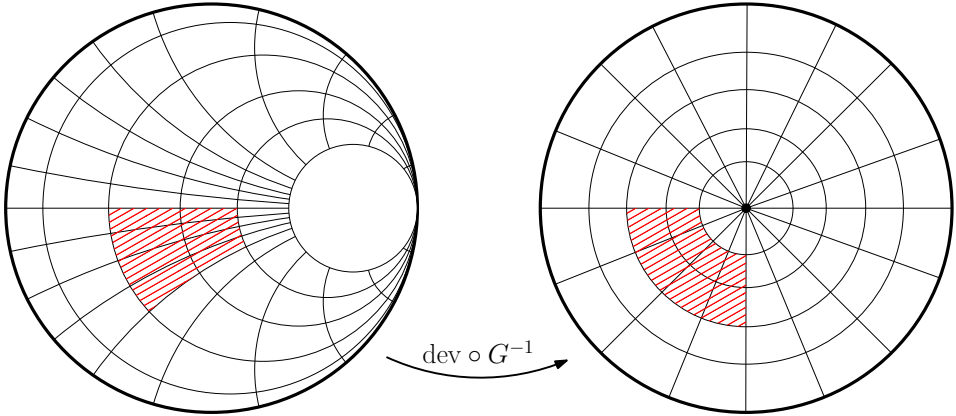}
\caption{Again, the developing map in the Poincar\'e disc model. In the case $C<0$, the inverse of the Gauss map $G^{-1}:\Hyp^2\rar S_{C,v_0}(K)$ shrinks a horoball to a point. \label{fig:developing2}}
\end{figure}

\begin{prop} 
For every $K<0$, $C<0$ and every null vector $v_0\in\R^{2,1}$ there exists an isometric embedding 
$$i_{K,C,v_0}:\widetilde{\Hyp^2(K)\setminus {p}}\to\R^{2,1}$$ 
with image a Cauchy surface $S_{C,v_0}(K)$ for $\Ip(0)$.
The closure of the surface $S_{C,v_0}(K)$ intersects the null cone $\partial\Ip(0)$ in the point $\beta v_0$, where
$$\beta=\frac{\sqrt{2}}{2}\frac{1}{|K|\sqrt{|C|}}\frac{\pi}{4}\,.$$
The inverse of the Gauss map of the closure of $S_{C,v_0}(K)$ maps a horoball of $\Hyp^2$ to $\beta v_0$. Finally $i_{K,C,v_0}$ is equivariant with respect to the group of isometries $\widetilde{R(p)}$ of $\widetilde{\Hyp^2(K)\setminus {p}}$ and the parabolic linear subgroup of $\isom(\R^{2,1})$ fixing $v_0$.
\end{prop}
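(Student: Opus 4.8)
The proof assembles the explicit computations carried out in the discussion just above into a single argument, in four steps. \emph{Constructing the surface.} First I would take $f_{C,K}$ to be the solution \eqref{explicit solution singular} of the ODE \eqref{equation constant curvature parabolic} on the half-line $s\le s_0:=\tfrac12\log(1/|CK|)$, with integration constant $D=0$, and extend it past $s_0$ by the formula prescribed in the text. The point to verify is that at $s_0$ one has $g(s_0)=0$, so that both the value and the first derivative of $f_{C,K}$ match the extension there; hence $\bar u_{C,K}(t,s):=f_{C,K}(s)$ is a $C^{1,1}$ function on $\Hyp^2$. It is convex, since the eigenvalues of $\Hess\bar u_{C,K}-\bar u_{C,K}\,\id$ are $f''-f$ and $-f'-f=g$ (from the Hessian formulas preceding \eqref{equation constant curvature parabolic}), which on the horoball $\{s\ge s_0\}$ are respectively a positive quantity and $0$. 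By the correspondence between sublinear lower semicontinuous functions and future-convex domains recalled in Subsection \ref{preliminaries subsec support}, $\bar u_{C,K}$ is then the support function of a future-convex domain $D$, and I set $S_{C,v_0}(K):=\partial_s D$.

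\emph{Curvature and induced metric.} On $\{s<s_0\}$ both eigenvalues of $B^{-1}=\Hess\bar u_{C,K}-\bar u_{C,K}\,\id$ are positive with product $1/|K|$ by \eqref{equation constant curvature parabolic}, so by Lemma \ref{lemma formulae minkowski} the surface is a $C^2$ convex surface there with $K=-\det B$ constant. Then I would compute the pull-back of the first fundamental form by $G^{-1}$ via \eqref{equation pull-back gauss map parabolic}, perform the change of variable $r=r(s)$ indicated in the text, substitute $g(s)^2=\tfrac1{|K|}\tanh^2(\sqrt{|K|}\,r)$ and $e^{-2s}=|K|\,|C|\cosh^2(\sqrt{|K|}\,r)$, and arrive at $dr^2+\tfrac{|C|}{2}\sinh^2(\sqrt{|K|}\,r)\,dt^2$, which after rescaling $t$ is $dr^2+\sinh^2(\sqrt{|K|}\,r)\,dt^2$. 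Since $r$ runs over $(0,\infty)$ as $s$ runs over $(-\infty,s_0)$ and $t$ runs over all of $\R$, this is precisely the metric of $\widetilde{\Hyp^2(K)\setminus p}$ in geodesic polar coordinates about the removed centre $p$; thus $G^{-1}\circ\sigma$ (after the rescaling of $t$) is the claimed isometric embedding $i_{K,C,v_0}$.

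\emph{Boundary behaviour, Cauchy property and equivariance.} On the horoball $\{s\ge s_0\}$ one has $f'+f\equiv 0$, so in \eqref{inverse gauss map surface parabolic group c positive} the term carrying $A_t(v_1)$ drops out and $G^{-1}(\sigma(t,s))=\tfrac{\sqrt 2}{2}(f'(s)-f(s))e^s v_0$ is the constant vector $\beta v_0$ with $\beta$ as in the statement; hence $G^{-1}$ crushes the whole horoball to the single null vector $\beta v_0$, and $\overline{S_{C,v_0}(K)}$ meets $\partial\Ip(0)$ exactly there. With $D=0$ and $\sigma(0,s)_z=\cosh s$ one checks $\lim_{s\to-\infty}u_{C,K}(t,s)=0$ for every $t$, so the support function at infinity of $S_{C,v_0}(K)$ is identically $0$, which is that of $\Ip(0)$; since moreover the future-convex domain cut out by $\bar u_{C,K}$ is contained in $\overline{\Ip(0)}$ (because $\bar u_{C,K}\le 0$), Lemma \ref{remark support functions and Cauchy} shows $S_{C,v_0}(K)$ is a Cauchy surface for $\Ip(0)$. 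Finally, because $\bar u_{C,K}$ depends only on the horocyclic coordinate $s$ and the parabolic linear group acts by $(t,s)\mapsto(t+\mathrm{const},s)$ in the coordinates $\sigma(t,s)=A_t(\gamma_0(s))$, the transformation rule \eqref{transf rule support function} (with trivial translation part) makes $S_{C,v_0}(K)$ invariant under $\{A_t\}_{t\in\R}$; on the Riemannian side the $t$-translations are isometries of $dr^2+\sinh^2(\sqrt{|K|}\,r)\,dt^2$ and form exactly the group $\widetilde{R(p)}$, so $i_{K,C,v_0}$ intertwines $\widetilde{R(p)}$ with the parabolic subgroup of $\isom(\R^{2,1})$ fixing $v_0$.

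\emph{Main obstacle.} The delicate part will be the first step together with the first half of the third: one must be sure that the $C^{1,1}$ gluing at $s_0$ really yields a convex domain whose spacelike boundary is the surface described, and that the single degenerate direction of $\Hess\bar u_{C,K}-\bar u_{C,K}\,\id$ on the horoball corresponds to the Gauss map collapsing an entire horoball to a point, rather than opening a lightlike ray of positive length in $\partial D$. This requires care with the signs and constants in the explicit formulae (in particular with the value of $\beta$, and with whether the extension of $f_{C,K}$ past $s_0$ is the multiple of $e^{-s}$ that keeps $g\equiv 0$), and with the fact that the induced metric as $r\to 0$ does not close up into a genuine cone point but only into the cone point of the universal cover. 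Everything else is a direct, if lengthy, computation.
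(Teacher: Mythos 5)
Your proposal is correct and follows essentially the same route as the paper, which establishes this proposition precisely by the chain of explicit computations you reassemble (the solution \eqref{explicit solution singular} with $D=0$, its extension past $s_0$, the change of variables $r(s)$, and the parametrization \eqref{inverse gauss map surface parabolic group c positive}); you also rightly flag the typographical issues in the extension formula and in the constant $\beta$ as the points requiring care. One small correction: with the extension proportional to $e^{-s}$ that makes the gluing $C^1$, \emph{both} eigenvalues $f''-f$ and $-f'-f$ of $\Hess\,\bar u_{C,K}-\bar u_{C,K}\,\id$ vanish identically on the horoball (not one positive and one zero) --- which is exactly why $G^{-1}$ collapses the whole horoball to the single point $\beta v_0$, and convexity is unaffected since both eigenvalues remain $\geq 0$.
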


%\cleardoublepage
\bibliographystyle{plain}
\bibliography{bs-bibliography}

\end{document}